\newtheorem{theorem}{Theorem}[section]
\newtheorem*{mainth}{Main Theorem}
\newtheorem{corollary}[theorem]{Corollary}
\newtheorem{lemma}[theorem]{Lemma}
\newtheorem{proposition}[theorem]{Proposition}
\newtheorem{claim}[theorem]{Claim}
\theoremstyle{definition}
\newtheorem{define}[theorem]{Definition}
\newtheorem{remark}[theorem]{Remark}
\newtheorem*{convention}{Convention}
\newcommand{\setword}[2]{%
  \phantomsection
  #1\def\@currentlabel{\unexpanded{#1}}\label{#2}%
}
\newcommand{\fol}{\mbox{${\mathcal F}$}}
\newcommand{\fn}{\mbox{$\widetilde {\mathcal F}$}}
\newcommand{\cG}{\mbox{${\mathcal G}$}}
\newcommand{\cB}{\mbox{${\mathcal B}$}}
\newcommand{\cF}{\mbox{${\mathcal F}$}}
\newcommand{\lam}{\mbox{${\mathcal W}$}}
\newcommand{\wl}{\mbox{${\widetilde{\mathcal W}}$}}
\newcommand{\A}{\mbox{${\mathcal A}$}}
\newcommand{\R}{\mbox{${\mathcal R}$}}
\newcommand{\U}{\mbox{${\mathcal U}$}}
\newcommand{\Y}{\mbox{${\mathcal Y}$}}
\newcommand{\Z}{\mbox{${\mathcal Z}$}}
\newcommand{\K}{\mbox{${\mathcal K}$}}
\newcommand{\G}{\mbox{${\mathcal G}$}}
\newcommand{\F}{\mbox{${\mathcal F}$}}
\newcommand{\cW}{\mbox{${\mathcal W}$}}
\newcommand{\WW}{\mbox{${\mathcal W}$}}
\newcommand{\cD}{\mbox{${\mathcal D}$}}
\newcommand{\B}{\mbox{${\mathcal B}$}}
\newcommand{\gs}{\mbox{${\mathcal G}^s$}}
\newcommand{\gu}{\mbox{${\mathcal G}^u$}}
\newcommand{\wgs}{\mbox{$\widetilde {\mathcal G}^s$}}
\newcommand{\wgu}{\mbox{$\widetilde {\mathcal G}^u$}}
\newcommand{\PP}{\mbox{${\mathcal P}$}}
\newcommand{\C}{\mbox{${\mathcal C}$}}
\newcommand{\cC}{\mbox{${\mathcal C}$}}
\newcommand{\HH}{\mbox{${\mathcal H}$}}
\newcommand{\II}{\mbox{${\mathcal I}$}}
\newcommand{\D}{\mbox{${\mathcal D}$}}
\newcommand{\E}{\mbox{${\mathcal E}$}}
\newcommand{\cE}{\mbox{${\mathcal E}$}}
\newcommand{\V}{\mbox{${\mathcal V}$}}
\newcommand{\X}{\mbox{${\mathcal X}$}}
\newcommand{\mt}{\mbox{${\widetilde M}$}}
\newcommand{\rrrr}{\mbox{${\mathbb R}$}}
\newcommand{\oo}{\mbox{${\mathcal O}$}}
\newcommand{\oos}{\mbox{${\mathcal O}^s$}}
\newcommand{\oou}{\mbox{${\mathcal O}^u$}}
\newcommand{\ls}{\mbox{${\Lambda^s}$}}
\newcommand{\lsp}{\mbox{${\Lambda^s_{\Psi}}$}}
\newcommand{\lss}{\mbox{${\Lambda^s_2}$}}
\newcommand{\lu}{\mbox{${\Lambda^u}$}}
\newcommand{\wls}{\mbox{$\widetilde{\Lambda}^s$}}
\newcommand{\wlss}{\mbox{$\widetilde{\Lambda}^s_2$}}
\newcommand{\wlu}{\mbox{$\widetilde{\Lambda}^u$}}
\newcommand{\wwp}{\mbox{$\widetilde{\Phi}$}}
\newcommand{\eps}{\mbox{${\epsilon}$}}
\newcommand{\si}{\mbox{${\mathbb S^2_{\infty}}$}}
\title[Quasigeodesic Anosov flows in hyperbolic $3$-manifolds]{Non $\rrrr$-covered Anosov flows in hyperbolic $3$-manifolds are quasigeodesic}
\author[S.R. Fenley]{Sergio R.\ Fenley} 
\thanks{Research partially supported by Simons foundation
grant 637554, by National Science Foundation
grant DMS-2054909 and by the Institute for Advanced Study.}
\address{Florida State University, Tallahassee, FL 32306, USA}
\email{sfenley@fsu.edu}
\begin{document}
 
 \begin{abstract}
The main result is that if an Anosov flow in a 
closed hyperbolic
three manifold is not $\rrrr$-covered, 
then the flow is a quasigeodesic flow.
We also prove that if a hyperbolic three manifold supports
an Anosov flow, then up to a double cover it supports
a quasigeodesic flow.
We prove the continuous extension property for 
the stable and unstable foliations of any Anosov flow in a 
closed hyperbolic three manifold, and
the existence of group invariant
Peano curves associated with any such flow.

\bigskip
\noindent{\bf Keywords:} Anosov flows, quasigeodesic flows,
geometric properties of flow lines,
freely homotopic orbits, large scale geometry of flows.

\medskip
\noindent {\bf Mathematics Subject Classification 2020: } 
\ Primary: 57R30, 37E10, 37D20, 37C85; 
\ Secondary: 53C12, 37C27, 37D05, 37C86.
\end{abstract}

\maketitle

%Text goes here

\section{Introduction}

This article studies geometric properties of Anosov flows 
in $3$-manifolds, with a particular interest when the
manifold is hyperbolic. The goal is to study the quasigeodesic
property for such flows. A {\em quasigeodesic} is 
a rectifiable curve so that length along the curve is a 
uniformly efficient measure of distance, 
up to a bounded multiplicative distortion and additive
constant, when lifted
to the universal cover. Quasigeodesics
are extremely
important when the manifold is hyperbolic \cite{Th1,Th2,Gr}.
A flow with no point orbits is a quasigeodesic flow
if all its orbits are quasigeodesics.

A suspension Anosov flow, or any suspension flow 
in any closed manifold of any dimension for that
matter, is quasigeodesic \cite{Ze}.
Any closed $3$-manifold fibering over the circle
with pseudo-Anosov monodromy is hyperbolic \cite{Th2},
hence the class of quasigeodesic flows in hyperbolic
$3$-manifolds is very large. 
What should a quasigeodesic flow in a hyperbolic $3$-manifold
look like?
Calegari \cite{Cal3} started the study of such flows,
and this was greatly extended by the work of Frankel \cite{Fra1,Fra2,Fra3}.
Frankel analyzed the universal circle associated
with such flows and also the stable and unstable 
decomposition sets in the
orbit space of such flows. He used the stable and unstable
decompositions to prove that such flows always
have closed orbits \cite{Fra3}.

Hence it is very natural to consider pseudo-Anosov flows 
in this setting and ask: given a pseudo-Anosov flow
in a closed hyperbolic $3$-manifold when is it quasigeodesic?
In this article we will answer this question in the
case of Anosov flows, that is, when there are no $p$-prong
singularities with $p \geq 3$.
More specifically we will 
consider topological Anosov flows:
it is a weakening of the Anosov property allowing for less
regularity. 
See precise definition in Section \ref{prelim}.
Still the orbits of topological Anosov flows
are $C^1$ curves, hence
rectifiable.

In general it is very hard to decide whether an Anosov flow
is quasigeodesic, or not, and this question has remained open until now. 
In fact the first result previously  obtained is a negative one.
A codimension one foliation is called $\rrrr$-{\em covered}
if the leaf space of the foliation lifted to the universal
cover of the manifold is homeomorphic to the real line
$\mathbb{R}$
\cite{Fe1}.
An Anosov flow in a $3$-manifold is called $\rrrr$-covered
if the stable and the unstable $2$-dimensional
foliations of
the flow are $\rrrr$-covered \cite{Fe1}.
It is enough to assume that one of these foliations
is $\rrrr$-covered \cite{Ba1}.
For Anosov flows in $3$-manifolds, there are many examples:
suspensions and geodesic flows for example. 
Using Dehn surgery on periodic orbits of 
suspensions and geodesic flows \cite{Go,Fr}, it
was proved in \cite{Fe1} that there are infinitely many
examples of $\rrrr$-covered Anosov flows in hyperbolic
$3$-manifolds.

The first result in hyperbolic manifolds was 
that any $\rrrr$-covered Anosov flow in a
closed  hyperbolic $3$-manifold
is not quasigeodesic \cite{Fe1}. The reason is as follows:
Let $\Phi$ be an $\rrrr$-covered
Anosov flow in a closed hyperbolic $3$-manifold.
Suppose that $\Phi$ is quasigeodesic.
Notice that an $\rrrr$-covered Anosov flow is necessarily
transitive \cite{Ba1}. 
Then since $\Phi$ is transitive, it follows that $\Phi$ is
uniformly quasigeodesic,
meaning that the quasigeodesic constants are uniform
over all orbits.
For any $\rrrr$-covered Anosov flow in a hyperbolic
$3$-manifold, then up to a double
cover where the stable foliation
is transversely orientable, the following happens:
every periodic orbit is freely homotopic
to infinitely many other periodic orbits \cite{Fe1}. 
Coherent lifts of the freely homotopic orbits all
have the same ideal points in the sphere at infinity
$\si$ of the canonical compactification of the
universal cover.
The uniform quasigeodesic behavior implies 
that the coherent lifts of the periodic
orbits are a globally bounded Hausdorff distance from
each other
\cite{Th1,Th2,Gr}.
Hence they accumulate in the universal cover.
This is impossible for coherent lifts of freely homotopic
periodic orbits.

In this article we obtain a definitive answer to the
quasigeodesic question for Anosov flows
in hyperbolic $3$-manifolds, by
proving that $\rrrr$-covered behavior
is the only obstruction:

\begin{mainth}
Let $\Phi$ be a topological Anosov flow in a 
closed hyperbolic
$3$-manifold.  Suppose that $\Phi$ is not $\rrrr$-covered.
Then $\Phi$ is a quasigeodesic flow.
\end{mainth}

\begin{convention}
%\noindent
%{\bf {Conventions $-$}} \ 1) 
\ 1) Any topological Anosov flow in a
hyperbolic $3$-manifold is orbitally equivalent to
a (smooth) Anosov flow by recent results of Shannon 
\cite{Sha}.
Our results and techniques are not dependent on the
regularity of the flow and its foliations. Hence
we will abuse terminology, and many times refer to our
flows as Anosov flows, even when they may only be
topological Anosov flows.

In addition the Main Theorem concerns behavior of the foliations
in the universal cover, so it is unaffected by taking finite
lifts. Therefore whenever necessary we assume that the manifold
is orientable and/or the foliations are transversely orientable.
But orientability is not necessary for many intermediate steps
in the proof.

2) A topological Anosov flow in a $3$-manifold does not necessarily
have invariant one dimensional stable and unstable bundles,
and hence no strong stable and unstable foliations.
Our convention in this article is to denote
the flow invariant two dimensional foliations as the 
stable and unstable foliations. 
This convention will be observed
throughout the article.
Beware that elsewhere
in the literature the term stable foliation for an
Anosov flow in a $3$-manifold 
many times denotes the strong stable foliation, and
the $2$-dim foliation is then called the weak stable
foliation.
We again stress that in the case of a topological
Anosov flow we do not necessarily have
the strong stable foliation.
\end{convention}
%\vskip .1in

The Main theorem has many consequences:

\begin{theorem} \label{qgflow}
Let $M$ be  a hyperbolic $3$-manifold
admitting an Anosov flow. Then up to perhaps a double
cover, $M$ admits a quasigeodesic pseudo-Anosov flow.
In any case $M$ admits a one dimensional foliation
by quasigeodesics, with a dense leaf.
\end{theorem}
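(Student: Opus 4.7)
The plan is to split into cases according to whether the given topological Anosov flow $\Phi$ on $M$ is $\rrrr$-covered, and to handle the $\rrrr$-covered case by a pseudo-Anosov modification on a double cover.

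If $\Phi$ is not $\rrrr$-covered, the Main Theorem immediately gives that $\Phi$ is quasigeodesic. So the first assertion holds on $M$ itself (no cover needed), with $\Phi$ serving as the desired pseudo-Anosov flow. The orbit partition of $\Phi$ is a one-dimensional $C^1$ foliation of $M$ whose leaves are quasigeodesic in $\widetilde{M}$, and a dense leaf is supplied by any orbit dense in a transitive basic set of full support (for a non-$\rrrr$-covered Anosov flow in a hyperbolic 3-manifold such a basic set is present).

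Suppose instead $\Phi$ is $\rrrr$-covered. Since $M$ is hyperbolic, $\Phi$ cannot be orbitally equivalent to a suspension, so $\Phi$ is of skewed $\rrrr$-covered type (Barbot). Pass to the double cover $\widehat{M} \to M$ on which the weak stable and weak unstable foliations become transversely orientable; in the skewed case, the periodic orbits on $\widehat{M}$ are organized into free homotopy classes of infinite size, with coherent lifts sharing ideal endpoints on $\si$. I would then perform a pseudo-Anosov blowup (in the style of the Fried/Goodman surgery but singularizing instead of Dehn filling) along a well chosen periodic orbit $\alpha$ lifted to $\widehat{M}$, replacing $\alpha$ by a $p$-prong singular orbit for some $p \ge 3$. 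The blowup is arranged so that the resulting pseudo-Anosov flow $\Psi$ is defined on $\widehat{M}$ and its bifoliation fails the $\rrrr$-covered property, since the new $p$-prong orbit forces lozenge chains in the orbit space to branch in a way incompatible with $\rrrr$-coveredness.

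It remains to verify that $\Psi$ is quasigeodesic. The continuous extension of leaves, the existence of group invariant Peano curves, and the orbit space analysis underlying the Main Theorem are all stable under the introduction of finitely many $p$-prong orbits, so the argument carries over to yield that $\Psi$ is a quasigeodesic pseudo-Anosov flow on $\widehat{M}$. For the remaining \emph{``in any case''} assertion, the orbit partition of $\Psi$ descends from $\widehat{M}$ to an unoriented one-dimensional foliation of $M$: the deck involution permutes orbits, possibly reversing orientation, but the partition into orbits is preserved. Quasigeodesicity is measured in $\widetilde{M}$, hence is automatically inherited by the descended foliation, and transitivity of $\Psi$ yields a dense leaf in $M$. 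The main obstacle will be the equivariant pseudo-Anosov blowup on $\widehat{M}$ together with the verification that the modified flow is genuinely non-$\rrrr$-covered; once that is in hand, the adaptation of the Main Theorem's techniques from the Anosov to the pseudo-Anosov setting is comparatively routine.
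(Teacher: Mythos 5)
Your non-$\rrrr$-covered case is fine and matches the paper: the Main Theorem gives quasigeodesicity of $\Phi$ itself, and transitivity (automatic for Anosov flows in atoroidal manifolds) gives the dense leaf. The genuine gap is in your $\rrrr$-covered case. First, the proposed ``pseudo-Anosov blowup'' that replaces a regular periodic orbit of the lifted flow by a $p$-prong singular orbit on the same double cover is not a construction you can invoke: no such operation is described or justified, and arranging it equivariantly and so that the result is non-$\rrrr$-covered is itself a substantial claim. Second, and more seriously, even granting such a flow $\Psi$, you cannot conclude quasigeodesicity by saying the Main Theorem's argument ``carries over'' to the pseudo-Anosov setting: the Main Theorem is stated and proved only for topological Anosov flows, and the paper explicitly lists the extension to flows with $p$-prong singularities as an open problem, precisely because the proof relies on Candel's leafwise hyperbolic metric on the (nonsingular) stable foliation with flow lines geodesic in stable leaves, a setup that is unavailable for singular foliations. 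So the step you call ``comparatively routine'' is exactly the part that is not known.

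The route the paper actually takes in the $\rrrr$-covered case avoids modifying $\Phi$ altogether: since $M$ is hyperbolic, the flow is skewed, and on the double cover $M_2$ where $\ls$ becomes transversely orientable one has Thurston's regulating pseudo-Anosov flow $\varphi$ transverse to the stable foliation, which is already known to be quasigeodesic (Thurston, Calegari, Fenley). That proves the first assertion. For the descent to $M$, your claim that the deck involution preserves the orbit partition of the new flow is unsupported; the paper instead shows that a deck transformation reversing the transverse orientation must swap the leafwise geodesic laminations $\wgs$ and $\wgu$ (using that going up expands $\wgs$-leaves away from each other, so $\gamma(\wgs)=\wgs$ is impossible), hence preserves their union and the collapsed one-dimensional foliation, which therefore projects to $M$ as a quasigeodesic foliation with a dense leaf. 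Some statement of this kind is needed; without it the ``in any case'' assertion does not follow.
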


Roughly this goes as follows:
if the flow is not $\rrrr$-covered we use the Main Theorem.
Otherwise up to a double cover if necessary, 
the stable foliation is $\rrrr$-covered and
transversely orientable. Then one can produce
a quasigeodesic pseudo-Anosov flow transverse to the
stable foliation \cite{Th3,Cal1,Fe6}.
We explain in detail in Section \ref{moreqg} that the double
cover may be necessary: this happens for example for
many Fried surgeries on geodesic flows of closed,
non orientable hyperbolic surfaces.
%The double cover may be necessary: for example start
%with the geodesic flow in $T^1 S$, where $S$ is a closed,
%non orientable, hyperbolic surface. Do a Dehn surgery
%on a periodic orbit associated with a filling geodesic
%in $S$. The resulting manifold is atoroidal, and hence
%%hyperbolic \cite{Fo-Ha}. Choosing the surgery slope carefully,
%one can do the surgery so the geodesic flow
%becomes an Anosov flow in the resulting manifold
%\cite{Go,Fr}, and under a further restriction the
%resulting Anosov flow is $\rrrr$-covered \cite{Fe1,Bo-Ia,Fo-Ha}.
%The two dimensional stable foliation of the geodesic flow is
%not transversely orientable because the surface $S$ is
%not orientable. After surgery the stable foliation of the
%resulting Anosov flow is also not transversely orientable
%and hence does not admit a transverse flow. In this
%case a double cover is necessary in the previous theorem.

Another consequence concerns the continuous extension
property. Suppose a topological  Anosov flow $\Phi$
in a $3$-manifold $M$ has stable leaves which are $C^1$.
This can always be achieved up to orbit equivalence
\cite{BFP}.
The leaves of the stable foliation are Gromov
hyperbolic \cite{Pla,Sul,Gr}.
Hence if $F$ is a leaf of the stable foliation
in the universal cover $\mt$, then it has a 
canonical compactification
into a closed disk $F \cup S^1(F)$. The 
{\em continous extension property} asks whether the
embedding $F \to \mt$ extends continuously to a map
$F \cup S^1(F) \to \mt \cup {\mathbb S}^2_{\infty}$,
see \cite{Ga} and Question 10.2 of \cite{Cal2}:

The continuous extension property was first proved for
fibrations over the circle in the celebrated article
by Cannon and Thurston \cite{Ca-Th}. This was an unexpected
and very surprising result. Since then it has been extended
to some classes of foliations: finite depth foliations
\cite{Fe5},  $\rrrr$-covered foliations 
\cite{Th3,Fe6}, and foliations with one sided 
branching \cite{Fe6}. In general it is a very hard property to
prove. For Anosov foliations this question is
very  complex, in part because these foliations 
do not have compact leaves.
In this article we prove:

\begin{theorem} \label{continuous}
Let $\Phi$ be an Anosov flow in a hyperbolic $3$-manifold.
Then the stable and unstable foliations of $\Phi$ have
the continuous extension property.
\end{theorem}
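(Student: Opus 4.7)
The plan is a case analysis based on the \rrrr-covered dichotomy. If $\Phi$ is \rrrr-covered then the stable and unstable foliations of $\Phi$ are \rrrr-covered codimension-one foliations of a closed hyperbolic $3$-manifold, and the continuous extension property in this setting was already established in \cite{Th3, Fe6}. Hence we may assume throughout that $\Phi$ is not \rrrr-covered.

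In the non-\rrrr-covered case the Main Theorem provides that $\Phi$ is a quasigeodesic flow, and on a compact manifold the quasigeodesic constants can be taken uniform over all orbits. Consequently every orbit $\widetilde{\gamma}$ in $\mt$ has two distinct ideal endpoints $\widetilde{\gamma}(\pm\infty) \in \si$, and by quasigeodesic stability these endpoints vary continuously with the orbit. On a lifted stable leaf $F$, any two orbits are forward asymptotic in $\mt$, so they share a common positive endpoint $\xi^+_F \in \si$; they diverge in $F$ in backward time, so distinct orbits have pairwise distinct backward endpoints. The symmetric picture holds on unstable leaves.

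Next I would define the candidate extension $\phi \colon F \cup S^1(F) \to \mt \cup \si$ as the inclusion on $F$, sending the distinguished forward-collapsing point of $S^1(F)$ to $\xi^+_F$ and sending the remaining boundary points, which are parametrized by transverse limits in the orbit space of $F$, to the backward endpoints of the corresponding orbits. The core task is then continuity of $\phi$ at points of $S^1(F)$. A sequence $p_n \in F$ with $p_n \to \eta \in S^1(F)$ falls into one of two regimes: either $p_n$ stays in a bounded transversal of a single orbit, in which case the endpoint of that orbit controls the limit in $\si$ directly; or $p_n$ crosses infinitely many orbits, in which case one must show that the corresponding orbits converge in the orbit space of $F$ (or converge to one of its ends) and that their ideal endpoints converge in $\si$ to $\phi(\eta)$.

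The main obstacle will be the second regime, since one must rule out pathological accumulation of backward endpoints as the transverse parameter runs to infinity in $F$; this is essentially a statement that the universal-circle parametrization of ideal points along $F$ has no collapsing. I expect to control it using the structure of lozenges, non-separated leaves and perfect fits in the non-\rrrr-covered orbit space, combined with the geometric estimates on orbits and their Hausdorff neighborhoods that drive the proof of the Main Theorem. The statement for the unstable foliation is then obtained symmetrically by applying the argument to the reverse flow $\Phi^{-1}$.
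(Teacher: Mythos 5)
Your overall route is exactly the paper's: split on whether $\Phi$ is $\rrrr$-covered, and in the $\rrrr$-covered case quote \cite{Th3} and Theorem G of \cite{Fe6}; in the other case use the Main Theorem to conclude $\Phi$ is quasigeodesic. The difference is in how the quasigeodesic case is finished. The paper does not construct the extension at all: it simply invokes the already-published fact that a quasigeodesic Anosov (indeed pseudo-Anosov) flow in a closed hyperbolic $3$-manifold has the continuous extension property for its weak stable and unstable foliations (Theorem 5.8 of \cite{Fe1}, see also Theorem 3.4 of \cite{Fe4}). Your outline — uniform quasigeodesicity, forward endpoints constant along a stable leaf, backward endpoints parametrizing the rest of $S^1(F)$, then continuity at ideal points — is essentially the scheme of that cited proof, but as written it is not a proof: the decisive step, continuity in your ``second regime'' where the sequence crosses infinitely many orbits (equivalently, that the induced parametrization of ideal points along $F$ does not collapse or oscillate), is left as an expectation to be controlled ``using lozenges, perfect fits, and geometric estimates.'' That step is precisely the content of the cited theorems, so as a self-contained argument your proposal has a gap there; replacing the sketch by the citation closes it and recovers the paper's proof verbatim. (Also note that uniformity of the quasigeodesic constants uses transitivity, which for an Anosov flow on a hyperbolic manifold holds because non-transitive Anosov flows contain incompressible tori — worth a word or a reference rather than ``compactness'' alone.)
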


The non $\rrrr$-covered case is entirely new in terms of techniques:
it is the first result proving the continuous extension property,
where the key tool is not a transverse or almost transverse
 pseudo-Anosov flow
which is quasigeodesic.

The quasigeodesic property 
also has consequences for the computation of the Thurston
norm. We refer the reader to \cite{Mos2} for that.

Finally the main theorem implies the existence of group
invariant Peano curves as follows. Given an Anosov or
pseudo-Anosov flow
$\Phi$ in a closed $3$-manifold $M$, let $\oo$ be the
orbit space of the lifted flow $\wwp$ in the universal
cover $\mt$. This orbit space is always homeomorphic 
to the plane $\rrrr^2$ \cite{Fe1,Fe-Mo}.
The orbit space has one dimensional, possibly singular
foliations $\oo^s, \oo^u$ which are the projections to $\oo$ of
the two dimensional stable and unstable foliations in $\mt$. 
In \cite{Fe6} we produce an ideal boundary $\partial \oo$ and
ideal compactification $\oo \cup \partial \oo$ using only
the one dimensional foliations $\oo^s, \oo^u$. The ideal
boundary is always homeomorphic to a circle and the compactification
is homeomorphic to a closed disk. In addition the fundamental
group $\pi_1(M)$ naturally acts on all these objects.
The results of this article lead to Peano curves associated with
Anosov flows:

\begin{theorem} \label{groupin}
Let $M$ be a hyperbolic $3$-manifold admitting an Anosov flow
$\Phi$. Then there is a group invariant Peano curve
$\eta: \partial \oo \to {\mathbb S}^2_{\infty}$ where
$\partial \oo$ is boundary of the orbit space
$\oo$ of a quasigeodesic
flow in either $M$ or a double cover of $M$.
\end{theorem}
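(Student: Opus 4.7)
The plan is to combine Theorem \ref{qgflow} with the continuous extension property from Theorem \ref{continuous}, and then construct the Peano curve orbit by orbit, exploiting the fact that each orbit of a quasigeodesic flow has a well-defined pair of endpoints on $\si$.

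First, I would apply Theorem \ref{qgflow} to replace $M$, if necessary, by a double cover $\widehat M$ admitting a quasigeodesic pseudo-Anosov flow $\Psi$. Write $\wwp$ for its lift to $\mt \cong \mathbb{H}^3$, $\oo \cong \rrrr^2$ for the orbit space, and $\oo^s, \oo^u$ for the induced one-dimensional (possibly singular) foliations. By \cite{Fe6} the ideal compactification $\oo \cup \partial \oo$ is a closed disk on whose boundary circle $\pi_1(\widehat M)$ acts. Since every orbit $\widetilde\gamma$ of $\wwp$ is a uniform quasigeodesic in $\mathbb{H}^3$, it has two well-defined distinct endpoints $\widetilde\gamma^\pm \in \si$ depending continuously and $\pi_1(\widehat M)$-equivariantly on the orbit; this yields continuous equivariant endpoint maps $e^\pm : \oo \to \si$.

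Next, I would define $\eta$ by extending $e^\pm$ to $\partial \oo$. A point $p \in \partial \oo$ is encoded in \cite{Fe6} by an end of a leaf of $\oo^s$ or $\oo^u$, or more subtly by a nested sequence of complementary regions of such leaves. Theorem \ref{continuous} assigns to each such leaf-end a canonical point of $\si$: namely the common ideal endpoint toward which every orbit of $\wwp$ in the corresponding two-dimensional stable or unstable leaf is asymptotic along that end. I would set $\eta(p)$ equal to that common endpoint; $\pi_1(\widehat M)$-equivariance is then automatic from naturality.

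For continuity I would use that if orbits $\widetilde\gamma_n \in \oo$ converge to $p \in \partial \oo$, then for large $n$ they lie in stable or unstable leaves whose ends converge to the leaf-end representing $p$, so by Theorem \ref{continuous} their endpoints in $\si$ converge to $\eta(p)$. Surjectivity follows from density of the image: lifts of periodic orbits of $\Psi$ give pairs of fixed points in $\si$ of hyperbolic elements of $\pi_1(\widehat M)$, which are dense in $\si$, so continuity and compactness deliver surjectivity and hence a Peano curve. The main obstacle is well-definedness of $\eta$ at those points of $\partial \oo$ where the combinatorial construction of \cite{Fe6} identifies different leaf-ends (for example at non-separated leaves of $\oo^s$, or where a stable and an unstable branch are glued on $\partial \oo$): one must verify that any two leaf-ends identified in the construction of $\partial \oo$ necessarily have the same image in $\si$, and conversely that the geometric extension creates no spurious identifications beyond these. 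This tight coordination between the combinatorial circle at infinity $\partial \oo$ and the geometric boundary $\si$ is where the bulk of the work lies.
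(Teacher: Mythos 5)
There is a genuine gap, and it is exactly the step you set aside at the end. The statement to be proved \emph{is} the well-definedness and continuity of $\eta$ on all of $\partial \oo$ (equivariance and surjectivity are comparatively soft: the image is a nonempty closed $\pi_1$-invariant subset of $\si$, hence everything). Your plan defines $\eta$ leaf-end by leaf-end via ideal endpoints of orbits and then says the coordination between the combinatorial circle $\partial\oo$ and $\si$ ``is where the bulk of the work lies'' — but that work is the theorem, and no mechanism is offered for it. The paper does not verify this by hand either; it imports the machinery of \cite{Fe7}: compactify $\mt$ as $\D = (\oo\cup\partial\oo)\times[-1,1]$, collapse each stable leaf together with its ideal points on the top face, each unstable leaf on the bottom face, and each vertical segment, obtaining a quotient $\R$ of $\partial\D$; the key input is that for a quasigeodesic (pseudo-)Anosov flow the induced action of $\pi_1$ on $\R$ is a \emph{uniform convergence group} action, so Bowditch's theorem identifies $\R$ equivariantly with $\si$, and $\eta:\partial\oo\to\R\cong\si$ is the induced collapsing map. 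Without this (or a direct argument that the decomposition elements of $\si$ coming from the flow form an upper semicontinuous, monotone decomposition with the right quotient), continuity of your pointwise-defined $\eta$ and the absence of spurious identifications are not established.

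Two further mismatches with the actual setup. First, in the $\rrrr$-covered case the quasigeodesic flow is not $\Phi$ but the transverse regulating pseudo-Anosov flow $\varphi$ (on $M$ or on the double cover $M_2$), so Theorem \ref{continuous} — which concerns the foliations $\ls,\lu$ of $\Phi$ — is not the statement you need; you would need the analogous extension/ideal-point property for the singular stable and unstable foliations of $\varphi$, which comes from its quasigeodesity (as in \cite{Fe6,Fe7}), a different citation. Second, in the case where $\ls$ is not transversely orientable your proposal only yields $\pi_1(M_2)$-equivariance ``by naturality,'' whereas the paper's proof does additional work to get invariance under all of $\pi_1(M)$: it shows that a deck transformation $\gamma\notin\pi_1(M_2)$ satisfies $\gamma(\wgs)=\wgu$ and $\gamma(\wgu)=\wgs$, so it preserves the pair of laminations and hence the identifications defining $\R$, making $\eta$ equivariant for the full group. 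That swap argument (and the fact that the identifications on $\partial\oo$ are symmetric in stable/unstable) is missing from your outline.
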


The quasigeodesic flow in Theorem \ref{groupin}
is the original flow $\Phi$
in the case that $\Phi$ is quasigeodesic $-$ that is,
when $\Phi$ is not $\rrrr$-covered; \ or
it is a pseudo-Anosov flow which is transverse to (say) the
stable foliation of either $\Phi$ in $M$ or the lift to
a double cover of $M$ $-$ in the case that $\Phi$ is $\rrrr$-covered.

The first result concerning group invariant Peano curves
was again proved by Cannon and Thurston in the same seminal paper
\cite{Ca-Th}. 
They considered fibers of fibrations, which lift to properly
embedded planes in $\mt$. They proved that the embeddings 
in $\mt$ extend to the
ideal compactifications, and proved that 
the ideal maps are group invariant Peano curves.
There are results concerning Peano curves for 
quasigeodesic pseudo-Anosov flows in hyperbolic $3$-manifolds
\cite{Fe6,Fe7}.
%$-$ here the domain of the Peano curve
%is a ``universal circle" associated with the
%orbit space of the quasigeodesic flow lifted to $\mt$.
There are similar  results for general quasigeodesic
flows in hyperbolic $3$-manifolfds 
\cite{Fra1,Fra2}. 
Finally there are also many results on
Cannon-Thurston maps from 
the Kleinian group setting, see for example \cite{Mj}.

\begin{remark} To prove the Main theorem 
we only use that
$M$ is atoroidal. By the geometrization theorem
of Perelman this implies that $M$ is hyperbolic,
but that is an extremely hard result.
In this article we will invariably state results and do
arguments using atoroidal as an hypothesis.
By atoroidal we mean ``homotopically atoroidal", that is,
there is no $\pi_1$-injective immersion of a torus into $M$.
There is a slight difference between homotopically atoroidal
and ``geometrically atoroidal" $-$ the second condition
means that there is an embedded incompressible torus 
in $M$. If $M$ has an Anosov flow,
then $M$ is irreducible, so the difference is only for
small Seifert fibered spaces. But there are small Seifert
fibered spaces that admit Anosov flows. 
For example let $S$ be a compact hyperbolic $2$-dimensional
orbifold which has a finite cover which is a surface, and
$S$ is a sphere with $3$ cone points. Then $M = T^1 S$
is a small Seifert fibered space and the geodesic flow
of $S$ 
in $M$ is an Anosov flow.
These small Seifert fibered spaces are geometrically
atoroidal but not homotopically atoroidal.
\end{remark}

\subsection{Examples of $\rrrr$-covered and non $\rrrr$-covered
Anosov flows in hyperbolic $3$-manifolds}
\label{examples}

The results of this article obviously 
lead naturally to the question as to 
the existence of both of these classes and how widespread they
are. It turns out that both $\rrrr$-covered and non
$\rrrr$-covered Anosov flows are very common in hyperbolic
$3$-manifolds. We already mentioned the existence of
infinitely many $\rrrr$-covered examples obtained
by some  Dehn surgeries on closed orbits of suspensions
and geodesic flows \cite{Fe1}. On the other hand any Anosov flow
in a non orientable hyperbolic $3$-manifold is 
non $\rrrr$-covered \cite{Fe4} and one can obtain these
by doing Dehn surgery on suspensions of orientation reversing
hyperbolic diffeomorphisms of the torus.
%, or on geodesic
%flows in non orientable hyperbolic surfaces.

The wealth of examples was enormously extended by a
recent article of Bonatti and Iakovouglou \cite{Bo-Ia}.
We just mention some results of \cite{Bo-Ia} that show
that both classes are extremely large.

To obtain examples of non $\rrrr$-covered Anosov flows:
Theorems 2 and 3 of \cite{Bo-Ia} 
state that starting with a non $\rrrr$-covered Anosov flow
and doing appropriate Fried Dehn surgeries one obtains
a non $\rrrr$-covered Anosov flow. To get examples in hyperbolic
$3$-manifolds, start with a non $\rrrr$-covered Anosov flow
in a hyperbolic $3$-manifold. Most Dehn surgeries will produce
hyperbolic $3$-manifolds.  Theorems 5 and 6 of \cite{Bo-Ia}
provide many more examples which are non $\rrrr$-covered. 

To obtain examples of $\rrrr$-covered Anosov flows:
Theorems 4 and 7 of \cite{Bo-Ia} produce a wide class
of $\rrrr$-covered Anosov flows by doing Dehn surgeries
on suspensions. If the surgery coefficients are big
in absolute value, then the resulting manifold is
hyperbolic. Corollary 4.1 and Proposition 4.1 of \cite{Bo-Ia}
produce $\rrrr$-covered examples starting with geodesic
flows.

There are many more results in \cite{Bo-Ia} showing
either the $\rrrr$-covered behavior or the non $\rrrr$-covered
behavior for Anosov foliations in $3$-manifolds.

Many other possible examples can possibly be obtained as follows:
start with an example as constructed in \cite{BBY} using
hyperbolic plugs. This is an extremely general
construction producing an enormous amount
of Anosov flows. Do Dehn surgeries on some periodic orbits
to eliminate all tori. The resulting manifold is hyperbolic.
This specific situation was analyzed by Beguin and Yu
\cite{BY} who produced many examples where the flow
is not $\mathbb{R}$-covered, and large classes
of non orbit equivalent such flows.

\subsection{Some ideas on the proof of the Main Theorem}
\label{some}

We will prove the contrapositive of the statement of the
Main theorem.
As a starting point of the analysis we use the following result:
It was proved in \cite{Fe7} that a topological Anosov flow $\Phi$
(even a pseudo-Anosov flow) in a hyperbolic $3$-manifold
is quasigeodesic if and only if there is a global bound $K$,
so that any set of pairwise freely homotopic periodic
orbits of $\Phi$ has cardinality bounded above by $K$.
The strategy of the proof of our Main 
theorem is to assume that $\Phi$ is not quasigeodesic,
hence this boundedness condition
fails. We then prove that this implies that $\Phi$ is
$\rrrr$-covered.

In very rough terms what we will do is the following:
assume that the flow is not quasigeodesic. Then
we will produce
an appropriate essential lamination in $M$ which 
will imply that the flow is $\rrrr$-covered.

To do that we will employ a standard form for the flow
as follows. Using resuls of Candel \cite{Cand} 
it was proved previously in \cite{BFP} that $\Phi$
is orbitally equivalent to a topological Anosov flow
$\Psi$
so that the stable leaves of $\Psi$ are $C^1$ with a leafwise
Riemannian metric in $M$ making every stable leaf a hyperbolic
surface (meaning Gaussian curvature constant and 
equal to $-1$). In 
addition $\Psi$ can be chosen so that the flow lines of $\Psi$
are geodesics in the stable leaves.
Nothing is claimed about the unstable leaves of $\Psi$.

Replace $\Phi$ by $\Psi$ if necessary and assume that
$\Phi$ has all the properties described above.

By assumption we have arbitrarily big sets of pairwise 
freely homotopic periodic orbits.
We realize the free homotopies between the periodic orbits
so that intersections with stable leaves are geodesics
in the respective leaves. 
These are immersed annuli in $M$.
These immersed annuli are chosen so that they 
are uniquely determined by
the periodic orbits.
%This is best seen in the
%universal cover $\mt$. 
If there are more orbits freely homotopic to a given
orbit then the annuli can be put together forming
bigger annuli.
%These free homotopies are then
%canonical immersions of annuli into $M$. These annuli
%are partitioned into smallest subannuli which are also
%free homotopies. 
In other words one can concatenate
the free homotopies in a nice way.
We take limits of these big annuli when the number of freely
homotopic  orbits goes to infinity.
We can take limits and they are well behaved  in part 
because of the canonical form of the free homotopy annuli:
the stable leaves are hyperbolic surfaces and the intersections
of the annuli with these stable leaves are geodesics
in the leaves. This is one main reason for the setup with
the metric on the stable leaves and the flow lines 
geodesics in these stable leaves.
Even with this setup it is very difficult to understand
the limits. We will explain more in Subsection \ref{more}
once we have defined lozenges and other objects
associated with free homotopies.
The limits of the bigger and bigger
free homotopy annuli, when lifted to the universal cover, are
what we call {\em walls}.
The walls are properly embedded planes
in the universal cover, and
intersect an $\rrrr$ worth set of stable leaves.
The goal is to prove that this is the set of all stable
leaves in $\mt$, and hence $\Phi$ is $\rrrr$-covered.
To do that we study properties of these walls and certain
subsets of the orbit space in $\mt$, called {\em bi-infinite blocks},
 associated
with the walls.
A lot of the analysis is done without assuming that
$M$ is atoroidal. But the atoroidal hypothesis is used
in many places to obtain stronger results.

One goal is to understand the projection of the walls in 
$M$. There are two main cases: either the projection
of a wall does not self intersect transversely or it does.
In the first case one shows, in the case that
$M$ is atoroidal, that the
projection of a wall to $M$ has closure which is a
lamination $-$ in other words any tangent intersection
of two walls 
leads to equality of these walls. This last fact in general
holds only when $M$ is atoroidal.
We study the complementary regions of this
lamination and eventually prove that $\Phi$ is $\rrrr$-covered.
In the second case there are transverse
self intersections when projecting to $M$.
We then study a type of ``convex hull" 
in $\mt$ 
associated with such a wall. 
This convex hull is obtained using the hyperbolic
metric in the stable leaves and using that walls intersect
such leaves in geodesics. There are two subcases: if the
convex hull is all of $\mt$, then the $\rrrr$-covered
property follows. Otherwise we do a very careful
analysis of the boundary
components of the convex hull.
Using this analysis we are then able to produce walls
without transverse self intersections when projected to $M$,
and the proof reduces to the first case.

\begin{remark}
As explained in the beginning of this description
the lack of quasigeodesic
behavior is equivalent to a property involving the number
of freely homotopic periodic orbits, and that is what 
we analyze in this article. In particular we stress 
that the arguments
in this article
never analyze the quasigeodesic property for flow lines
directly. As explained in the description above,
we will analyze freely homotopic periodic orbits
and sets in $M$ or $\mt$ which are generated by them,
which will eventually connect with the $\rrrr$-covered property.
On the other hand, it is true that 
geometric properties of flow lines
are used in several places: for example a global bound of the
Hausdorff distance between corner orbits of lozenges,
also a global bound of the Hausdorff distance between
boundary orbits of bands if the flow does not
have scalloped regions (lozenges and bands to be defined
later).
These properties are true for any Anosov 
flow in a $3$-manifold, and not just in hyperbolic manifolds.
\end{remark}

We remark again that
we will provide more details about the proof of the Main
Theorem in Subsection \ref{more} after we have
defined certain terms used in the proof, such 
as for example perfect fits and
lozenges.

\subsection{Brief description of sections}

The next section describes the main objects and properties
of topological Anosov flows.
The Main Theorem is proved in Sections \ref{limitsoflozenges}
to \ref{completion}. 
Section \ref{limitsoflozenges} analyzes bands and
limits of lozenges and bands.
Section \ref{construction} describes walls associated with
the limits of bands.
The case with a wall with no transverse
self intersections when projected to $M$ is 
worked out in Sections \ref{translates} and
\ref{notransverse}. The case where all walls self intersect
transversely when projected to $M$ is worked out in
Sections \ref{transverse1} to \ref{completion}.
Many examples of walls are described in detail
in Section \ref{examp}.
Theorem \ref{qgflow}, concerning 
 production of quasigeodesic flows,
 is proved in Subsection \ref{moreqg}.
Theorem \ref{continuous}, which proves the continuous
extension for Anosov foliations, 
 is proved in Subsection \ref{continuousextension}.
Theorem \ref{groupin}, which produces group
invariant Peano curves,  is proved in Subsection \ref{groupinvar}.
The final section, Section \ref{future} explains some
open questions in the subject.

\subsection{Acknowledgements}  \
We thank the reviewer of this article who did an outstanding
job, with many corrections and suggestions that substantially helped the
presentation of this article.
We also thank Rafael Potrie for many suggestions to a preliminary
version of this article. We thank Thomas Barthelme who
suggested Theorem \ref{qgflow} to us.

\section{Background on topological Anosov flows}
\label{prelim}

The manifold $M$ is equipped with a Riemannian metric.
For definitions and fundamental properties and results on Anosov flows 
we refer to \cite{An,Ka-Ha}.
In this article we will consider a generalization of
this:

\begin{define}{}{(topological Anosov flow)}
Let $\Phi$ be a flow on a closed
$3$-manifold $M$. 
We say that $\Phi$ is a {\em topological Anosov flow}
if the following 
conditions are
satisfied:

- For each $x \in M$, the flow line $t \to \Phi(x,t)$ is $C^1$,
and it is not a single point.
The tangent vector bundle $D_t \Phi$ is $C^0$ in $M$.

-There are two transverse topological foliations
$\ls, \lu$ which are two dimensional, with leaves
saturated by the flow and so that $\ls, \lu$ 
intersect exactly along the flow lines of $\Phi$.
These are called the stable ($\ls$) and the unstable
($\lu$) foliations of the flow $\Phi$.

- In a stable leaf all orbits are forward asymptotic.
In addition there is $\eps > 0$ so that any two distinct
orbits in a stable leaf, they eventually in the past
always stay more than $\eps$ apart from each other (easiest 
seen in the universal cover). 
Similarly for the unstable foliation.
%n after some time in the
%past they separate by more than $\eps$ an,
%but no two distinct orbits are backwards asymptotic.
%Conversely in
%an unstable leaf all orbits are backwards asymptotic and
%no two distinct orbits are forward asymptotic.
\end{define}

%The condition on backaward orbits of stable leaves is 
%necessary: for example start with 
%an Anosov flow (which has the strong stable and unstable
%bundles which uniquely integrate to foliations). Blow up
%one unstable leaf into a topological product, still keeping
%the blownup unstable foliation transverse to the stable
%foliation. The intersection is a flow which satifies all the
%conditions, except that in a stable leaf some distinct
%orbits are backwards asymptotic, because of the blow up.

%If the flow is transitive, meaning there is a dense orbit,
%then by work of Inaba and Matsumoto \cite{In-Ma}, and Paternain,
%\cite{Pat} it follows that $\Phi$ admits strong stable
%and untable one dimensional foliations. In general these
%foliations do not have $C^1$ leaves, so there is no strong
%stable or strong unstable bundle.

%A fundamental remark is that the ambient manifold supporting
%a topological Anosov Anosov flow is necessarily irreducible $-$ this
%is because the universal cover is homeomorphic to
%$\rrrr^3$ \cite{No}.

We remark that a priori there are no stable and unstable
one dimensional 
bundles associated with a topological Anosov flow \cite{BFP}.
Hence, in general there are no one dimensional strong stable
and strong unstable foliations. We only have the two dimensional
stable and unstable 
foliations $\ls, \lu$.

As mentioned in the introduction, if $M$ is atoroidal,
then $\Phi$ is orbitally equivalent to an Anosov flow
\cite{Sha}. For this reason we will abuse notation and
many times 
refer to our flows as Anosov flows. The arguments in this
article only use the topological Anosov flow behavior.

\vskip .05in
\noindent
\underline {Notation/definition:} \ 
We denote by $\pi: \mt \rightarrow M$ the universal
covering of $M$, and by $\pi_1(M)$ the fundamental group
of $M$, considered as the group of deck transformations
on $\mt$.
The lifted 
foliations to $\mt$ are
denoted by $\wls, \wlu$.
If $x \in M$ let $\ls(x)$ denote the leaf of $\ls$ containing
$x$.  Similarly one defines $\lu(x)$
and in the
universal cover $\wls(x), \wlu(x)$.
If $\alpha$ is an orbit of $\Phi$, similarly define
$\ls(\alpha)$, 
$\lu(\alpha)$, etc...
Let also $\wwp$ be the lifted flow to $\mt$.

\vskip .1in
The deck transformations will also be called covering
translations.

We review the results about the topology of
$\wls, \wlu$ that we will need.
We refer to \cite{Fe4,Fe6} for more details and proofs.
We note that in \cite{Fe4} the definitions and results
are stated for the more general situation of pseudo-Anosov
flows, which imply the statements here.
In a pseudo-Anosov flow one allows finitely many
$p$-prong orbits, with $p \geq 3$.
The orbit space is denoted by $\oo$ which is the quotient
space $\mt/\wwp$. 
The orbit space of $\wwp$ in
$\mt$ is homeomorphic to the plane \cite{Fe1,Fe4}.
There is an induced action of $\pi_1(M)$ on $\oo$. Let

$$\Theta: \mt \rightarrow \oo \cong \rrrr^2$$

\noindent
be the projection map. 
It is naturally $\pi_1(M)$ equivariant.
If $L$ is a 
leaf of $\wls$ or $\wlu$,
then $\Theta(L) \subset \oo$ is homeomorphic
to $\rrrr$.
%In particular every orbit in $L$ disconnects $L$.
The foliations $\wls, \wlu$ induce $\pi_1(M)$ invariant
$1$-dimensional foliations
$\oos, \oou$ in $\oo$. 
%Its leaves are the $\Theta(L)$'s as above.
%If $L$ is a leaf of $\wls$ or $\wlu$, then 
%a {\em sector} is a component of $\mt - L$.
%Similarly for $\oos, \oou$. 
%If $B$ is any subset of $\oo$, we denote by $B \times \rrrr$
%the set $\Theta^{-1}(B)$.
%The same notation $B \times \rrrr$ will be used for
%any subset $B$ of $\mt$: it will just be the union
%of all flow lines through points of $B$.
If $x$ is a point of $\oo$, then $\oos(x)$ (resp. $\oou(x)$)
is the leaf of $\oos$ (resp. $\oou$) containing $x$.

A ray in a leaf $\ell$ of $\oos$ or $\oou$ is a connected
component of $\ell \setminus \{ x \}$ where $x$ is a point
in $\ell$. 

\begin{define}
Let $L$ be a leaf of $\wls$ or $\wlu$. 
If $\zeta$ is a ray in $\Theta(L)$ then $A = \Theta^{-1}(\zeta)$
is called a half leaf of $L$.
If $\zeta$ is an open segment in $\Theta(L)$ 
with compact closure,
we define a flow band of $L$ (denoted by $L_1$)
as $L_1 := \Theta^{-1}(\zeta)$.
\end{define}

%\vskip .1in
We abuse convention and call a leaf $L$ of $\wls$ or $\wlu$
{\em periodic} if there is a non trivial covering translation
$g$ of $\mt$ with $g(L) = L$. Equivalently $\pi(L)$ contains
a periodic orbit of $\Phi$. In the same way an orbit
$\gamma$ of $\wwp$ is {\em periodic} if $\pi(\gamma)$ is a 
periodic orbit of $\Phi$. Observe that in general the isotropy group
of an element $\alpha$ of $\oo$ is either trivial, or 
an infinite cyclic subgroup of $\pi_1(M)$.

If $F \in \wls$ and $G \in \wlu$ 
then $F$ and $G$ intersect in at most one
orbit. 
Also suppose that a leaf $F \in \wls$ intersects two leaves
$G, H \in \wlu$ and so does $L \in \wls$.
Then $F, L, G, H$ form a {\em rectangle} in $\mt$.
%see \cite{Fe4} pages 637-638.

The following two generalizations of rectangles will
be extremely important in this article:
\ 1) perfect fits $=$ 
the projection
in the orbit space is a  properly embedded rectangle
with one corner removed;
and \ 2) lozenges $=$ the projection in $\oo$
is a rectangle with two opposite corners removed.

\begin{figure}[ht]
\begin{center}
\includegraphics[scale=0.90]{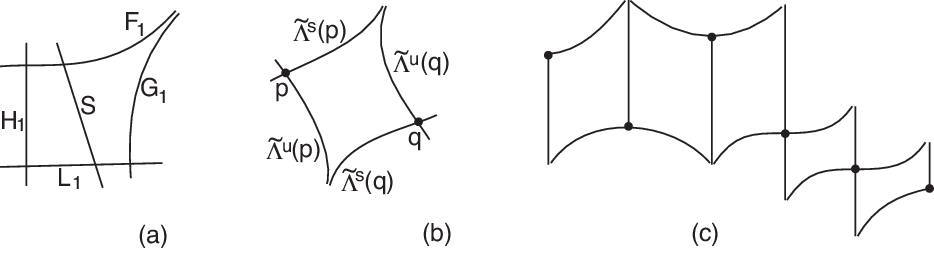}
\begin{picture}(0,0)
%
%\put(-185,65){$z$}
%\put(-195,84){$x$}
%
%\put(-54,69){$y$}
%\put(-34,109){$\pi(c_1)$}
%\put(-115,260){$c$}
%\put(-249,76){$\pi(s)$}
%\put(155, 106){$c$}
\end{picture}
\end{center}
%\vspace{-0.5cm}
\vspace{0.0cm}
\caption{a. A perfect fit. The half leaves $F_1, G_1$ of $F, G$
depict the perfect fit between $F, G$. Here $F_1, G_1$ do 
not intersect, but any $S$ intersecting $L$ between 
$G$ and $H$ intersects $F_1$;,
b.  A lozenge. $p, q$ are the corners of the lozenge;
c. A chain of lozenges.}
\label{figure1}
\end{figure}

\begin{define}{(\cite{Fe2,Fe4})}{}
Perfect fits -
Two leaves $F \in \wls$ and $G \in \wlu$, form
a perfect fit if $F \cap G = \emptyset$ and there
are half leaves $F_1$ of $F$ and $G_1$ of $G$ 
and also flow bands $L_1 \subset L \in \wls$ and
$H_1 \subset H \in \wlu$,
so that 
%$F_0$ is regular on the side containing $L$,
%$G_0$ is regular on the side containing $H$ and:
%
%$$ \overline L_1 \cap \overline G_1 = \partial L_1 \cap \partial G_1,
%\ \ \overline L_1 \cap \overline H_1 = \partial L_1 \cap \partial H_1,
%\ \ \overline H_1 \cap \overline F_1 = 
%\partial H_1 \cap \partial F_1,$$ 
the set 

$$\overline F_1 \cup \overline H_1 \cup 
\overline L_1 \cup \overline G_1$$

\noindent
separates $M$,
and its projection to $\oo$ forms a rectangle $R$ with a corner removed:
The joint structure of $\oos, \oou$ in 
$R$ is that of
a rectangle without one corner (which in $\mt$ corresponds
to a ``missing" orbit).
The removed corner in $R$ corresponds to the perfect
fit of $F$ and $G$ which do not intersect.
We also use the terminology perfect fits in $\oo$, and say
that $\Theta(F), \Theta(G)$ make a perfect fit.
%The missing corner 
%corresponds to the perfect of $F$ and $G$.
%Specifically, a stable leaf intersects $H_1$ if and only
%if it intersects $G_1$ and similarly for unstable
%leaves intersecting $F_1, L_1$.
% which 
%do not intersect.
%`$${\rm with} \ \ \  \overline L_1 \cap \overline G_1 \ \not = \ \emptyset, 
%\ \ \overline L_1 \cap \overline H_1  \ \not = \ \emptyset \ \ {\rm and}
%\ \ \overline H_1 \cap \overline F_1 \ \not = \ \emptyset.$$
%\noindent
%Furthermore
%$$\forall \ S \in \wlu, \ \ \ \   
%S \cap L_1  \not = \emptyset \ \Rightarrow
%S \cap F_1  \not = \emptyset \ \ \ (1) $$
%$${\rm and} \ \ \forall \ E \in \wls, \ \ \ \ E \cap H_1  
%\not = \emptyset \ \Rightarrow
%E \cap G_1  \not = \emptyset \ \ \ (2).$$
\end{define}

We refer to Figure \ref{figure1} (a) for perfect fits.
%We will also denote by rectangles, perfect fits, lozenges
%and product regions the projection of these regions to
%$\oo \cong \rrrr^2$. 
%Implications $(1), (2)$ force
%equivalences (that is 
%$S \cap L_1  \not = \emptyset \ \Leftrightarrow
%S \cap F_1  \not = \emptyset$ and the same for (2)).
%The set 
%$\overline F_1 \cup \overline H_1 \cup 
%\overline L_1 \cup \overline G_1$
%separates $\mt$.
%Let $A$ be the complementary region which does
%not contain $F - F_1$ in its closure.
%An important fact is that there are 
%singularities of $\wwp$
%in $A$.
%Perfect fits produce ``ideal" rectangles, in the sense that
%even though $F$ and $G$ do not intersect, there is
%a product structure (of $\wls$ and $\wlu$) in the 
%interior of $A$.
There is a product structure in the interior of $R$: there are
two stable boundary sides and two unstable boundary sides 
in $R$. An unstable
leaf intersects one stable boundary side (not in the corner) if
and only if it intersects the other stable boundary side
(not in the corner).
%We also say that the leaves $F, G$ {\em asymptotic}.

%\begin{define}{\cite{Fe2}}{}
%Given $p \in \mt$ (or $p \in \oo$), and a half leaf $H$ 
%of $\wu(p)$ defined by $\wwp_{\rrrr}(p)$, let 
%
%$$\uo(H) \ = \ \{ F \in \hs  \ | \ F \cap 
%H \not = \emptyset \} \ \subset \ \hs.$$
%
%\noindent
%Notice that $\ws(p) \not \in \uo(H)$.
%Let  also
%
%$${\cal L}^u(H)  \ = 
%\ \bigcup  \ \{ \ p \ \in \mt \ \ 
%| \ \ p \ \in \ F \ \in \ \uo(H) \ \} \ \ \subset \ \ \mt.$$
%
%\noindent
%Then ${\cal L}^u(H) \subset \mt$
%and $\ws(p) \subset \partial {\cal L}^u(H)$.
%Similarly define
%$\so(L),
%{\cal L}^s(L)$ for a stable half leaf $L$.
%\end{define}

\begin{define}{(\cite{Fe4})}{}
Lozenges - A lozenge in $\oo$ is a region of $\oo$ whose closure
is homeomorphic to a rectangle with two corners removed.
More specifically two points $x, y$ define the corners 
of a lonzenge if there are 
rays $a, b$ of $\oos(x), \oou(x)$ defined by $x$,
and $c, d$ rays of $\oos(y), \oou(y)$ defined by $y$, 
so that $a$ and $d$ form a perfect fit, and so do $b$ and $c$.
The rays above are the sides of the lozenge,
and are not contained in the lozenge, but are in the
boundary of the lozenge.
We use the same terminology of lozenges in $\mt$, these
are just the pullback by $\Theta^{-1}$ of lozenges in $\oo$,
specifically: 
there are half leaves $A, B$ of
$\wls(p), \wlu(p)$ defined by $p$
and  $C, D$ half leaves of $\wls(q), \wlu(q)$ so
that $A$ and $D$ form a perfect fit and so do
$B$ and $C$. The sides of the lozenge are $A, B, C, D$.
The sides are not contained in the lozenge, but
are in the boundary of the lozenge.
%There may be singularities in the boundary of the lozenge.
See Figure \ref{figure1} (b).
\end{define}

The previous definition is Definition 4.4 of \cite{Fe4}.
Two lozenges are {\em adjacent} if they share a corner and
there is a stable or unstable leaf
intersecting both of the lozenges, see Figure \ref{figure1} (c).
Therefore they share a side.
A {\em chain of lozenges} is a collection $\{ \mathcal C _i \}, 
i \in I$, of lozenges
where $I$ is an interval (finite or not) in ${\bf Z}$,
so that if $i, i+1 \in I$, then 
${\mathcal C}_i$ and ${\mathcal C}_{i+1}$ share
a corner, see Figure \ref{figure1}, c.
Consecutive lozenges may be adjacent or not.
The chain is finite if $I$ is finite.
A chain $\mathcal C$ is a {\em string of lozenges} if 
no two consecutive lozenges are adjacent. The chain depicted
in (c) of Figure \ref{figure1} is not a string of lozenges.
But the subchain of the right $3$ lozenges in this figure is
a string of lozenges.

\vskip .05in
\noindent
{\bf {Important convention}} $-$ 
We also use the terms half leaves, 
perfect fits, lozenges and rectangles for the projections of these
objects in $\mt$ to the orbit space $\oo$.

\begin{define}{}{} Product region $-$ 
Suppose $A$ is a flow band in a leaf of $\wls$.
Suppose that for each orbit $\gamma$ of $\wwp$ in $A$ there is a
half leaf $B_{\gamma}$ of $\wlu(\gamma)$ with boundary
$\gamma$ so that: 
for any two orbits $\gamma, \beta$ in $A$ then
a stable leaf intersects $B_{\beta}$ if and only if 
it intersects $B_{\gamma}$.
%for any 
%\zeta \subset E \in \wls$ is a (possibly infinite) 
%strong stable segment 
%so that for each $p \in \zeta$ there is a half leaf $H_p$
%of $\wu(p)$ defined by $\wwp_{\rrrr}(p)$ so that
%
%$$ \forall \ p, q \in \zeta, \ \ \uo(H_p) = \uo(H_q).
%\ \ \  In \ that \ case \ let \ {\mathcal P} = \bigcup _{p \in \zeta} 
%H_p.$$
%
%\noindent
%Then ${\mathcal P} \subset \mt$ is called an unstable product
%region with base segment $\zeta$. 
%The base segment
%is not uniquely determined by ${\mathcal P}$.
%Similarly define negative unstable product regions 
This defines a stable product region which is the union
of the $B_{\gamma}$.
Similarly define unstable product regions.
\label{defsta}
\end{define}

It follows that $B_{\gamma}$ varies continuously with $\gamma$
%The main property of product regions is the following, 
(\cite{Fe4} page 641).
%for any $F \in \wls$, $G \in \wlu$ so that 
%$(i) \ F \cap A  \
%\not = \ \emptyset \ \ {\rm  and} \ \ 
% (ii) \ G \cap A \ \not = \ \emptyset,
%\ \ \ {\rm then} \ \ 
%F \cap G \ \not = \ \emptyset$.
%There are no singular orbits of 
%$\wwp$ in $A$.

\begin{theorem}{(\cite{Fe4}, Theorem 4.10)}{} \label{product}
Let $\Phi$ be a topological Anosov flow. Suppose that there is
a stable or unstable product region. Then $\Phi$ is 
topologically conjugate to a suspension Anosov flow.
\label{prod}
\end{theorem}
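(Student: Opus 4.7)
The strategy is to show that a stable product region must propagate, via the $\pi_1(M)$-action and recurrence, to a global product structure on $\mt$, forcing every leaf of $\wls$ to intersect every leaf of $\wlu$ in exactly one orbit. Once this is in hand, standard cross-section results for Anosov flows on closed $3$-manifolds identify $\Phi$ with a suspension of a hyperbolic toral automorphism.

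First I would translate the hypothesis into the orbit space. Writing $U = \Theta(P)$, the product condition says that in $U$ the one-dimensional foliations $\oos$ and $\oou$ restrict to a genuine product: any two leaves of $\oos$ meeting $U$ cross exactly the same family of $\oou$-leaves, and vice versa. Next I would enlarge $U$ to a maximal open $V \subset \oo$ on which $\oos$ and $\oou$ still restrict to a product; such a $V$ exists by Zorn's lemma applied to the directed collection of product open sets containing $U$, and continuity of the foliations makes $V$ open with the same product structure. Note $V$ is automatically $\pi_1(M)$-invariant only up to individual translates being also maximal product regions, which will be used below.

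The core of the argument is to rule out $V \neq \oo$. If $\partial V$ in $\oo$ were nonempty, it would contain either a full boundary leaf $\ell$ of $\oos$ or, symmetrically, of $\oou$, or a perfect-fit pair on $\partial V$. Using compactness of $M$, I would pick a sequence $g_n \in \pi_1(M)$ translating a fixed $\wwp$-orbit through $P$ back into a compact set; the translates $g_n V$ are again maximal product regions, and a subsequential limit relative to $V$ must either coincide with $V$ along a large portion of $\partial V$ or produce, on $\partial V$, a perfect fit between an $\oos$-leaf and an $\oou$-leaf. Such a perfect fit, combined with the product behavior in the neighboring region $V$, yields a lozenge whose interior meets $V$; but inside a product region every stable and unstable leaf intersect, so no perfect fit corner can exist there. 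This contradicts the assumption $\partial V \neq \emptyset$, so $V = \oo$.

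Once $V = \oo$, every leaf of $\wls$ meets every leaf of $\wlu$ in exactly one $\wwp$-orbit, and $\oo$ itself is a global product of two $\rrrr$-factors parametrizing $\oos$ and $\oou$. A standard cross-section argument (in the spirit of Schwartzman--Fried--Sullivan, or directly by pushing a small transverse disk around using the product structure) then produces a closed surface $\Sigma \subset M$ transverse to $\Phi$ and meeting every orbit, with a hyperbolic return map; hence $\Phi$ is topologically equivalent to a suspension Anosov flow. I expect the main obstacle to be the contradiction step that rules out $\partial V \neq \emptyset$: one has to convert a boundary configuration of the maximal product region, together with $\pi_1(M)$-recurrence, into a genuine perfect fit in $\oo$, and then see that such a perfect fit is incompatible with the product behavior in $V$. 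Making the limit argument precise, and ensuring the limiting configuration is not a degenerate enlargement of $V$ itself, is the delicate point.
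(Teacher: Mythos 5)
Your overall architecture (product region $\Rightarrow$ global product structure on $\mt$ $\Rightarrow$ cross-section $\Rightarrow$ suspension) matches the known proof in the cited reference: there too, the heart of the matter is to show that every leaf of $\wls$ intersects every leaf of $\wlu$, and the final step is Barbot's characterization of suspensions as the product Anosov flows, which it is fair to quote. The gap is in the middle step, and it is a genuine one, not just a matter of polishing the limit argument.

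The problem is that your contradiction argument only ever uses that $V$ is a maximal product \emph{open set}, and the paper explicitly warns that product open sets and product regions are very different objects: every Anosov flow has product open sets, and maximal ones with nonempty boundary are ubiquitous (the interior of a lozenge, or of a chain of adjacent lozenges, is a product open set whose boundary consists precisely of half leaves meeting in perfect fits). Consequently, producing ``a perfect fit on $\partial V$'' cannot by itself be a contradiction; for a non-suspension flow that is exactly what the boundary of a maximal product open set looks like. Your attempted contradiction --- ``a perfect fit yields a lozenge whose interior meets $V$, but no perfect fit corner can exist inside a product region'' --- does not close, because the corner of a lozenge lies on the \emph{boundary} of the lozenge, a lozenge is itself a product open set, and nothing prevents a lozenge from overlapping or even containing a product open set. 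The two leaves forming a perfect fit on $\partial V$ do not both cross $V$ (the perfect fit is precisely where one of them stops meeting $V$), so the product property of $V$ is never violated. To make the argument work you must use the defining feature of a product \emph{region} that your proof never invokes: the unstable pieces $B_\gamma$ are \emph{complete half leaves}, so $\Theta(P)$ is unbounded in the unstable direction and its only stable boundary is the flow band $A$. The actual proof exploits this completeness together with the density of $\pi_1(M)$-translates of $P$ in the leaf spaces to show directly that the interval of stable leaves meeting the $B_\gamma$ is all of the stable leaf space and that no unstable leaf can fail to meet a given stable leaf; the maximality-plus-boundary-analysis route, as you have set it up, cannot distinguish the hypothesis of the theorem from configurations (lozenges) that occur in flows which are not suspensions. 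A secondary, related weakness is the recurrence step: the translates $g_nV$ are indeed maximal product open sets, but there is no reason a subsequential limit of them ``must either coincide with $V$ along a large portion of $\partial V$ or produce a perfect fit on $\partial V$'' --- distinct maximal product open sets can overlap without coinciding, so this dichotomy is not established.
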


We say that 
two orbits $\gamma, \alpha$ of $\wwp$ 
(or the leaves $\wls(\gamma), \wls(\alpha)$)
are connected by a 
chain of lozenges $\{ {\mathcal C}_i \}, 1 \leq i \leq n$,
if $\gamma$ is a corner of ${\mathcal C}_1$ and $\alpha$ 
is a corner of ${\mathcal C}_n$.
If a lozenge ${\mathcal C}$ has corners $\beta, \gamma$ and
if $g$ in $\pi_1(M) - \{ id \}$ satisfies $g(\beta) = \beta$,
$g(\gamma) = \gamma$ (and so $g({\mathcal C}) = {\mathcal C}$),
then $\pi(\beta), \pi(\gamma)$ are closed orbits
of $\Phi$ which are freely homotopic to the inverse of each
other.

\begin{theorem}{(\cite{Fe4}, Theorem 4.8)}{} \label{freelyhomotopic}
Let $\Phi$ be a topological Anosov flow in $M$ closed and let 
$F_0 \not = F_1 \in \wls$.
Suppose that there is a non trivial covering translation $g$
with $g(F_i) = F_i, i = 0,1$.
Let $\alpha_i, i = 0,1$ be the periodic orbits of $\wwp$
in $F_i$ so that $g(\alpha_i) = \alpha_i$.
Then $\alpha_0$ and $\alpha_1$ are connected
by a finite chain of lozenges 
$\{ {\mathcal C}_i \}, 1 \leq i \leq n$ and $g$
leaves invariant each lozenge 
${\mathcal C}_i$ as well as their corners.
\label{chain}
\end{theorem}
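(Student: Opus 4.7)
The plan is to realize the chain of lozenges by following a $g$-invariant arc in the orbit space $\oo \cong \rrrr^2$ from $\alpha_0$ to $\alpha_1$ and cataloguing the intermediate $g$-fixed orbits along the way. The orbits $\alpha_0, \alpha_1$ are distinct fixed points of the induced action of $g$ on $\oo$, lying on distinct $\oos$-leaves $F_0, F_1$; that each $\alpha_i$ is the unique $g$-fixed orbit in $F_i$ follows because all orbits in $F_i$ are forward asymptotic and $g$ preserves the flow, so a second $g$-fixed orbit in $F_i$ would force two asymptotic periodic orbits to be translated by the same flow amount, contradicting the exponential contraction.

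First I would produce a $g$-invariant arc $\sigma$ in $\oo$ from $\alpha_0$ to $\alpha_1$. The periodic orbits $\pi(\alpha_0)$ and $\pi(\alpha_1)$ are freely homotopic in $M$ since both represent the conjugacy class of $g$. Realize this free homotopy as an immersed annulus in $M$, lift it to a $g$-equivariant strip in $\mt$ bounded by $\alpha_0$ and $\alpha_1$, and project to $\oo$ to obtain $\sigma$. As an alternative, a Brouwer-translation-style symmetrization starting from any arc $\tau$ from $\alpha_0$ to $\alpha_1$ and using $\tau \cup g(\tau)$ on the plane $\oo$ would also produce such a $\sigma$.

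Next, along $\sigma$ record the orbits $\beta$ with $g(\beta) = \beta$, listed in the natural order as $\alpha_0 = \beta_0, \beta_1, \ldots, \beta_n = \alpha_1$. For each consecutive pair $\beta_i, \beta_{i+1}$ I would build a lozenge with these as corners. Since no $g$-fixed orbit on $\sigma$ lies strictly between them, an analysis of how $g$ permutes the four stable and unstable half-leaves emanating from $\beta_i$ and $\beta_{i+1}$ that frame the region $\sigma$ traverses should force one of three outcomes: either $\beta_i$ and $\beta_{i+1}$ share a leaf of $\wls$ or $\wlu$, excluded by the same uniqueness argument applied to that leaf; or a stable (respectively unstable) product region arises between them, in which case Theorem \ref{product} puts $\Phi$ in suspension form and the chain is exhibited directly from the product structure; or a pair of perfect fits emerges between the appropriate half-leaves at $\beta_i$ and $\beta_{i+1}$, yielding the desired lozenge $\mathcal{C}_i$. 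In the last case $g$-invariance of $\mathcal{C}_i$ and its corners is automatic because $g$ fixes $\beta_i, \beta_{i+1}$ and preserves $\wls, \wlu$.

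The finiteness of the chain, which I expect to be the principal obstacle, comes from a compactness argument in $M$. Every corner $\beta_i$ projects to a periodic orbit of $\Phi$ representing the conjugacy class of $g$; the lifted strip realizing the free homotopy has bounded width in $\mt$, so the projected periodic orbits all lie in a fixed compact subset of $M$ and have length bounded by a constant depending only on the annulus. Since closed orbits of a topological Anosov flow cannot accumulate on any compact set at bounded period, only finitely many such $\beta_i$ can occur, giving the finite bound on $n$ and completing the argument.
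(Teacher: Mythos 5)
This theorem is not proved in the paper at all; it is quoted from \cite{Fe4} (Theorem 4.8), where the chain is built inductively from $\alpha_0$ toward $F_1$ using the action of $g$ on the leaf spaces, perfect fits and non-separated leaves, and the product-region theorem, with finiteness coming from the fact that the construction progresses monotonically from $F_0$ to $F_1$. Measured against that, your proposal has its entire weight resting on a step you only assert: the claim that for two ``consecutive'' $g$-fixed orbits the configuration of the four half-leaves ``should force'' either a shared leaf, a product region, or a double perfect fit is precisely the content of the theorem, and no argument is offered for it. The trichotomy is also not obviously exhaustive: branching of the leaf space (non-separated leaves) is exactly the phenomenon that makes this analysis delicate, and it is what produces situations where two fixed orbits are joined only by a longer chain whose intermediate corners do not lie on any pre-chosen path. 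Relatedly, the reduction to fixed points lying on a $g$-invariant arc $\sigma$ is itself unjustified: the projection to $\oo$ of the lifted annulus is a $g$-invariant compact connected set, not an arc, the ``Brouwer symmetrization'' of $\tau\cup g(\tau)$ does not obviously yield an embedded invariant arc, and even granting $\sigma$, consecutive fixed points on $\sigma$ need not be the corners of a single lozenge, so the induction you envisage does not get off the ground.

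The finiteness argument is also unsound. You are in effect trying to bound the total number of $g$-fixed orbits, but no such bound exists: for skewed $\rrrr$-covered Anosov flows (used throughout this paper, e.g.\ the bi-infinite strings of lozenges and walls of Sections \ref{construction} and \ref{examp}) a single non-trivial $g$ fixes the corners of a bi-infinite chain, hence infinitely many orbits of $\wwp$; the theorem only claims that any two prescribed fixed orbits are joined by a finite subchain. Moreover the period bound you invoke fails: pairwise freely homotopic periodic orbits of an Anosov flow do not have uniformly bounded periods (expansivity forces the periods in an infinite free homotopy class to go to infinity), ``lying in a fixed compact subset of $M$'' is vacuous since $M$ is compact, and bounded width of the lifted strip controls neither the location of the intermediate corners in $\mt$ (corners of successive lozenges can drift linearly, cf.\ Theorem \ref{lozengebound}) nor the lengths of the corresponding closed orbits. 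The finiteness you need is not a global statement about $g$; it has to come out of the inductive construction of the chain between the two given leaves $F_0$ and $F_1$, which is exactly the part of the argument your proposal leaves blank.
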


%A chain from $\alpha_0$ to $\alpha_1$ is called
%{\em minimal} if all lozenges in the chain are distinct.
%Exactly as proved in \cite{Fe2} for Anosov flows,
%it follows that there
%is a unique minimal chain from $\alpha_0$ to $\alpha_1$
%and also all other chains have to contain all the lozenges
%in the minimal chain.

This means that each free homotopy of closed orbits generates a chain of lozenges
preserved by a non trivial element $g$ of $\pi_1(M)$.

The Main Theorem concerns the leaf spaces of $\wls, \wlu$. 
If this leaf space is Hausdorff, then it is homeomorphic
to $\rrrr$ \cite{Fe1}.
Suppose that the leaf space of $\wls$ (or $\wlu$) is not Hausdorff.
Two points of this space are non separated if they
do not have disjoint neighborhoods in their respective leaf
space.
For the Main Theorem
it is very important to understand the structure of non
separated leaves.
Here is one example: suppose that $C, D$ are adjacent lozenges
intersecting a common stable leaf.
A possible depiction of this is in Figure \ref{figure1} (c), the
first two lozenges on the left part of the
figure.
The lozenges share a corner $\gamma$
and the stable leaf $\wls(\gamma)$ is entirely contained
in the boundary of $C \cup D$. 
There are two other stable leaves $S, E$ which have half
leaves in the boundary of $C$ and $D$ respectively
By the definition of lozenges, it follows that $S, E$ are
distinct, but they are non separated from each other.

As it turns out the structure of non separated leaves in general
is very similar to the above, and it exhibits a rigid behavior.
The main properties concerning non Hausdorff behavior in the leaf spaces
of $\wls, \wlu$ are gathered in the following statement:

\begin{theorem}{(\cite{Fe4}, Theorem 4.9)}{}\label{nonsep}
Let $\Phi$ be a topological Anosov flow in $M^3$. 
Suppose that $F \not = L$
are not separated in the leaf space of $\wls$.
%Let $F_0, L_0$ be the line leaves of $F, L$ which
%are not separated from each other.
Let $V_0$ be the component of $\mt - F$ 
%bounded by $F_0$ and
containing $L$.
The following happens:
\begin{itemize}
\item{$F$ and $L$  are periodic.}
\item{Let $\alpha$ be the periodic orbit in $F$ and
$\beta$ the periodic orbit in $L$.
$H_0$ be the component of $(\wlu(\alpha) - \alpha)$ 
contained in $V_0$.}
\item{Let $g$ be a non trivial covering translation
with $g(H_0) = H_0$,
and $g$ leaves
invariant the components of $(F - \alpha)$.
Then $g(L) = L$.}
\item{This produces periodic orbits $\pi(\alpha), \pi(\beta)$
of $\Phi$ which are freely
homotopic in $M$.}
\item{Theorem \ref{chain} then implies that $\alpha$ and $\beta$
are connected by
a finite chain of lozenges 
$\{ A_i \}, 1 \leq i \leq n$,
%all contained in ${\mathcal L}^u(H_0)$.
where consecutive lozenges are adjacent and all intersect
a common stable leaf $E$.}
\item{There is an even number of lozenges 
in the chain, see
Figure \ref{figure2}.}
\item{In addition 
let ${\mathcal B}_{F,L}$ be the set of leaves of $\wls$ non separated
from $F$ and $L$.
Put an order in ${\mathcal B}_{F,L}$ as follows:
The set of orbits of $E$ contained
in the union of the lozenges and their sides is an interval.
Put an order in this interval.
If $R_1, R_2 \in {\mathcal B}_{F,L}$ let $\alpha_1, \alpha_2$
be the respective periodic orbits in $R_1, R_2$. Then
$\wlu(\alpha_i) \cap E \not = \emptyset$ and let 
$a_i = \wlu(\alpha_i) \cap C$.
We define $R_1 < R_2$ in ${\mathcal B}_{F,L}$ 
if $a_1$ precedes
$a_2$ in the order of the set of orbits of 
$C$.}
\item{Then ${\mathcal B}_{F,L}$ with this order,
is either order isomorphic to $\{ 1, ..., n \}$ for some
$n \in {\bf N}$; or ${\mathcal B}_{F,L}$ is order
isomorphic to the integers ${\bf Z}$.}
\item{In addition if there are $Z, S \in \wls$ so that
${\mathcal B}_{Z, S}$ is infinite, then there is 
an incompressible torus in $M$ transverse to 
$\Phi$. In particular $M$ cannot be atoroidal.}
\item{Also if there are $F, L$ distinct leaves of 
$\wls$  non separated from each other as above, then there are
periodic orbits $\tau, \eta$ of $\Phi$ which
are freely homotopic to the inverse of each other.}
\item{Finally up to covering translations,
there are only finitely many non Hausdorff
points in the leaf space of $\wls$.}
\end{itemize}
The same happens for $\lu$.
\label{theb}
\end{theorem}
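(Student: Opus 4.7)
The plan is to peel the statement into a structural core $-$ that $F$ and $L$ are periodic and connected by a chain of adjacent lozenges $-$ and then to read off the listed consequences from the rigidity of this skeleton. I would begin by fixing base points $x_F \in F$, $x_L \in L$ and short transversals to $\wls$ through them. Non-separation means that every pair of open neighborhoods of $F$ and $L$ is met by a common leaf, so one can select orbits $\gamma_n$ of $\wwp$ whose stable leaves $F_n = \wls(\gamma_n)$ meet both transversals simultaneously. The decisive analytic move is to pass to limits of the unstable leaves through the $\gamma_n$: either these accumulate onto a single unstable leaf $U$ that forms a perfect fit with both $F$ and $L$ (the target configuration), or one obtains a stable product region spanned by $F$ and $L$. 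The latter possibility is excluded by Theorem \ref{product}, since it would force $\Phi$ to be topologically conjugate to a suspension, which has Hausdorff stable leaf space.

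Once a perfect fit with $U$ is extracted, I would combine the two perfect fits (of $U$ with $F$ and with $L$) into a chain of adjacent lozenges whose common stable leaf $E$ plays the role prescribed in the statement. A deck transformation invariance argument on $U$ and $E$ then produces the covering translation $g$ simultaneously fixing both endpoints, hence both $F$ and $L$ become periodic with period represented by $g$. At this moment Theorem \ref{freelyhomotopic} applies and supplies the chain $\{A_i\}_{1 \le i \le n}$ together with the $g$-invariance of each $A_i$ and of its corners.

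For the parity claim, each adjacency swaps the side of $E$ on which the shared unstable side sits, so a chain running from $F$'s side of $U$ back to $L$'s side while preserving the non-separation type must have even length. For the order on ${\mathcal B}_{F,L}$, I would argue that any further non-separated leaf $R$ is produced by the same mechanism and contributes a periodic orbit $\alpha_R$ whose unstable leaf meets $E$ in a single orbit $a_R$; the linear order of the $a_R$ along $E$ gives the claimed order on ${\mathcal B}_{F,L}$, with $g$ acting by a uniform shift. A finite chain then gives order type $\{1,\dots,n\}$, and an infinite chain gives order type ${\bf Z}$. In the infinite case, $g$ together with a second element of $\pi_1(M)$ translating along the chain generates a ${\bf Z}^2$ subgroup acting freely on the strip-like region formed by the lozenges; the quotient gives an incompressible torus transverse to $\Phi$, ruling out this case when $M$ is atoroidal.

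The freely homotopic pair of orbits (inverse to each other) drops out of the evenness via the action of $g$ on opposite ends of the chain and a second application of Theorem \ref{freelyhomotopic}. Finiteness of non-Hausdorff points modulo $\pi_1(M)$ follows because each such pair is anchored by a periodic orbit of $\Phi$ of length bounded by the translation length of $g$, and for any such uniform bound there are only finitely many $\pi_1(M)$-orbits of periodic orbits of $\Phi$ in the compact manifold $M$. The main obstacle is the very first step: upgrading mere non-separation to an actual perfect fit (equivalently, producing the invariant covering translation $g$). This is where all of the Anosov geometry $-$ shadowing of the $\gamma_n$, contraction along $\wls$, and the use of Theorem \ref{product} to rule out a full product region $-$ is concentrated, and every subsequent item in the statement is a combinatorial consequence of this single configurational fact.
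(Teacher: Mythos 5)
Your skeleton — get periodicity of $F$ and $L$ by excluding product regions via Theorem \ref{product}, then invoke Theorem \ref{chain} for the chain of lozenges, then read off the combinatorics — is the same strategy as the original proof in \cite{Fe3,Fe4} (the present paper only quotes this theorem, it does not reprove it). But two steps have genuine gaps. The dichotomy you place at the core is incomplete: you claim the relevant unstable leaves either accumulate onto a \emph{single} unstable leaf $U$ making perfect fits with both $F$ and $L$, or else yield a stable product region. The first alternative is exactly the case $n=2$ of the conclusion. For any $n\geq 4$ (as in Figure \ref{figure2}) the unstable leaf perfect-fitting $F$ and the one perfect-fitting $L$ are distinct, separated by further stable leaves non-separated from $F$ and $L$, so no single $U$ fits both. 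Your route to the covering translation $g$ implicitly rests on a double-perfect-fit statement in the spirit of Proposition \ref{periodicdouble} applied to the two half leaves of $U$; when $U$ does not exist this collapses, and periodicity must instead be extracted by a recurrence argument (translating a definite-size piece of the gap configuration back into a fixed compact part of $M$ and taking a limit of the resulting deck transformations). That is precisely the hard content of the theorem, and as written your argument establishes it only in the shortest-chain case.

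The finiteness claim at the end is also broken. You bound the length of the anchoring periodic orbit of a given pair by the translation length of \emph{that pair's} $g$ and then invoke finiteness of periodic orbits of bounded length; but each non-separated pair has its own $g$, and nothing in your argument gives a bound on these translation lengths uniform over all pairs — a priori there could be infinitely many inequivalent pairs whose anchoring orbits have lengths tending to infinity. The correct argument is again a compactness/limiting one on the configurations themselves: infinitely many inequivalent branching configurations, translated into a fundamental domain, would converge to a configuration forcing two of them to agree up to deck transformation or producing a forbidden product region. Two smaller assertions of the same nature are left unjustified: the second generator of the ${\bf Z}^2$ acting on the infinite chain, and the exclusion of order type ${\bf N}$ for ${\mathcal B}_{F,L}$.
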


\begin{figure}[ht]
\begin{center}
\includegraphics[scale=1.20]{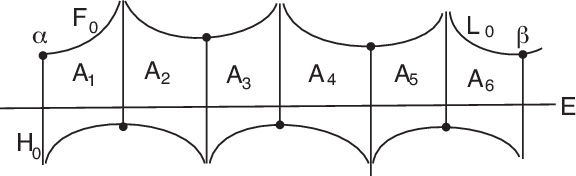}
\begin{picture}(0,0)
%
%\put(-185,65){$z$}
%\put(-195,84){$x$}
%
%\put(-54,69){$y$}
%\put(-34,109){$\pi(c_1)$}
%\put(-115,260){$c$}
%\put(-249,76){$\pi(s)$}
%\put(155, 106){$c$}
\end{picture}
\end{center}
%\vspace{-0.5cm}
\vspace{0.0cm}
\caption{A chain of adjacent lozenges $\{ A_i, 1 \leq i \leq n \}$
 connecting
non separated leaves $F, L$ in $\widetilde \Lambda^s$.
Here $F_0$ is a half leaf of $F$ contained in the boundary
of $A_1$ and similarly $L_0$ is a half leaf of $L$ contained
in the boundary of $A_6$.}
\label{figure2}
\end{figure}

Notice that ${\mathcal B}_{F,L}$ is a discrete set in this order.
Many of the results mentioned in the above theorem are not actually
proved in \cite{Fe4}, but this is one place where they are all
stated together.
For detailed explanations and proofs, see
\cite{Fe4}.

\begin{define}{}{}
A product open set is an open set $Y$ in  $\oo$ so that
the induced stable and unstable foliations in $Y$ satisfy the
following: every stable leaf intersects every unstable leaf.
A product open set $Z$ in $\mt$ is $\Theta^{-1}(Y)$ where
$Y$ is a product open set in $\oo$, in particular $Z$
is flow saturated.
\end{define}

For example, if stable leaves $A, B$ both intersect unstable
leaves $C, D$, then the four of them bound a compact rectangle 
whose interior is 
a product open set. In our explorations we will also
consider the closure of the open product set (in $\mt$ or $\oo$) which is the
open set union its boundary components.

We stress that {\underline {product open sets}}
and {\underline {product regions}} in general are not the same,
and the second class is a subset of the first. Any pseudo-Anosov flow has
product open sets, but only suspensions have product regions.

We finish this section with $3$ more technical results, which
are from \cite{Fe7}.

\begin{proposition} (Proposition 4.6 of \cite{Fe7} - periodic
double perfect fits) \label{periodicdouble}
Suppose that two disjoint half leaves of a leaf $S$ of 
$\wls$ (or $\wlu$) make a perfect fit respectively with
$A, B$ which are leaves of $\wlu$ (or $\wls$). Suppose that 
$A, B$ are in the same complementary component of $\mt - S$.
Then $A, B$ and $S$ are all periodic and left invariant
by the same non trivial covering translation $\zeta$.
\end{proposition}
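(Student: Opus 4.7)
The plan is to produce $\zeta$ by a compactness argument that exploits the rigidity of the double perfect fit configuration.

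First, in the orbit space $\oo$, let $V$ denote the component of $\oo \setminus \Theta(S)$ containing $\Theta(A)$ and $\Theta(B)$. The two perfect fits give two ``corners at infinity'' at opposite ends of $\Theta(S)$: the unstable leaves $\wlu(\alpha)$ through orbits $\alpha \subset S$ on the $V$-side accumulate on $A$ as $\alpha$ exits the $S^+$ end and on $B$ as $\alpha$ exits the $S^-$ end, sweeping out a slab bounded by $\Theta(S)$, pieces of $\Theta(A)$ and $\Theta(B)$, and the two corners at infinity. This is the structural picture to keep in mind throughout.

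Second, I would extract a candidate translation by compactness of $M$. Pick $x \in \gamma \subset S$ and flow forward: the orbit $t \mapsto \wwp_t(x)$ escapes in $\mt$ but has bounded projection in $M$, so there exist $t_n \to \infty$ and $g_n \in \pi_1(M)$ with $g_n(\wwp_{t_n}(x))$ converging in $\mt$ to some $x_\infty$. After passing to a subsequence, the leaves $g_n(S)$, $g_n(A)$, $g_n(B)$ converge in the leaf topology to $S_\infty = \wls(x_\infty)$, $A_\infty \in \wlu$, and $B_\infty \in \wlu$, and the limit triple inherits the double perfect fit structure with $A_\infty, B_\infty$ on the same side of $S_\infty$.

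Third, I would argue that, after further subsequencing, there exist $n > m$ with $g_n(S, A, B) = g_m(S, A, B)$, so $\zeta := g_m^{-1} g_n$ is a nontrivial covering translation fixing each of $S, A, B$. Nontriviality follows because $g_n$ cannot be eventually constant: otherwise $g \cdot \wwp_{t_n}(x)$ would converge while $\wwp_{t_n}(x)$ escapes, a contradiction. The rigidity behind the stabilization is that stable leaves admitting a double perfect fit of the given type are discrete modulo $\pi_1(M)$: given $S$ and the chosen side, the partners $A, B$ are uniquely determined by their end perfect fits, so the triple is parametrized by $S$ alone. Any accumulation of distinct such leaves would either force a stable product region (ruled out by Theorem \ref{product}, since suspension Anosov flows have no perfect fits in their orbit space), or create infinitely many mutually non-separated leaves in $\wls$, whose structure is controlled by Theorem \ref{nonsep} and already comes equipped with a periodic covering translation that can serve as $\zeta$.

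The hard part is the rigidity and stabilization step, particularly verifying that convergence to $(S_\infty, A_\infty, B_\infty)$ forces exact equality infinitely often. The double perfect fit condition is essential here: a single perfect fit can appear as a limit of non-perfect-fit rectangles and so may fail to be discrete, but having perfect fits at \emph{both} ends of $S$ on the \emph{same} side pins down the configuration tightly enough to push the discreteness argument through, via Theorems \ref{product} and \ref{nonsep}.
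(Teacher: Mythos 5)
You should note first that the paper does not actually prove this statement: it is imported verbatim as Proposition 4.6 of \cite{Fe7}, so there is no in-paper argument to compare against, and your sketch has to stand on its own. As written it has two gaps that I think are fatal. The first is the compactness step. Translating the points $\wwp_{t_n}(x)$, $x\in\gamma\subset S$, only guarantees that the leaves $g_n(S)$ meet a fixed compact set, since the translated base points lie on $g_n(S)$; you have no control whatsoever on $d(\wwp_{t_n}(x),A)$ or $d(\wwp_{t_n}(x),B)$, so $g_n(A)$ and $g_n(B)$ may leave every compact set and need not converge to any leaf at all. The fact that a forward flow ray in $S$ stays a bounded Hausdorff distance from a backward ray in a leaf making a perfect fit with $S$ is essentially Theorem 5.7 of \cite{Fe7}, which in that reference comes after, and depends on, precisely the periodicity statement you are trying to prove, so invoking it here would be circular, and you offer no substitute. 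Even granting convergence, the claim that the limit triple ``inherits the double perfect fit structure'' is not automatic: perfect fits are not preserved under limits of leaves in general (the limiting unstable leaf may instead make a perfect fit with a leaf non-separated from $S_\infty$), which is exactly the kind of degeneration the paper has to fight in Proposition \ref{limitloz}.

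The second and more serious gap is the stabilization step, which is asserted rather than proved. The dichotomy ``accumulation of distinct double-perfect-fit leaves forces a stable product region or infinitely many mutually non-separated leaves'' is precisely the rigidity one would have to establish, and no argument is given for it; moreover neither horn delivers what you need. Theorem \ref{product} only applies once a product region has actually been produced, and if infinitely many pairwise non-separated stable leaves did occur (nothing excludes this: the proposition carries no atoroidal hypothesis), Theorem \ref{nonsep} yields a deck transformation fixing \emph{those} leaves, not one fixing your original $S$, $A$, $B$. Uniqueness of the fit partners $A,B$ given $S$ and the side is fine, but discreteness of the family of such leaves $S$ modulo $\pi_1(M)$ is, in the existing literature, a \emph{consequence} of the periodicity statement (via Theorem \ref{freelyhomotopic}, Theorem \ref{nonsep} and the finiteness of non-Hausdorff points up to covering translations), not an available input to it. Finally, even in your favorable scenario where $g_m^{-1}g_n$ fixes $S$, you still must check that it preserves the complementary component $V$ before uniqueness of perfect-fit partners gives $\zeta(A)=A$ and $\zeta(B)=B$. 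So the heart of the proposition, actually manufacturing a nontrivial $\zeta$ preserving all three leaves, is still missing from your outline.
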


\begin{theorem} (Theorem 5.2 of \cite{Fe7}) \label{lozengebound}
Let $\Phi$ be an Anosov flow. There is a constant $a_0$ depending
only on the geometry of $M$ and the flow $\Phi$, so that if
$\alpha, \beta$ are corner orbits of a lozenge $C$ in $\mt$,
then $\alpha, \beta$ are a bounded Hausdorff distance from
each other: $d_H(\alpha, \beta) < a_0$, where $d_H$ is the
Hausdorff distance in $\mt$.
\end{theorem}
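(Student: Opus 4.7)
My plan is to argue by contradiction, using a compactness-and-limit procedure to extract a stable product region out of a hypothetical sequence of lozenges whose corner orbits drift apart in Hausdorff distance, and then to invoke Theorem \ref{product} together with the observation that suspension Anosov flows contain no lozenges. The latter observation is immediate from the structure of the suspension of a hyperbolic toral automorphism: in its universal cover every leaf of $\wls$ meets every leaf of $\wlu$, so there are no perfect fits and a fortiori no lozenges.

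First I would suppose no such $a_0$ works, obtaining a sequence of lozenges $C_n \subset \mt$ with corner orbits $\alpha_n, \beta_n$ and points $x_n \in \alpha_n$ (after possibly swapping the two corners) with $d_{\mt}(x_n, \beta_n) \to \infty$. Using the cocompactness of the $\pi_1(M)$-action, pick covering translations $g_n \in \pi_1(M)$ so that $g_n(x_n)$ lies in a fixed compact fundamental domain, and relabel $(C_n, \alpha_n, \beta_n, x_n)$ as $(g_n C_n, g_n \alpha_n, g_n \beta_n, g_n x_n)$. Passing to a subsequence with $x_n \to x_\infty$, continuity of the topological Anosov structure yields an orbit $\alpha_\infty$ through $x_\infty$ with $\alpha_n \to \alpha_\infty$, $\wls(\alpha_n) \to F := \wls(\alpha_\infty)$ and $\wlu(\alpha_n) \to G := \wlu(\alpha_\infty)$, uniformly on compact sets. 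The two sides of $\partial C_n$ issuing from $\alpha_n$ $-$ half-leaves $A_n \subset \wls(\alpha_n)$ and $B_n \subset \wlu(\alpha_n)$ both pointing toward $\beta_n$ $-$ then converge, after a further subsequence, to full half-leaves $A \subset F$ and $B \subset G$ emanating from $\alpha_\infty$, since $\beta_n$ escapes every compact set.

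The next step is to manufacture a stable product region out of this limit. For each orbit $\gamma \in A$, select $\gamma_n \in A_n$ with $\gamma_n \to \gamma$; the product structure inside $C_n$ supplies a half-unstable leaf $L_{\gamma_n} \subset \wlu(\gamma_n)$ meeting every stable leaf on the arc from $\wls(\alpha_n)$ to $\wls(\beta_n)$ through the interior of $C_n$, and a diagonal extraction produces $L_{\gamma_n} \to L_\gamma$, a half-leaf of $\wlu(\gamma)$ on the $A$-side of $F$. For large $n$ any stable leaf $S$ sufficiently close to $F$ on that side is automatically in this arc, because $\wls(\beta_n)$ has been pushed out in the stable leaf space; the product structure of $C_n$ therefore forces $S$ to meet $L_{\gamma_n}$ and $L_{\delta_n}$ simultaneously for all $\gamma, \delta \in A$, and this property passes to the limit. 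Restricting to a compact flow band inside $A$ then yields a stable product region in the sense of Definition \ref{defsta}; by Theorem \ref{product} the flow $\Phi$ must be topologically conjugate to a suspension Anosov flow, contradicting the very existence of the $C_n$. The main obstacle I foresee is precisely this limiting step: one must verify that $A_n, B_n$ and the $L_{\gamma_n}$ all converge to honest full half-leaves rather than merely compact pieces, that the selection $\gamma \mapsto L_\gamma$ is coherent enough in $\gamma$ and $n$ that the perfect-fit/product structure survives the limit, and that the one-sided neighborhood of $F$ carrying the would-be product region is genuinely open and nonempty once $\beta_n$ has escaped.
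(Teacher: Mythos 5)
The paper itself does not prove this statement: it is imported verbatim from Theorem 5.2 of \cite{Fe7}, so there is no internal argument to compare yours with, and the only thing to assess is your proposal. It has a genuine gap at its central step. You pass from ``$d(x_n,\beta_n)\to\infty$, hence $\beta_n$ escapes every compact set of $\mt$'' to ``$\wls(\beta_n)$ has been pushed out in the stable leaf space, so every stable leaf sufficiently close to $F$ on that side is eventually crossed by $C_n$''. Escaping in $\mt$ does not imply escaping in the leaf space: the corner orbit $\beta_n=\wls(\beta_n)\cap\wlu(\beta_n)$ can go to infinity while $\wls(\beta_n)$ and $\wlu(\beta_n)$ both converge to leaves $S$ and $U$; the intersection escapes precisely when $S\cap U=\emptyset$, for instance when $S$ and $U$ make a perfect fit. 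In that ``thin lozenge'' degeneration the interval of stable leaves spanned by $C_n$ stays inside a fixed compact interval of the leaf space, your limit object is a product open set with a single corner $\alpha_\infty$ rather than anything containing a stable product region, and since the statement is for arbitrary Anosov flows (toroidal $M$ allowed, where infinite families of non-separated leaves and scalloped regions do occur) you cannot dismiss such configurations by an appeal to Lemma \ref{noproductwithoutcorners}, whose hypotheses fail in general. Ruling out exactly this case is the real content of the theorem, and your proposal does not engage with it.

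Even in the favorable ``wide'' case, what you produce is not yet a stable product region in the sense of Definition \ref{defsta}: there the sets $B_\gamma$ are full unstable half leaves and the requirement is that they all meet \emph{exactly} the same set of stable leaves. Your limit gives at best that each $L_\gamma$ meets every leaf of the limiting interval of the $I^s_n$, and even this containment needs an argument: for a fixed stable leaf $Z$ the intersections $L_{\gamma_n}\cap Z$ are orbits in $Z$ that may escape to infinity inside $Z$ as $n\to\infty$, in which case the limiting half leaf $L_\gamma$ misses $Z$ (the same phenomenon by which intersecting leaves degenerate to a perfect fit). Equality of the full crossing sets for different orbits $\gamma$ in the flow band, which is what Theorem \ref{product} requires, does not follow from the convergence you describe. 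The outer skeleton of your argument is sensible (normalize by deck transformations, extract limits, and note that a flow conjugate to a suspension has product orbit space and hence no perfect fits or lozenges), but the two steps above are where the actual difficulty of the theorem lives, and as written they are missing.
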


Let $U$ be a product open set, say in $\oo$. A {\em corner}
of $U$ is a point $x$ in the closure of $U$ so that
there are rays $a$ and $b$ of $\oos(x), \oou(x)$ defined
by $x$ so that $a$ and $b$ are contained in the closure of $U$.
In particular $\bar{U} \cap \oos(x) = a$ and $\bar{U} \cap 
\oou(x) = b$. 
The same terminology of corners is used when the product open
set is a subset of $\mt$.

\begin{remark} 
There are at most finitely many non separated Hausdorff 
points in the leaf space of (say) $\wls$ up to deck
transformations \cite{Fe4}. Let $K$ be an upper bound
for this. Hence if $H$ is a set of pairwise non separated
leaves of $\wls$ which has cardinality bigger than $K$, it
follows that at least two of them are deck translates of
each other. Iterating by this deck translate implies that
there are in fact infinitely many leaves which
of $\wls$ which are pairwise non separated from each
other. This produces a region in the orbit space
which is called a {\em scalloped region}, which in 
particular implies the existence of a torus in $M$ transverse
to the flow which necessarily is incompressible \cite{Fe4}.
In particular the manifold cannot be atoroidal.
We do not define scalloped regions formally but refer
the reader to \cite{Fe4}.
\end{remark}

\begin{lemma} \label{noone}
Suppose that $\Phi$ is an Anosov flow which
is not orbitally equivalent to a suspension Anosov flow,
and does not admit a scalloped region.
Then there is no product open set $Q$ in $\oo$ so that
the boundary of $Q$ has a single corner or no corner.
In particular this holds if $M$ is atoroidal.
\end{lemma}

\begin{proof}
This is the statement of Lemma 4.5 of \cite{Fe7}, except
that the hypothesis there is that the flow $\Phi$ is bounded.
The bounded hypothesis (see Definition 1.1 of \cite{Fe7})
means that the cardinality of any set of pairwise
freely homotopic periodic orbits of $\Phi$
is bounded and $\Phi$ is not orbitally equivalent
to a suspension. The bounded hypothesis definitely does not
follow from the hypothesis of this lemma  $-$ for
example consider an $\rrrr$-covered Anosov flow in $M$
hyperbolic.
However, what is used in the proof of Lemma 4.5 of \cite{Fe7}, is the
weaker property that
the set of leaves non separated from a leaf in $\wls$ or $\wlu$
is finite. The previous remark shows that unless there are 
scalloped regions, such sets are bounded in cardinality,
and this is the case if $M$ is atoroidal.
%If this set is not a singleton, then all leaves
%are periodic, left invariant by the same non trivial
%deck transformation see Theorem \ref{nonsep}, 
%which ties in with the bounded 
%hypothesis.
%In particular the bounded hypothesis
%is implied by the hypothesis of this lemma.
Hence the same proof as the proof of Lemma 4.5 of \cite{Fe7} 
implies the result of this lemma.
%If $M$ is atoroidal then Theorem \ref{nonsep} implies that
%the set of leaves non separated from any $E$ is finite.
\end{proof}

\section{Limits of lozenges and canonical bands}
\label{limitsoflozenges}

The proof of the Main Theorem will involve the analysis of
limits of free homotopies between periodic orbits.
By Theorem \ref{freelyhomotopic}, any free homotopy 
between periodic orbits is represented by a chain of
lozenges.
Therefore we will need to understand limits of
lozenges and limits of chains of lozenges.
This is the goal of this section.
When one takes
limits of the chains of lozenges we will obtain what 
we call blocks and bi-infinite blocks.
Also associated with the lozenges there will be geometric sets
in $\mt$ which are called bands, and the limits of the
geometric sets associated with chains of lozenges will be
called walls.
This analysis will involve
a type of product open sets called ideal
quadrilaterals.

\begin{define} An $(i,j)$ ideal quadrilateral in $\mt$ or $\oo$ 
is a product open set $Q$ satisfying the following conditions.
Here $i + j = 4$. The boundary of $Q$ is made up of $4$ pieces
$S_k, 1 \leq k \leq 4$ which are either leaves, half leaves
or flow bands alternatively in leaves of $\wls, \wlu$ so 
that either $S_k, S_{k+1} (mod 4)$ intersect in a corner
of $Q$ or make a perfect fit. These pieces in the boundary
are called the sides of $Q$.
\end{define}

We stress that this definition is not symmetric in $i, j$. 
Here $i$ is the number of perfect fits originating from $Q$
and $j$ is the number of corners of $Q$.

We will call a side of $Q$ compact if its projection to $\oo$ is
compact.
%Before defining the possibilities, we mention the following
%result:

%\begin{lemma} (Lemma 4.5 of \cite{Fe7}) \label{noproductwithoutcorners}
%Let $\Phi$ be 
%an Anosov flow with an upper bound on the cardinality
%of sets of pairwise non separated stable leaves
%in $\mt$.
%Then there are no product open sets with a single corner
%or no corner.
%\end{lemma}
%
%\begin{remark} By Theorem \ref{nonsep}, it follows that in
%atoroidal manifolds there is an upper bound as above, because
%there are at most finitely many non separated Hausdorff 
%points in the leaf space of (say) $\wls$. Hence if there
%is not an upper bound, there are distinct leaves which are non separated from
%each other and one is a deck translate of the other. This generates
%an infinite sets of leaves pairwise non separated from each
%other, which again is disallowed by Theorem \ref{nonsep} if
%$M$ is atoroidal.
%\end{remark}

Now we describe the possibilities of $(i,j)$ ideal quadrilaterals.
First of all
a $(0,4)$ ideal quadrilateral is 
an actual quadrilateral or a rectangle in the orbit space.
The previous lemma shows that there are no
%In Proposition 4.4 of \cite{Fe7} 
%we proved that there are no 
$(4,0)$ or $(3,1)$ ideal
quadrilaterals. The $(1,3)$ ideal quadrilaterals will be a
crucial ingredient in this article. Whenever two leaves make
a perfect fit one can produce infinitely many such 
$(1,3)$ ideal quadrilaterals,
by direct checking of the definition of perfect fits.
As for $(2,2)$ ideal quadrilaterals there are two possibilities:
the first is 
that there is no full leaf of $\wls$ or $\wlu$ which is a side of
$Q$,
in which case $Q$ is a lozenge. 
Equivalently the corners alternate with the perfect fits.
The other option is
that there is a full leaf, say $S_1$ contained in $\partial Q$.
In this case this full leaf $S_1$
makes a perfect fit with $S_2$ and $S_4$.
This can be obtained as the union of two $(1,3)$ quadrilaterals,
so that two compact sides are glued together.

Notice that there are other types of product open sets which
are not ideal quadrilaterals: one is depicted in 
Figure \ref{figure2},
the union $\cD$ of the lozenges $A_i, 1 \leq i \leq 6$,
 plus the half
leaves contained in the boundary of both $A_i$ and $A_{i+1}$. 
In this example
the  product open set $\cD$ has two unstable half leaves (subsets
of $\wlu(\alpha), \wlu(\beta)$) contained in the boundary. 
The boundary also contains $3$ stable leaves non separated from
each other, one of which makes a perfect fit with $\wlu(\alpha)$,
as well as parts of other $4$ stable leaves: $F_0, L_0$ and two
other stable leaves between $F, L$.

Two ideal quadrilaterals quadrilaterals $Q_1, Q_2$ are said to be
{\em adjacent} if there a side $S_1$ of $Q_1$ and a side 
$S_2$ of $Q_2$ which are the same set in $\mt$.
In particular if $S_1$ is either a flow band or a half leaf,
then $Q_1, Q_2$ share a corner.
%As mentioned in the previous paragraph, two adjacent $(1,3)$ 
%quadrilaterals along a compact side glue up to be a $(2,2)$
%ideal quadrilateral with a full leaf as side, hence not a lozenge.

\subsection{Blocks} \label{ss.blocks}

Lozenges and chains of lozenges have been extensively used
to study pseudo-Anosov flows, see for example
\cite{Fe4,Fe5,Fe7,BM,BFrM,BBM1,BBM2,BY} to cite a few references.
Here we have to consider a more general object which also
allows for $(1,3)$ ideal quadrilaterals.

\begin{figure}[ht]
\begin{center}
\includegraphics[scale=1.00]{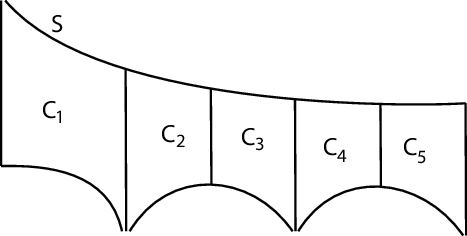}
\begin{picture}(0,0)
\end{picture}
\end{center}
%\vspace{-0.5cm}
\vspace{0.0cm}
\caption{A finite block. Assuming that $S$ is a stable leaf,
then this is a stable adjacent block. It has 5 elements,
$C_1, ...., C_5$. The element $C_1$ is a lozenge, 
and $C_2$ through $C_5$
are $(1,3)$ ideal quadrilaterals.}
\label{figure3}
\end{figure}

\begin{define} (block, adjacent block, odd block)
A block is a finite or infinite collection indexed by
a connected set in ${\mathbb Z}$. The elements of
the collection are either lozenges
or $(1,3)$ ideal quadrilaterals. Consecutive elements
share a corner. If the consecutive elements
have sides whose interiors intersect,
then the respective sides are the same set in $\mt$,
that is the two ideal quadrilaterals are adjacent.
The block is called adjacent if there is a stable or
unstable leaf which intersects all of the elements.
If there is a stable leaf intersecting all elements
then the block is called a stable adjacent block,
and similarly in the unstable case.
The block is called odd if there is a finite, odd number
of elements in it.
In particular a chain of lozenges is a block, where all the elements
are lozenges.
\end{define}

%We will consider adjacent blocks with a finite, odd number
%of elements. 

%\begin{define} (canonical band associated with an 
%adjacent block)
%Let $B$ be an adjacent block with a finite odd number of elements.
%\end{define}

%\begin{define}{(chains)} By a chain 
%or a chain of quadrilaterals we mean a collection
%$C_i, i \in I$, where $I$ is an interval in ${\bf Z}$ which
%can be finite, a ray or all of ${\bf Z}$ satisfying the
%following conditions. Each $C_i$ is either a lozenge
%or a $(1,3)$ ideal quadrilateral. Consecutive elements
%$C_i, C_{i+1}$ either are adjacent or have a common corner.
%By an adjacent chain or a chain of adjacent quadrilaterals
%we mean that all elements intersect a common stable or
%a common unstable leaf.
%\end{define}

We adopt the convention that the chains that form
blocks do not have backtracking, in other words, they
are minimal given the path of blocks they traverse.

Figure \ref{figure3}  shows a simple example of a finite adjacent block.
Figure \ref{figure4} shows a more involved example of a finite
block, which is made up of the union of a stable
adjacent block and an unstable adjacent block, both
of which have more than one element.

\vskip .05in
\noindent
{\bf {Convention $-$}} Throughout the article
we assume the following convention:
Suppose that $\C$ is a block. When we
talk about the set of stable leaves which 
intersect $\C$ we also include a leaf $E$ of $\wls$ 
so that there are consecutive elements $C_i, C_{i+1}$ in $\C$
so that $E$ contains a side of $C_i$ and a side of $C_{i+1}$ 
and $E$ separates $C_i$ from $C_{i+1}$. 
Notice that the elements $C_i, C_{i+1}$ may be adjacent along $E$ or not.
Equivalently we could say that this set is the smallest
connected set in the leaf space of $\wls$ which contains all
leaves intersecting an element of $\C$.
Similarly for the unstable foliation.

\begin{figure}[ht]
\begin{center}
\includegraphics[scale=1.00]{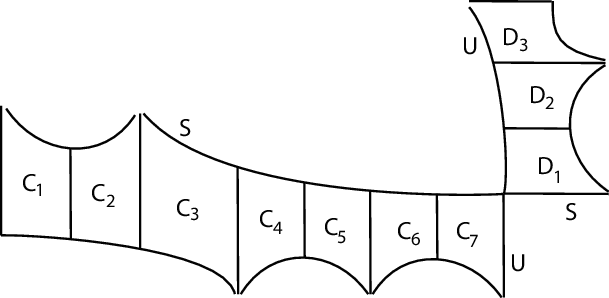}
\begin{picture}(0,0)
\end{picture}
\end{center}
%\vspace{-0.5cm}
\vspace{0.0cm}
\caption{A more involved finite block.
We assume that $S$ is a stable leaf and $U$ is
an unstable leaf. The block is made up of 10 elements
$C_1, ..., C_7$ and $D_1, D_2, D_3$. The $C's$ form
a stable adjacent block with one lozenge and 6 other
elements which are $(1,3)$
ideal quadrilaterals. In this example in this 
stable adjacent block the
lozenge is neither the first, nor the last
element in this block. The $D's$ form an unstable 
adjacent block, with one lozenge $D_3$ and two
$(1,3)$ ideal quadrilaterals, $D_1, D_2$.}
\label{figure4}
\end{figure}

\subsection{Good form of the flow $\Phi$} \label{ss.goodform}

In \cite{BFP} it is proved that $\Phi$ is orbitally equivalent
to a topological Anosov flow $\Psi$ satisfying the following:

\begin{itemize}
\item{The weak stable foliation of $\Psi$ has smooth leaves.}
\item{There is a leafwise Riemannian metric in the tangent
space of the weak stable foliation $\lsp$ of $\Psi$, such that 
the leaves of $\lsp$ are hyperbolic surfaces with
this metric.}
\item{The flow lines of $\Psi$ are geodesics in the 
respective leaves of $\lsp$ with the hyperbolic metrics given
by the metric.}
\end{itemize}

This is Corollary 5.23 of \cite{BFP}. 
Let $L$ be a leaf of the stable foliation of $\widetilde \Psi$.
The flow lines of $\widetilde \Psi$ in $L$ are geodesics
in $L$ and since this is the stable foliation all these flow lines
have the same ideal point $p$ in $S^1(L)$ in the forward  direction.
In the other direction no two flow lines have the same ideal
point, as they are geodesics and $L$ has a hyperbolic metric.
Finally by continuity and compactness, for any $q$ in $S^1(L) 
\setminus \{ p \}$ there is a single orbit of $\widetilde \Psi$
in $L$ with ideal points $p, q$.
%Its construction uses
%ideas of 
%\cite{Cal3}, see also
%\cite{Cal4}. We briefly describe it.
%First using Candel's theorem \cite{Cand}
%put a metric in $M$ which is leafwise hyperbolic.
%In Proposition 5.14 
%\marginpar{Check}
%of \cite{BFP} it is proved that the flow lines of $\wwp$
%are uniform quasigeodesics in leaves of $\wls$ (of the flow $\wwp$). 
%This implies \cite{Fe1} that for each leaf $E$ of $\wls$ the
%ideal points of $S^1(E)$ are described as follows:
%let $y$ be 
%the common forward ideal point  of all flow lines in $E$.
%For any other point $x$ in $S^1(E)$ there is a unique flow line
%in $E$ with negative ideal point $x$.
%Form a new flow as follows: consider all the geodesics in $E$
%with ideal point $y$. Consider the flow with flow lines
%these geodesics, moving at speed $1$ towards $y$. Do this for
%all leaves $E$ of $\wls$. This induces a continuous flow $\Psi$ in
%$\mt$ which is $\pi_1(M)$ invariant, inducing a flow in $M$.
The orbit spaces of $\Phi$ and $\Psi$ are $\pi_1(M)$
equivariantly homeomorphic. This implies that $\Psi$ is
orbitally equivalent to $\Phi$, in particular $\Psi$ is
also a topological
Anosov flow.
In this article we will prove the result for $\Psi$, which 
implies the result for $\Phi$ as well.
So we can start with $\Psi$ instead of $\Phi$.

\vskip .1in
\noindent
{\bf {Conclusion $-$}} 
We will assume that $\ls$ is a foliation by hyperbolic
leaves, and the flow lines of $\Phi$ are geodesics in the
respective leaves of $\ls$.

\vskip .05in
Throughout this article we will work with 
bands and walls
(see later for definitions of these objects)
transverse to the stable foliation $\wls$, starting with a metric
in $M$ so that in each leaf of $\ls$ the metric is a hyperbolic
metric. Conversely we could have considered the unstable
foliation throughout.

\subsection{More information about the proof of the Main theorem}
\label{more}

To prove the Main theorem, we consider bigger and bigger
sets of freely homotopic periodic orbits. 
By Theorem \ref{freelyhomotopic}, this leads to
longer and longer chains of lozenges. If $M$ is atoroidal
we can extract big subchains which are strings of lozenges.
As explained in the introduction, in Subsection \ref{some},
 we take limits of the
annuli associated with the big sets of freely homotopic
periodic orbits. It is very hard to understand the limits
of these by themselves, and we use the associated strings
of lozenges to understand this. 

One basic question here is:
what is the limit of a sequence of lozenges? Clearly 
one possibility is that a sequence of lozenges can 
limit to a lozenge. But it is very easy to see that 
another possibility is that the sequence of 
lozenges can converge to an adjacent chain of lozenges.
We prove in Proposition \ref{limitloz} that there is only
one additional possibility for the limit: an
adjacent block. 
Recall from Subsection \ref{some} that we will 
use walls obtained as   limits
of lifts of annuli associated with big strings of lozenges.
Hence a wall is associated with what
is called a {\em bi-infinite block}. The goal of this
section is to understand limits of lozenges and some properties
of blocks. 

To proceed with the proof of the Main Theorem,
we will need to understand transverse and
tangential intersections of walls. We use the bi-infinite
blocks to understand that. In particular in the case of
$M$ atoroidal this can be used to show that a tangent
intersection between a wall $L$ and a translate $\gamma(L)$
implies that they are exactly the same set.
In the case of a transverse intersection we will prove a 
linking property, which implies both $L$ and $\gamma(L)$ 
intersect exactly the same set of stable leaves in $\mt$.
All of these properties are very hard to understand and
prove using only the walls themselves. The lozenges and
bi-infinite blocks make this study possible.

Then as explained in Subsection \ref{some}: in the case no
transverse intersections between walls, we produce a lamination
in $M$ coming from walls, and eventually show that $\Phi$
is $\rrrr$-covered. In the case of transverse intersections,
we produce a ``convex hull" associated with walls, which
also proves the $\rrrr$-covered property.

\subsection{Bands}

We now describe one of the 
fundamental objects we will use in
this article. 
Eventually we will go from large sets of freely homotopic
periodic orbits to walls in $\mt$ by a limiting process.

\vskip .1in
\noindent
{\bf {Bands associated with lozenges}}

Let $C$ be a lozenge in $\mt$. We build a canonical set which 
we will call a {\em band}
associated with it. Let $\alpha, \beta$ be the corners
orbits of $C$. 
%In Theorem 5.2 of \cite{Fe7} it is proved
%that there is a global $a_0 > 0$ so that for any lozenge
%$V$ the corner orbits of $V$ are a finite Hausdorff
%distance from each other, and the Hausdorff distance
%is bounded by $a_0$. 
We construct the band (sometimes also called a canonical band)
$B$ associated with $C$ as follows: the boundary of $B$
is the union of the orbits $\alpha, \beta$. For any stable
leaf $E$ intersecting $C$, let $\alpha_E = \wlu(\alpha) \cap E$
and $\beta_E = \wlu(\beta) \cap E$. These are two orbits
in $E$, hence two geodesics in $E$. Let $\ell_E$ be the
geodesic in $E$ with ideal points which are the different
negative ideal points of $\alpha_E, \beta_E$ in $S^1(E)$.
Notice that $\ell_E$ is everywhere transverse to the flow
$\wwp$ restricted to $E$.
The band $B$ is:

$$B \ \ = \ \ \alpha \cup \beta \cup \ \bigcup_{E \in \wls, \  E \cap C 
\not = \emptyset} \{ \ell_E \} $$

For example, when $\pi(\alpha), \pi(\beta)$ are periodic, 
then $B$ is just the lift of an (elementary) Birkhoff annulus
$A$ so that the intersection of $A$ with any stable leaf  of $\Phi$
is the corresponding geodesic in the hyperbolic metric of
its leaf. 

Notice that the band $B$ is a subset of $\mt$.
The band $B$ comes equipped with a one dimensional foliation
which is made up of intersections with leaves of $\wls$, each
leaf of this foliation is a geodesic in the corresponding
leaf of $\wls$. We parametrize the leaves as $\ell_t, 0 \leq t \leq 1$ where (say) $\ell_0 = \alpha, \ \ell_1 = \beta$.
Let $E_t$ be the leaf of $\wls$ containing $\ell_t$.

\vskip .1in
We stress that a band denotes an object
that is very different from a flow band: a flow band is a
bounded interval of flow lines in either a stable or an unstable
leaf. 
By definition a band is a union
of geodesics in different stable leaves, most of which
are transverse to the flow lines and the boundary ones
are tangent to flow lines.

We will later define a wall, which will be a union of bands,
so that consecutive bands share a boundary component.
By {\em infinite strip} we mean the set $[0,1] \times \rrrr$
with the induced topology from inclusion in $\rrrr^2$.

\begin{lemma} \label{band1}
The band $B$ is the image of a continuous embedding
of an infinite strip into $\mt$. In
addition $B$ is a closed subset of $\mt$.
\end{lemma}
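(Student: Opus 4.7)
The plan is to exhibit a continuous injective proper map $\phi\colon [0,1]\times\rrrr\to B$; its properness together with continuity and injectivity will make $\phi$ a closed embedding, giving both the strip embedding statement and closedness of $B=\phi([0,1]\times\rrrr)$ in $\mt$. Parametrize the set of stable leaves meeting $\bar C$ as $(E_t)_{t\in[0,1]}$ with $E_0=\wls(\alpha)$ and $E_1=\wls(\beta)$, with $t\mapsto E_t$ continuous (these leaves form a closed arc in the leaf space of $\wls$). For $t\in(0,1)$ let $\ell_t$ be the geodesic in the hyperbolic surface $E_t$ joining the negative ideal points of $\alpha_{E_t}=\wlu(\alpha)\cap E_t$ and $\beta_{E_t}=\wlu(\beta)\cap E_t$, and set $\ell_0=\alpha$, $\ell_1=\beta$. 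Pick a continuously varying family of basepoints $m_t\in\ell_t$ (which can be produced from foliation charts around each point of $B$ together with a standard partition-of-unity argument on the contractible base $[0,1]$, using that each fiber $\ell_t$ is homeomorphic to $\rrrr$), and let $\phi(t,s)$ be the point at signed hyperbolic arc length $s$ along $\ell_t$ from $m_t$ in the leafwise metric on $E_t$.

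Continuity of $\phi$ reduces to continuity of $t\mapsto\ell_t$ on compact subsets: as $t$ varies, $\alpha_{E_t}$ and $\beta_{E_t}$ vary continuously by continuity of the transverse foliation $\wlu$; each of these is a geodesic in its stable leaf by the good form of $\Phi$ (Subsection \ref{ss.goodform}), so its negative ideal endpoint in $S^1(E_t)$ depends continuously on $t$ in the transverse direction; the unique geodesic in the hyperbolic plane joining two distinct, continuously varying ideal points varies continuously on compact subsets; and arc length in the leafwise hyperbolic metric depends continuously on $t$. Combined with continuity of $m_t$, this gives continuity of $\phi$. Injectivity is immediate: for $t_1\ne t_2$ the images lie in the disjoint stable leaves $E_{t_1},E_{t_2}$, and for $t_1=t_2$ hyperbolic arc length on the geodesic $\ell_{t_1}$ is injective.

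The main technical step is properness. Let $K\subset\mt$ be compact, take $(t_n,s_n)\in\phi^{-1}(K)$, set $p_n:=\phi(t_n,s_n)\in K$; passing to subsequences, assume $t_n\to t^\ast\in[0,1]$ and $p_n\to p\in K$. Since leaves of $\wls$ are properly embedded closed subsets of $\mt$ (a standard fact in the topological Anosov setting) and $E_{t_n}\to E_{t^\ast}$ in the leaf space, $p\in E_{t^\ast}$. The crucial claim is that $|s_n|$ is uniformly bounded: fix a compact neighborhood $K'\supset K\cup\{m_{t^\ast}\}$ in $\mt$; by proper embeddedness of leaves and transverse continuity of the leafwise hyperbolic metric on $\wls$, the leafwise hyperbolic diameter of $K'\cap E_t$ depends continuously on $t\in[0,1]$, hence is uniformly bounded. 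For large $n$ both $m_{t_n}$ and $p_n$ lie in $K'\cap E_{t_n}$, so their intrinsic hyperbolic distance $|s_n|$ is bounded. Passing to a further subsequence, $s_n\to s^\ast$ and $p=\phi(t^\ast,s^\ast)\in B$, proving $\phi^{-1}(K)$ is sequentially compact. The principal obstacle is precisely this boundedness of $|s_n|$: one must rule out a geodesic $\ell_{t_n}$ accumulating on itself in a fixed compact region of $\mt$ while its intrinsic distance to $m_{t_n}$ blows up, which is prevented by proper embeddedness of $\wls$-leaves together with the good form of $\Phi$ established in Subsection \ref{ss.goodform}.
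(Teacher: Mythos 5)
Your parametrization and the reduction of the embedding statement to continuity, injectivity, and properness of $\phi$ are fine, but two of the steps you wave through are exactly where the real content of the lemma lies, and as written they are gaps. First, continuity at the boundary parameters $t=0,1$: your argument ("the negative ideal endpoints of $\wlu(\alpha)\cap E_t$ and $\wlu(\beta)\cap E_t$ vary continuously, hence the geodesics do") only applies for $t\in(0,1)$, where both unstable leaves actually meet $E_t$. At $t=1$ the leaf $V=\wlu(\alpha)$ does \emph{not} intersect $E_1=\wls(\beta)$ (they make a perfect fit), so there is no "negative ideal endpoint of $\wlu(\alpha)\cap E_1$" and no naive continuity of the transverse foliation to invoke. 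What has to be shown is the degenerate convergence $\ell_t\to\beta$: namely that the ideal point $z_t$ of $V\cap E_t$ converges, in the local trivialization of the circles at infinity, to the common \emph{forward} ideal point of the flow lines in $E_1$. This requires the lozenge structure: if $z_t$ subconverged elsewhere, either $V$ would intersect $U=\wlu(\beta)$, or (using that $V$ is properly embedded) $V$ would intersect $E_1$, both contradicting the perfect fits. Your proposal never confronts this, and without it $\phi$ need not be continuous at $t=1$.

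Second, your properness argument rests on the claim that the leafwise hyperbolic diameter of $K'\cap E_t$ "depends continuously on $t$, hence is uniformly bounded." That is not a general fact about foliations and is unjustified here; it is precisely where the non-Hausdorffness of the stable leaf space enters. Leaves $E_{t_n}$ with $t_n\to 1$ may accumulate not only on $E_1$ but also on leaves $F$ non-separated from $E_1$, and a compact set $K'$ can meet $E_{t_n}$ in plaques whose leafwise distance from $m_{t_n}$ tends to infinity (points limiting onto $F$ or onto points of $E_1$ far from $\beta$); proper embeddedness of each individual leaf gives finiteness of the diameter for a fixed leaf, not uniformity over the family. Ruling out the bad limits needs the specific geometry of the band: that for parameters in a compact subinterval of $(0,1)$ the leaves $\ell_t$ are mutually asymptotic (so points far along $\ell_{t}$ track the properly embedded orbits $\alpha,\beta$ and must escape), that a limit of the geodesics $\ell_{t_n}$ inside $E_1$ is again a geodesic and a convexity/bounded-length argument forces such a limit point to lie on $\ell_1=\beta$, and that $\ell_t$ is confined to the region bounded by $U\cup V$, whose closure is disjoint from any leaf $F\ne E_1$ non-separated from $E_1$. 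These are the arguments the paper supplies; your appeal to "proper embeddedness plus the good form of $\Phi$" does not substitute for them, so the closedness/properness step is not established.
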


\begin{proof}{}
We stress that the lozenge $C$ is not assumed to be periodic,
that is, invariant under a non trivial deck translation. 
The periodic case is much easier.

There is a well defined topology in $T := \cup \{ S^1(E_t),
0 \leq t \leq 1 \}$, see for example \cite{Cal4}, turning 
it into a compact cylinder (this is the initial step in constructing
a universal circle for the foliation $\ls$).

Let $U = \wlu(\beta), V = \wlu(\alpha)$. First it is easy to see 
that the curves $\ell_t$ vary continuously with $t$ when $t$ is in
$(0,1)$. 
For any such $t$ then $\ell_t$ is the 
geodesic in $E_t$, with ideal points the negative ideal points
of $U \cap E_t$ and $V \cap E_t$. 
The negative ideal points of $U \cap E_t$ vary continuously with $t$
in $T$,
as these curves vary continuously. Using ideas associated with
the construction of the universal circle \cite{Cal4} it is easy
to see that the geodesics $\ell_t$ vary continuously.
%Let $r_t$ be a
%ray in $\ell_t$ with ideal point the ideal point of $U \cap E_t$
%and $v_t$ the complementary ray. 
%We choose the starting points of $r_t$ varying
%continuosly with $t$ and the rays $r_t$ in
%coherent directions. 
%In an unstable leaf, the
%flow lines are backwards asymptotic, hence $r_t, r_{t'}$ are
%asymptotic for any $t, t' \in (0,1)$. In the same
%way $v_t, v_{t'}$ are also asymptotic. Since the hyperbolic
%metrics in leaves of $\wls$ vary continuously, this shows
%continuity of the curves $\ell_t$ with $t$ for $t$ in $(0,1)$.

Notice that $B \cap E_1 = U \cap E_1$.

To get the continuity at say $t = 1$ we consider a point
$p$ in $B \cap E_1$ and a transversal $\tau$ to $\wls$ 
starting at $p$ and intersecting $E_t$ for $t$ close to $1$,
say $t > t_0$.
Then consider the trivialization of 
$R = \cup \{ S^1(E_t), t_0 \leq t \leq 1\}$
using the unit tangent bundle to $\wls$ restricted to $\tau$
($R$ is a subset of $T$ defined above).
The geodesic $U \cap E_t$ has ideal points $x_t, y_t$ in $S^1(E_t)$,
with $x_t$ corresponding to the negative ideal point.
The geodesics $\ell_t$ have one ideal point $x_t$ which clearly
converges to $x_1$ as $t \rightarrow 1$ in the local trivialization.
The other endpoint of $\ell_t$ in $S^1(E_t)$ is the negative
ideal point of $V \cap E_t$ for $0 < t < 1$. 
The geodesics $V \cap E_t$ have one ideal point  which
is the forward ideal point of all flow lines of $E_t$, hence
this is the same as the forward ideal point of $U \cap E_t$,
that is, it is $y_t$. 
Let $z_t$ be the other ideal point
of $V \cap E_t$. Since the flow lines 
$U \cap E_t$ converge to $U \cap E_1$ as $t \to 1$,
it follows that $y_t$ 
%(and hence $z_t$) 
converges to $y_1$
in the local trivialization. If the other ideal point $z_t$
of $V \cap E_t$ does not converge to $y_1$ in the
local trivialization of $R$, it follows
that there is a subsequence $z_{t_k}$ converging 
to a point $w \not = y_1$ in $S^1(E_1)$. 
%We first rule out $w = x_1$: if that were the case
%then the flow lines in $E_t$ with negative ideal
%point $z_{t_k}$ converge to $\beta$. But since 
%these flow lines are in $V$ this would imply
%$V$ intersects $U$, contradiction.
The local trivialization of $R$ implies that
the geodesics in $V$ with ideal points
$y_{t_k}$ and $z_{t_k}$ converge to the geodesic
in $E_1$ with ideal points $y_1$ and $w$.
% Hence the intersections $V \cap E_{t_k}$ 
%converge to a geodesic in $E_1$. 
Since $V$ is properly embedded in $\mt$, the limit geodesic
is also in $V$. In other 
words $V$ intersects $E_1$. But this is impossible,
since $V, E_1$ make a perfect fit.
Hence the ideal points $z_t$ converge to $y_1$.

This shows that $\ell_t$ (with ideal points $x_t, z_t$)
converges to $\ell_1$ (with ideal points $x_1, y_1$)
as $t \rightarrow 1$
in the local trivialization of $R$.

\begin{figure}[ht]
\begin{center}
\includegraphics[scale=1.00]{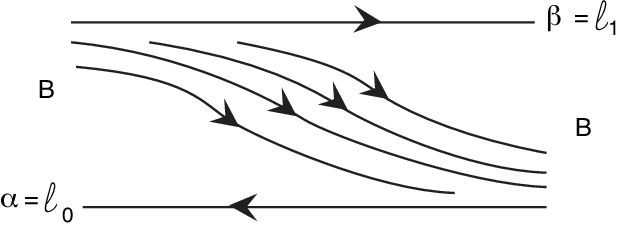}
\begin{picture}(0,0)
\end{picture}
\end{center}
%\vspace{-0.5cm}
\vspace{0.0cm}
\caption{A band $B$. 
The figure depicts the foliation $\wls \cap B$,
with its geometric behavior: the leaves in the interior
are asymptotic to the boundary leaves in the negative
flow direction.
The arrows in $\ell_0$ and $\ell_1$
indicate the flow direction in these flow lines.}
\label{figure5}
\end{figure}

\vskip .05in
Now we prove that $B$ is a closed subset of $\mt$.
We orient the curves $\ell_t$
in a continuous manner and so that for $t = 1$ the orientation
is the positive flow direction in $\ell_1 = \beta$.
With this choice, for any $0 < t < 1$, it follows
that any ray of $\ell_t$ going against the orientation
in $\ell_t$ is asymptotic to a
backwards ray (with respect to the flow)
of  $\ell_1 = \beta$, 
see Figure \ref{figure5}.
This property will be used throughout the article in 
some varied settings so we explain it carefully here,
following the notation of the first part of the proof.
The geodesics $\ell_t$ have ideal points $z_t, x_t$. The
$x_t$ converge to $x_1$, so the direction of $x_t$ in
$\ell_t$ corresponds to the direction of $x_1$ in $\ell_1$.
In $\ell_1$, the point $x_1$ is the negative ideal point of
the flow line $\ell_1$.
In addition $\ell_t$ is asymptotic (in $E_t$) in the $x_t$ direction
with the flow line $U \cap E_t$. The flow line $U \cap E_t$ is asymptotic
in the negative flow direction with $U \cap E_1 = \ell_1$.
It follows that $\ell_t$ is asymptotic in the $x_t$
direction with $\ell_1$ in the negative flow direction
of $\ell_1$ $-$ as we wanted to prove.
In the same way
any positive ray of $\ell_t$ ($0 < t < 1$) is asymptotic to 
a ray of $\alpha$ which is backwards with respect to the flow
in $\ell_0 = \alpha$. Notice that in $\alpha$ the flow
direction and the positive orientation direction differ from each
other, 
see Figure \ref{figure5}.

%Since the ideal points of $\ell_t$ for $t \in (0,1)$ generate
%asymptotic geodesics in the respective leaves, it follows that
%these curves only limit on like curves for $t \in (0,1)$.
%The issue is when $t \rightarrow 0,1$. For simplicity
%assume that $t \rightarrow 1$. The arguments above show
%that the only limit points in $E_1$ are contained in $\ell_1$.
%However a priori there may be other points in stable
%leaves non separated from $E_1$.
We now prove the closed property.
Suppose that $p_i$ is a sequence in $B$ which converges to $p$
in $\mt$. We want to show that $p$ is in $B$.
If there is a subsequence of $p_i$ which is either entirely
contained in $E_0$ or $E_1$ this follows since $\alpha$ and
$\beta$ are properly embedded in $\mt$, being flow lines
of $\wwp$.

Hence up to removing finitely many terms we assume that $p_i$
is never in $E_0$ or $E_1$. Let $t_i \in (0,1)$ with $p_i 
\in E_{t_i}$. If up to a subsequence $t_i$ is in a compact 
subinterval $J$ of $(0,1)$, then the following happens.
Fix a compact transversal $\tau'$ to $\wls$, with 
$\tau'$ contained  in $B$  and
intersecting exactly the leaves $\ell_t$ for $t \in J$.
By compactness of $J$ the following occurs:
for any $k > 0$, 
then in the positive direction of any leaf $\ell_t$, 
after a fixed length $d_k$ from $\tau'$ along any $\ell_t$
($t \in J$), then $\ell_t$ is 
less than $1/k$ from $\ell_0 = \alpha$.
Again using that $\alpha, \beta$ are properly embedded in $\mt$, it
follows that $p_i$ has to have distance from $\tau'$ along $\ell_{t_i}$
which is bounded: this is because if not, then they get closer and closer
to either $\alpha$ or $\beta$ and since $\alpha$ and $\beta$ are
properly embedded, the sequence $p_i$ would escape in $\mt$.
Hence 
sequences in the set $\cup \{ \ell_t, t \in J\}$ 
can only converge to
points in the same set.

Finally suppose that $p_i$ is in $\ell_{t_i}$ where for 
simplicity we assume that $t_i \rightarrow 1$
and so that $p_i$ converges to $p$ in $\mt$.
We also know that there are $q_i \in \ell_{t_i}$ with
$q_i$ converging to $q$ a point in $\ell_1$.
We want to show that $p$ is also in $\ell_1$. 
Suppose this is not the case. 

First think of the case that $p$ is in $E_1$ as well.
Since we are assuming that $p$ is not in $\ell_1$, it follows
that $\ell_{t_i}$ also converges to a geodesic $v$ in $E_1$
through the point $p$, 
besides converging to $\ell_1$. 
This uses that the hyperbolic metrics in different leaves of
$\wls$ vary continuously, so any connected component of
a limit of geodesics is a geodesic.
Fix a round disk $D$ in $E_1$ containing in the interior both
$q$ and $p$.
 Fix a neighborhood of $D$
in $\mt$ where the foliation $\wls$ is topologically a product.
Hence there are approximating convex disks $D_i$ in $E_{t_i}$. 
The points $p_i, q_i$ are in $D_i$ for $i$ big enough, so
the segments of $\ell_{t_i}$ between $p_i$ and $q_i$ are also contained
in $D_i$, as $D_i$ is convex. As a result these segments
have bounded length. Therefore the segments must converge
to a subsegment of $\ell_1$ (because of bounded length),
showing that $p$ is in $\ell_1$.
This finishes this case.

Suppose now that $p$ is not in $E_1$. Hence $p$ is in a leaf
$F$ of $\wls$ non separated from $E_1$. 
Recall that in general the foliation $\ls$ is not
$\rrrr$-covered. However, by construction
$\ell_t$ is always contained in the region of $\mt$ bounded
by $U \cup V$. The closure of this region is disjoint 
from $F$: this is easiest seen in the orbit space
$\oo$: $\Theta(\ell_t)$ is contained in the region
bounded by $\Theta(U), \Theta(V)$. Any $F$ non separated from $E_1$
has $\Theta(F)$ outside of this region. Hence $\ell_t$ cannot limit
in points in $F$. 

This proves that $B$ is properly embedded, proving the lemma.
\end{proof}

\begin{figure}[ht]
\begin{center}
\includegraphics[scale=1.00]{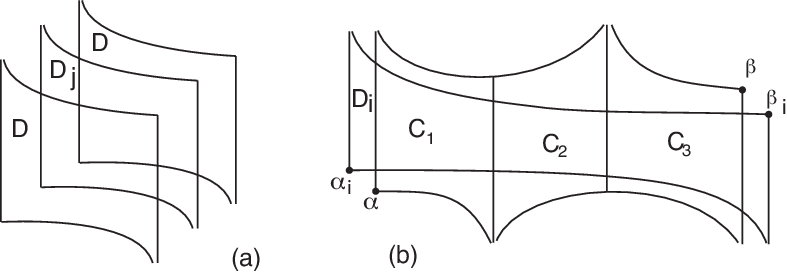}
\begin{picture}(0,0)
\end{picture}
\end{center}
%\vspace{-0.5cm}
\vspace{0.0cm}
\caption{(a) A sequence of lozenges $D_i$ converging
to a single lozenge $D$, (b) A sequence of lozenges $D_i$ converging
to a union of $3$ lozenges $C_1, C_2, C_3$. In the figure the lozenges
$D_i$ are long horizontally and in the limit split into $3$ lozenges.}
\label{figure6}
\end{figure}

\subsection{Limits of lozenges} \label{ss.limitslozenges}

In Theorem 5.2 of \cite{Fe7} we proved that there is 
a global constant $a_0 > 0$
so that if $\alpha, \beta$ are corners of a lozenge, then
the Hausdorff distance between $\alpha$ and $\beta$ is
finite and bounded above by $a_0$.
See Theorem \ref{lozengebound} here.
It will be crucial for us, when constructing walls in $\mt$,
to understand well limits of sequences of lozenges.
Suppose that $D_i$ is a sequence of lozenges with corners
$\alpha_i, \beta_i$. Suppose that the sequence $\beta_i$ 
converges to an orbit $\beta$. Then bounded Hausdorff distance
implies that $\alpha_i$ all intersect a fixed compact set.
So up to subsequence the $\alpha_i$ converges to an orbit $\alpha$.

The simplest situation is that such a limit is a lozenge
itself. This is certainly possible, for example in the case of
the geodesic flow, and for any $\rrrr$-covered Anosov flow.
This is depicted in Figure \ref{figure6} (a)
where the sequence
$D_i$ converges to the lozenge $D$.
It is very easy to see that another possibility
is that the sequence $D_i$ converges to a union of adjacent
lozenges, this happens for example if there is a free Seifert
piece of the Anosov flow with a ``hyperbolic blow up", as in
\cite{Barb-Fe}. This situation is depicted in Figure \ref{figure6} (b)
where the limit of the sequence $D_i$
is the union of the lozenges $C_1, C_2, C_3$.
In the figure, the corners $\beta_i$ of $D_i$ converge
to $\beta$. The other corners $\alpha_i$ of $D_i$ converge
to $\alpha$. The figure depicts the situation that
$\alpha, \beta$ are connected by a finite
chain of lozenges.

However there is still a more complicated possibility.
This also happens in the examples of \cite{Barb-Fe} mentioned
above, if they are transitive.
This is depicted in Figure \ref{figure7}.
This figure is more complex
and harder to understand, so we explain it in more detail. For simplicity
let us assume that the vertical leaves in the figure
are unstable leaves. The
curved leaves, slightly horizontal, are stable leaves.
In the figure we depict one of the lozenges of the 
sequence, denoted $D_i$, this has corners $\alpha_i, \beta_i$
which converge to orbits $\alpha, \beta$, when $i \to \infty$.
The limits $\alpha,
\beta$ are connected by a collection of 3 objects: the
first is an actual lozenge with corners $\alpha, \alpha'$,
this is $C_1$ in the figure. The second object is $C_2$, which is
a $(1,3)$ ideal quadrilateral with corners $\alpha', \beta',
\beta"$. The third object is $C_3$, which is also a $(1,3)$
ideal quadrilateral, and has corners $\beta, \beta', \beta"$.
There is a stable leaf intersecting all of $C_1, C_2$ and $C_3$.
In addition there are further lozenges associated with
this situation: the pair of adjacent quadrilaterals $C_2, C_3$
is contained in a pair of adjacent lozenges $B_2, B_3$.
For this property see Proposition \ref{periodicdouble}.
The lozenge $B_2$ is the union of $C_2$ and $Z_2$. The set
$Z_2$ is disjoint from the limit of the $D_i$. Similarly
for $C_3$. The leaf $\wlu(\beta)$ is periodic, however
$\beta$ is not the periodic orbit in it. The periodic orbit
in $\wlu(\beta)$ is the orbit $\delta$. The corners
of the periodic lozenge $B_3 = C_3 \cup Z_3$ are $\beta"$ and $\delta$.
In the same way there is a periodic orbit $\mu$ in $\wls(\alpha)$
and $\mu, \alpha$ are distinct orbits.
This will be used throughout this article.
There is a lozenge $B$ which has one side in $\wlu(\alpha')$
and one side in $\wls(\alpha)$. It is visible in Figure \ref{figure7}.
This lozenge intersects the lozenge $C_1$, however 
$B$ neither contains $C_1$ nor is contained in $C_1$.

But there is still a slightly more complicated possibility:
this is similar to what is depicted in Figure \ref{figure7},
but with a different possible positioning of the lozenge
in the adjacent block: specifically it could be that the
lozenge is not an end element of the block, but rather
it is in the middle. In that case there is an even number
of $(1,3)$ quadrilaterals before the lozenge, and
an even number of quadrilaterals after the lozenge.
An example of this is given in Figure \ref{figure4},
as described before,
the stable adjacent block $\cB$ contained in this block
is such that $\cB$ 
has 2 $(1,3)$ quadrilaterals
$C_1, C_2$, then a lozenge $C_3$, then 4 more
$(1,3)$ quadrilaterals $C_4, \cdots, \C_7$.

\begin{convention}
There is non uniqueness involved in the objects obtained
in the limiting process above: the lozenge $C_1$ is 
uniquely determined, and so is the union $C_2 \cup C_3$
(including the common boundary side). 
This union is a $(2,2)$ ideal quadrilateral which is not
a lozenge. It can be split in uncountably many ways
as a union of two $(1,3)$ ideal quadrilaterals. 
In other words, the orbit $\beta"$ is not uniquely
determined by the limiting process. 
We will do the following: using 
$\wlu(\alpha'), \wlu(\beta)$ and the stable leaf $S$ entirely
contained in $\partial (C_2 \cup C_3)$, Theorem \ref{nonsep} shows
that $S$ is periodic, and it has a unique periodic
orbit $\nu$ in it. As a convention we will always choose the
splitting of $(C_2 \cup C_3)$ to that $\beta"$ is the
periodic orbit $\nu$ in $S$.
This convention will always be
used throughout the article whenever we
express a $(2,2)$ ideal quadrilateral which is not a lozenge
as a union of two $(1,3)$ ideal quadrilaterals.
\end{convention}

\begin{figure}[ht]
\begin{center}
\includegraphics[scale=1.20]{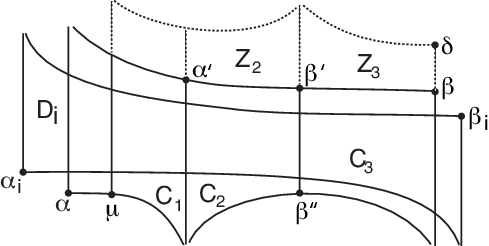}
\begin{picture}(0,0)
\end{picture}
\end{center}
%\vspace{-0.5cm}
\vspace{0.0cm}
\caption{A sequence of lozenges $D_i$ converging
to a union of $3$ elements $C_1, C_2, C_3$. 
The first is a lozenge, and $C_2, C_3$ are
$(1,3)$ ideal quadrilaterals.}
\label{figure7}
\end{figure}

Our next goal is to show that these are the
only possibilities for the limit of lozenges.
%this is the most complicated
%possibility for the limit, the other two being a single
%lozenge, or an adjacent chain of lozenges.
This will be a crucial ingredient in the proof of the
results of this article.

\begin{proposition} \label{limitloz}
Let $(D_k)$ be a sequence of lozenges with
corners $\alpha_k, \beta_k$, which both converge respectively
to $\delta_0, \delta_1$. Then the lozenges $D_k$ converge
to either: \ 1)  a lozenge; or \ 2) 
an adjacent chain of lozenges connecting
$\delta_0, \delta_1$ containing more than
one lozenge; or \ 3) a lozenge and a collection of an even number of 
consecutively adjacent $(1,3)$ ideal quadrilaterals, all intersecting
a common stable or unstable leaf; or \ 4) a lozenge and
two collections of an even number of consecutively adjacent
$(1,3)$ ideal quadrilaterals, again all intersecting a common
stable or unstable leaf. In the last case the lozenge
is between the two families of $(1,3)$ ideal quadrilaterals.
In any case the total number of elements (lozenges and possibly
$(1,3)$ quadrilaterals) is always odd, and there is always
one lozenge in the limit.
\end{proposition}

\begin{remark} \label{samenotation}
We abuse notation here: we include
in the adjacent
chain of lozenges also the half leaves which are common
boundaries of adjacent lozenges in the chain.
With this convention the ajacent chain of lozenges becomes
a connected set, which is a product open set.
The same convention applies to blocks.
In other words the limit of the sequence
$D_k$ is an adjacent block.
We also abuse notation and many times denote the
same way the block $\cB$ and the open set in $\mt$ associated 
with it.
\end{remark}

\begin{proof}
Notice that $\Phi$ is not orbitally equivalent to a suspension,
since there are lozenges.

The case where one of the sequences 
$(\wls(\alpha_k)), (\wlu(\alpha_k)), (\wls(\beta_k)),
(\wlu(\beta_k))$ has a constant subsequence
(which would be equal to $\wls(\delta_0)$ in the first case),
is easier to deal with than the general case. We do the
more complicated case when this does not happen. The
simpler case is left to the reader 
and follows along the same lines as the general proof.
Up to a subsequence we may assume the following
facts: each of the sequences above
is by pairwise distinct leaves, every element of any
of these sequences
intersects a foliated
neighborhood of the corresponding limit orbit; and finally
the sequences are monotonic in the respective leaf spaces.

%We denote by $\wls'(\alpha_k)$, and so on, the 
%half leaf of this leaf in the boundary of $D_k$.
%Then the sequence $\wls'(\alpha_k)$ (or $\wlu'(\alpha_k)$) converges
%to a half leaf of $\wls(\delta_0)$ (or $\wlu(\delta_0)$)
%and perhaps other full leaves non separated from this one.
%Suppose that the limit of $\wls'(\alpha_k)$ contains
%at least a full leaf besides a half leaf in $\wls(\delta_0)$.
%more than one leaf. 
%In particular $\alpha_k$ is in the component of $\mt \setminus
%\wls(\delta_0)$ which also contains $\delta_1$.

Let $I^s_k$ be the open interval of stable leaves intersecting
the lozenge $D_k$.
Similarly let $I^u_k$ be the open interval of
unstable leaves.
We first deal with the case that the sequences $(I^s_k)$ or $(I^u_k)$
are not monotonic in $k$.
This happens if an only if either $\delta_0$ is in $D_k$ for all
$k$,
% (with the assumption they are all near), 
or $\delta_1$ is in $D_k$ for $k$ for all $k$
(these two cases are mutually exclusive).
These cases are also easier than when $(I^s_k)$ is monotone.
Suppose that $\delta_0$ is in $D_k$ for all $k$.
Let $\wls_0(\alpha_k)$ be the half leaf of $\wls(\alpha_k))$ contained
in the closure of $D_k$. This converges to a half leaf of $\wls(\delta_0)$.
Any limit of $\wls_0(\alpha_k)$ is bounded by $\wlu(\delta_1)$ and since $\beta_k$
converges to $\delta_1$, there cannot be any other leaf in the
limit. The same happens for the limit of the half leaves $\wls_0(\alpha_k)$
of $\wlu(\alpha_k)$
contained in the closure of $D_k$. It follows that $\wlu(\delta_1)$
makes a perfect fit with $\wls(\delta_0)$ and similarly
$\wls(\delta_1)$ makes a perfect fit with $\wlu(\delta_0))$ $-$
in other words $\delta_0, \delta_1$ are the corners
of a lozenge $D$, and $D$ is exactly the limit of 
$D_k$. This finishes the proof in this case.

From now on assume that $(I^s_k)$ is monotonic with $k$.
In addition one of the sequences $(I^s_k)$,
$(I^u_k)$ is monotonically increasing and the other
is monotonically decreasing.

It follows that $(I^s_k)$ converges to an
interval, whose interior, denoted by $I^s$ has $\wls(\delta_0),
\wls(\delta_1)$ as boundary points
$-$ and possibly other leaves as well.
Similarly let $I^u$ be the interior of the limit of $(I^u_k)$.
This is obvious for the monotonic increasing sequence. Suppose
that $(I^s_k)$ is monotonic decreasing. A priori it could be that
it converges to a single leaf $S$. But then $\wlu(\alpha_k)
\cap S$ converges to the orbit $\delta_0$ which is in $S$.
This in turn would force $\wlu(\beta_k) \cap S$ to escape to infinity in $S$,
and cannot converge to an orbit $\delta_1$. We conclude that
both $I^s, I^u$ are non empty.

We first define a limit product set. The boundary of this set
has $\delta_0, \delta_1$ as the only corners. The union of this set
and its boundary will be shown to have the structure defined
in the statement of the lemma.
We define the set $R$ to be the set of flow lines $\eta$ of $\wwp$
so that $\wls(\eta) \in I^s$ and $\wlu(\eta) \in I^u$.

We claim that $R$ is the interior of the limit of the lozenges
$D_k$.
Let $\eta$ be a flow line in $R$. Hence for $k$ big enough
$\wls(\eta)$ is in $I^s_k$ and $\wlu(\eta)$ is in $I^u(k)$.
In particular $\eta$ is contained in the lozenge $D_k$.
This also works for a small product neighborhood of $\eta$.
Hence $\eta$ is contained in any possible limit of the
sequence $(D_k)$.
Conversely if $\eta$ is contained in the interior of a 
limit of the $(D_k)$ then $\wls(\eta)$ is contained in $I^s$,
and $\wlu(\eta)$ is contained in $I^u$ so $\eta$ is in
$R$.
This proves the claim.

Finally we show that $R$ is a product open set.
Let $X$ be in $I^s$ and $Y$ in $I^u$. Then 
we explained above that there is a neighborhood $N_0$
of $X$
in $I^s$ which is contained in $I^s_k$ for any $k$
big enough, and similarly for a neighborhood
$N_1$ of $Y$ in $I^u$ for the unstable direction.
Any stable leaf $Z$ in $N_0$ intersects the lozenge $D_k$
and likewise any unstable leaf $V$ in $N_1$ intersects
the lozenge $D_k$. But $D_k$ is a lozenge, so it is
a product open set, and it
follows that $Z$ intersects $V$. 
In addition
the intersection
$Z \cap V$ is in $R$.
It follows that the set $R$ is product foliated: 
each leaf of $\wls_{|R}$ intersects each leaf of 
$\wlu_{|R}$. 
Since $I^s, I^u$ are open intervals then $R$ is open as well,
that is, it is a product open set.

Notice that the orbits $\delta_0, \delta_1$ uniquely determine
the intervals $I^s, I^u$.
Therefore there is only one possibility
for the set $R$. This shows that any subsequence of the original
$(D_k)$ will produce the same region $R$ and proves that 
the sequence of lozenges converges to a region with interior $R$.

\vskip .1in
Suppose without loss of generality that $I^s_k$ is increasing, 
in which case $I^u_k$
is decreasing.
It follows that $\wlu(\beta_k) \cap \partial D_k$ converges to a single leaf:
this is because $I^u_k$ is decreasing and there is only
one limit of the sequence $\wlu(\beta_k) \cap \partial D_k$ which is in the boundary of $I^u$.
This limit is a half leaf of $\wlu(\delta_1)$ defined
by $\delta_1$.

Suppose that the limit of $\wls(\alpha_k) \cap \partial D_k$ is not contained
in $\wls(\delta_0)$. We claim that the limit
is a finite collection containing a half leaf of 
$\wls(\delta_0)$ and full leaves $X_1, ..., X_j$.
In addition $\wlu(\delta_1)$ makes a perfect fit
with the last of these, because there are no product 
regions. This argument using that there are
no product regions will be used many
times, so the first time we give more details.

Let $\eta$
be an orbit in $R$ so that $\wlu(\eta)$ intersects
$\wls(\delta_0)$. We already proved that $\wls(\eta)$ intersects 
$\wlu(\delta_1)$.
First let $Z$ be the first unstable leaf between $\wlu(\eta)$
and $\wlu(\delta_1)$ not intersecting $\wls(\delta_0)$.
The hypothesis that the limit of $\wls(\alpha_k) \cap \partial D_k$
is not contained in $\wls(\delta_0)$, implies that $Z$
is not $\wlu(\delta_1)$.
We define an unstable leaf $U$ as follows:
$U$ intersects $\wls(\eta)$, $U$ is between $\wlu(\eta)$ and
$\wlu(\delta_1)$ $-$ including possibly $\wlu(\delta_1)$ which
is what we actually want to show happens.
In addition either $U = \wlu(\delta_1)$ or for any
$V$ unstable intersecting $\wls(\eta)$ between $U$ and $\wlu(\delta_1)$
then $V$ does not intersect any leaf non separated from $\wls(\delta_0)$.
Again the hypothesis that the limit of $\wls(\alpha_k) \cap 
\partial D_k$ is not contained in $\wls(\delta_0)$ implies
that $Z, U$ are distinct leaves intersecting $\wls(\eta)$.
If $U$ is not $\wlu(\delta_1)$ then consider the subregion
$R'$ of $R$ cuttoff by $\wls(\eta), U$ and accumulating on
$\wlu(\delta_1)$, see Figure \ref{figure8}.
This is a product region, and by construction it is a stable
product region. This is impossible. It follows
that $U = \wlu(\delta_1)$. 
In addition there are only finitely many leaves
in the limit of $\wls(\alpha_k) \cap D_k$: otherwise
these limit leaves would be part of the boundary
of a scalloped region. This again is a contradiction
because in that case a half leaf of $\wlu(\delta_1) (= U)$ could
not intersect the same leaves as a half leaf of $Z$
defined by $Z \cap \wls(\eta)$.

We conclude that the limit of $\wls(\alpha_k) \cap \partial D_k$
is the union of a half leaf of $\wls(\delta_0)$ and
$X_1 \cup \cdots X_j$. In addition $\wlu(\delta_1)$
makes a perfect fit with $X_j$.

%If $X_j$ does not make a perfect fit with
%$\wlu(\delta_1)$ there is a first unstable leaf $A_1$ intersecting
%$\wls(\eta)$ between $\eta$ and $\wls(\eta) \cap \wlu(\delta_1)$
%so that $A_1$ does not intersect $X_j$, and by our 
%assumption $A_1$ is not $\wlu(\delta_1)$.
%Let $R_0$ be the subregion of $R'$ cut off by $A_1$ and containing
%a half leaf of $\wlu(\delta_1)$ in its closure.
%Since $\wls(\alpha_k)$ limits to $X_j$, it follows now
%that $R_0$ is a stable product region, contradiction.

%, intersecting $R_0$ and not intersecting $\wlu(\delta_1)$.
%It makes a perfect fit with $\wls(\delta_1)$. Then $A_1$ cuts a
%subregion  $R'$ of $R_0$ which is a product open set. If there is no
%stable leaf non separated from $\wls(\delta_1)$ and between
%$A_1$ and $\wlu(\delta_0)$, then this shows that that 
%$R'$ is a product region $-$ which is impossible since
%$\Phi$ is not orbitally equivalent to a suspension.
%Theorem \ref{nonsep} shows that there are finitely many
%leaves $T_i, 1 \leq i \leq n$ non separated from $\wls(\delta_1)$
%between $A_1$ and $\wlu(\delta_0)$. If the one closest to
%$\wlu(\delta_0)$ (assume that to be $T_n$) does not make
%a perfect fit with a leaf non separated
%from $\wlu(\delta_0)$, then redo the argument
%above with $T_n$ in the place of $\wls(\delta_1)$. This would
%again show that there is a product region, contradiction.
%This shows that $T_n$ makes a perfect fit with a
% leaf non separated from $\wlu(\delta_0)$.

\vskip .1in
Since $\wls(\delta_0), X_1,... X_j$ are pairwise non
separated, there is a periodic orbit
$\eta_0$ in $\wls(\delta_0)$ and there are
lozenges $G_0, G_1,... G_{2j}$ all intersecting
a common stable leaf, so that $G_0$ has a side in $\wls(\delta_0)$,
and $G_{2i-1}, G_{2i}$ have a side in $X_i$,
and the union of these two sides is $X_i$ minus the periodic
orbit in $X_i$.

So far we have obtained parts of the boundary of $R$: 
a half leaf of $\wls(\delta_0)$, the leaves
$X_1,...,X_j$ and a half leaf of $\wlu(\delta_1)$.
We now continue analyzing the boundary of $R$ 
starting from $\delta_1$. 
It all depends on whether $\wls(\delta_1)$ makes
a perfect fit with $\wlu(\delta_0)$ or not.

Suppose first that $\wls(\delta_1)$ makes a perfect fit
with $\wlu(\delta_0)$. In this case we show that $R$ is
a union of a lozenge with corner $\delta_0$ and 
$2j$ blocks which are contained in $G_i$, $1 \leq i \leq j$.
This is fairly easy: $\wls(\delta_1)$, as it makes
a perfect fit with $\wlu(\delta_0)$, must intersect
all lozenges $G_i, 1 \leq i \leq 2j$, cutting them
into $(1,3)$ ideal quadrilaterals contained in $R$. 
Let now $A$ be the leaf of $\wlu$ which makes a perfect fit
with both $\wls(\delta_0)$ and $X_1$. Then $A$ intersets
$\wls(\delta_1)$ in an orbit denoted by $\delta_3$. 
%See figure  \ref{figure8} (b), where we drew the case that $j = 1$, and
%denote $X_1$ by $X$.
Then $\delta_0, \delta_3$ are the corners of
a lozenge and $R$ is the union of this lozenge
and the $2j$ $(1,3)$ ideal quadrilaterals described before.
This finishes the proof in this case.

%Now we analyze the boundary of product open set $R$.
%Any corner in the boundary of $R$ has to be the limit of corners
%of $D_k$ (by our monotonicity assumption, the corners of $D_k$
%also vary monotonically).
%It follows that the boundary of $R$ has exactly two corners,
%which are 
%$\delta_0, \delta_1$. In addition the boundary
%of $R$ contains stable and unstable
%half leaves through these orbits. 
%The stable half leaves of $\delta_0, \delta_1$ in the boundary
%of $R$ are not contained in the same stable leaf and they
%are boundary points of $I^s$. We need to be careful here,
%the limit of a sequence $x_i$ in $I^s$ converging to the
%boundary may converge to more than one stable leaf in $\wls$, 
%because the leaf space of $\wls$ may be non Hausdorff. 
%$\wls(\delta_0)$ is one specific such limit in the
%boundary of $I^s$.
%There may be other (full) leaves contained in the boundary of $R$,
%in which case they will be non separated from 
%respective half leaves through the corners of $R$.
%Let $Z_0$ be the half leaf of $\wls(\delta_0)$ with endpoint
%$\delta_0$ which is contained in the boundary of $R$.
%Similarly let $Y_0$ be the half leaf of $\wlu(\delta_0)$ 
%contained in the boundary of $R$.
%Similarly define $Z_1, Y_1$ half leaves of $\wls(\delta_1),
%\wlu(\delta_1)$ respectively in the boundary of $R$.

\begin{figure}[ht]
\begin{center}
\includegraphics[scale=1.00]{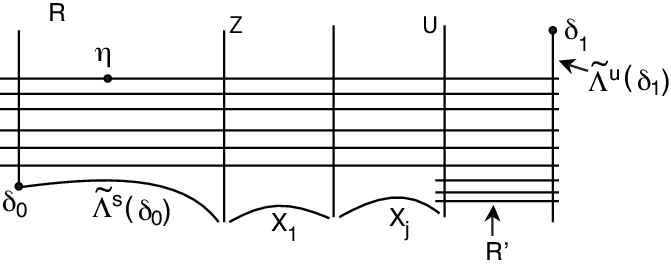}
\begin{picture}(0,0)
\end{picture}
\end{center}
%\vspace{-0.5cm}
\vspace{0.0cm}
\caption{This situation results in a disallowed
product region $R'$. The figure displays part of the 
product open set $R$ and its boundary, which contains
half leaves of all of $\wlu(\delta_0), \wls(\delta_0),
\wls(\delta_1), \wlu(\delta_1)$ as well as at least the
stable leaves $X_1, \cdots, X_j$. For simplicity here
$j = 2$.} 
\label{figure8}
\end{figure}

\vskip .1in
The next case to analyze is when $\wls(\delta_1)$ does
not make a perfect fit with $\wlu(\delta_0)$.
Recall that $\wlu(\delta_1)$ makes a perfect
fit with $X_j$, and $X_j$ is a periodic leaf,
hence $\wlu(\delta_1)$ is also periodic and
there is a periodic orbit $\eta_1$ in $\wlu(\delta_1)$.
Let $h$ be a non trivial deck translation which
fixes all the $G_i$, hence $h$ also fixes $\eta_1$.
In addition $\wls(\delta_0)$ is also fixed by $h$
and there is a periodic orbit $\eta_0$ in $\wls(\delta_0)$
fixed by $h$.

The limit of $\wls_0(\beta_k)$ contains a half
leaf of $\wls(\delta_1)$ and other leaves
which are non separated from this one. The last
such leaf which is in the limit of $(D_k)$, call it $B$,  makes a perfect fit with
$\wlu(\delta_0)$.

\vskip .05in
Suppose first in this case that $\delta_1 = \eta_1$.
Then $h$ fixes $\delta_1$ and hence also fixes $B$,
and consequently also fixes
$\wlu(\delta_0)$. As $h$ also fixes $\wls(\delta_0)$,
it now follows that $\delta_0$ is periodic
and is equal to $\eta_0$. 
In this case the orbits $\delta_0, \delta_1$ are 
connected by a chain of adjacent lozenges all intersecting
a common stable leaf.
In this case $\delta_0, \delta_1$ and all lozenges
are fixed by $h$.

\vskip .1in
Suppose finally that $\eta_1$ is not equal to $\delta_1$.
Suppose first that $\wls(\eta_1)$ separates $\wls(\delta_1)$
from $\delta_0$. Then, since $R$ is a product open set, 
and $\wls(\eta_1)$ intersects $R$, it follows that $\wls(\eta_1)$
intersects $\wlu(\delta_0)$. 
This is impossible since $\wls(\eta_1)$ has a half leaf in
the boundary of $G_{2j}$ and so $\wls(\eta_1)$ cannot intersect
$\wlu(\delta_0)$. This case cannot happen.

The remaining possibility is that $\wls(\delta_1)$ separates
$\wls(\eta_1)$ from $\delta_0$.
In fact this case is actually possible.
This is similar to the situation that the limit of $\wls(\alpha_k)$
is not contained in a single leaf.
In this case we have that there are lozenges
$H_0, H_1,... H_{2k}$ all intersecting a common stable
leaf and so that consecutive ones share an unstable
side, and $H_{2k}$ has a unstable side contained in $\wlu(\delta_0)$.
The orbit $\delta_0$ is not the periodic orbit in $H_{2k}$.
%There is an additional lozenge $H_0$ which is adjacent to $H_1$
%and has a stable side contained in $\wls(\delta_1)$.
%There is also a lozenge $G_0$ adjacent to $G_1$ along an
%unstable side, and so that $G_0$ has a stable side
%contained in $\wls(\delta_0)$. 
The lozenges $G_i$ and $H_m$ are disjoint from each other
and even the closures of $G_0$ and $H_0$ are disjoint from
each other.

The leaf $\wls(\delta_1)$ 
cuts the lozenges $G_i, 0 \leq i \leq 2j$, and one
complementry component is a $(1,3)$ quadrilateral
contained in $R$. 
In the same way the leaf $\wls(\delta_0)$ cuts
the lozenges $H_i, 0 \leq i \leq 2k$,
and one complementry component is a $(1,3)$
quadrilateral contained in $R$.  
Finally between $H_0$ and $G_0$ there is
a $(0,4)$ quadrilateral $N$  (a rectangle) contained
in $R$: it has one side in $\wls(\delta_0)$,
one side in $\wls(\delta_1)$, one unstable side
in the boundary of $H_0$, and one unstable side
in the boundary of $G_0$. The union 
$(H_0 \cap R)  \cup N \cup (G_0 \cap R)$ (and
the flow bands in common boundaries) is a lozenge, with a corner
in $\wls(\delta_0)$ which is also in the 
common boundary of $H_0$ and $H_1$ and a corner
in $\wls(\delta_1)$ which is also in the common
boundary of $G_0$ and $G_1$. In this case
again the limit region $R$ is the
union of a lozenge and an even number of $(1,3)$
ideal quadrilaterals: $2j$ coming from the $G_i's$
and $2k$ coming from the $H_i's$. Unlike the
case that $\wls(\delta_1)$ makes a perfect fit
with $\wlu(\delta_0)$, in this case the lozenge
is in the middle of the chain from $\delta_0$ to $\delta_1$
and not in the beginning of it.

\vskip .05in
This finishes the analysis that there is
an $X$ non separated 
from $\wls(\delta_0)$ contained in the boundary of $R$.
The same happens if 
there is $X$ non separated
from $\wls(\delta_1)$ contained in the boundary of $R$. Notice
that in either case there is no $Y$ unstable 
entirely contained in $\partial R$,
so that $Y$ is non separated
from either $\wlu(\delta_0)$ or $\wlu(\delta_1)$.
Switching the roles of stable and unstable this also deals with
the case that there is $Y$ unstable non separated from $\wlu(\delta_0)$
or $\wlu(\delta_1)$ contained in $\partial R$. In this case
there is no other stable leaf in the boundary of $R$. 
This is equivalent to $I^u_k$ increasing. In 
addition $\delta_0, \delta_1$ are connected by either a chain
of lozenges or a  lozenge and an even number of $(1,3)$
ideal quadrilaterals, all intersecting a common unstable leaf.

%\vskip .05in
Finally we are left with the case that there is no other 
stable, or unstable  leaf in $\partial R$.
In this case the same analysis as above shows that
$\wls(\delta_0)$ makes a perfect fit with $\wlu(\delta_1)$
and $\wlu(\delta_0)$ makes a perfect fit with $\wls(\delta_1)$.
In other words $\delta_0, \delta_1$ are the corners of a lozenge.

This finishes the proof of Proposition \ref{limitloz}.
\end{proof}

\begin{define} (basic block) A basic block is a limit
of a sequence of lozenges. It can be either a single lozenge,
a stable adjacent block with an odd number $> 1$ of elements,
or an unstable adjacent block with an odd number $> 1$ of elements.
\end{define}

In the cases of adjacent blocks it could be that the elements
are all lozenges, or one element is a lozenge and the others
are pairs of adjacent $(1,3)$ ideal quadrilaterals.

\subsection{Bands and finite adjacent blocks}

In this subsection we construct bands associated
with a basic block.
The bands will be canonical, that is, entirely determined
by the block, in fact determined by the corner orbits of
the block.
Let $\C$ be a basic block with elements
$C_i, 0 \leq i \leq 2j$.

\vskip .1in
\noindent
{\bf{Bands for finite adjacent blocks intersecting a common stable leaf}}

The constructions of bands will be similar to the construction
of the band associated to a single lozenge.
First we consider the case that all elements in $\C$ intersect
a common stable leaf. Then every $C_i$ intersects exactly
the same set of stable leaves, which is an open interval $I_0$.
%Notice that $k$ is odd. 
%Without loss of generality we assume
%that $C_0$ is a lozenge. The other elements are grouped in 
%consecutive pairs: either both elements $C_{2k-1}, C_{2k}$ are
%adjacent lozenges or they are adjacent $(1,3)$ ideal quadrilaterals.
Let $\alpha$ be the initial corner in $C_0$ and $\beta$ be
the final corner in $C_{2j}$. 
We adjoin the leaves $\wls(\alpha), \wls(\beta)$ to
the open interval $I_0$ to produce
a closed interval $I$ in the leaf space $\oo^s$ of $\wls$.

For $E = \wls(\alpha)$ we define $\eta_E := \alpha$,
and similarly for $E = \wls(\beta)$ we define $\eta_E := \beta$.
For a leaf $E$ in $I_0$ it follows that $E$ intersects
both $\wlu(\alpha)$ and $\wlu(\beta)$, each intersection
is a flow line which is a geodesic in $E$. For such $E$ we define 
$\eta_E$ to be the geodesic with ideal points
the negative ideal points of the flow lines 
$\wlu(\alpha) \cap E$ and $\wlu(\beta) \cap E$.
The band associated with $\cC$ is defined as

$$B \ \ := \ \ \bigcup_{E \in I} \{ \eta_E \}$$

\begin{remark} In the case of a stable adjacent block, the
band is transverse to the flow lines except
at the boundary leaves.
Notice that this band is canonical, that is, unique given the
block, in fact unique given $\alpha, \beta$. 
%So sometimes
%we will refer to it as the canonical band.
%There are no subbands.
\end{remark}

Just as in the previous subsection one can prove the following:

\begin{lemma}\label{band2}
The band $B$ associated to  block that intersects
a common stable leaf is the image of a continuous embedding of
an bi-infinite strip into $\mt$. The band $B$ is a closed subset
of $\mt$.
\end{lemma}

\begin{proof}
The proof is very similar to that of Lemma \ref{band1}.
Let $\fol$ be the foliation in $B$ induced by $\wls \cap B$.
Just as in Lemma \ref{band1}, any pair of interior leaves are asymptotic
to each other in both directions, 
so they are a bounded distance from each other.

Parametrize $I$ as $E_t, 0 \leq t \leq 1$, with $\wls(\beta) = E_1$.
Let $\ell_t = E_t \cap B$. Just as in Lemma \ref{band1} one proves that
$\ell_t$ converges to $\ell_1$ when $t \rightarrow 1$ and
similarly for $t \rightarrow 0$.
This shows the continuity.

Now we prove the closed property for $B$. 
Most of the proof of Lemma \ref{band1} carries over to our situation.
It suffices to consider the situation that
$p_i \in \ell_{t_i}$ converges to $z$, and $t_i \rightarrow 1,
t_i < 1$, the other cases are dealt
with exactly as in Lemma \ref{band1}.
Hence $z$ is in a leaf $F$ of $\wls$, with $F$ non separated
from $E_1$. The case $F = E_1$ is again dealt with as in
Lemma \ref{band1}. So suppose that $F$ is not $E_1$.
Now this is different from Lemma \ref{band1}, because the
block $B$ may have more than one element and hence there may be
other stable leaves in the boundary of the open interval $I_0$
%$\overset{\circ}{I}$
%(the open interval which is the interior of $I$),
 which are not
separated from $\wls(\beta)$.
We analyze this more carefully.

Recall that the corners of the block $\C$ are $\alpha, \beta$.

Since $p_i$ converges to $z$ in $F$, then up to subsequence 
we assume that $\ell_{t_i}$ also converges to a geodesic
$\zeta$ in $F$. At least one of the ideal points
of $\zeta$ is a negative ideal point in $F$. Call it $y$.
We now consider a closed interval of leaves of $\wls$ intersecting
a transversal $\tau$ with an endpoint $z$ and intersecting 
$E_{t_i}$ for $i$ big enough. Consider the local trivialization
of the annulus $A = \cup_{x \in \tau} S^1(\wls(x))$. 
Since $\ell_{t_i}$ converges to $\zeta$ one of the ideal points
of $\ell_{t_i}$ converges to $y$ in the local trivialization of $A$.
%One ideal point of $\ell_{t_i}$ is the
%negative ideal point of $\wlu(\beta) \cap E_{t_i}$.
%Since $\ell_{t_i}$ converges to $\eta$ 
It follows that there is a closed
interval $J$ of \underline{unstable} leaves all intersecting
$F$ with $\wlu(z)$ in the interior of $J$ satisfying
the following: the ideal
points of $\ell_{t_i}$ in $E_{t_i}$ converging to $y$ in $A$
are negative ideal points of flow lines
$U_i \cap E_{t_i}$, where the  $U_i$ are appropriate unstable leaves
contained
in $J$.
Notice that since $U_i$ intersects $F$ it cannot be either 
$\wlu(\alpha_i)$ or $\wlu(\beta_i)$.
But by construction the ideal points of $\ell_{t_i}$ 
are the negative ideal points of $\wlu(\beta_i) \cap E_{t_i}$
and $\wlu(\alpha_i) \cap E_{t_i}$. Since $U_i$ is neither $\wlu(\alpha_i)$
nor $\wlu(\beta_i)$ this is a contradiction.
%But this other ideal point is also the negative
%ideal point of
%$\wlu(\alpha) \cap E_{t_i}$.
%This is impossible.
This finishes the proof of the lemma.
\end{proof}

\vskip .1in
\noindent
{\bf {Bands for basic blocks 
intersecting a common unstable leaf}}

In this case the band will split into subbands
if $j > 0$.

Suppose first that there are only lozenges in the basic block $\C$,
that is $C_i$ is a lozenge for any
$0 \leq i \leq 2j$.
For each lozenge $C_i$ construct the band
associated to it. Recall that the band was uniquely
determined by the corners.  The union is the band $B$
associated with the block.
Each corner of a lozenge $C_i$ is 
an orbit which is tangent to the band
constructed. The band $B$ 
can be decomposed into finitely many
subbands, and outside of the boundaries of these 
subbands, the subbands are transverse to the flow lines
in the respective leaves of $\wls$.
For each individual lozenge, the band satisfies the continuity
and closed properties of Lemma \ref{band2}, and so does the union.

\vskip .05in
Now suppose that there are $(1,3)$ ideal quadrilaterals
in $\C$.
To be precise an example is when the first element $C_0$ is a lozenge
and the other elements $C_i, 1 \leq i \leq 2j$ are $(1,3)$
ideal quadrilaterals.
For the lozenge $C_0$ we consider the 
band $B_0$ associated to it.
This satisfies the properties of continuity and being closed
of Lemma \ref{band2}.
For the other elements, consider them in pairs $C_{2i-1}, C_{2i}$.
For simplicity of notation consider $C_1, C_2$ with
corner orbits $\zeta$ in $\partial C_1$ and $\mu$ in 
$\partial C_2$, hence
$\wlu(\zeta) = \wlu(\mu)$.
The associated subband will be denoted by $B$.
Let $I$ be the closed interval of stable leaves from
$\wls(\zeta)$ to $\wls(\mu)$, parametrized as $E_t, 0 \leq t
\leq 1$ with $E_1 = \wls(\mu)$.
Let $V$ be the unstable leaf entirely contained
in $\partial(C_1 \cup C_2)$.
Let $\ell_0 = \zeta, \ell_1 = \mu$.
For $0 < t < 1$ let $\ell_t$ be the geodesic in $E_t$ with
ideal points the negative ideal points of the
flow lines $V \cap E_t$ and
$\wlu(\zeta) \cap E_t$.
Now let 

$$B_i = \bigcup_{0 \leq t \leq 1} \ell_t$$

\noindent
be the band associated with $C_{2i-1} \cup C_{2i}$.
Let $B$ be the union of $B_i, 0 \leq i \leq 2j$.

Notice that in this case $\zeta, \mu$ are asymptotic
in the negative direction and the directions of the
flow lines $\zeta, \mu$
agree.
Just as in Lemma \ref{band1}, the $\ell_t$ vary 
continuously. For $0 < t < 1$ different $\ell_t$ are asymptotic
to each other in both directions. This is because 
the ideal points are either the negative ideal points
of $V \cap E_t$ or the negative ideal point of $\wlu(\zeta) \cap E_t$.
Hence for different $t$ the corresponding rays of $\ell_t$ are
asymptotic to each other.
The proof that $B$ is closed is easier in this case since
there are no other stable leaves in the limit of $E_t, \ t \rightarrow
1$ which are between $V$ and $\wlu(\mu)$, which is the situation
dealt with in
Lemma \ref{band1}.
A similar description holds when the lozenge in the block
is neither the initial nor the final element of the block.

\begin{remark} In the case of an unstable adjacent block, the
band associated to the block can have two possibilities:
Let $C_i, 0 \leq i \leq 2j$ be the elements of the block.

- If there are only lozenges in the block then the 
band has $2j + 1$ subbands, one for each lozenge $C_i$.
In this case the lozenges are periodic, so the subbands
are lifts of elementary Birkhoff annuli. One can push
the tangencies of the union of the Birkhoff annuli
with the intermediate corners, producing a single Birkhoff
annulus which is not elementary. This is well known and
used in the theory of pseudo-Anosov flows. However
notice two facts: \ i) the induced ``pushed" band
will not intersect all leaves of $\wls$ in geodesics,
only the outermost ones, \ ii) the pushed band is not
unique given the chain of lozenges. Both of these
properties are crucial for us in the ensuing analysis.

- If there is one lozenge and $2j$ elements which are $(1,3)$
ideal quadrilaterals, then the band has $j + 1$
subbands, one for the lozenge, and one for each pair of
adjacent $(1,3)$ quadrilaterals.
\end{remark}

\vskip .1in
\noindent
{\bf {The bounded distance property.}}

Given a finite adjacent block $\C$ with band $B$, we let $\fol$
be the foliation in $B$ induced by intersecting $\wls$ with $B$.
By construction this foliation has leaf space which is 
a closed interval.
This is a one dimensional foliation in $B$.

\begin{proposition} \label{bounded}
Let $B$ be the band associated with a
finite adjacent 
block $\C$. Then any two leaves of $\fol$ are a finite and bounded
Hausdorff distance from each other as subsets of $\mt$. 
The bound depends on $B$.
\end{proposition}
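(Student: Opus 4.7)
The plan is to reduce the problem to the elementary building blocks of the block $\C$, handle each via a combination of the thin-triangle property in hyperbolic leaves, Anosov backward contraction, and Theorem \ref{lozengebound}, and then patch everything together using the triangle inequality for Hausdorff distance. The first step will be to observe that $\C$ is a finite concatenation of single lozenges and adjacent pairs of $(1,3)$ ideal quadrilaterals, and that consecutive pieces share a common leaf of $\fol$ (either a corner orbit or the common stable side of the $(1,3)$ pair), so a bound on $d_H$ within each piece will suffice.

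For a single lozenge $C$ with corners $\alpha = \ell_0$, $\beta = \ell_1$ and interior transversal geodesics $\ell_t \subset E_t$, three ingredients combine. In the hyperbolic leaf $E_t$, the flow lines $\alpha_{E_t} = \wlu(\alpha) \cap E_t$ and $\beta_{E_t} = \wlu(\beta) \cap E_t$ together with $\ell_t$ bound an ideal triangle whose third vertex is the common forward ideal of the flow lines in $E_t$, and uniform thinness of ideal hyperbolic triangles provides a universal constant $C_0$ so that every point of $\ell_t$ lies within $C_0$ of $\alpha_{E_t} \cup \beta_{E_t}$ in the leaf metric, with closest-point projections landing in the backward rays of these flow lines. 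Within $\wlu(\alpha)$, Anosov backward contraction forces the backward rays of $\alpha_{E_t}$ and $\alpha$ to be within uniformly bounded $\mt$-distance, and symmetrically for $\beta_{E_t}$ and $\beta$ in $\wlu(\beta)$. Coupled with $d_H(\alpha, \beta) < a_0$ from Theorem \ref{lozengebound}, this gives every interior $\ell_t$ inside a bounded $\mt$-neighborhood of $\alpha$. For the reverse inclusion $\alpha \subset N_R(\ell_t)$, the backward ray of $\alpha$ is handled by reversing the asymptotic argument; for the forward ray of $\alpha$, I will invoke Theorem \ref{lozengebound} to transfer to $\beta$ and then use the $\beta$-side of the band to land on $\ell_t$. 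The same scheme then yields the bound for any two leaves $\ell_s, \ell_t$ by pivoting through $\alpha$ or $\beta$.

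For an adjacent pair of $(1,3)$ ideal quadrilaterals, Proposition \ref{periodicdouble} guarantees that the common stable leaf and the two enclosing unstable leaves are all fixed by a single non-trivial deck translation $\zeta$. The associated subband is $\zeta$-invariant and descends to a compact piece in the quotient, yielding the desired finite Hausdorff bound at once. The principal obstacle in this plan is the forward-ray step in the single-lozenge case: the Anosov asymptotics directly pair only the backward rays of $\alpha$ with the $\alpha$-side ray of $\ell_t$, while the forward ray of $\alpha$ has no \emph{direct} asymptotic counterpart on $\ell_t$. Closing this gap requires the combined use of Theorem \ref{lozengebound} and the symmetric $\beta$-side of the band, and is the technical heart of the proposition.
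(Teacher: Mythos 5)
Your overall direction (reduce to subbands, bound interior leaves against the corner orbits, use Theorem \ref{lozengebound} for the corners of a lozenge) is the right shape, but the mechanism you propose for the key inclusion does not close. The step ``Anosov backward contraction forces the backward rays of $\alpha_{E_t}$ and $\alpha$ to be within \emph{uniformly} bounded $\mt$-distance'' is exactly the point at issue. Backward asymptoticity inside $\wlu(\alpha)$ only bounds the Hausdorff distance between two backward rays in terms of the gap at their starting points, and the portions of $\alpha_{E_t},\beta_{E_t}$ onto which $\ell_t$ projects under the thin-triangle argument begin at the core of the ideal triangle in $E_t$; as $E_t$ approaches a boundary leaf this core travels far along the flow lines, and whether it stays a bounded distance from $\alpha\cup\beta$ is precisely the content of the proposition. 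So your argument is circular at the decisive moment. In the paper this inclusion is \emph{not} obtained by direct asymptotics: one assumes points $p_i\in B$ with $d(p_i,\beta)\to\infty$, renormalizes by deck transformations, uses the ideal-triangle facts only to rule out degeneration of the images of the stable and unstable intervals, and derives a product open set with no corners, contradicting Lemma \ref{noone}. That compactness mechanism (and hence the hypothesis behind Lemma \ref{noone}) is absent from your plan, and without it neither the uniform bound nor even the band-wide bound that Corollary \ref{boundthickness} later upgrades is available.

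Two further points fail as stated. For a pair of adjacent $(1,3)$ ideal quadrilaterals, Proposition \ref{periodicdouble} makes the boundary \emph{leaves} periodic and $\zeta$-invariant, but the corner \emph{orbits} of the pair need not be the periodic orbits in those leaves (the paper stresses this repeatedly, e.g.\ in the discussion of Figure \ref{figure7}); hence the subband is not $\zeta$-invariant and does not descend to a compact piece of the quotient, so that case collapses. The paper instead bounds those corners using Theorem 5.7 of \cite{Fe7} (rays of leaves making a perfect fit are a finite Hausdorff distance apart), a tool you never invoke, and it is also what handles the stable adjacent blocks. Finally, your decomposition of $\C$ into single lozenges and $(1,3)$-pairs glued along common leaves of $\fol$ is not correct for a stable adjacent block: there the canonical band is a single subband whose leaves $\ell_E$ span the entire block and miss the intermediate corners altogether, so the block cannot be treated piece by piece in the way you describe.
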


\begin{proof}{}
Obviously the Hausdorff distance 
in question depends at least on how many elements
there are in $B$.

The band $B$ is decomposed into finitely many subbands,
so that each subband has boundary components which are flow lines
and in the interior the subband is transverse to the flow.
We do the proof for such a subband, which for this proof
we assume is
the original band.
This is enough to prove the result.
We explain more: if the block $\cC$  is a stable adjacent block then
there is a single band and no subdivision.
If $\cC$ is an unstable adjacent block, then either 
\ i) $\cC$ only has lozenges, in which case the
result subdivision is just a single lozenge and
the band associated to this lozenge, or \ ii) $\cC$
is a union of one lozenge and finitely many pairs
of adjacent $(1,3)$ quadrilaterals. Then the subband is
either associated to a lozenge, as before, or to a pair
of adjacent $(1,3)$ quadrilaterals. For the purposes here, the union of this pair
is seen as a $(2,2)$ quadrilateral.

For simplicity of notation, in this proof we also denote
the resulting subblock as $\cC$.
Associated with $\C$ there is a product open set $R$ with
only two corners $\alpha, \beta$. 
Notice that with the subdivision of the original band, the
resulting $\C$ may not be a basic block.
This occurs only in the final possibility
described above, when the resulting $\C$ is a union of two adjacent
$(1,3)$ ideal quadrilaterals.
After the decomposition, there are unique unstable leaves
$U, V$ with a half leaf in $\partial R$, except in the last case.
In the last case one of $U, V$ is entirely contained in $\partial R$,
and the other has a bounded flow band contained in $\partial R$.
We first prove the following:

\begin{claim} \label{claim-firstbound}
$\alpha, \beta$ are a finite Hausdorff distance
from each other. 
\end{claim}

\begin{proof}
This is true if $\C$ is a lozenge, by
Theorem 5.2 of \cite{Fe7}. 
The other case is that $\C$ is the union of two
adjacent $(1,3)$ quadrilaterals.

A subtle point here: if the original block is the limit of
a sequence of lozenges, then the initial and final corners
are a bounded distance from each other, again by the theorem
above. However, it does not immediately follow that the intermediate
corners are a bounded distance or even finite distance
from each other.

Suppose next that
$\C$ is the union of two adjacent $(1,3)$ quadrilaterals
intersecting a common unstable leaf, then $\alpha, \beta$ are
in the same unstable leaf, which we  may assume is $U$.
Then there are two adjacent lozenges $C_1, C_2$
%both intersecting $U$ 
and the $(1,3)$ quadrilaterals
are obtained by cutting $C_1, C_2$ along $U$. 
Up to switching $C_1, C_2$,
suppose $\alpha$ is in a stable side of $C_1$ and $\beta$
in a stable side of $C_2$. 
Then $V$ is the unstable leaf entirely contained in
$\partial(C_1 \cup C_2)$.
Let $\gamma$ be the periodic orbit in $V$.
%Let $\gamma$ be the corner of 
%$V_1$ in $\wls(\alpha)$ and $\delta$ the corner of $V_2$
%in $\wls(\beta)$.  Finally let $\eps$ be the joint corner
%of $V_1, V_2$, see fig.7a, page35Wb. 
Backwards rays of $\alpha, \beta$ are asymptotic, hence
a finite Hausdorff distance from each other. 
The leaf $V$ makes a perfect fit with $\wls(\alpha)$.
By Theorem 5.7 of \cite{Fe7} 
a forward ray in $\wls(\alpha)$
is a finite Hausdorff distance from a backwards ray in $V$.
Hence a forward ray of $\alpha$ is a finite distance
from a backwards ray of $\gamma$. The same holds for
a forward ray of $\beta$ and a backwards ray of $\gamma$.
It follows that forward rays in $\alpha$ and $\beta$ are a finite
Hausdorff distance from each other. 
This implies that $\alpha, \beta$ are a finite Hausdorff
distance from each other.
%(A forward
%ray of $\alpha$ is a bounded asymptotic to a forward ray of $\gamma$.
%By Theorem 5.2 of \cite{Fe7} $\gamma$ is a bounded distance
%from $\eps$, in fact a forward ray of $\gamma$ is a bounded
%distance from a backward ray of $\eps$. In the same
%way a forward ray of $\delta$ is a bounded distance from
%a backward ray of $\eps$. But this forward ray of $\delta$
%is also asymptotic to a forward ray of $\beta$. It follows
%that a forward ray of $\beta$ is a bounded distance
%from a forward ray of $\alpha$ and this implies the
%property in this case.

The last situation to analyze is the case that $\C$ is a stable
adjacent basic block. 
If it is a union of lozenges, the claim follows from the
lozenge case, so we assume $\C$ has a lozenge and pairs
of adjacent $(1,3)$ ideal quadrilaterals.
We first deal with the case that the lozenge is at the
end of the block.
We refer to a previous
figure, Figure \ref{figure7}. The block $\C$
contains one lozenge and pairs
of $(1,3)$ ideal quadrilaterals, and $\C$ has corners
$\alpha, \beta$ as in Figure \ref{figure7}
(so $\{ \alpha \cup \beta \} = \{ \delta_0 \cup \delta_1 \}$).
Up to switching the
roles of $\delta_1, \delta_0$ we may assume that 
$\wls(\delta_1)$ makes a perfect fit with $\wlu(\delta_0)$
as in the figure.
Again by Theorem 5.7 of \cite{Fe7},  a forward ray of $\delta_1$ is a finite
Hausdorff distance from a backward ray of $\delta_0$.
In addition $\wls(\delta_0)$ is non separated from stable leaves
$X_i, 1 \leq i \leq j$, all in $\partial R$,
with $X_j$ making a perfect fit with
$\wlu(\delta_1)$. 
%In Figure \ref{figure8}, a, $X_n = X$. 
Again by Theorem 5.7 of \cite{Fe7} a backward ray
of $\delta_1$ is a finite Hausdorff distance from a forward
ray of a flow line in $X_j$. The $X_i$ have periodic
orbits which project to freely homotopic orbits of $\Phi$.
Hence forward rays in distinct $X_i$ are a finite Hausdorff
distance from each other (this could also be obtained by
the harder result of Theorem 5.7 of \cite{Fe7}). 
But $\wls(\delta_0)$ also has a periodic orbit which projects
to $M$ to a periodic orbit freely homotopic to one of these
orbits. A forward ray of $\delta_0$ is also a finite
Hausdorff distance from these, so we conclude that 
a forward ray of $\delta_0$ is a finite Hausdorff distance from
a backward ray of $\delta_1$.
We conclude that $\delta_0, \delta_1$ are a finite Hausdorff 
distance from each other in $\mt$.

%This proves that the corners $\alpha, \beta$ of $\C$ are a
%finite Hausdorff distance from each other, and
The last case is when the lozenge is in the middle of the 
block.
Then we have a finite number of pairs of adjacent $(1,3)$
quadrilaterals, one lozenge, and then more pairs of
adjacent $(1,3)$ quadrilaterals. For each pair of 
adjacent $(1,3)$ the union is a $(2,2)$ ideal quadrilateral.
The previous case shows that the two corners are a finite
Hausdorff distance from each other. When we hit the lozenge,
the corners are a bounded Hausdorff distance from each other.
Then more pairs of adjacent $(1,3)$ quadrilaterals, which
again have corners a finite Hausdorff distance from
each other.

This finishes the proof of the claim.
\end{proof}

Next we analyze the interior leaves of $\F$, and 
prove that the whole band $B$ is a finite
Hausdorff distance from (say) $\beta$.
Hence suppose that there are $p_i$ in $B$ so that $d(p_i,\beta)$ converges
to infinity. In particular $p_i$ are eventually not in $\alpha \cup \beta$
by the claim above.
Up to a subsequence and deck translations $g_i$
we assume that $g_i(p_i)$ converges to $p$.
Let $R$ be the product open set defined by the block $\C$
with corners $\alpha, \beta$. 
Let $I^s$ ($I^u$) be the open set of stable (unstable) leaves
intersecting $\C$. 
Then $g_i(R)$ is a product open
set with corners $g_i(\alpha), g_i(\beta)$.

We prove that up to a subsequence $(g_i(R))$ converges 
to a product open set.
One concern is that $(g_i(R))$ may converge to a single stable
or single unstable leaf. 
%This could happen if the $g_i(R)$ are
%pairwise disjoint. 
We first analyze this.

\begin{claim} \label{claim-intervalnotshrinking}
$g_i(I^s), g_i(I^u)$ cannot converge
up to subsequence to a single leaf.
\end{claim}

\begin{proof}
We first analyze the images $g_i(I^s)$. Suppose that
up to a subsequence $g_i(I^s)$ converges to a single
leaf $F$. Up to subsequence and up to changing $p_i$ slightly in its
stable leaf if necessary, there are distinct
leaves
$E, L$ of $\wls$ which have at least half leaves contained
in the boundary of $R$ and so that $\wlu(p_i)$ intersects
both $E$ and $L$. The slight change is that
a priori $\wlu(p_i)$ could intersect just one stable
leaf or half leaf in $\partial R$, and make a perfect
fit with some stable leaves in $\partial R$. 
In addition there only finitely many stable leaves which
have at least a half leaf contained in $\partial R$, so $E, L$
can be chosen independent of $i$ up to subsequence.

If $g_i(I^s)$ converges to a single
leaf $F$, then so do the sequences $g_i(E), g_i(L)$, 
this is because $g_i(p_i)$ converges in $\mt$.
This implies that $g_i(\wlu(\alpha)), g_i(\wlu(\beta))$ 
escape compact sets in $\mt$.
In particular the distance in $\mt$ from $g_i(p_i)$ to 
the union
$g_i(\wlu(\alpha)) \cup g_i(\wlu(\beta))$
converges to infinity. This is a property that we will show is
impossible.
The property implies that distance from $p_i$ to
$\wlu(\alpha)  \cup \wlu(\beta)$ converges to infinity.
Let $\ell_i$ denote the intersection $B \cap \wls(p_i)$.
The geodesics

$$\wlu(\alpha)  \cap \wls(p_i), \ \ \ \wlu(\beta) \cap \wls(p_i), 
\ \ \ \ell_{i}, \ \ {\rm in} \ \ 
\wls(p_i),$$

\noindent
are the sides of an ideal triangle $T_i$ in $\wls(p_i)$. Notice that
all ideal triangles in the hyperbolic plane are isometric.
The point $p_i$ is on one side of this ideal triangle
in $\wls(p_i)$.
A basic property of the hyperbolic plane implies that any
point in any side of an ideal triangle is a bounded distance
from the union of the two other sides, hence the distance
from $p_i$ to \
$(\wlu(\alpha)  \cap \wls(p_i))   \cup  (\wlu(\beta)  \cap \wls(p_i)$
in  $\wls(p_i)$
is bounded above. It follows that the distance between these
sets in $\mt$ is also bounded above.
This contradicts the property above.
%Consider the horocycle $h_i$ in $E_{t_i}$ which has ideal
%point the positive ideal point of flow lines in $E_{t_i}$ and
%which passes through $p_i$. The intersection $v_i$ of $h_i$ with the
%triangle $T_i$ is a piece of horocycle, and the important fact
%to notice is that this intersection has length $\geq a_0 > 0$
%for some fixed number $a_0$. It follows that $g_i(v_i)$ has
%length bounded below by $a_0$ and up to another subsequence
%converges to a horocycle segment of length $\geq a_0 > 0$
%(could be infinite length). 
In particular this implies
that $g_i(I^s)$ cannot converge to a single stable leaf
even up to subsequence. 

%\vskip .05in
We cannot apply the same argument to $g_i(I^u)$ because the
construction is not symmetric. 
For example the leaves of $\wlu$
are not necessarily smooth. 
%We do something else instead.
%Up to switching $U, V$ we assume that $\beta \subset U$, which
%implies that $V$ makes a perfect fit with $\wls(\beta)$.
Suppose that $g_i(I^u)$ converges to a single leaf $W$.
%Hence there are sides $U_i, V_i$ of $R$ so that
In this case it follows that
$d(g_i(p_i),g_i(\wlu(\alpha)))$ and $d(g_i(p_i),g_i(\wlu(\beta)))$ both converge
to zero. This is distance in $\mt$.
In particular $d(p_i, \wlu(\alpha)), \ d(p_i,\wlu(\beta))$,
also converge to $0$, again distance is in $\mt$.
%Up to
%a subsequence assume that $U, V$ are constant
Now recall that there is a uniform positive size
so that  each point in $\mt$ has a product neighborhood of $\wls$ 
of at least this size, and in addition a leaf of $\wls$
intersects each such neighborhood in a single plaque of the
foliation.
In particular $d(p_i,\wlu(\alpha)) \to 0$ implies that $\wls(p_i)$ intersects
$\wlu(\alpha)$,  and in addition distance in $\wls(p_i)$ between
$p_i$ and $\wls(p_i) \cap \wlu(\alpha)$ converges to zero
as $i \to \infty$.
The same holds for $\beta$.
Now we analyze
the situation in $\wls(p_i)$. As before let $\ell_i = B \cap \wls(p_i)$.
As in the case for $I^s$, \ 
$\wlu(\alpha) \cap \wls(p_i), \wlu(\beta)  \cap \wls(p_i), \ell_i$ are the 
sides of an ideal geodesic triangle in $\wls(p_i)$.
The points $p_i$ are in one side of this triangle.
We proved above that the distance in $\wls(p_i)$  from $p_i$ to {\underline {each}} of the 
other two
sides of this geodesic triangle converges to zero (when $i \to \infty$).
But this is impossible as seen from the $\wls$ leaf:
there is a positive lower bound
to the maximum distance from a point in a side of an ideal
triangle in ${\bf H}^2$ to the other two sides.
We conclude that $g_i(I^u)$ also cannot converge to a single 
(unstable) leaf. 
This proves the claim.
\end{proof}

%Let $L_1 = \wls(\beta), L_2 = \wls(\alpha)$. 
%Suppose that the sequence $d(p_i,L_1)$ converges to $0$.
%This proves the claim.
%
%Again we use the property of the hyperbolic plane to obtain that
%there is $a_2 > 0$ so that
%in $E_{t_i}$ the geodesic $\ell_{t_i}$ is contained in the 
%$a_2$ neighborhood of $(V \cap E_{t_i}) \cup (U \cap E_{t_i})$.
%Up to a subsequence assume that it is always a bounded distance
%from the same one. We will produce a contradiction.
%Suppose first $p_i$ is a bounded distance in $E_{t_i}$ from 
%$U \cap E_{t_i}$. 
%Let $q_i$ in $U \cap E_{t_i}$ which is a bounded distance
%in $E_{t_i}$ from $p_i$.
%The local product structure of the stable foliation; and
%the
%properties  that the sequence
%$(d(p_i,E_1))$ converges to $0$ and $d_{E_{t_i}}(p_i,q_i)$
%is bounded, imply the following:
%$(d(q_i,E_1))$ converges to $0$ as well.
%In particular $\wlu(q_i)$ intersects $E_1$ very close to $q_i$.
%But $\wlu(q_i) = U$ and $U \cap E_1 = \beta$. Hence $(d(q_i,\beta))$
%goes to $0$ and $d(p_i,\beta)$ is bounded, contradiction.
%On the other hand if $d_{E_{t_i}}(q_i,V \cap E_{t_i})$ is bounded,
%the same arguments imply that $V$ intersects $E_1$. But $V$
%makes a perfect fit with $E_1 = \wls(\beta)$ so this is also
%impossible.

By the claim
it follows that 
%there is $a_1 > 0$ so that $g_i(\C)$ contains
%a ball of radius $a_1$ centered in $g_i(p_i)$. 
%Since $g_i(p_i)$ is in $g_i(R)$, then u
up to removing finitely
many elements we may assume that the $g_i(R)$ pairwise intersect
each other.
%Hence for any $i, j$ the sets $g_i(I_s), g_j(I_s)$ intersect
%each other. 
We next show that the 
intervals $g_i(I^s), g_j(I^s)$ do not branch from each
other. This means the following:
there are no $F \in g_i(I^s)$, $L \in g_j(I^s)$ which
are distinct but non separated from each other. 
Suppose this is not the case and let $F, L$ as above. There is
$q$ in $g_i(R) \cap g_j(R)$. In addition $F$ intersects $g_i(R)$
so there is $q_1$ in $F \cap g_i(R)$. But $g_i(R)$ is a product open set,
so $\wls(q_1) = F$ intersects $\wlu(q)$ and the intersection is
in $g_i(R)$. In particular $\wlu(q)$ intersects $F$. 
In the same way one proves that $\wlu(q)$ intersects $L$, so $F, L$
cannot be distinct but non separated from each other.
It follows that the union $\cup_{i \in {\mathbb N}} g_i(I^s)$
is an open interval in the stable leaf space.
Hence $(g_i(I^s))$ converges as $i \to \infty$ to an 
interval with non empty interior, which
we denote by  $J^s$. In the same way $g_i(I^u)$
converges to an interval with non empty interior, denoted by  $J^u$.
It is easy to see that 
the pair $J^s, J^u$ determines a product open set, which
is non empty. In addition the corners of $g_i(R)$ are $g_i(\alpha),
g_i(\beta)$. The condition that $p_i$ is not a bounded distance
from $\alpha \cup \beta$ implies that these corners converge
to infinity. This produces a product open set
with no corners. 
This is impossible by Lemma \ref{noone}.

\vskip .1in
This contradiction shows that 
there is $a_2 > 0$ (a priori depending on 
$B$) so that $B$ is contained
in the $a_2$ neighborhood of $\beta$.
Recall that $B = \cup \{ \ell_t, 0 \leq t \leq 1 \}$.
Hence any $\ell_t$ is in the $a_2$ neighborhood of $\beta$.
In order to prove that the Hausdorff distance is finite
we need to prove the converse inclusion for some 
fixed size neighborhood of $\ell_t$.
This is much easier. 
To prove this 
we produce sequences $p_n$ in $\ell_t$ and $q_n$ in $\beta$
as follows, a bounded distance from each other
that will show $\beta$ is in a bounded neighborhood from $\ell_t$.
Recall that $d$ is the ambient distance
in $\mt$.  Choose a point $p_0$ in $\ell_t$.
The curve $\ell_t$ is a geodesic in $\wls(\ell_t)$ and hence it is
properly embedded in $\mt$. For $n > 0$ inductively choose
$p_{n+1}$ in $\ell_t$ so that $d(p_n, p_{n+1}) = 2 a_2 + 1$.
In addition choose these points $p_n$ so they are
nested in $\ell_t$. Hence they escape in $\ell_t$
and also in $\mt$.
Similarly for $n < 0$, choose $p_{n-1}$ so that 
$d(p_n,p_{n-1}) = 2 a_2 +1$ and also $p_n$ nested in $\ell_t$
for $n \leq 0$.
Then for each $n$ choose $q_n$ in $\beta$ with
$d(p_n,q_n) < a_2$.
Notice that $d(q_n,q_{n+1}) < 4 a_2 +1$. 
In addition $q_n$ escapes in $\mt$.

Now we use the following fact: given $4 a_2 + 1$ there is a
constant $a_3$ so that if $x, y$ are in a flow line $\zeta$ of $\wwp$
and $d(x,y) < 4a_2 + 1$, then the length of $\zeta$ between
$x$ and $y$ is less than $a_3$. Otherwise we find $x_n, y_n$
in same flow lines of $\wwp$, with $d(x_n,y_n) < 4a_2 + 1$,
but flow length from $x_n$ to $y_n$ is $> n$. Up to subsequence
and deck translates we assume $x_n \to x$ and $y_n \to y$.
Then the flow lines through $x$ and $y$ are not the same,
contradicting that the orbit space of $\wwp$ is Hausdorff.

We note another important property:
there are no $x_n, y_n$ escaping in opposite rays of
$\beta$ with $d(x_n,y_n)$ bounded. Otherwise the orbit
space of $\wwp$ would not be Hausdorff.
Hence as $n \to \infty$, the $q_n$ escape in a single
ray of $\beta$. The same happens for $n \to -\infty$.

By the fact above 
the length in $\beta$ from $q_n$ to $q_{n+1}$ 
is uniformly bounded by $a_3$, independent of $n$ or $t$.
It only depends on $4 a_2 + 1$.
Hence there is $a_4 > 0$ so that a ray of $\beta$ is 
in the $a_4$ neighborhood of 
a ray in $\ell_t$.
It follows that a ray
of $\ell_t$ is a finite and bounded Hausdorff distance from a ray of $\beta$.
The bound is independent of $\ell_t$ or $t$.

\vskip .1in
There is still one more property to prove which is the following:
different rays of $\ell_t$ are
not a finite distance from the same ray of $\beta$.

One of the rays in $\ell_t$ is aymptotic to a backwards ray of $\beta$.
There are two possibilities. 
Suppose first the subblock $\C$ is not
a pair of adjacent $(1,3)$ ideal quadrilaterals intersecting a common
unstable leaf. Under this condition,
then as explained in the proof of Claim \ref{claim-firstbound},
a forward ray of $\beta$ is a bounded distance from a backward
ray of $\alpha$, and a forward ray of $\alpha$ is a bounded distance
from a backward ray of $\beta$. Since $\ell_t$ also has a ray
asymptotic to a backward ray of $\alpha$, this ray is a bounded
distance from a forward ray of $\beta$. 
In other words two disjoint rays of $\ell_t$ cannot be
a bounded distance from the same ray of $\beta$.
This finishes the analysis in this case.

The last case is that the subblock $\C$ is a union of two adjacent
$(1,3)$ ideal quadrilaterals both intersecting a common unstable leaf.
Then $\alpha, \beta$ are in the same unstable leaf and $\ell_t$
has a ray asymptotic to a backward ray of $\beta$.
%(the same ray
%is asymptotic to a backward ray of $\alpha$. 
As in the previous setup $U = \wlu(\beta)$,
and $V$ is the other unstable leaf in the boundary of the block.
The other ray of $\ell_t$ is asymptotic to a backards ray 
of $E_t \cap V$. 
This ray of $\wls(\ell_t) \cap V$ is a finite Hausdorff
distance from a forward ray in $\wls(\beta)$ (since $U, \wls(\beta)$
make a perfect fit). Hence this ray is a finite Hausdorff 
distance from a positive ray of $\beta$ (again by Theorem
5.7 of \cite{Fe7}). 
Again this shows that disjoint rays of $\ell_t$ cannot
be a bounded distance from the same ray of $\beta$
and concludes in this case as well.

This completes the proof of Proposition \ref{bounded}.
\end{proof}

We now improve this result for bands associated
with lozenges.
The {\em{thickness}} of a band $B$ is the supremum
of the Hausdorff distances between leaves of $\F$ in $B$.
This uses the ambient distance in $\mt$.

\begin{corollary} \label{boundthickness}
The thickness of bands associated with lozenges
is globally bounded.
\end{corollary}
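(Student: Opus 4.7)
The plan is to argue by contradiction, running a limiting argument parallel to the one used inside the proof of Proposition \ref{bounded} but now leveraging the \emph{uniform} Hausdorff bound $a_0$ for corner orbits of lozenges provided by Theorem \ref{lozengebound}. Suppose no global bound on thickness exists. Then I can extract a sequence of lozenges $C_n$ with corner orbits $\alpha_n, \beta_n$, canonical bands $B_n$, and points $p_n \in B_n$ with $d(p_n, \alpha_n \cup \beta_n) \to \infty$.

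I would next choose deck transformations $g_n$ so that, along a subsequence, $g_n(p_n)$ converges to a point $p \in \mt$. By Theorem \ref{lozengebound} we have $d_H(g_n(\alpha_n), g_n(\beta_n)) < a_0$ for every $n$, while the isometry-invariant quantity $d(g_n(p_n), g_n(\alpha_n) \cup g_n(\beta_n))$ still tends to infinity. Hence both corner sequences $g_n(\alpha_n)$ and $g_n(\beta_n)$ must escape every compact subset of $\mt$ \emph{together}, and at a uniformly bounded Hausdorff distance from each other.

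The core of the proof is then to pass to a subsequential limit of the lozenges $g_n(C_n)$ and describe it, just as in Proposition \ref{bounded}. I would first rule out the degenerate possibility that either the interval of stable leaves $g_n(I^s_n)$ or the interval of unstable leaves $g_n(I^u_n)$ meeting $g_n(C_n)$ collapses to a single leaf: in either case $g_n(p_n)$ would be forced to lie on one side of an ideal triangle in its stable leaf whose other two sides both approach $g_n(p_n)$ in $\mathbb{H}^2$, contradicting the universal positive lower bound on the maximum distance from a point on a side of an ideal triangle to the union of the other two sides. Then the argument from Proposition \ref{limitloz} and Proposition \ref{bounded} shows that $g_n(C_n)$ converges to a product open set $R$ with nonempty interior, whose candidate corners have been pushed to infinity. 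Therefore $R$ has no corners at all, contradicting Lemma \ref{noone}, and the contradiction yields the required uniform thickness bound.

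The main obstacle is the same asymmetry that appeared in Proposition \ref{bounded}: since the canonical band is built from geodesics inside stable leaves, the unstable-direction collapse needs a separate justification rather than a symmetric one. Fortunately the ideal-triangle estimate in the stable leaf through $g_n(p_n)$ handles both directions at once, because $B_n \cap \wls(p_n)$ is always one side of an ideal triangle in $\wls(p_n)$ whose other two sides are the flow lines $\wlu(\alpha_n) \cap \wls(p_n)$ and $\wlu(\beta_n) \cap \wls(p_n)$.
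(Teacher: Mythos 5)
Your overall skeleton is the same as the paper's: argue by contradiction, use Theorem \ref{lozengebound} so that points far from $\beta_n$ are automatically far from both corners, translate by deck transformations so $g_n(p_n)$ converges, rule out collapse of the stable and unstable intervals, and then obtain a limiting product open set whose corners have escaped to infinity, contradicting Lemma \ref{noone}. This is exactly how the paper proves Corollary \ref{boundthickness}, by rerunning the proof of Proposition \ref{bounded} with the global constant $a_0$ replacing the band-by-band Hausdorff bound.

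However, the one place where you supply your own reasoning contains a genuine error: the claim that a single ideal-triangle estimate "handles both directions at once" because in either collapse scenario "the other two sides both approach $g_n(p_n)$". That picture is correct only when the \emph{unstable} interval $g_n(I^u_n)$ collapses: then $g_n(\wlu(\alpha_n))$ and $g_n(\wlu(\beta_n))$ squeeze onto a single unstable leaf, the distance from $g_n(p_n)$ to both triangle sides tends to $0$, and one contradicts the positive lower bound on the maximum distance from a point on one side of an ideal triangle to the other two sides. When instead the \emph{stable} interval $g_n(I^s_n)$ collapses, the sides of the relevant ideal triangle in $\wls(g_n(p_n))$ are still arcs of the unstable leaves $g_n(\wlu(\alpha_n))$, $g_n(\wlu(\beta_n))$, and collapse in the stable direction does not bring these close to $g_n(p_n)$; in fact it cannot, since in the paper's argument the two stable sides of the lozenge then converge to a common leaf, forcing the unstable sides to leave every compact set of $\mt$ (an unstable leaf surviving in the limit would both intersect and make a perfect fit with that common limit leaf). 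The contradiction in that case is with the \emph{opposite} estimate: any point on a side of an ideal triangle in $\mathbb{H}^2$ lies within a universally bounded distance of the union of the other two sides, which is incompatible with $d(g_n(p_n), g_n(\wlu(\alpha_n)) \cup g_n(\wlu(\beta_n))) \to \infty$ while $g_n(p_n)$ stays bounded. So the stable-collapse case is not covered by your unified lower-bound argument and needs this separate treatment, as in the paper's proof of Proposition \ref{bounded}; with that correction your plan goes through.
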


\begin{proof}
The proof of the previous proposition implies this result.
Suppose the result is not true.
%Let $B$ be the band and $\alpha, \beta$ be its
%corner orbits. By assumption $\alpha, \beta$ are the
%corners of a lozenge.
%By Theorem 5.2 of \cite{Fe7},
%the Hausdorff distance between
%$\alpha$ and $\beta$ is bounded by a global constant
%$a_0$. We now refer to the proof of the previous result,
%Proposition \ref{bounded}.
Let $B_i$ be a sequence of bands with
corners $\alpha_i, \beta_i$ and $B_i$  associated
with lozenges. Suppose there are $p_i$ in $B_i$ so that 
$d(p_i,\beta_i)$ converges to infinity.
Following the steps of the proof of Proposition,
first one does not need to split the band into
subbands. Claim \ref{claim-firstbound} is not needed
as in the case of lozenges, it is exactly
Theorem 5.2 of \cite{Fe7}.
Claim \ref{claim-intervalnotshrinking} works exactly
the same. Then the discussion after this claim
first shows that there is a global $a_2 > 0$ so that
$B_i$ is contained in a bounded distance neighborhood
of $\beta_i$. The discussion after that proves the
other containment with a different constant.
\end{proof}

\begin{remark}
The global bound result of Corollary \ref{boundthickness} is
not true for general bands. 
%There is a positive
%constant $b_0$ so that if 
%$\C$ is a stable adjacent block made up of $2i + 1$
%lozenges, then the minimum distance between points in
%the corners of $\C$ is bounded below by $b_0 i$.
%This is because distance between the corners from
%any point in the unstable leaf of a corner two 
%lozenges away is bounded below by some such $b_0 > 0$.
This is very simple: given an Anosov flow there is a
positive constant $a_1 > 0$ so that if two orbits
have points within $a_1$ of each other, then the stable
of one of them intersects the unstable of the other one.
This bounds the number of orbits of $\wwp$ which are all pairwise within
a fixed bound from each other, but their stable and unstable
leaves do not intersect. So all that is needed is to
have for each $n$ a periodic chain of lozenges with more
than $n$ corners. This happens if there is an infinite
adjancent chain of lozenges, for instance in 
the Bonatti-Langevin example \cite{Bo-La}.
In fact it happens whenever there is a scalloped region.
\end{remark}

%In all cases it follows that the leaves of $\fol$ in the
%canonical bands are a finite distance from each other
%as sets in $\mt$.
%We explain this for the case of lozenges the other cases
%being similar.
%Let $C$ be a lozenge with corners $\alpha, \beta$ and $B$
%the canonical band. Consider $\ell_t, t < 1$. In one direction
%$\ell_t$ is asymptotic to $\ell_1$, hence a bounded distance
%from it. 
%In the other direction the ideal point of $\ell_t$ is the
%negative ideal point of $\wlu(\alpha) \cap E_t$. Recall
%that $\wlu(\alpha)$ makes a perfect fit with 
%$\wls(\beta)$. As proved in FeQG a negative ray in $\wlu(\alpha)$
%is a bounded distance from a positive ray in $\wls(\beta)$.
%Hence this ray of $\ell_t$ is a bounded distance from
%a positive ray in $\wls(\beta)$ and so a bounded distance
%from a positive ray of $\beta$. This finishes the analysis.

\subsection{Limits of bands of lozenges}

Here we prove an improvement of Proposition \ref{limitloz}.
We first define geometric convergence.

\begin{define} \label{geomconv}
(geometric convergence) A sequence of sets $(V_k)$
in $\mt$ converges geometrically to a set $V$ if the following holds:

$-$ For any $p$ in $V$ there are $p_k$ in $V_k$ with
$p_k \rightarrow p$.

$-$ If $(k_i)$ is a subsequence of the integers and $(p_{k_i})$ is
a sequence which converges to $p$ in $\mt$, and $p_{k_i}$ is in $V_{k_i}$
for all $i$, then $p$ is in $V$.
\end{define}

\begin{proposition} \label{limitofbands}
Let $(D_i)$ be a sequence of lozenges with corners
$\alpha_i, \beta_i$ which converge respectively to
$\alpha, \beta$ and so that $(D_i)$ converges 
to a basic block $\C$. Then the 
bands $B_i$ associated with $D_i$ converge geometrically to the
band $B$ associated with $\C$.
\end{proposition}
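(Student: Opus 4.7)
The plan is to verify the two halves of Definition~\ref{geomconv} by reducing everything to continuous variation of geodesics on stable leaves. Recall that for each stable leaf $E$ meeting the lozenge $D_i$, one has $B_i \cap E = \ell_{E,i}$, the geodesic of $E$ whose ideal points are the negative ideal points of $\wlu(\alpha_i) \cap E$ and $\wlu(\beta_i) \cap E$. Since $\alpha_i \to \alpha$ and $\beta_i \to \beta$, the unstable leaves $\wlu(\alpha_i), \wlu(\beta_i)$ converge to $\wlu(\alpha), \wlu(\beta)$; when $\C$ is an unstable adjacent block, the intermediate unstable leaves in $\partial \C$ are likewise limits of unstable leaves crossed by $D_i$, obtained from the monotonicity of the intervals $I^s_k, I^u_k$ in the proof of Proposition~\ref{limitloz}. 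Using a trivialization of the circle bundle $\bigcup_{x \in \tau} S^1(\wls(x))$ over a short transversal $\tau$ to $\wls$, together with the leafwise continuity of the hyperbolic metric from Subsection~\ref{ss.goodform}, these ideal endpoints vary continuously. Therefore whenever $E_i \to E$ with all the leaves in the discussion meeting the respective lozenges, $\ell_{E_i,i}$ converges to a geodesic in $E$ uniformly on compact pieces.

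For the first part of the definition, fix $p \in B$. If $p$ lies on a corner orbit of $\C$, we approximate by the corresponding corner orbits of $D_i$, or, for an intermediate corner orbit of $\C$ produced by Proposition~\ref{limitloz}, by flow lines of $B_i$ whose stable and unstable leaves approximate those of the intermediate corner. Otherwise $p$ lies in the interior of some subband of $B$, on a geodesic $\ell_E$ in a stable leaf $E$; since $E$ eventually lies in $I^s_i$, the continuity above yields $\ell_{E,i} \to \ell_E$ and hence $p_i \to p$ with $p_i \in B_i$. In the unstable adjacent block case, the geodesic $\ell_E$ defining the subband through $p$ has ideal points coming from a specific pair of unstable sides of the corresponding subblock, and these are exactly the limits of the pair of ideal points defining $\ell_{E,i}$ on the portion of $B_i$ that projects to the relevant interval of $D_i$, so the approximation is obtained from that portion of $B_i$.

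For the second part, suppose $p_{k_i} \in B_{k_i}$ with $p_{k_i} \to p$, and let $E_{k_i}$ be the stable leaf through $p_{k_i}$; after passing to a subsequence, $E_{k_i}$ converges to some stable leaf $F$. The principal obstacle is to exclude the possibility that $F$ is non-separated from, but distinct from, a boundary stable leaf of the interval of leaves associated with $\C$, since then $p$ would lie outside $B$. This is ruled out by the same mechanism as in the closed-property argument at the end of the proof of Lemma~\ref{band2}: the ideal endpoints of $\ell_{E_{k_i},k_i}$ are always negative ideal points of $\wlu(\alpha_{k_i}) \cap E_{k_i}$ and $\wlu(\beta_{k_i}) \cap E_{k_i}$, and the limiting unstable leaves $\wlu(\alpha), \wlu(\beta)$, together with the intermediate unstable leaves in the unstable adjacent case, lie in the perfect-fit configuration with the boundary stable leaves of $\C$ described in Proposition~\ref{limitloz}, a configuration incompatible with their intersecting a stable leaf non-separated from, but distinct from, those boundary leaves. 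Hence $F$ lies in the closed interval of stable leaves associated with $\C$, and convergence of ideal endpoints together with continuity of the hyperbolic metric identifies $p$ as a point of $B \cap F$, or of the corresponding corner orbit when $F$ is a boundary leaf. In the unstable adjacent block case the correct subband containing $p$ is singled out by tracking which pair of unstable sides of $\C$ the ideal endpoints of $\ell_{E_{k_i},k_i}$ approach. This gives $p \in B$ and finishes the proof.
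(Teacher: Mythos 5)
Your strategy is the same as the paper's: both halves of Definition \ref{geomconv} are verified through convergence of the ideal endpoints of the leafwise geodesics $B_i\cap E$, and limits landing on leaves non separated from the boundary leaves of the interval associated with $\C$ are excluded by the closed-property mechanism of Lemmas \ref{band1} and \ref{band2}; this is exactly how the paper argues. Two steps, however, need repair. First, your approximation of an intermediate corner orbit of $\C$ ``by flow lines of $B_i$'' fails as written: the only flow lines of $\wwp$ contained in the canonical band of the lozenge $D_i$ are its boundary orbits $\alpha_i,\beta_i$, and these converge to the \emph{end} corners $\alpha,\beta$ only. An intermediate corner $\delta$ of an unstable adjacent block is instead reached by the transverse geodesics $B_i\cap\wls(\delta)$: one of their ideal points converges to the negative ideal point of $\delta$, because among the non separated unstable leaves arising as limits of (say) $\wlu(\beta_i)$ only $\wlu(\delta)$ meets $\wls(\delta)$, and it meets it exactly in $\delta$; the other ideal point converges to the \emph{forward} ideal point of $\wls(\delta)$, because the limiting unstable leaves on the other side make perfect fits with $\wls(\delta)$, so the intersections $\wlu(\alpha_i)\cap\wls(\delta)$ escape in that leaf $-$ the degeneration mechanism in the proof of Lemma \ref{band1}. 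Hence $B_i\cap\wls(\delta)$ converges to the flow line $\delta$ uniformly on compact sets, which is what the first half of geometric convergence requires at such points; this is the argument your setup actually supplies, and it is the one implicitly behind the paper's brief treatment of the unstable adjacent case.

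Second, in the converse half you dispose of the case where the limit leaf is a boundary leaf, say $\wls(\beta)$, but $p\notin\beta$, with the phrase that continuity of the hyperbolic metric ``identifies $p$ as\ldots the corresponding corner orbit.'' That is precisely the delicate point: geodesics in varying stable leaves can accumulate on more than one geodesic of the boundary leaf, and convergence of ideal endpoints in a local trivialization does not by itself exclude a second limit component through $p$. The paper rules this out by the convex-disk, bounded-length argument from Lemma \ref{band1}, and your write-up should invoke that argument explicitly. Your use of the Lemma \ref{band2} mechanism for the non separated case does match the paper, with the small adjustment that the unstable leaves are now the varying $\wlu(\alpha_{k_i}),\wlu(\beta_{k_i})$ rather than fixed ones, so the contradiction to be extracted is that these leaves would have to intersect the non separated leaf $F$ in the controlled region, which is impossible for large $i$ given the perfect-fit structure of the limit block from Proposition \ref{limitloz}.
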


\begin{proof}
As in the proof of Proposition \ref{limitloz},
let $I^s_i$ be the open interval in the leaf space
of $\wls$ of leaves intersecting the lozenge $D_i$,
and let $I^s$ be the interior of the limit of the $I^s_i$.
Similarly for $I^u_i$ and $I^u$.

We start by proving the first property
of Definition \ref{geomconv}. First of all,
the corners of $D_i$ converge to $\alpha, \beta$,
so the full orbits $\alpha, \beta$ are in the limit of
the $D_i$.
We consider the case where $\C$ is an adjacent block 
with more than one element,
the case where $\C$ is a single lozenge is simpler.

First suppose that $\C$ is an adjacent block whose
elements intersect a common stable leaf.
We refer to Figure \ref{figure6}, b, \ and 
Figure \ref{figure7}
for the arguments.
Let $E$ be a leaf of $\wls$ which is in $I^s$.
Then for big enough $i$, $E$ is in $I^s_i$.
The band $B$ intersects $E$ in a geodesic,
which we denote by  $\ell_E$
with ideal points the negative ideal points of $\wlu(\beta) \cap E$
and $\wlu(\alpha) \cap E$.
The intersection of $B_i$ with $E$ is a geodesic $\ell_i$ with
ideal points the negative ideal points of $\wlu(\beta_i) \cap E$
and $\wlu(\alpha_i) \cap E$. Since $\alpha_i$ converges to $\alpha$
and similarly for $\beta_i$, then the geodesics $\ell_i$ converge
to $\ell_E$. Hence any $p$ in $\ell_E$ is the limit of a sequence $(p_i)$
with $p_i$ in $\ell_i$.
This also deals with the case that $\cC$ is a lozenge.
%In this case the intersection of $B$ with $E$ intersects less
%orbits than the intersection of $B_i$ with $E$: the lozenges
%are shrinking in the stable direction and increasing in the
%unstable direction.

Suppose now that $\C$ is an unstable adjacent block with 
more than one element.
In this case we have that the $I^u_i$ is forced to increase
with $i$.
In addition this case is more complicated because it is
not only $\wlu(\alpha)$ and $\wlu(\beta)$ that have
leaves or half leaves contained in the unstable boundary
of $\cC$.
Here we think of $\cC$ as a union of a lozenge and $(2,2)$
quadrilaterals, so by corner orbit of $\cC$ we mean
a corner of one of these elements.

First we deal with the leaves $E$ in $I^s$ so that
$E$ has a corner of the block $\cC$.
In that case $E \cap B$ is an orbit of $\wwp$, which
we denote by $\ell_E$, this orbit is a corner of the block.
%In addition $B$ $ intersects a ray of orbits in $E$,
%including the endpoint.
As in the previous case 
the intersection of $B_i$ with $E$ is the geodesic, 
denoted by $\ell_i$, with ideal points the
negative ideal points of $\wlu(\alpha_i) \cap E$ and
$\wlu(\beta_i) \cap E$.
Without loss of generality we assume that the corner
$\ell_E$ is the limit of $\wlu(\alpha) \cap E$.
Then as in the proof of Lemma \ref{band2} it follows that
$\ell_i$ converges to $\ell_E$ in the sense that
any point in $\ell_E$ is the limit of points in $\ell_i$.

For the leaves $E$ in $I^s$ so that $E$ does not
contain a corner of $\cC$ then there are leaves
$U, V$ of $\wlu$ so that $E \cap B$ is equal
to the geodesic $\ell_E$ with ideal points the negative
ideal points of $U \cap E$ and $V \cap E$.
If $\ell_i = B_i \cap E$, then again $\ell_i$ converges
to $\ell_E$.
%The difference is that the
%intersection of $B$ with $E$ intersects more orbits thant the
%intersection of $B_i$ with $E$. 
This finishes the proof that any $p$ in $B$ is the limit of $(p_i)$
with $p_i$ in $B_i$.

\vskip .05in
Now suppose that $p$ is the limit of 
$(p_{k_i})$ with 
$p_{k_i}$ in $B_{k_i}$. We need to show that $p$ is in $B$.
The limit of any subsequences of $(\alpha_i), (\beta_i)$ still
converges to $\alpha, \beta$ respectively. The band
$B$ does not change by taking a subsequence. Hence for the purposes
of this argument we may assume for simplicity of notation that
$(k_i)$ is the original sequence of positive integers.
With this understanding let $p$ be the limit of $(p_i)$ with
$p_i$ in $B_i$.
If up to subsequence the $p_i$ are in $\wls(\beta_i)$
(or $\wls(\alpha_i))$ then $p_i$ are in fact in $\beta_i$
(or in $\alpha_i$). Hence the limit $p$ is in $\beta$
(or $\alpha$) $-$ because the orbit space of $\wwp$ is
Hausdorff. Therefore $p$ is in $B$. So we assume from now
on that $p_i$ is never in $\wls(\beta_i) \cup 
\wls(\alpha_i)$.

\vskip .05in
%We will assume that the block $\C$ has more than one element.
%The case that $\C$ is a lozenge is similar and simpler.
Suppose first that all elements in $\C$ intersect a common
stable leaf.
Recall the open intervals $I^s_i$. To these we adjoin
$\wls(\alpha_i), \wls(\beta_i)$ to obtain closed intervals
$J^s_i$ in the leaf space of $\wls$. In addition adjoin
$\wls(\alpha), \wls(\beta)$ to $I^s$ to obtain the closed
interval $J^s$.
Parametrize $J^s$ as $E_t, 0 \leq t \leq 1$ with
$\wls(\alpha) = E_0$ and $\wls(\beta) = E_1$.

Let $U_i = \wlu(\alpha_i), V_i = \wlu(\beta_i),
U = \wlu(\alpha), V = \wlu(\beta)$.

%In the case that the limit is a block with more than one element
%all intersecting a common stable leaf 
Suppose first that $I^s_i$ eventually
increases, so assume it always increases.
Hence $p_i$ is in
a leaf $E_{t_i}$ in $I^s$, $0 < t < 1$.
% for $i$ big enough, so we may assume it
%always is. Therefore there are $t_i, 0 < t_i < 1$ 
%with $p_i$ in $E_{t_i}$.

Suppose up to subsequence that $t_i$ converges to $t_0$.
Suppose first that $0 < t_0 < 1$. 
The point $p_i$ is in the geodesic $\ell_i$ in $E_{t_i}$ whose
endpoints are the negative ideal  of $U_i \cap E_{t_i}$
and $V_i \cap E_{t_i}$. 
We have that $U_i$ converges to $U$ and $V_i$ converges to $V$,
hence $U_i \cap E_{t_i}$ converges to $U \cap E_{t_0}$.
In addition the leaves $E_{t_i}$ and $E_{t_0}$ are asymptotic
to each other near the direction of the negative ideal point
of $U \cap E_{t_0}$. Similarly
for $V_i$. This shows that any
limit point of points in $\ell_i$ are in the geodesic $\ell$ in
$E_{t_0}$ with ideal points the negative ideal points
of $U \cap E_{t_0}$ and $V \cap E_{t_0}$.
It follows that $p$ is in $\ell$. But by definition, $p$ is
in the band associated with the block $B$.
This finishes the case $0 < t_0 < 1$.

Now without loss of generality, assume that $t_0 = 1$,
that is $E_{t_0} = \wls(\beta)$.
If $p$ is in $E_1$, and $p$ is not
in $\beta$, then a proof as in Lemma \ref{band1}
shows this is a contradiction. Suppose now that $p$ is in $F$
distinct from $\wls(\beta)$, but non separated from $\wls(\beta)$. 
Then the proof of Lemma \ref{band2} shows that this
is impossible.
We conclude that $p$ is in $B$, and that $B$
 is the geometric limit
of the bands $B_i$.

Now suppose that $I^s_i$ is not monotone, in which case it
follows that $\cC$ is a single lozenge (recall we are still
assuming  that the limiting block has all elements intersecting
a common stable leaf).
Suppose first that up to subsequence then $p_i$ is
in $I^s$. Then the above proof also works in this case.
Suppose now that $p_i$ is not in $I^s$.
Without loss of generality
we assume that $\wls(p_i)$ converges to $\wls(\beta)$ (and maybe
other leaves). The other case would be $\wls(p_i)$  converges
to $\alpha$ which is dealt with similarly.
It follows that $p$ is either in $\wls(\beta)$ or
in a leaf $F$ non separated from $\wls(\beta)$.
Then a proof as in Lemma \ref{band2} implies that 
$p$ has to be in $\beta$.

\vskip .1in
This finishes the proof when the limiting block has 
all elements intersecting a common stable leaf.
Next assume that the limit block $\C$ is not a single
lozenge, and its elements all intersect
a common unstable leaf. The same proof as above applies, with the
difference that the lozenges $D_i$ increase in the stable
direction, instead of decreasing.

This finishes the proof of Proposition \ref{limitofbands}.
\end{proof}

\section{Construction of walls}
\label{construction}

We now define another object that will be extremely important
for our analysis.
Recall that throughout this article we 
assume a metric in $M$ which makes each leaf of $\ls$
into a hyperbolic surface.
Conversely one could have considered the unstable
foliation in place of the stable foliation, and an adjusted
definition of walls below. 
Recall that if $B$ is a band associated to a block
$\cC$ with corners $\alpha, \beta$ then the set of
stable leaves intersected by $B$ is a compact interval
$I$ in the leaf space $\wls$ with endpoints
$\wls(\alpha), \wlu(\beta)$. 
The collection of unstable leaves intersected by $B$ is
contained in the closure of an open segment. The
open segment is the set of unstable leaves intersecting
$\cC$, and the additional points include $\wlu(\beta),
\wlu(\alpha)$. There are further points only
when $\cC$ is an unstable adjacent block with more than
one element.
The band $B$ intersects leaves of $\wls$ in geodesics,
inducing a foliation $\cF$ in $B$. If one puts
an orientation in $\cF$ then in one and only one of $\alpha, \beta$
the orientation agrees with the positive flow direction.

\begin{define}{(Walls)}  \label{def-walls}
Let $a_0 > 0$ be a global bound on the thickness of any
band which is the limit of bands associated with 
lozenges. 
A wall $W$ is a bi-infinite union of bands $B_i, i \in 
\mathbb{Z}$,  
each associated with a respective finite adjacent block
having thickness bounded by $a_0 + 1$,
and in addition satisfying:
\ 1) $B_i$ intersects $B_j$ only if $|i-j| = 1$ and then they
only intersect in a boundary component of each,
\ 2) The set of stable leaves intersected by $W$ is
a properly embedded copy of the reals $\rrrr$ in the stable
leaf space. 
\ 3) The set of unstable leaves intersecting $W$ which 
are not boundary points of this set is a properly embedded copy of $\rrrr$
in the unstable leaf space.
A partial wall $W$ is a finite union of $B_i$ satisfying \ 1)
and so that the set of stable leaves intersected
by $W$ is a compact interval, and a similar condition for the
unstable leaves.
\end{define}

A {\em boundary point} of the set of unstable leaves intersected
by $W$ is an unstable leaf $U$ so that $U$ intersects $W$ but
there is not an open  transversal $\tau$ to $\wlu$ intersecting $U$
and so that all unstable leaves intersecting $\tau$ also intersect
$W$. Does this happen? Yes, it does if there is an unstable 
adjacent block which is not a single lozenge. Let $B$ be such a block
associated to a product open set $R$ with corners $\alpha, \beta$.
Then $R$ has other corners, let $\gamma$ be one such other corner.
Then $\gamma$ is contained in $W$, and $\wlu(\gamma)$ intersects
$W$, but only one component of $\mt \setminus \wlu(\gamma)$
intersects $W$. Then $\wlu(\gamma)$ is one such boundary point.

\begin{remark} Why the condition on thickness? This is because
we want to take limits of partial walls and walls and still
get objects of the same type. If there is no global bound,
for example if there is a scalloped region, one could
do the following: consider longer and longer stable adjacent
blocks. Then the limit of these is not a block, and
will not be a wall $-$ the set of stable leaves intersected
by the limit is not a properly embedded copy of $\rrrr$ in
the leaf space, just to name one problem.
In the case of atoroidal manifolds, there is an upper
bound on chains of lozenges intersecting a common stable
or unstable leaf, so they satisfy the global bound
in the definition, and the restriction is vacuous.
\end{remark}

We now describe the construction of walls associated with 
large sets of freely homotopic periodic orbits of $\Phi$.
In Theorem 5.10 of \cite{Fe7}
we proved that if $\Phi$ is an Anosov flow in $M^3$ atoroidal and
%atoroidal (hence $M$ is hyperbolic, by Perelman's results) and
$\Phi$ is not quasigeodesic, then the following happens:
for each $i > 0$ there is a chain $\C_i$ which is a string
of lozenges with $2i$ lozenges and with periodic corners.
A chain of lozenges is a {\em string of lozenges} if
no two consecutive lozenges are adjacent. In other words
consecutive lozenges always intersect opposite quadrants of
the corner in question.
The projection to $M$ of the collection of corners produces
a set of pairwise distinct freely homotopic orbits of $\Phi$.
Let $\beta_{ij}$, $-i \leq j \leq i$ be the consecutive
corners of $\C_i$. 
For each $i$ consider the union of the 
bands associated with the lozenges in $\C_i$. This union
is a partial wall.
The last property uses the fact that $\C_i$ is a string
of lozenges.
Otherwise one cannot guarantee that the set of stable leaves
intersect by the union is a compact interval $-$ but 
this could be true even if the chain is not a string of lozenges.
The band associated with corners $\beta_{i,j}$
and $\beta_{i,j+1}$ is denoted by $B_{i,j}$.
For each $i$ the union of the bands
$B_{i,j}, -i \leq j \leq i-1$ $-$ which is a partial
wall $-$ is denoted by $O_i$.

We will take limits of the partial walls associated with
this sequence of string of lozenges as follows:
Let $O_i$ be a sequence of partial walls as above. 
Pick points $p_i$ in $B_{i,0}$.
By compactness of $M$, there is a subsequence, still denoted
by $p_i$ so that $g_i(p_i)$ converges to a point $p_0$ in $\mt$.
When we take a subsequence we can shorten the $O_i$ so that
it still has $2i$ elements.
%Up to a subsequence $i_k$ of $i$ and deck transformations $g_k$
%assume there are $p_k$ in the bands  $B_{i_k,j_k}$ 
%with $g_k(p_k)$ converging
%to $p_0$ in $\mt$. 
%We assume that 
%$|j_k - i_k|$ converges to infinity with $k$.
Since any of the bands are bands associated with 
lozenges, they have uniformly bounded thickness
by Corollary \ref{boundthickness}.
Hence up to subsequence we may assume that one of the corners,
say $g_k(\beta_{i,0})$ converges to an orbit $\gamma_0$.
Since the Hausdorff distance between orbits which are corners
of a lozenge are globally bounded, we may assume that
$g_i(\beta_{i,1})$ also converges, and to an orbit $\gamma_1$.
By Proposition \ref{limitofbands} the bands $g_i(B_{i,0})$ converge
to the band associated with $\gamma_0, \gamma_1$.
This is an odd adjacent block. We denote it by $D_0$.
The results of the previous section show this is a
partial wall, with uniformly bounded thickness.

Using a diagonal process assume that 
for each $n$, the bands $g_i(B_{i,n})$ also converge as $n \to \infty$,
let the limits be denoted by $D_n$, and 
let the limits of the boundary orbits be $\gamma_n, \gamma_{n+1}$.
Notice that given $n$, these bands only exist if 
$|n+1|  \leq i$.
Since $n$ is fixed, this is satisfied whenever $i$ is big
enough.

Any limit of the partial walls $g_i(O_i)$ is a wall.
The collection of all such limits is denoted by $\WW$.
Just for definiteness we state the following result:

\begin{lemma} $\WW$ is the collection of all walls.
In addition if $W_n$ is a collection of walls obtained by
the process above, and $W_n$ converges to a set $W$, then
$W$ is also a wall and $W$ is contained in $\WW$.
\end{lemma}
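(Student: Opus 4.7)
The plan is a diagonal extraction, using that the elements of $\LL$ are themselves geometric limits of a specific class of sequences; consequently $\LL$ ought to be closed under further geometric limits.

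By definition of $\LL$, for each $n$ the wall $L_n$ is the geometric limit of a sequence $(g^n_k(O_{i^n_k}))_{k \in \mathbb{N}}$, where $O_{i^n_k}$ is the partial wall built from a string $\C_{i^n_k}$ of $2 i^n_k$ lozenges, the $g^n_k$ are deck transformations, and after the diagonal extraction inherent in the construction there are indices $j^n_k$ and points $p^n_k \in B_{i^n_k, j^n_k}$ with $g^n_k(p^n_k)$ converging in $\mt$ and $|j^n_k - i^n_k| \to \infty$. Fixing a basepoint $x_0 \in \mt$, I would choose $k_n$ for each $n$ large enough that: (i) the partial wall $h_n := g^n_{k_n}(O_{i^n_{k_n}})$ agrees with $L_n$ in Hausdorff distance less than $1/n$ on the metric ball $B(x_0, n) \subset \mt$; (ii) $|j^n_{k_n} - i^n_{k_n}| \ge n$; and (iii) after composing $g^n_{k_n}$ with an appropriate deck translation, there is a point $q_n \in B_{i^n_{k_n}, j^n_{k_n}}$ whose image lies in a fixed compact subset of $\mt$. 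Combined with the hypothesis $L_n \to L$, condition (i) forces $h_n \to L$ geometrically.

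Next I would run the diagonal process used in the construction of $\LL$ on the sequence $(h_n)$. After passing to a further subsequence, for each fixed $m \in \mathbb{Z}$ the shifted band $g^n_{k_n}(B_{i^n_{k_n}, j^n_{k_n} + m})$ converges, by Proposition \ref{limitofbands} together with the Hausdorff bound of Theorem \ref{lozengebound}, to a canonical band $D_m$ of a basic block; the corresponding corner orbits converge to orbits $\gamma_m, \gamma_{m+1}$; and condition (ii) ensures these shifted bands are actually present for every fixed $m$ once $n$ is large. The union of the $D_m$ is, by the very definition of $\LL$, an element of $\LL$, and by uniqueness of the geometric limit it equals $L$. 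The wall axioms for $L$ then follow automatically: the corner orbits form a discrete bi-infinite family, consecutive corners bound a canonical band whose flow directions disagree (inherited from the corresponding property along $\C_{i^n_{k_n}}$), the intersection of $L$ with each stable leaf is connected, and the thickness of each $D_m$ is uniformly bounded via Corollary \ref{boundthickness}.

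The main obstacle I expect is the bookkeeping needed to make one diagonal choice $(k_n)$ simultaneously satisfy the three conditions above while ensuring that the resulting sequence still fits the framework in the definition of $\LL$, in particular that the shifted bands at every fixed position $m$ exist and converge. The decisive technical input is the uniform thickness bound of Corollary \ref{boundthickness}: it prevents the bands from collapsing or running off to infinity under the limiting process, and is what guarantees that the bi-infinite family $(D_m)_{m \in \mathbb{Z}}$ extracted in the limit is nondegenerate, discrete, and exhausts $L$.
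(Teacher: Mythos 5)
The paper does not actually supply an argument for this lemma: it is stated ``just for definiteness'' as an immediate consequence of the limiting construction, so the routine diagonal extraction you describe is exactly the intended proof, and your outline — approximate each $L_n$ by a translated partial wall on a large ball, rerun the construction on the diagonal sequence, and use Proposition \ref{limitofbands} and Corollary \ref{boundthickness} to get the limiting bands and their nondegeneracy — is correct in substance. One slip worth fixing: in your condition (iii), composing $g^n_{k_n}$ with a further deck translation is both unnecessary and, read literally, harmful, since the modified approximants would then converge to a translate of $L$ rather than to $L$ itself. No composition is needed: because $L_n \to L$ and $L$ is nonempty, the approximating partial walls already contain points in a fixed compact subset of $\mt$, and you simply re-index so that the marked band $B_{i^n_{k_n}, j^n_{k_n}+m_n}$ is one containing such a point, checking (as you do in (ii), but for the re-centered index) that the room on both sides of the marked band grows with $n$ so the shifted bands at every fixed position eventually exist. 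Finally, your closing claim that the union of the limit bands $D_m$ exhausts $L$ (equivalently, that the limiting corner orbits escape and no limit points come from bands whose index tends to infinity) is asserted at the same level of detail as the paper's own statement that any limit of the partial walls is a wall; it is fine to inherit it from the construction, as you do.
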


\begin{proof}
The first result is because a wall is a union of partial
walls, so if we take an increasing collection of partial
walls with union the whole wall, then the collection
of partial walls converges to the wall.

The second property is also easy to obtain:
let $p$ in $W$, then it is the limit of $p_n$ in $W_n$.
For each $n$ choose a partial wall in $W_n$ with $2n$ elements
and ``centered around" $p_n$. 
Since $W_n$ converges to $W$, it follows that these partial
walls converge as well. The limit has to be $W$, so
$W$ is a wall, and is contained in $\WW$.
\end{proof}

We stress again that the important point is that if $W_n$
is a collection of walls or partial wals with points $p_n$
converging to $p_0$, then up to a subsequence $W_n$ converges
to a wall or a partial wall. 

\begin{remark} In Section \ref{examp} we give several
examples of walls in atoroidal and toroidal manifolds.
Looking at the examples in that section may help in understanding
some of the results in the following sections.
\end{remark}

%\begin{proposition}
%Consider the canonical bands $B_{j_k+n,i_k+n}$. They converge
%to bands connecting the corresponding corners in the limit.
%\end{proposition}
%
%\begin{proof}
%By the previous proposition we know that $\gamma_n, \gamma_{n+1}$
%are connected by either a chain of adjacent lozenges or a lozenge
%and a chain of adjacent $(1,3)$ quadrilaterals.
%We now prove that the bands $B_{j_k+n,i_k+n}$. 
%We prove that these converge to bands connecting $\gamma_n$ and
%$\gamma_{n+1}$.
%
%\end{proof}

\section{Translates with no transverse intersections}
\label{translates}

In this section we work toward 
the proof of the Main theorem under the following
condition: suppose there is a wall
$G$ as constructed in the limit process of the last section,
such that no deck translate of 
$G$ intersects $G$ transversely.
This is existence, this is not saying that all limit
walls satisfy this property.
By a {\em transversal intersection}  of a wall $W$
we mean there is $g$ a deck transformation, so that
$g(W), W$ intersect in a point $p$ satisfying the
following: $p$ is in a leaf $E$ of
$\wls$, and $W \cap E, \ g(W) \cap E$ are transverse geodesics
in $E$ intersecting in $p$.

\vskip .05in
\noindent
{\bf {Standing hypothesis}} Throughout the proof of the
main theorem, we assume that there are bigger and bigger
strings of freely homotopic orbits and we produce 
{\underline {all}} limits
of bands associated with them, as in the previous
section, generating the closed set of walls, denoted
by $\mathcal L$.
This setup does not use that $M$ is atoroidal.
This is because we are using {\underline {strings}} of lozenges,
and limits associated to those. In this case the thickness
of the individual blocks even in the limit is always bounded.
The atoroidal hypothesis will be used in some specific 
places to obtain further results.
\vskip .05in

Let then $W$ be a wall and suppose that $g(W), W$ intersect,
but not transversely. Ideally one would like to prove that
this always implies that $W = g(W)$. However this is
not necessarily true in general. But when $M$ is atoroidal, 
this turns out to be true.
We analyze these non transverse intersections to start with.

\begin{define}{(corners and hinges of walls)}
Given a wall $W$, a corner of $W$ is a flow line of
$\wwp$ which is contained in $W$. 
A hinge of $W$ is a corner of $W$ so that its 
unstable leaf is an interior point of the set of
unstable leaves intersected by the wall.
\end{define}

Notice that a corner of $W$ 
is always a corner $\eta$ of an element
of the bi-infinite block associated with $W$ so that
$\eta$ is contained in $W$. This is a flow line of $\wwp$.
In general there are corners of $W$ which 
are not hinges of $W$. They are exactly the leaves of
$W \cap \wls$ (the foliation $\cF$) which are contained
in a boundary point of the set of unstable leaves
intesected by $W$.

If $W$ is obtained from the limit of a 
 sequence of strings of lozenes
lozenges, then a hinge is a limit of corners of the lozenges
in the sequence. 
A corner which is not a hinge occurs if there is 
a unstable  adjacent block $B$ in the limit 
with more
than one element.
The intermediate corners of $B$ are not hinges, while
the end corners are hinges. 
In a stable adjacent block, only the end orbits are corners
in $W$. No other corner of an element in the block is
contained in $W$.
%The same happens for
%stable adjacent blocks.

%\vskip .05in
%\noindent
%{\bf {Distinguishing hinges from corners that are not
%hinges $-$}}
%An important property that distinguishes hinges from corners
%which are not hinges is the following: In a hinge the 
%two adjoining elements of the block associated with $L$
%are not adjacent, they are in opposite quadrants.
%In a corner that is not a hinge the adjoining elements 
%are adjacent. 
%In the first case the set of stable leaves intersected
%by the elements is monotonic when it crosses the hinge,
%and the same for the unstable foliation. In the case of
%corner which is not a hinge, one of them is not monotonic:
%the unstable leaf through the corner is a local (say)
%maximum of the unstable leaves nearby intersecting the block.
%Corners that are not hinges only occur in unstable 
%adjacent blocks with more than one element.
%\vskip .05in

Let $\alpha$ be an orbit of $\wwp$. A {\em quadrant} $Q$ 
associated with 
$\alpha$ is one of the connected components of $\mt -
(\wls(\alpha) \cup \wlu(\alpha))$. There are four
quadrants associated with $\alpha$.
Two quadrants are adjacent if they intersect a common
stable or unstable leaf.
Let $\alpha$ be an orbit of $\wwp$ contained in a wall
$W$. There are two subbands of $W$ which have $\alpha$
as a boundary component. Each of these subbands enters
a distinct quadrant associated with $\alpha$.
The quadrants may be adjacent or not.

\vskip .05in
\noindent
{\bf {Study of 
properties of non transverse intersections
between translates}}

These properties, which we describe now,  are a progression to prove that
non transversal intersection implies $W = g(W)$
when $M$ is atoroidal.
In Lemma \ref{commoncorner} we prove that 
if $W, g(W)$ intersect, but non transversely,
then $W, g(W)$ share a corner.
In Lemma \ref{cornerperio} we prove that if a wall $W$ has a 
periodic corner, then all corners are periodic and there
are no $(1,3)$ ideal quadrilaterals in the bi-infinite
block $\C$ associated with $W$.
In Lemma \ref{morethanone} we prove that if the block 
$\C$ associated with $W$ 
has a basic block which is an adjacent block with more
than one element, all of which are lozenges, then all
the corners of $W$ are periodic.
All of these results do not strictly need $M$ atoroidal
just the bound that is assumed in Definition \ref{def-walls}.
In Lemma \ref{sameband} we need to assume $M$ 
atoroidal $-$ here we prove that if $W, g(W)$ 
share a corner $\eta$ and enter the same quadrant $Q$ of $\eta$,
then $W, g(W)$ agree on the subbands entering $Q$.
For this we also need that $g$ preserves orientations.
In the proof of this result, the atoroidal hypothesis
is only used when the corners of $W$ are periodic.
The next result is Lemma \ref{eitherperiodic} which does not
need $M$ atoroidal: assume $g$ preserves orientations,
$W, g(W)$ share a corner orbit $\eta$, but $W, g(W)$
do not enter exactly the same quadrants at $\eta$.
Then all corners of $W, g(W)$ are periodic.
The final result is Proposition \ref{identical}, which 
shows that if $M$ atoroidal, and $g$ preserves orientations,
then non transversal intersection implies $W = g(W)$.
Some of the proofs are very intricate because there are 
many possibilities to be analyzed.

\begin{lemma} \label{commoncorner}
Let $W$ be a wall.
%Suppose that $M$ is atoroidal. 
Let $g$ in $\pi_1(M)$ so that
$g(W)$ and $W$ intersect, but not transversely.
Then $W$ and $g(W)$ share a corner orbit.
%Then $g(W) = W$. 
%Also if a corner of $W$ is periodic, then all corners
%are periodic and fixed by $\gamma$.
\end{lemma}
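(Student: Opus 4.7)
The plan is to start from a point $p$ of non-transverse intersection of $L$ with $\gamma(L)$ and compare the two walls inside the stable leaf $E=\wls(p)$. By the definition of a wall, each of $L$ and $\gamma(L)$ meets $E$ in a single connected geodesic of the hyperbolic surface $E$, and both of these geodesics pass through $p$. Non-transversality of the intersection at $p$ means they have the same tangent line at $p$; in a hyperbolic surface two geodesics sharing a tangent at a point must coincide, so $c:=L\cap E=\gamma(L)\cap E$ is a common geodesic of the two surfaces in $E$.

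Then I would split on the nature of $c$. If $c$ is tangent to the flow at some (hence every) point, then $c$ is a flow line of $\wwp$, and the definition of wall makes any such flow line a corner orbit of the wall containing it; hence $c$ is a common corner orbit of $L$ and $\gamma(L)$ and we are done. Otherwise $c$ is everywhere transverse to the flow in $E$, and $c$ is an interior leaf of the wall foliation $\fol$ on some sub-band $B$ of $L$ and also on some sub-band $B'$ of $\gamma(L)$. The task then reduces to locating a common corner of $B$ and $B'$.

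Here I would exploit the rigidity of canonical bands worked out in Section \ref{limitsoflozenges}. In each of the possible basic-block types (single lozenge; stable adjacent block; pair of adjacent $(1,3)$ ideal quadrilaterals comprising a sub-band of an unstable adjacent block), the construction of the canonical band shows that the interior geodesic $c\subset E$ has its two ideal points in $S^1(E)$ equal to the negative ideal points of $U_1\cap E$ and $U_2\cap E$ for two specific unstable leaves $U_1,U_2$ determined by the block (the two corner unstable leaves in the lozenge case; $\wlu(\zeta)$ and $V$ in the pair-of-$(1,3)$-quadrilaterals case). Because all flow lines of $E$ share the common forward ideal point $y_E$, distinct flow lines in $E$ have distinct negative ideal points in $S^1(E)$, and so the unordered pair $\{U_1,U_2\}$ is recovered from $c$ and $E$ alone. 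Applying this to both $B$ and $B'$ forces $\{U_1^B,U_2^B\}=\{U_1^{B'},U_2^{B'}\}$.

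Lastly, I would promote agreement of unstable leaves to agreement of an actual corner orbit. After relabeling, $U_1^B=U_1^{B'}$ and $U_2^B=U_2^{B'}$. For a lozenge sub-band, the corner $\alpha\in U_1$ equals $U_1\cap\wls(\alpha)$, where $\wls(\alpha)$ is the stable leaf making a perfect fit with $U_2$ on the side of $E$ opposite the other corner; since on a given side of $U_2$ such a perfect-fit stable leaf is unique, we get $\wls(\alpha)=\wls(\alpha')$ and hence $\alpha=\alpha'$. The same argument handles stable adjacent block sub-bands, with $\alpha,\beta$ interpreted as the end corners of the block. In the pair-of-$(1,3)$-quadrilaterals sub-case, both end corners lie in the common unstable leaf $U_2$, and the stable leaves through them both make perfect fits with $V=U_1$ on the same side; Proposition \ref{periodicdouble} then forces these stable leaves to be periodic and fixed by a common non-trivial deck translation, and the canonical decomposition convention fixed in Section \ref{limitsoflozenges} pins down the end corners of $B$ and $B'$ as the same flow line. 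The main obstacle I anticipate is this last sub-case, since matching the unordered pair of ideal-point unstable leaves does not by itself distinguish the two end corners when they share an unstable leaf, and it is precisely here that Proposition \ref{periodicdouble} together with the convention on splittings of $(2,2)$ ideal quadrilaterals is essential.
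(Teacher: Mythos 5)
Your opening reduction is correct and matches the paper: the two geodesics $L\cap E$ and $\gamma(L)\cap E$ coincide, and the flow-line case is immediate. Your key observation — that the unordered pair of unstable leaves $\{U_1,U_2\}$ whose negative ideal points are the ideal points of the common geodesic $c$ is recoverable from $c$ and $E$ alone, hence is the same for both walls — is also correct, and it is a genuinely different starting point from the paper, which instead compares the two basic blocks $R,R_\gamma$ as product open sets crossing the same unstable leaves and runs a long case analysis. The gap is in the promotion step. Your recipe ``corner $=U_1\cap(\text{unique perfect-fit partner of }U_2)$'' is only valid when the sub-band is the canonical band of a single lozenge. For a stable adjacent block with more than one element the determining unstable leaves are those through the two end corners, and the stable leaf through an end corner in general does \emph{not} make a perfect fit with the opposite unstable leaf: it is only non-separated from the leaf that does (the leaf $X_j$ in the situation of Proposition \ref{limitloz}, Figure \ref{figure8}), and that leaf does not even meet $U_1$, since an unstable leaf cannot cross two non-separated stable leaves. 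So ``the same argument handles stable adjacent block sub-bands'' fails as stated.

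More seriously, your argument implicitly assumes the two sub-bands containing $c$ have the same combinatorial type. Matching $\{U_1,U_2\}$ does not give this: a priori the sub-band of $L$ could be a stable adjacent block band while that of $\gamma(L)$ is a lozenge band or a pair-of-$(1,3)$-quadrilateral band, and these mixed configurations are exactly where the content of the lemma lies — they are what occupy Cases 1 and 2 of the paper's proof (pivots, product-open-set arguments via Lemma \ref{noproductwithoutcorners}, and the possibility that the shared orbit is a pivot of one wall but a non-pivot corner of the other). To close your argument you would need something like: on each side of $E$, a sub-band with determining pair $\{U_1,U_2\}$ cannot terminate while both $U_1,U_2$ still cross the stable leaves (its leaves would then converge to a geodesic, not a flow line), its boundary flow line must lie on $U_1\cup U_2$, and among the possibly several mutually non-separated stable leaves at which $U_1$ or $U_2$ stops crossing, only one meets $U_1\cup U_2$; this would pin down the same corner orbit for both walls independently of block type. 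Nothing of this kind appears in your sketch, and the appeal to the splitting convention in the $(1,3)$-quadrilateral case is off-target (the convention fixes the intermediate orbit of a $(2,2)$ ideal quadrilateral, which is not a corner of the band; the corners there are determined by the perfect fits at the two ends of $V$). As written, the proposal proves the lemma only when both sub-bands are lozenge bands.
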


\begin{proof}
Let $p$ be a point of intersection of $W$ and $g(W)$.
Let $E$ be the leaf of $\wls$ which contains $p$.
Then $W \cap E$, \ $g(W) \cap E$ are geodesics in $E$ 
intersecting at $p$, but not transversely. As the leaves
of $\wls$ have the hyperbolic metric, it follows that 
$W \cap E \ = \ g(W) \cap E$.
Denote this intersection by $\mu$.

If $\mu$ is tangent to the flow, then it is a corner 
of both $W$ and $g(W)$ and  the result is obvious.
Assume then that $\mu$ is transverse to the flow
and we will produce a common corner of $W$ and $\gamma(W)$.

Let $\C$ be the bi-infinite block associated with $W$
and $\E$ the bi-infinite block associated with $g(W)$.
There are several possibilities to consider. 
Let $R$ be the basic adjacent block in $\C$ which contains
$\mu$ and $R_g$ be the one in $\E$ containing $\mu$.
By that that we mean that the end corners of $R$ are
hinges of $W$, no interior corner of $R$ is a hinge,
and similarly for $R_g$.
Both $R$ and $R_g$
are adjacent blocks (each of which could be a lozenge), 
hence associated with 
product open sets, denoted the same way,
 see Remark \ref{samenotation}.
%Let the respective product
%open sets be $R$ and $R_{\gamma}$.
It follows that $\mu$ intersects
all unstable leaves which intersect $R$
and likewise for $R_g$.  
%Hence $R$, $R_g$
%intersect exactly the same set of unstable leaves.
This is not a priori true for the stable foliation.
We also stress that it is not necessarily true that
$R_g  = g(R)$ (but there is such a block $R'$ in
$\cC$ so that $R_g = g(R')$).

\vskip .05in
\noindent
{\bf {Case 1 $-$ One of $R$ or $R_g$ 
%the basic blocks 
%of $L$ and $\gamma(L)$ containing $\mu$ 
is a stable adjacent
block.}}

Up to switching $R, R_g$ assume that $R$
is a stable adjacent block.
Let $\beta$ be a corner of the band 
associated with $R_g$.
Let $V = \wlu(\beta)$.
Since $R$ is a stable 
adjacent block, it follows that $\partial R \cap V$
is a a half leaf of $V$, which we denote by $V'$.
Let $V_2 = \partial R_g \cap V$. Notice that 
the common curve $\eta$ shows that $V_2, V'$ share
a non degenerate interval. 
Let $\alpha$ be the other corner of $R$.

Notice that $\wls(\beta)$ has a half
leaf with boundary $\beta$ which is $\partial R \cap \wls(\beta)$.
We denote this half leaf by $F$.
Suppose first that there is an orbit $\gamma$ in 
$V \setminus V'$ which is in the boundary of $R_g$, 
in other words $R_g$ goes beyond $L$.
Without loss of generality assume that $\wls(\gamma)$ intersects
$R_g$, and let $E = \wls(\gamma) \cap R_g$.
Since $R_g$ is a product open set it follows that 
every unstable leaf intersecting $E$ intersects
$\eta$, and consequently intersects $F$.
Conversely every leaf intersecting $\eta$ has to intersect $F$,
so this implies that 
$\wls(\beta)$ makes a perfect fit with $\wlu(\alpha)$.
Suppose first that $E$ is a full half leaf of $\wls(\gamma)$.
Then since there are no
product regions, it follows that there are unstable
leaves non separated from $\wlu(\alpha)$ in the boundary
of $R_g$ between $\wls(\beta)$ and $\wls(\gamma)$.
The region bounded by $\wls(\gamma), \wls(\beta), \wlu(\gamma)$
and the non separated leaves we just produced is contained
in $R_g$ and it is a union of $(2,2)$ quadrilaterals.
it follows that $\beta$ is also a corner of $R_g$.
It is a hinge of $W$ but not a hinge of $g(W)$.

On the other hand suppose that $\wls(\gamma) \cap R_g$ is
not a half leaf, and let $\gamma'$ be
the other boundary component of this set. Then 
$\wlu(\gamma')$ is non separated from 
$\wlu(\alpha)$. If it does not make
a perfect fit with $\wls(\beta)$, then there is another
unstable leaf $U$ entirely contained in $\partial R_g$,
non separated from $\wlu(\alpha)$ and in between 
$\wls(\beta)$ and $\wls(\gamma)$. This reduces to
the previous case, showing that $\beta$ is a corner
(non hinge) of $R_g$.
Finally suppose that $\wlu(\gamma')$ makes a perfect fit with
$\wls(\beta)$. We already know that $R_g$ is an unstable
adjacent block with more than one element in this case. 
$F$ is a side of two consecutive elements in $R_g$,
and again it follows that $\beta$ is a corner of $R_g$.

The remaing case is that $V_2 \subset V'$. 
To analyze this case we start from $\alpha$ instead of
$\beta$. Let $U = \wlu(\alpha)$, and $U' = \partial R \cap U$,
and so $U'$ is a half leaf of $U$ with boundary point
$\alpha$. Let $U_2 = \partial R \cap U$. 
By the analysis we just did we can deal with all
cases unless $U_2 \subset U'$. 
Suppose first that $V'$ is a half leaf of of $V$: 
this implies that either a half leaf of $\wls(\alpha)$ is
contained in $\partial R$ or $U_2$ has a point 
beyond $\alpha$. In the second case we get by the
analysis above that $\alpha$ is a hinge of $B$ and
a corner of $B'$ which is not a hinge. In the
first case we get that $\alpha$ is an extremal
corner of both $R, R_g$ and so $\alpha$ is a
hinge of both $B$ and
$B'$.
Finally suppose that both $U_2$ and $V_2$ are bounded 
intervals of orbits in $U, V$ respectively. But this
implies that the projection $\Theta(R_g)$ in $\oo$
is a rectangle with compact closure. This is impossible
for the blocks we are considering.

This finishes the analysis when one of the blocks
$R$ or $R_g$ is a stable adjacent block (including the
case one or both are lozenges).

\begin{figure}[ht]
\begin{center}
\includegraphics[scale=1.00]{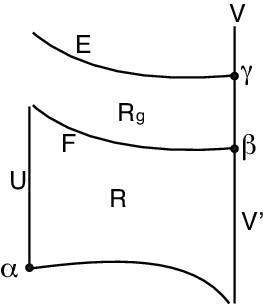}
\begin{picture}(0,0)
\end{picture}
\end{center}
%\vspace{-0.5cm}
\vspace{0.0cm}
\caption{Depiction of a situation in Case 1. The 
stable leaves $F, E$ make perfect fits with unstable leaves
non separated from $\wlu(\alpha)$.}
\label{figure9}
\end{figure}

\vskip .1in
\noindent
{\bf {Case 2  $-$
The blocks $R$ and $R_{\gamma}$ are
both unstable adjacent blocks with
more than one element.}}

There are several possibilities, some of them similar
to situations in Case 1.
The geodesic $\mu$ in $E$ has ideal points
the negative ideal points of two distinct flow lines,
defining two unstable leaves $U, V$ so that $\mu$
has ideal point the negative ideal points of $U \cap E$
and $V \cap E$.
Also the block $R$ has a subblock which contains $\mu$,
this is either a lozenge or a union of two adjacent 
$(1,3)$ ideal quadrilaterals forming a  $(2,2)$ ideal
quadrilateral which is not a lozenge. 
In either case let this subblock be denoted by $\B$.
In the same way we define $\B_g$: it is the
subblock of $R_g$ containing $\mu$.

%Let $\C_0$ be the block associated with $L$ and containing
%$\mu$. This has a subblock $\C_1$ so that the corresponding
%band in $L$ only has the boundary as orbits of the flow.
%Similarly let $\D_1$ be the corresponding subblock 
%associated with $\gamma(L)$.

Suppose first that the subblocks $\B, \B_g$ are
both lozenges. 
Both have a side in $V$. If $\partial \B, \ \partial \B_g$
share a half leaf of say $V$, then $\B, \B~$ have sides in 
exactly the same leaves so $\B = \B_g$), and they share a corner,
as we wanted to prove. Otherwise $\partial \B \cup \partial
\B_g$ contains $V$. This is impossible: 
$\partial \B \cap V$ (which is a side of $\B$) makes a perfect
fit with a stable leaf denoted by $L$ which intersects 
$U$. In the same way $\partial \B_g \cap U$ (which is a side
of $\B_g$) makes a perfect fit with a stable leaf $F$ which
intersects $V$. Since these two perfect fits are on the same
side of $\wls(\mu)$ it follows that $L, F$ intersect,
but are not the same. This is impossible.

Suppose now that one of the subblocks is a lozenge
and the other is a union of a pair of $(1,3)$ 
ideal quadrilaterals which have a common stable side.
Without loss of generality assume that $\B$ is the 
union of the two $(1,3)$ quadrilaterals, and has
unstable sides in $U, V$, entirely containing $V$.
Since $\B_g$ has a side in $V$ it follows that it
has a corner in $V$, which we denote by $\gamma$. 
Then a half leaf of $\wls(\gamma)$ is contained in
$\partial \B_g$. But this is impossible as
this half leaf intersects the side of $\B$ contained
in $U$, so $\B, \B_g$ cannot intersect the same set
of unstable leaves.

Finally  suppose that both blocks are unions of pairs
of $(1,3)$ ideal quadrilaterals. The unstable sides
of these blocks are contained in $U, V$. 
Each of the pairs $\B, \B_g$ has  a full unstable
leaf contained in its boundary $-$ this unstable leaf
can only be $U$ or $V$. 
Suppose first that $U$ is contained in the boundary
of (say) $\B$ and $V$ is contained in the boundary
of $\B_g$. Then $U$ makes a perfect fit with two 
stable leaves which intersect $V$ and similarly
for $V$. This is impossible.
Hence either $U$ is contained in the unstable  boundary
of both $\B, \B_g$ or $V$ is. Without loss of generality
assume that $U$ is contained in both unstable boundaries.
Hence the stable leaves in the boundary of $\B, \B_g$
making a perfect fit with $U$ are the same leaves,
and so are the intersections with the unstable leaf $V$.
It follows that $\B = \B_g$ and hence they 
share corner orbits.

This finishes the proof of Lemma \ref{commoncorner}.
\end{proof}

We also need the following two simple results:

\begin{lemma} \label{cornerperio}
Let $W$ be a wall, with associated bi-infinite
block $\C$. Suppose that a corner of $\C$ is periodic.
Then there are no $(1,3)$ ideal quadrilaterals in $\C$, 
that is, the elements of $\C$ are all lozenges.
In addition all corners of $\C$ are periodic.
\end{lemma}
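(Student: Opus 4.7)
The plan is a propagation argument along the linear sequence of basic blocks that make up $\C$. Suppose $\eta$ is a periodic corner of $L$, fixed by some $g \in \pi_1(M) \setminus \{\mathrm{id}\}$. Because $g$ preserves $\wls$, $\wlu$ and the flow direction, $g$ fixes each half-leaf of $\wls(\eta)$ and of $\wlu(\eta)$, and hence preserves each quadrant at $\eta$. Let $\mathcal B$ be a basic block of $\C$ having $\eta$ as one of its corners. The first task is to verify that $g$ preserves $\mathcal B$ and fixes every corner of every element of $\mathcal B$: each element (a lozenge or a $(1,3)$ ideal quadrilateral) is uniquely determined once one of its corners and an ambient quadrant are specified, via the uniqueness of perfect fits and of the adjacency relations, and $g$ preserves all this data; propagating element by element inside $\mathcal B$ starting from the element meeting $\eta$ yields the claim.

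The main obstacle is ruling out $(1,3)$ ideal quadrilaterals inside $\mathcal B$. Denote by $\eta'$ the opposite end corner of $\mathcal B$, which is periodic by the previous paragraph. Assume for contradiction that $\mathcal B$ contains $(1,3)$ ideal quadrilaterals. By the case analysis in the proof of Proposition \ref{limitloz} (or its stable$-$unstable dual, depending on the type of $\mathcal B$), $\mathcal B$ then has the form of one lozenge at one end together with $2j \geq 2$ adjacent $(1,3)$ ideal quadrilaterals obtained by cutting, along $\wls(\eta')$, a chain $G_0, G_1, \ldots, G_{2j}$ of periodic lozenges connecting the periodic orbit $\mu$ of $\wls(\eta)$ to the periodic orbit $\delta$ of $\wlu(\eta')$; in particular $\wls(\eta')$ makes a perfect fit with $\wlu(\eta)$, and by Proposition \ref{periodicdouble} combined with Theorem \ref{nonsep} the whole configuration is invariant under one non-trivial deck transformation. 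Since both $\eta$ and $\eta'$ are periodic, the uniqueness of the periodic orbit in a periodic leaf forces $\eta = \mu$ and $\eta' = \delta$. But then $\wls(\eta') = \wls(\delta)$ is already a stable side of the lozenge $G_{2j}$ and does not in fact cross the interior of any $G_k$; the alleged $(1,3)$ ideal quadrilaterals collapse and $\mathcal B$ reduces to the adjacent chain $G_0, \ldots, G_{2j}$, contradicting the hypothesis. Hence $\mathcal B$ consists only of lozenges.

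Finally, $\eta'$ is a periodic corner of $L$ and is also an end corner of the next basic block of $\C$ on the opposite side, so the same argument applied to $\eta'$ shows that the next basic block of $\C$ consists only of lozenges and has periodic corners. Iterating in both directions along $\C$ proves that every basic block of $\C$ is an adjacent chain of lozenges (possibly a single lozenge) and that every corner of $L$ is periodic, which is the content of the lemma.
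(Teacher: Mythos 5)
Your route is genuinely different from the paper's and, in outline, it can be made to work, but two steps need repair before it does. The paper's argument is purely local: if the periodic corner $\delta$ were a corner of a $(1,3)$ ideal quadrilateral, then the full leaf $Z$ contained in the boundary of the adjacent pair makes perfect fits with $\wls(\delta)$ and with $\wls(\delta_1)$, where $\delta_1$ is the other corner of the union; an orientation-preserving power of the deck transformation then fixes $Z$, hence $\wls(\delta_1)$, hence $\delta_1$ itself since $\wlu(\delta_1)=\wlu(\delta)$, and two distinct invariant orbits in one leaf is impossible. No appeal to the classification of basic blocks is needed. You instead propagate invariance to the far corner of the basic block and invoke the fine structure from the proof of Proposition \ref{limitloz} (the chain $G_0,\dots,G_{2j}$ cut along $\wls(\eta')$, with $\wls(\eta')$ making a perfect fit with $\wlu(\eta)$) to force $\eta'$ to coincide with the periodic orbit of $\wlu(\eta')$, which is incompatible with $\wls(\eta')$ crossing the interior of $G_{2j}$. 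That contradiction is valid, but notice that your own propagation already hands you the paper's contradiction much earlier: crossing a single adjacent pair of $(1,3)$ quadrilaterals fixes both corners of the pair-union, and these lie on a common stable or unstable leaf, so you have two invariant orbits in one leaf without ever using the global structure. The detour through the proof of Proposition \ref{limitloz} therefore buys nothing and adds dependence on details that are in that proof rather than in its statement.

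The two repairs. First, the inference ``$g$ preserves $\wls$, $\wlu$ and the flow direction, hence fixes each half-leaf of $\wls(\eta)$ and $\wlu(\eta)$'' is not valid: a deck transformation fixing $\eta$ can interchange the two half leaves (reverse a transverse orientation) while preserving both foliations and the flow direction. You must pass to $g^2$, as the paper does when it assumes the transformation preserves both transversal orientations; this is harmless since $\pi_1(M)$ is torsion free, but without it the quadrant preservation and every subsequent uniqueness step collapse. Second, your uniqueness claim is false as stated: a single $(1,3)$ ideal quadrilateral has three corners and one perfect fit and is not at all determined by one corner and an ambient quadrant. What is true, and what you actually need, is that a lozenge, and the union of an adjacent pair of $(1,3)$ quadrilaterals (a $(2,2)$ ideal quadrilateral which is not a lozenge), are each determined by one of their corners together with the quadrant, via uniqueness of perfect fits; the propagation must be phrased for these pair-unions, whose end corners are the corners of $L$. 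Once both corrections are made, your propagation plus the unique-periodic-orbit-per-leaf fact gives the statement, essentially by the same mechanism as the paper, only with an unnecessary global detour at the end.
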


\begin{proof}
Suppose 
that a corner $\delta$ of $\C$ is periodic,
left invariant by a non trivial deck transformation
$g$. 
Up to taking a power of $g$ assume it preserves
both transversal orientations.
Recall that a corner of $W$ is also a corner of $\C$.
%the corners
%of $L$ are either the corners of lozenges in $\C$
%or the corners of the pairs of adjacent  $(1,3)$
% ideal quadrilaterals in $\C$.

%We show that $\delta$ cannot be a corner of a $(1,3)$
%ideal quadrilateral. 
Suppose by way of contradiction that 
$\delta$ is a corner of an element $C$ 
of $\C$ which is a $(1,3)$ ideal 
quadrilateral. There is $C_1$ another $(1,3)$ quadrilateral
in $\C$ which is adjacent to $C$ along a flow band.
Their union is a $(2,2)$ quadrilateral, which we denote
by $C^*$. Let $E$ be the unique stable or unstable
leaf which is entirely contained in $\partial C^*$.
This leaf has a (unique) periodic orbit which we denote by
$\gamma$.
Recall that the 
splitting of $C^*$ into $C \cup C'$ was done along
the stable or unstable leaf of $\gamma$ which is not 
$E$. It follows that $\delta$ is not in this leaf,
and $\delta$ is in a corner of $C$ contained in a half
leaf in the boundary of $C$.

Let $\delta_1$ be the other corner of the union $C \cup C_1$.
%For simplicity and without loss of generality assume that
We proved above that 
a half leaf of either $\wls(\delta)$ or $\wlu(\delta)$
 is a side of $C$. 
Assume without loss of generality that a half leaf of $\wls(\delta)$
is a side of $C$.
Since $g(\wls(\delta)) = \wls(\delta)$ and
$E$ makes a perfect fit with $\wls(\delta)$ it follows
that $g(E) = e$. This uses the orientation hypothesis.
In the same way $E$ makes a perfect fit with $\wls(\delta_1)$
so $g(\wls(\delta_1)) = \wls(\delta_1)$.
On the other hand $\wlu(\delta_1) = \wlu(\delta)$, so
also $g(\wlu(\delta_1)) = \wlu(\delta_1)$.
So $g$ fixes the intersection of $\wls(\delta_1)$
with $\wlu(\delta_1)$, which is exactly $\delta_1$.
In other words $g(\delta_1) = \delta_1$.
This is a contradiction because we obtain two distinct orbit
$\delta, \delta_1$ in $\wlu(\delta)$ which are both invariant
under $g$.

It follows that $\delta$ cannot be a corner of a $(1,3)$
ideal quadrilateral in $\C$. Hence $\delta$ is the corner
of two lozenges in $\C$. The other corners of these lozenges are also
invariant under $g$. We can now restart the analysis with
these two invariant corners and proceed. 
It follows that all corners of $\C$ are invariant under
$g$ and there are no $(1,3)$ ideal quadrilaterals in $\C$.
\end{proof}

This lemma also works for partial walls.

\begin{lemma} \label{morethanone}
Let $W$ be a wall with associated bi-infinite block $\C$.
Suppose that $\C$ has a basic block $R$  which is either 
a stable adjancent block or an unstable adjacent block,
which has more than one
lozenge. Hence the corners of $R$ are 
periodic.
It follows that all corners of $\C$ are periodic.
\end{lemma}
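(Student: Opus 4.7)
The plan is to reduce to Lemma~\ref{cornerperio}: once a single corner of $\C$ is known to be periodic, that lemma forces every corner of $\C$ to be periodic. The basic block $R$ will furnish such a corner.

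First I would justify the parenthetical ``Hence the corners of $R$ are periodic'' in the statement. Since $R$ is a stable (or unstable) adjacent block arising as a limit of lozenges and containing at least two lozenges, the end case analysis in the proof of Proposition~\ref{limitloz} shows that the end corners of $R$, call them $\delta_0$ and $\delta_1$, are periodic orbits of $\wwp$ left invariant by a common non-trivial deck transformation $g \in \pi_1(M)$. Theorem~\ref{freelyhomotopic} then guarantees that $g$ fixes every lozenge of $R$ together with each of their corners, so in fact every corner of every lozenge in $R$ is periodic.

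Next, the end corners of $R$ are pivots of the wall $L$, and in particular qualify as corners of $\C$ in the sense required by Lemma~\ref{cornerperio}. Applying that lemma to any such periodic corner immediately gives the desired conclusion that every corner of $\C$ is periodic. As a byproduct, Lemma~\ref{cornerperio} also yields that $\C$ contains no $(1,3)$ ideal quadrilaterals, so in fact every element of $\C$ is a lozenge $-$ a strengthening that is not explicitly requested but is automatic.

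The whole argument is really a two-step assembly, and the only thing to check carefully is the bookkeeping identification ``corner of the basic block $R$ $=$ corner of $\C$'' needed to feed into Lemma~\ref{cornerperio}; this is immediate from the way bi-infinite blocks are built out of their basic subblocks. Since the existence of the periodic deck transformation $g$ is supplied by Proposition~\ref{limitloz} and Theorem~\ref{freelyhomotopic}, and the propagation step is exactly what Lemma~\ref{cornerperio} does, there is essentially no obstacle to overcome here.
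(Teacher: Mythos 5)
Your proposal is correct and takes essentially the same route as the paper: the paper likewise reduces to the ``Showing that $\delta_1 = \eta_1$'' analysis at the end of the proof of Proposition \ref{limitloz} (phrased there via the periodic stable leaf $Z$ in $\partial(C_{2j-1}\cup C_{2j})$ and the unstable leaf $U$ making a perfect fit with it) to see that an end corner of $R$ is the periodic orbit, and then concludes by applying Lemma \ref{cornerperio}.
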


\begin{proof}
If the block has more than one lozenge, then by
the description of basic blocks, it follows that
this basic block only has lozenges.

Assume without loss of generality that $R$ is a stable
adjacent block.
Let $C_0,C_1,..., C_{2j}$ be the elements of $R$.
Let $\alpha, \beta$ be the corners of $R$ with
$\alpha$ a corner of $C_0$. 
Let $Z$ be the stable leaf entirely contained in
$\partial(C_0 \cup C_1)$. Since $j \geq 1$,
it follows that $Z$ is non separated from
another stable leaf, which has at least a half
leaf contained in $\partial R$. It 
follows that $Z$ is periodic.
Since $\wlu(\alpha)$ makes a perfect fit with $Z$,
it follows that $\wlu(\alpha)$ is also periodic,
and let $\gamma$ be the periodic orbit in $\wlu(\alpha)$.
Let $\delta$ be the periodic orbit in $Z$. Notice that
$\wlu(\delta)$ makes a perfect with $\wls(\alpha)$ and
also another stable leaf which has half leaves
in $\partial C_1$ and $\partial C_2$.
Let $g$ be a non trivial deck translation 
preserving $\delta$ 
and preserving orientations. So $g$ preserves $\gamma$ as well.

We want to show that $\gamma$ is equal to $\alpha$.
Since both $\gamma, \delta$ are preserved by $g$ and
$\wlu(\gamma)$ makes a perfect fit with $\wls(\delta)$,
it follows that $\wls(\gamma)$ also makes
a perfect fit with $\wlu(\delta)$. 
We also know that $\wlu(\delta)$ has a half
leaf which is in the boundary of both $C_0$ and $C_1$.
In particular $\wlu(\delta)$ makes a perfect fit
with $\wls(\alpha)$ (as $C_0$ is a lozenge). 
So both $\wls(\alpha)$ and $\wls(\gamma)$ make a perfect
fit with $\wlu(\delta)$, which implies that $\alpha = \gamma$.
%This is exactly what is done in the subsection 
%``Showing that $\delta_1 = \eta_1$" in the end of the
%proof
%of Proposition \ref{limitloz}. There it was shown
%that the orbit corresponding here to the corner of
%$C_{2j}$ not in $Z$, has to be the periodic orbit in the unstable
%leaf corresponding to $U$ here.
%In other words this corner orbit of $C_{2j}$ is
%$\alpha$.
Once one corner is periodic, the previous lemma shows
that all corners are periodic.
\end{proof}

Notice that it was crucial in the last lemma that
$C_0$ is a lozenge, and there is more than one
lozenge in the block considered.

We first analyze two configurations, which will come
up in some arguments.

\vskip .05in
\noindent
{\bf {Configuration 0 $-$}} Let $W$ be a wall 
with associated bi-infinite block $\cC$. Let $\alpha$
a hinge of $W$. Let $Z \in \wls$ intersecting
$\wlu(\alpha)$, let $S \in \wlu$ intersecting 
$\wls(\alpha)$, and so that $Z, S$ make a perfect fit.
Let $Q$ be the quadrant at $\alpha$ so that contains
the perfect fit between half leaves of $Z, S$.
Then $W$ cannot enter $Q$.
\vskip .05in

The proof is by contradiction. Suppose this occurs.
%Let $\HH$ be the bi-infinite block associated with
%$H$ and 
Let $P$ be the element of $\cC$ with a corner
in $\alpha$ and contained in $Q$.
Suppose first that the element $P$ is a lozenge.
Then it has a side contained in a stable leaf $T_0$
making a perfect fit with $\wlu(\alpha)$ and
a side in a leaf $T_1$ making a perfect 
fit with $\wls(\alpha)$. Both $T_0, T_1$ are contained
in $Q$. The leaves $T_0, T_1$ have to intersect in the other corner
of the lozenge, but $Z$ separates $T_0$ from $T_1$ showing
this situation is impossible.
This contradiction is also the content of the ``Non corner
criterion", which is \cite[Lemma 2.29]{BFrM}.

\begin{figure}[ht]
\begin{center}
\includegraphics[scale=1.20]{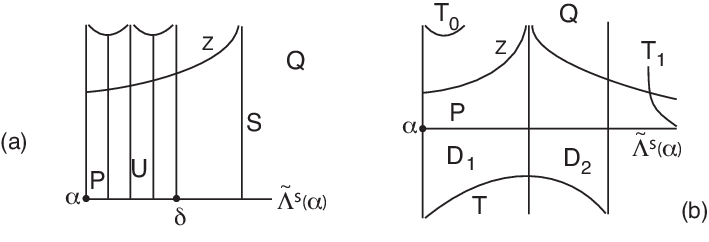}
\begin{picture}(0,0)
\end{picture}
\end{center}
%\vspace{-0.5cm}
\vspace{0.0cm}
\caption{(a) Figure used for Configuration $0$,
(b) Figure used to start the analysis of Configuration $1$.}
\label{figure10}
\end{figure}

Suppose now that $P$ is a $(1,3)$ ideal quadrilateral.
The situation is fairly symmetric here in terms of
stable and unstable 
so we assume that the pair of
adjacent ideal quadrilaterals, one of which is $P$,
intersects a common stable leaf.
Let $U \not = \wlu(\alpha)$ be the 
unstable leaf which also has a half
leaf in the boundary of the union of the pair of 
$(1,3)$ ideal quadrilaterals, one of which is
$P$. We refer to Figure \ref{figure10} (a).
The important thing to notice is that $U$ intersects 
$\wls(\alpha)$ and the intersection is a corner
of the union of the pair of $(1,3)$ ideal quadrilaterals.

Iterate until the last pair of ideal quadrilaterals in the
basic block of $\cC$ containing $P$,
with a last corner denoted by $\delta$ which is still
in $\wls(\alpha)$. A priori this could be
the last corner of the basic block of $\cC$ containing
$P$ (that is, $\delta$ would be a hinge of the 
basic block), or there could be more elements in the
basic block, the first of which necessarily would be a lozenge.
Here is where we are going to use the fact that
$\alpha$ is a hinge, so it is the first corner in
the basic block. Since we only had pairs of quadrilaterals,
we have an even number of elements, so eventually
we will hit a lozenge.
This lozenge
has a corner in $\delta$ in $\wls(\alpha)$ and is contained in $Q$.
The analysis of the lozenge case shows that this
is impossible.

This finishes the analysis of Configuration 0.

\vskip .1in
\noindent
{\bf {Configuration 1 $-$}} Suppose that $\alpha$ is a corner of 
a wall $W$ and $\alpha$ is in the boundary of a lozenge $D_1$,
part of two
adjacent lozenges $D_1, D_2$ and so that either $\wls(\alpha)$
or $\wlu(\alpha)$ 
intersects the interior of $D_1$ and $D_2$.
Let $Q$ be the quadrant at $\alpha$ which contains
half leaves of two distinct stable or unstable leaves which are
in the boundary of $D_1 \cup D_2$.
Then $W$ cannot enter the quadrant $P$.
\vskip .1in

We refer to Figure \ref{figure10} (b).
Let $\cC$ be the bi-infinite block associated with 
$W$. We assume by way of contradiction this is not
true and let $P$ be the element
of $\cC$ with a corner 
in $\alpha$ and contained in the quadrant $Q$.
Suppose first that $P$ is a lozenge. 
In this case without loss of generality 
we can assume assume that $\wls(\alpha)$
intersects the interiors of $D_1, D_2$.
Let $Z$ be the stable leaf which contains a half leaf
in boundary of $D_1$ and $Z$ intersecting $\wlu(\alpha)$.
Let $T$ be the stable leaf which is entirely
contained in $\partial(D_1 \cup D_2)$.
Two of the sides
of $P$ 
are half leaves in $\wls(\alpha)$ and $\wlu(\alpha)$.
The other two sides are in leaves $T_0$ that make a perfect
fit with $\wlu(\alpha)$ and $T_1$ which makes a perfect fit 
with $\wls(\alpha)$. The leaf $Z$ separates $T_0$ from
$T_1$, so $T_0, T_1$ could not intersect, contradicting
the lozenge possibility.

Let $Q'$ be the quadrant at $\alpha$ containing $T$.

The other possibility is that $P$ is a $(1,3)$ ideal
quadrilateral. 
This is part of a pair of adjacent
ideal quadrilaterals. Suppose first this pair intersects
a common unstable leaf as in Figure \ref{figure11} (a).
Then the other corner of the union of the two adjacent
$(1,3)$ ideal quadrilaterals is still in 
$\wlu(\alpha)$ as in Figure \ref{figure11} (a). 
If there are more pairs of
adjacent quadrilaterlas, this last property keeps occurring,
until the last element in the basic block of the basic 
block of $\C$ containing with $P$ which is 
either a lozenge or a $(1,3)$ ideal quadrilateral.
If it is a lozenge, then it still 
has a corner in $\wlu(\alpha)$, and 
we obtain a contradiction as in the case of initial
element is a lozenge. 
Otherwise the last element is a $(1,3)$ ideal quadrilateral,
but the basic block contains a lozenge, denoted by $C$, which has
to be contained in the quadrant $Q'$.
To get to $C$ one may need to go past some pairs of $(1,3)$
quadrilaterals contained in $Q'$, but the important point
is that they all have a corner in $\wlu(\alpha)$ (in
the half leaf $\wlu(\alpha) \cap \partial Q'$).
Therefore $C$ has a corner, denoted by $\delta$ which is 
in this half leaf of $\wlu(\alpha)$.
Then one stable side of $C$ is contained
in $T$ and intersects an unstable leaf which
makes a perfect fit with $\wls(\alpha)$. 
Again this is impossible.

\begin{figure}[ht]
\begin{center}
\includegraphics[scale=1.00]{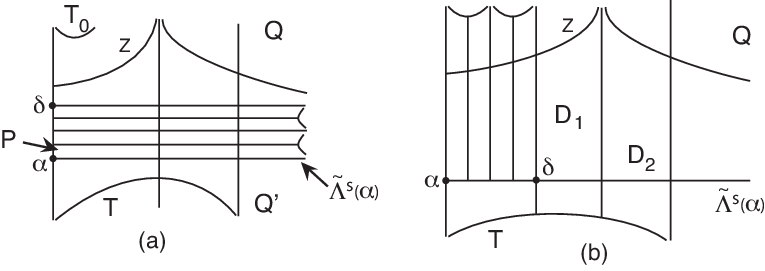}
\begin{picture}(0,0)
\end{picture}
\end{center}
%\vspace{-0.5cm}
\vspace{0.0cm}
\caption{Possibilities associated with Configuration 1.}
\label{figure11}
\end{figure}

The last situation is that the two adjacent $(1,3)$
ideal quadrilaterals 
(one of which is $P$) intersect a common stable leaf.
This is depicted in Figure \ref{figure11} (b). 
First notice that this implies that
the block is a stable adjacent block, and
in particular $\alpha$ is a hinge of this block.
Then we have a finite collection
of such pairs in $Q$, the corners of which are always in $\wls(\alpha)$. 
Let $\delta$ be the last such corner.
So we still have that $\wlu(\delta)$ intersects $Z$. 
The next element in the block is a lozenge, which
we denote by $C$. This in the block, which is still
contained in the quadrant $Q$.
The analysis of the lozenge case shows that this is impossible.
%Again it could be that the final element of the basic
%block is a lozenge with corner $\delta$. The
%same proof as the case that $P$ is a lozenge shows that this
%is impossible. Otherwise $\delta$ is a hinge in the
%wall $W$. 
%The next element of $\C$ is in the quadrant at
%$\delta$ which does not intersect $Z$ or $\wlu(\alpha)$.
%This quadrant has the property that it contains
%a perfect fit between a stable leaf $T$ and an unstable
%leaf $S$ which makes a perfect fit and so that
%$T \cap \wlu(\delta) \not = \emptyset$ and $S \cap \wls(\delta)
%\not = \emptyset$. 
%This is impossible, by the analysis of Configuration 0 $-$ which needs
%that $\delta$ is a hinge.

This finishes the analysis of Configuration 1.
\vskip .1in

We stress that we have to assume that $\alpha$ is a hinge
in Configuration 0, but $\alpha$ only needs to be a corner
in the analysis of Configuration 1.

\begin{lemma} \label{sameband}
Assume that $M$ is atoroidal.
Suppose that $W$ is a wall and $g$ is a deck transformation
such that both $W$ and $g(W)$ contain the same orbit 
$\eta$ of $\wwp$ and both $W$ and $g(W)$ enter
the same quadrant $Q$ associated with $\eta$.
Suppose that $g$ preserves the transverse orientations
to $\wls, \wlu$.
Then the subbands of $W, g(W)$ with boundary in
$\eta$ and entering $Q$ are the same.
\end{lemma}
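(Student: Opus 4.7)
The strategy is to show that, proceeding from $\eta$ into $Q$, the basic block of the bi-infinite block of $L$ and that of $\gamma(L)$ coincide as subsets of $\mt$, element by element, and terminate at the same pivot; equality of the resulting canonical bands (the subbands) will then follow immediately.

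First, I establish that the first element of the basic block at $\eta$ entering $Q$ is uniquely determined by the pair $(\eta, Q)$ via local perfect-fit data. Such an element $W$ has $\eta$ as a corner whose two sides are half leaves of $\wls(\eta)$ and $\wlu(\eta)$ contained in $Q$. The remaining two sides must lie in the unique unstable leaf $V$ making a perfect fit with the half leaf of $\wls(\eta)$ in $Q$, and the unique stable leaf $S$ making a perfect fit with the half leaf of $\wlu(\eta)$ in $Q$. Either $V \cap S$ is a single orbit $\eta_1$, in which case $W$ is the lozenge with corners $\eta, \eta_1$; or $V$ and $S$ make a perfect fit (so $V \cap S = \emptyset$), in which case $W$ is the $(1,3)$ ideal quadrilateral whose sides are determined by these four leaves. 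In either alternative $W$ depends only on $\eta$ and $Q$, so the first elements of the basic blocks for $L$ and $\gamma(L)$ at $\eta$ entering $Q$ agree.

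Next I propagate by induction along the basic block. Assuming the first $k$ elements coincide, with shared corners $\eta = \eta_0, \eta_1, \ldots, \eta_k$, either the $(k+1)$-th element exists and is adjacent to the $k$-th along an explicit shared side, in which case the same local perfect-fit analysis identifies it for both walls, or both basic blocks terminate at $\eta_k$, so that $\eta_k$ is a pivot of both $L$ and $\gamma(L)$. The hard part is matching the termination, since pivot-ness is a global feature of the wall and not determined by local data alone. Here I invoke the atoroidal hypothesis through Theorem \ref{nonsep}, which bounds the sizes of sets of non-separated leaves in the leaf spaces of $\wls$ and $\wlu$, in combination with Lemmas \ref{cornerperio} and \ref{morethanone}: as soon as a basic block contains more than one lozenge, all corners of the bi-infinite block become periodic and $(1,3)$ ideal quadrilaterals are excluded, forcing a rigid lozenge-only structure on the block pattern. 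Because $\gamma$ preserves transverse orientations, the periodic structure of $L$ transports compatibly to that of $\gamma(L)$, and so $\eta_k$ is a pivot of one wall exactly when it is of the other.

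Finally, by induction the basic blocks coincide as subsets of $\mt$, and hence their canonical bands, which are the subbands of $L$ and $\gamma(L)$ with boundary in $\eta$ and entering $Q$, are equal. The principal obstacle is the termination-matching step in the induction; once that is handled, the rest of the argument is a combination of the local uniqueness of the first element together with straightforward inductive propagation.
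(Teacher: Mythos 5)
Your reduction to ``local uniqueness of the first element plus induction'' does not work as stated, and the step you yourself flag as hard is exactly where the argument has no content. First, your inductive target is too strong: the lemma does not claim that the basic blocks of $\C$ and $\E$ at $\eta$ coincide, and in fact they need not. For example the basic block of $\C$ entering $Q$ can be a single lozenge $C$ while that of $\E$ is an unstable adjacent block with several elements; the subbands adjoining $\eta$ still agree, because for an unstable adjacent block the subband with boundary $\eta$ is only the canonical band of the element (or pair) adjoining $\eta$, not of the whole block. Your element-by-element matching with ``termination at the same pivot'' would be trying to prove a false statement in that scenario. The correct statement rests on the stable/unstable asymmetry: only when a stable adjacent block with more than one element occurs does the subband depend on the entire block, and that is the only case where real work is needed.

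Second, in that case your ``termination matching'' is asserted rather than proved. Invoking Theorem \ref{nonsep} together with Lemmas \ref{cornerperio} and \ref{morethanone} and saying the periodic structure ``transports compatibly'' under $\gamma$ does not rule out the two stable adjacent blocks having different numbers of elements. The actual proof needs two separate mechanisms that you omit: (i) if the extra elements are lozenges, then all corners are periodic and one must derive a $\mathbb{Z}^2$ subgroup of $\pi_1(M)$ (from $\gamma^{-1}\zeta\gamma=\zeta^{\pm 1}$ for a generator $\zeta$ of the orientation-preserving stabilizer of a corner), contradicting atoroidality --- this, and not the finiteness of sets of non-separated leaves, is how the atoroidal hypothesis enters; (ii) if the extra elements are pairs of adjacent $(1,3)$ ideal quadrilaterals, the corners need not be periodic at all, so the periodicity rigidity you invoke is unavailable, and one must instead run the Configuration 0/Configuration 1 analyses at the relevant pivot to reach a contradiction. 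Finally, your ``local dichotomy'' for the first element is also not right: when the perfect-fit partners $V$ and $S$ of the half leaves of $\wls(\eta)$ and $\wlu(\eta)$ in $Q$ are disjoint they need not make a perfect fit with each other, a $(1,3)$ quadrilateral element is not determined by perfect-fit data at $\eta$ (its splitting side is fixed only by the convention on periodic orbits in the boundary leaf), and excluding a lozenge in one block against a $(1,3)$ quadrilateral in the other requires a genuine argument (the perfect-fit contradiction of the paper's Case 2), not a tautology about uniqueness.
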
 

\begin{proof}
Let $\C$ (resp. $\E$)  be the bi-infinite block associated with
$W$ (resp. $g(W))$. 
%Then $\gamma(\C)$ is the bi-infinite block associated
%with $\gamma(L)$, we denote this by $\E$ for simplicity.

The proof is long because there are many possibilities.

\vskip .1in
\noindent
{\bf {Case 0 $-$   There is a corner of $\C$ of which is periodic.}}

Equivalently there is a corner of $\cE$ which is periodic.
By Lemma \ref{morethanone} all corners of $\cC$ and of $\cE$
are periodic. 
To deal with this possibility we will have to use that $M$
is atoroidal.

By hypothesis $W, g(W)$ share the corner $\eta$.
Let $\beta = g^{-1}(\eta)$ which is a corner 
in $W$ as well.
Hence $\beta$ and $g(\beta)$ are corner orbits
of $W$.
Suppose for a moment that 
$g(\beta) = \beta$. Since $g$ preserves transverse 
orientations to $\wls, \wlu$, then $g$ preserves
any lozenge in a chain with a corner $\beta$. 
Therefore $g$ preserves all lozenges associated
with $W$. It follows that $\cE = g(\C) = \C$.
Since $W$ is made up of bands this
implies that $g(W)$ is equal to $W$. This
finishes the proof in this case.

Assume therefore that $g(\beta)$ is not $\beta$.
Since $\beta$ is periodic it has a non trivial
isotropy group, let $h$ be a generator of the subgroup
of this isotropy group which preserves all transverse
orientations.
Since $h$ preserves transverse orientations to $\wls, \wlu$
and $\C$ is a chain of lozenges, it follows that $h$
preserves all lozenges of $\C$ and hence all corners of $\C$.
In addition $h$ is a generator of the subgroup of
the isotropy group of 
any of these corners which preserves transverse
orientations: this is obtained by switching the corners.
Then since $g(\beta)$ is a corner of $L$, it is preserved
by $h$ and therefore:,

$$g^{-1} h g (\beta) \ = \ \beta, \ \ \ \
{\rm which \ implies \ that} \ \ \ \ g^{-1} h g \ = \
h^i$$

\noindent
for some non zero integer $i$. 
In the same way starting with the periodic orbit $g(\beta)$
which is a corner of $L$,  one
obtains that 

$$g h g^{-1} (g(\beta)) \ = \  g(\beta), \ \ 
{\rm so} \ \ g h g^{-1} \ = \ h^j$$

\noindent
for some $j$ non zero integer. These two equations imply
$h^{ij} = h$. Hence either $i = j = 1$ or $i = j = -1$.
In both cases we have that 
$g^2 h g^{-2} = h$. So in either case 
$g^2, h$ generate a $\mathbb Z^2$ subgroup of $\pi_1(M)$.
Notice that $\pi_1(M)$ is torsion free.

A $\mathbb Z^2$ subgroup of $\pi_1(M)$ contradicts 
$M$ being atoroidal.
This finishes the analysis in this case.

\vskip .1in
\noindent
{\bf {Case 1 - The elements of both $\C$ and $\E$ adjoining
$\eta$ in the quadrant $Q$ are lozenges.}}

In this case since the lozenges share a corner and intersect
in the interior, it follows that these lozenges are the
same.  Let $C$ be this lozenge.
It does not immediately follow that the subbands of $W$ and $g(W)$
are the same because one or both the basic blocks in $\C$, $\E$
could be a stable adjacent block with more than one element.
Let $\alpha$ be the other corner of $C$.

\vskip .1in
\noindent
{\bf {Case 1.1 - One of the basic blocks in $\C$ or $\E$ containing
$C$ 
is a stable adjacent block with more than
one element.}}

Without loss of generality assume that the basic block of 
$\C$ satisfies this.
In particular $\eta$ is a hinge in $W$.
Let $\cE_0$ be the basic block
of $\cE$ containing $C$, and likewise define the basic block
$\cC_0$ of $\cC$.
The analysis of this case will be split further into subcases 
which are:

$-$ the basic block $\cE_0$ in $\cE$ is a chain of at least 2 lozenges intersecting
a common stable leaf,

$-$ $\cE_0$ is is a union of one lozenge and some
$(1,3)$ quadrilaterals intersecting a common stable leaf,

$-$ $\cE_0$ is a single lozenge,

$-$ $\cE_0$ is an unstable adjacent block with more than
one element.

\vskip .05in
Suppose first that $\cE_0$
is also a stable adjacent block with more than one
element. 
Since the first element of the basic block  $\C_0$
is a lozenge with a corner $\eta$,  and the basic block 
$\cE_0$ is a stable adjacent block, then 
the other elements of $\cE_0$ are either $(1,3)$ ideal quadrilaterals 
or lozenges, all intersecting a common stable leaf.
Hence they all have a side
contained in a stable leaf non separated from
$\wls(\eta)$, and also a stable side either
contained in $\wls(\alpha)$,
%(in the case the additional
%elements are $(1,3)$ ideal quadrilaterals), or
or in stable leaves non separated from $\wls(\alpha)$
(case that additional elements are lozenges).
It follows that the elements are the same for
$\C$ and $\E$. 
If the blocks $\cC_0, \cE_0$ are the same, then the subbands
are the same and we finish this part of the analysis. 
If the blocks are not the same, then one of them has more 
elements. Notice that the intersection of these basic blocks is 
one of the basic blocks,
which we assume is the block in $\C$ without
loss of generality. 

\vskip .05in
To analyze this, assume 
first that the additional elements in the basic block $\cE_0$ 
are lozenges. 
Lemma \ref{morethanone} implies that all the corners
of $\C$ are periodic. This is dealt with in Case 0.
It follows that we cannot have that the additional
elements in the basic block of $\E$ are lozenges.

The other situation to be analyzed is that the
additional elements in $\E_0$ are all pairs of 
adjacent $(1,3)$ ideal quadrilaterals.
In Figure \ref{figure12} we depict
an example so the block $\C_0$ has 3 elements and the
block $\E_0$ has 5 elements. The last two elements
in $\E_0$ are adjacent $(1,3)$ ideal
quadrilaterals $C_1, C_2$. 
%Let $C_1, C_2$ be the first pair in the block in $\E$
%which is not contained in $\C$. 
Let $D_1, D_2$ be the two adjacent lozenges containing
$C_1, C_2$ respectively.
Let $\tau$ be the corner of $C_1$ which is also 
a corner of the block $\C_0$. Hence $\tau$ is a hinge
of $\C$. Then we are in the situation of Configuration 1.
It was shown in the analysis of Configuration 1 that
$W$ cannot enter the quadrant of $\tau$ containing 
the pair of non separated stable leaves $Z_1, Z_2$ with
sides in $\partial(D_1 \cup D_2)$. But since
$\tau$ is a hinge of $\C$, then $W$ has to enter that
quadrant of $\tau$. This contradiction shows that this
case cannot happen.

\begin{figure}[ht]
\begin{center}
\includegraphics[scale=0.90]{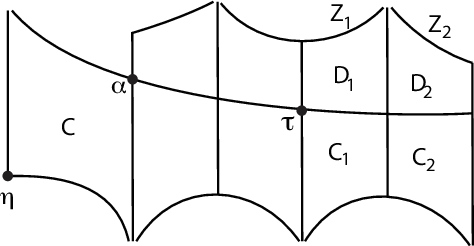}
\begin{picture}(0,0)
\end{picture}
\end{center}
%\vspace{-0.5cm}
\vspace{0.0cm}
\caption{Figure for Case 1.1.
$C_1, C_2$ are the first two elements in $\E$ which
are  not elements of $\C$.}
\label{figure12}
\end{figure}

We conclude that if both elements adjoining $\eta$ are lozenges
and the blocks $\C_0$ and $\E_0$ are stable adjacent blocks, then the blocks
are the same, so the subbands of $W, g(W)$ adjoining
$\eta$ are the same, and this obtains the lemma in this case.

\vskip .05in
We continue the analysis of Case 1.1. The next situation to 
consider is that the basic block $\cC_0$ 
(it contains $C$ as an element) is a stable adjacent
block, and $\E_0$ has a basic block starting with $C$ which is 
either a single lozenge 
or an unstable adjacent block with more than one
element. We analyze each in turn.

Suppose first that $\E_0$ ends with $C$,
that is, $\alpha$ is a hinge of $\E$.
If $\C_0$ has only lozenges, then
all corners are periodic and $W = g(W)$ by Case 0.
If all other elements
of the basic block $\C_0$ are $(1,3)$ ideal quadrilaterals
do the following: analyze the hinge $\alpha$ of $\E$ and the
next element of $\E$ beyond $\alpha$. The analysis of Configuration
1 produces a contradiction, because $\alpha$ is a hinge of $\E$,
and so $\E$ enters the quadrant at $\alpha$ opposite to $C$.

Finally suppose that the basic block $\E_0$ containing
$C$ does not end in $C$. Then $\alpha$ is a corner
orbit of $\E$ which is not a hinge, and the basic block $\E_0$
is an unstable adjacent block with more than
one element. 
If the additional elements in $\E_0$ are
lozenges, then all corners are periodic and we finish
by Case 0.
The other possibility is that the remaining elements
are pairs of adjacent $(1,3)$ quadrilaterals all intersecting
a common unstable leaf, which also intersects $C$.
The additional corners of the union of these pairs
are always in $\wlu(\alpha)$. Let $\delta$ be the last
such corner. Now $\delta$ is a hinge of $\E$, and we are 
in the situation just analyzed,
which produced a contradiction.

%Here we obtain a contradiction as follows.
%The leaf $\wls(\eta)$ is periodic (since the basic block
%in $\C$ has more than one element), and it has a half leaf
%making a perfect fit with an unstable leaf $V$, so that $V$
%separates $C$ from the other elements in the basic block
%of $\C$. By periodicity there is a periodic lozenge $G$
%with sides contained in $\wls(\eta)$ and $V$. 
%Let $\beta_0$  (resp. $\beta_1$) be the periodic orbit
%in $\wls(\alpha)$ (resp. $V$). Then $\wlu(\beta_0)$ makes
%a perfect fit with $\wls(\beta_1)$. In addition $\wls(\alpha)$
%enters the lozenge $G$. Now $\alpha$ is a corner of the
%next element $F_1$ of the basic block in $\E$ beyond $C$.
%
%Suppose first that the basic block in $\E$ it is a single lozenge, 
%which then
%is the lozenge $C$. 
%If the block in $\C$ starting in $\eta$ is a lozenge,
%then $\alpha$ is also a hinge of $L$ and the bands 
%in $L$ and $\gamma(L)$ starting in $\eta$ are the same,
%as we wanted to prove.
%If the block in $\C$ is an adjacent chain of lozenges
%with more than one lozenge, then we proved above that
%all corners are periodic and this finishes the proof.
%Finally if the additional elements of the block in $\C$
%are $(1,3)$ ideal quadrilaterals, we arrive at 
%Configuration 1 again, which we proved is impossible.

This finishes the proof in Case 1.1.

\vskip .1in
\noindent
{\bf {Case 1.2 - Suppose that none of the basic blocks $\cC_0,
\cE_0$ in $\C, \E$ 
with a corner in $\eta$ and containing $C$ 
is a stable adjacent block with more
than one element.}}

In this case the basic blocks are either a single lozenge or
an unstable adjacent subblock with more than one
element. In either case the subband associated with it
is the band associated with the lozenge $C$.
This is one aspect where there is an asymmetry between
the stable and unstable foliations. So the bands 
$W$ and $g(W)$ agree in this subband with boundary $\eta$.

This finishes the analysis of Case 1.

\vskip .2in
\noindent
{\bf {Case 2 - Up to switching $\cC$ and $\cE$, the element in $\C$ with
a corner $\eta$ and contained in $Q$ is a lozenge
and the corresponding element  in $\E$ is
a $(1,3)$ ideal quadrilateral.}} 

Let $C$ be the lozenge in $\C$ with a corner in $\eta$
and contained in the quadrant $Q$.
Let $E_1, E_2$ the adjacent $(1,3)$ quadrilaterals, which
are elements of $\E$ and (say) $E_1$ has a corner in $\eta$
and is contained in $Q$.
We first claim that $E_2$ is also contained in $Q$.
Otherwise $\eta$ is an interior point in the compact side
of the $(2,2)$ ideal quadrilateral which is the
union of $E_1, E_2$. But in that case the orbit
$\eta$ is not contained in the band associated
with the basic block containing $E_1, E_2$, contrary to
assumption.

It follows that the boundary of the union of $E_1, E_2$ has a corner in $\eta$
and contains one side of $C$. Suppose without loss of generality
that it is an unstable side of $C$, which is then a half leaf of
$\wlu(\eta)$. Then the stable leaf $A$ which is contained in the
boundary of $E_1 \cup E_2$ makes a perfect fit with $\wlu(\eta)$
and hence it contains a stable side of the lozenge $C$. 
But $A$ is supposed to make a perfect fit with another unstable
leaf which intersects $\wls(\eta)$ (besides $\wlu(\eta)$).
This is impossible since $C$ is a lozenge.

This shows that Case 2 cannot happen.
Finally we have:

\vskip .1in
\noindent
{\bf {Case 3 - The elements in the blocks $\C, \E$ adjoining
$\eta$ are both $(1,3)$ ideal quadrilaterals}}

The $(1,3)$ ideal quadrilaterals are part of a pair of adjacent
ones forming a $(2,2)$ ideal quadrilateral. 
Suppose first that the pairs
for both $\C$ and $\E$ intersect either a common stable leaf or
a common unstable leaf.
We initially observe that the $(2,2)$ ideal quadrilaterals for both
are exactly the same: 
this is because $\eta$ is a corner of both $W$ and $g(W)$, hence
$\eta$ has to be a corner of the $(2,2)$ ideal 
quadrilaterals $-$ as opposed to $\eta$ being
a common corner of the two $(1,3)$ quadrilaterals
whose union is the $(2,2)$ quadrilateral.
It follows that the $(2,2)$ quadrilaterals in question
are the same for $\cC$ and $\cE$.

If there is an unstable leaf intersecting
both $C_1, C_2$, then this is part of unstable
adjacent blocks in both $\cE, \cC$ and the associated
subband with this $(2,2)$ quadrilateral is the same and 
the result is proved.

Next suppose that $C_1, C_2$ intersect
a common stable leaf. Since the orbit $\eta$
is contained in $\cC, \cE$ and both blocks in $\C$ and $\E$ 
are stable adjacent blocks, 
Following along the block in $\cC$ and $\E$ we have
a number of $(2,2)$ and then a lozenge. So the only
case to analyze is what happens when we hit a lozenge in
say $\cC$, and a $(2,2)$ ideal quadrilateral in $\cE$.
This is impossible.
%it follows that $\eta$ is a hinge of both $\C$ and $\E$.
%Each basic block in $\C$ and $\E$ containing  $C_1, C_2$
%has a number of pairs of adjacent
%$(1,3)$ ideal quadrilaterals and one lozenge at the end.
%It is at the end, because the subblocks start at $\eta$ and
%the initial elements are $(1,3)$ ideal quadrilaterals.
%The lozenge at the end of each
%block  has a side in $\wls(\eta)$ and
%this makes a perfect fit with an unstable leaf. Hence this
%unstable leaf is the same for both of them.
%Call it $U$. It intersects
%a stable leaf $S$ which is non separated from the stable 
%leaf entirely contained in $\partial(C_1 \cup C_2)$.
%Since the first pair $C_1, C_2$ is
%the same for both $\C$ and $\E$ it follows that $S$ is
%the same for both $\C$ and $\E$. It now follows easily
%that the number of pairs is the same for $\C$ and $\E$,
%and the other corner of the blocks for $\C, \E$ are the
%same (this corner is $U \cap S$). Hence the blocks are the
%same and the canonical band
%associated with the block is the same.
%This finishes the analysis in this case.

The remaining case is when the pair of adjacent $(1,3)$
ideal quadrilaterals for one of $\C$ or $\E$ intersects
a common stable leaf and the other intersects a common
unstable leaf.
Again without loss of generality assume that the block
in $\C$ intersects a common unstable leaf.
Hence the basic block in $\C$ is an unstable
adjacent block with more than one element, and 
likewise the basic block in $\E$ is a stable adjacent
block.
Let $C_1, C_2$ be the initial elements of the
basic block in $\C$ and $P_1, P_2$ the corresponding elements in the 
basic block in $\E$, see Figure \ref{figure13} a.
Let $U$ be the unstable leaf entirely
contained in $\partial(C_1 \cup C_2)$,
and let $Z$ be the stable leaf entirely contained in 
$\partial(E_1 \cup  E_2)$.
By the description of the stable adjacent blocks, it follows
that $U$ should intersect a stable leaf non separated from
$Z$. This is impossible, showing this case cannot happen.

This finishes the proof of Lemma \ref{sameband}.
\end{proof}

\begin{remark} \label{atoroidallittle}
In the previous result the atoroidal hypothesis is only
used when the corners of $L$ are periodic.
\end{remark}

\begin{lemma} \label{eitherperiodic}
Supppose that $W$ is a wall, and $g(W)$ is a deck translate
sharing a corner $\eta$ with $W$. 
Suppose that $g$ preserves the transversal orientations
to $\wls, \wlu$.
Suppose that the 
pair of quadrants
at $\eta$ that $W$ enters are not exactly the
same as the pair of quadrants at $\eta$ that $g(W)$
enters. Then all corners of $W$ are periodic, and so are
all corners of $g(W)$.
\end{lemma}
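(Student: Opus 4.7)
The plan is to prove that some corner of $L$ is periodic; Lemma \ref{cornerperio} then propagates periodicity to all corners of $L$, and by applying $\gamma$ to all corners of $\gamma(L)$.

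First, I rule out $\gamma(\eta)=\eta$: since $\gamma$ preserves the transverse orientations to both $\wls$ and $\wlu$, if $\gamma$ fixed $\eta$ it would fix each of the four quadrants at $\eta$ individually, so $L$ and $\gamma(L)$ would enter the same pair of quadrants at $\eta$, contradicting the hypothesis. Hence $\beta:=\gamma^{-1}(\eta)\neq \eta$ is a distinct corner of $L$.

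I then analyse the configuration at $\eta$. Since $L$ and $\gamma(L)$ enter different pairs of quadrants there, at least three of the four quadrants at $\eta$ are covered by adjoining elements of the bi-infinite block $\C$ of $L$ or of the bi-infinite block $\gamma(\C)$ of $\gamma(L)$. Each such element (a lozenge or $(1,3)$ ideal quadrilateral with corner $\eta$) contributes a perfect fit along a specific half-leaf of $\wls(\eta)$ with some unstable leaf, and analogously a perfect fit along a half-leaf of $\wlu(\eta)$ with some stable leaf. A pigeonhole check shows that with three of four quadrants covered, on at least one complementary side of $\wlu(\eta)$ there are perfect fits from two distinct stable leaves $\wls(\alpha)$ and $\wls(\alpha')$ along the two disjoint half-leaves of $\wlu(\eta)$, one arising from $\C$ and the other from $\gamma(\C)$. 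Proposition \ref{periodicdouble} then supplies a non-trivial deck transformation $\sigma$ fixing $\wlu(\eta)$, $\wls(\alpha)$, and $\wls(\alpha')$.

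Finally, I upgrade this leaf periodicity to periodicity of the orbit $\alpha$. The transformation $\sigma$ acts on the hyperbolic leaf $\wls(\alpha)$ as an isometry preserving the flow direction, hence as a translation along its axis, which is the unique periodic orbit of $\wls(\alpha)$ fixed by $\sigma$. Because the perfect fit between $\wls(\alpha)$ and $\wlu(\eta)$ is uniquely determined by the pair of leaves, and $\sigma$ fixes both leaves, $\sigma$ must preserve the specific pair of half-leaves realising this perfect fit; in particular it preserves the half-leaf of $\wls(\alpha)$ lying on the boundary of the adjoining element, and hence fixes its boundary flow line, which is $\alpha$. Thus $\alpha$ is periodic; since $\alpha$ is the other boundary corner of the subband of $L$ adjoining $\eta$, it is a corner of $L$, and Lemma \ref{cornerperio} concludes. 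The main obstacle is exactly this final upgrade, passing from leaf periodicity (via Proposition \ref{periodicdouble}) to orbit periodicity of a corner; it requires both the uniqueness of the perfect fit between the two $\sigma$-invariant leaves and the fact that deck transformations preserve the flow direction, which together force $\sigma$ to fix the boundary of the relevant half-leaf.
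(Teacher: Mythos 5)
Your overall plan (produce one periodic corner, then propagate with Lemma \ref{cornerperio}) is reasonable, but the two steps that carry the real weight both fail. First, the pigeonhole step is not valid. A $(1,3)$ ideal quadrilateral adjoining $\eta$ has exactly one perfect fit among its sides; its side at $\eta$ in one of $\wls(\eta),\wlu(\eta)$ is a compact flow band and carries no perfect fit, so such an element contributes a perfect fit with a half leaf of only \emph{one} of $\wls(\eta)$, $\wlu(\eta)$ (which one depends on whether it sits in a stable or an unstable adjacent block), not both as you assert. Consequently, covering three or four quadrants does not force two perfect fits along disjoint half leaves of the same leaf with the partner leaves on the same side. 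Concretely, in the configuration where $\eta$ is a pivot of both $L$ and $\gamma(L)$ with quadrant pairs $\{I,III\}$ and $\{II,IV\}$ and all four adjoining elements are $(1,3)$ quadrilaterals (those of $\C$ from stable adjacent blocks, those of $\E$ from unstable adjacent blocks), the fits along the two half leaves of $\wlu(\eta)$ have their stable partners on opposite sides of $\wlu(\eta)$, and likewise for $\wls(\eta)$, so Proposition \ref{periodicdouble} never applies. This is not an accident: in the paper this opposite-quadrant pivot--pivot case is not a ``periodic'' case at all; it is shown to be \emph{impossible}, by a coherence argument on transverse orientations at pivots combined with the hypothesis that $\gamma$ preserves both transverse orientations. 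Your argument neither yields periodicity there nor excludes the configuration, so the case is simply unhandled.

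Second, even where Proposition \ref{periodicdouble} does apply, your upgrade from periodic leaves to a periodic corner orbit does not work. The proposition gives a deck transformation $\sigma$ fixing $\wlu(\eta)$, $\wls(\alpha)$, $\wls(\alpha')$ as leaves, hence fixing the periodic orbit inside each leaf; but a perfect fit does not determine a distinguished half leaf with boundary $\alpha$ --- any smaller half leaf realizes the same perfect fit --- so ``$\sigma$ preserves the half-leaf realising the fit, hence fixes its boundary flow line'' is circular. The phenomenon you need to rule out is real: in the limit configurations of Proposition \ref{limitloz} (Figure \ref{figure7}) a wall corner such as $\beta$ lies on a periodic leaf $\wlu(\beta)$ whose periodic orbit is a \emph{different} orbit $\delta$. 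Passing from periodic leaves to periodicity of the relevant corner is exactly the delicate point the paper handles via the ``Showing that $\delta_1=\eta_1$'' analysis inside Proposition \ref{limitloz} and Lemma \ref{morethanone}, and, in the proof of this lemma, via the case analysis using Lemma \ref{sameband} and the Configuration~0/1 arguments, with the periodic conclusion arising only in the surviving configuration of two adjacent lozenges at $\eta$. (A smaller slip: the other corner of the adjoining element need not be a corner of $L$ when the basic block has several elements, though it is still a corner of $\C$, so Lemma \ref{cornerperio} would apply if you could make it periodic.)
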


\begin{proof}
Let $\C, \E$ be the bi-infinite blocks associated
with $W$ and $g(W)$ respectively.
Consider the quadrants of $\eta$. There are two possibilities:
either $W$ and $g(W)$ enter a common quadrant or not.

\begin{figure}[ht]
\begin{center}
\includegraphics[scale=1.00]{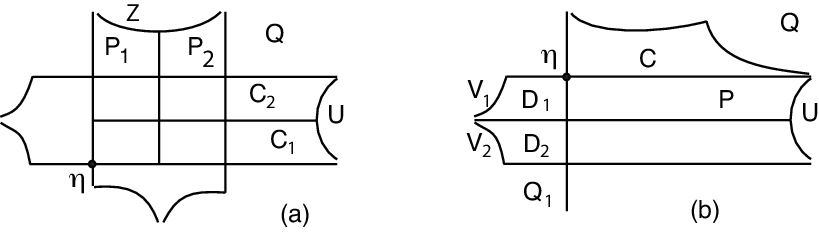}
\begin{picture}(0,0)
\end{picture}
\end{center}
%\vspace{-0.5cm}
\vspace{0.0cm}
\caption{a. Figure for Case 3 of Lemma \ref{sameband},
b. Figure for Case 1.1 of Lemma \ref{eitherperiodic}.}
\label{figure13}
\end{figure}

Notice that we do not know a priori whether $\eta$ is a
hinge of $W$ or not. 

\vskip .1in
\noindent
{\bf {Case 1 - 
Suppose that $W$ and $g(W)$ enter the same quadrant $Q$
of $\eta$.}}

By the previous lemma, using also Remark \ref{atoroidallittle},
either the corners of $W$ and
$g(W)$ are periodic $-$ in which case the lemma is proved;
or  the bands $W$ and $g(W)$ agree
on the subband with a boundary in $\eta$ and entering $Q$.
So we assume the second option occurs and we analyze that.
There is an element $C$ of $\C$ with a corner $\eta$ and
contained in $Q$. The element $C$ is also an element of $\E$
because the bands agree.
%, then the corresponding 
%subblocks in $\C, \E$ also agree.

In particular since the quadrants of $\eta$ that $W$ and $g(W)$
are not the same, it follows that one of $\C$ or $\E$
has adjacent elements with common corner $\eta$.
Without loss of generality assume that $W$ satisfies this 
property ($g(W)$ may also satisfy this property with
adjoining element adjacent along another side of $C$ 
or $\eta$ may be a hinge of $g(W)$).

\vskip .1in
\noindent
{\bf {Case 1.1 - $C$ is a lozenge.}}

Suppose first that the adjoining element $P$ of $\C$ is adjacent to
$C$ along $\wlu(\eta)$. Then $P, C$ are part of the same
adjacent block of $\C$ which is a stable adjacent basic block.
Then the element $P$ is forced to be a $(1,3)$ ideal
quadrilateral. In this case the band associated with this
subblock has boundary orbits one which is the other corner
of $C$ and one which is on the component of $\mt - \wlu(\eta)$
not containing $C$. Since the basic block is a stable
adjacent block, this would imply that
$\eta$ is not a corner orbit of $L$,
contradiction.

Suppose on the other hand that the adjoining element $P$ in $\C$ is
adjacent to $C$ along $\wls(\eta)$. Then the adjacent basic block 
of $\C$ containing $C$ and $P$ is an unstable adjacent block. 
By Lemma \ref{morethanone} we can assume that $P$ is an
$(1,3)$ ideal quadrilateral.
%There are two possibilities: either $P$ is a lozenge
%or $P$ is a $(1,3)$ ideal quadrilateral.
%Supppose first that $D$ is a lozenge, hence 
%$C, D$ are adjacent lozenges. Lemma \ref{cornerperio} 
%implies that $\eta$
%is a periodic orbit. Lemma \ref{morethanone}
%implies that all corners of $L$
%are periodic and finishes the proof.
%Then the basic block of $\C$ containing $C$ and $D$ is
%an unstable adjacent block. 
The two adjacent $(1,3)$ ideal quadrilaterals of $\cC$ (one of
which is $P$) are contained
in adjacent lozenges $D_1, D_2$. The leaf $\wlu(\eta)$ intersects
the interior of $D_1$ and $D_2$. There is an unstable leaf $U$
entirely contained in $\partial(D_1 \cup D_2)$. Let $V_1, V_2$
be the other unstable leaves containing half leaves in the
boundary of $D_1, D_2$ respectively. Then $V_1$ intersects
$\wls(\eta)$. Let $Q_1$ be the quadrant at $\eta$ that contains
$V_2$, see Figure \ref{figure13} (b). Consider now the other element $Y$ of 
$\E$ with a corner in $\eta$ as well. If $Y$ were adjacent
to the lozenge $C$ along $\wlu(\eta)$, then the basic block
in $\E$ would be a stable adjacent block and $\eta$ would not
be contained in the wall $g(W)$, contradiction to
assumption.
It follows that the element $Y$ of $\E$ is contained in the 
quadrant $Q_1$. We are now exactly in the situation described
in Configuration 1.
Here the quadrant
in question is $Q_1$, the lozenges are $D_1, D_2$.
The analysis of Configuration 1 shows that this is 
impossible, so this cannot occur.
This finishes the case that $C$ is a lozenge.

\vskip .1in
\noindent
{\bf {Case 1.2 - $C$ is a $(1,3)$ ideal quadrilateral.}}

\begin{figure}[ht]
\begin{center}
\includegraphics[scale=1.20]{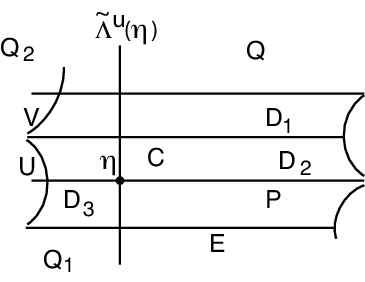}
\begin{picture}(0,0)
\end{picture}
\end{center}
%\vspace{-0.5cm}
\vspace{0.0cm}
\caption{Figure for Case 1.2.}
\label{figure14}
\end{figure}

Let $P$ be the element of $\C$ which is adjacent to $C$ 
with common corner $\eta$. 
If $P$ is adjacent to $C$ along $\wlu(\eta)$ then 
$C,P$  are part of a stable adjacent block. As in Case 1.1, \ 
$\eta$ would not be a corner orbit of $W$, contradiction.
It follows that $C,P$ are adjacent along $\wls(\eta)$
and they are part of an unstable adjacent basic block
which is contained in $\C$.
Then $C$ is part of a lozenge $D_2$, and there is another
lozenge $D_1$ adjacent to $D_2$ along a stable side,
and so that $D_2$ separates $D_1$ from
$\wls(\eta)$, see Figure \ref{figure14}.
This is because
$C$ is part of a pair of adjancent ideal quadrilaterals, and
$\eta$ is a corner orbit of the union.
In addition $P$ is contained in another lozenge
$D_3$ which is adjacent to $D_2$ along $\wls(\eta)$. 
Let $U$ be the unstable leaf entirely contained
in $\partial(D_2 \cup D_3)$ and let $V$ be the unstable
leaf with a half leaf in $\partial D_1$ and not contained
in $Q$. 
Let $E \not = \wls(\eta)$ be the other stable leaf which 
contains a side of $D_3$.
Let $Q_2$ be the quadrant at $\eta$ containing 
$V$. Let $Q_1$ be the quadrant at $\eta$ opposite to $Q$.
Consider the other element of $\E$ with corner $\eta$:
if this other element is in the quadrant $Q_2$, then we
are exactly in the situation of Configuration 1, where
the quadrant is $Q_2$ and the non separated leaves
are $U, V$. 
This is impossible by the analysis of Configuration 1.
If the other element of $\E$ with corner $\eta$ is
in the quadrant $Q_1$, then $\eta$ is a hinge of
$\E$. Here we are in the situation analyzed in
Configuration 0, where we use the leaves $U, E$ which make
a perfect fit and intersect $\wls(\eta), \wlu(\eta)$ respectively.
The analysis of Configuration 0 shows that this is impossible.

This finishes the analysis of Case 1.
Notice that there was one situation possible in this case,
which was that the adjoining elements to $\eta$ in $\C$ are
both lozenges and all corners of $L$ are periodic. This
is the conclusion we wanted.

\vskip .1in
\noindent
{\bf {Case 2 - $W$ and $g(W)$ do non enter the same
quadrant of $\eta$.}}

Either $W$ (and then also $g(W)$) enters adjacent quadrants
of $\eta$ or $W$ (and hence also $g(W)$) enters opposite
quadrants of $\eta$.

Suppose first that $W$ enters adjacent quadrants of $\eta$.
We initially consider the case  that
the elements in $\C$ with corner $\eta$ are adjacent
along $\wls(\eta)$. Then the block containing them  is
an unstable adjacent block. If all elements
are lozenges, then as in 
Case 1 this leads to all corners of $W$
being periodic and this completes the proof in this case.
If at least one of them is part of a pair of 
adjacent $(1,3)$ ideal quadrilaterals, then they 
are contained in adjacent lozenges $D_1, D_2$. 
Let $Q$ be the quadrant at $\eta$ which intersects
two non separated stable leaves
which contain sides in $D_1, D_2$
respectively. This quadrant $Q$ intersects $g(W)$ by
hypothesis, and contains $U_2$ and a half leaf of 
$U_1$ ($U_1, U_2$ are the unstable leaves with half leaves
in $\partial D_1, \partial D_2$ respectively).
Now we are in the situation of Configuration
1, whose analysis shows this is impossible.

If the elements in $\C$ with corner $\eta$ are adjacent along
$\wlu(\eta)$ then the associated block in $\C$ is a stable
adjacent block. 
%If one of the elements is a $(1,3)$ ideal
%quadrilateral, 
Then $\eta$ is not a corner orbit of $\C$, contradiction. 
%If both elements are lozenges, then again one proves that
%the corners of $W$ are periodic.

Finally suppose that $W$ (and hence $g(W)$) do not
enter adjacent quadrants at $\eta$. It follows that
$\eta$ is a hinge of both $W$ and $g(W)$.
We will show this leads to a contradiction.
Choose a transverse orientation to $\wls$. 
Now put a transverse orientation to $\wlu$ so that
when one crosses a hinge in $W$ in the positive transverse
orientation to $\wls$, then one also crosses the hinge
in the positive transverse orientation to $\wlu$.
This is possible because they are hinges.
%$\C$ is a limit of longer and
%longer strings of lozenges, and strings of lozenges have
%this coherence property.

Now consider the situation from the point of view of $g(W)$
when one crosses $\eta$ in the positive transverse orientation
to $\wls$. Since $\C, \E$ intersect disjoint quadrants of $\eta$
it follows that $g(W)$ crosses $\eta$ going in the negative
transverse orientation to $\wlu$. 
Now we use that $g$ preserves
transversal orientations to $\wls, \wlu$.
Translating back by $g^{-1}$ there is a hinge in
$W$ so that crossing it in the positive transverse orientation
to $\wls$ is crossing in the negative transverse orientation
to $\wlu$. This is contradiction. 
This shows this last case cannot happen.

This finishes the proof of lemma \ref{eitherperiodic}.
\end{proof}

Now we use the previous lemmas to obtain an important conclusion
when $M$ is atoroidal.

\begin{proposition} \label{identical}
Suppose that $W$ is a wall. Suppose that $M$ is atoroidal.
Let $g$ be a non trivial deck transformation so that 
$g(W)$ and $W$ intersect, but not transversely.
Suppose that $g$ preserves transversal orientations to
$\wls, \wlu$.
Then $g(W) = W$. If a corner of $W$ is periodic,
then $g$ preserves
all corners of $W$.
\end{proposition}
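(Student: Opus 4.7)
The plan is to bootstrap the preceding lemmas using the atoroidal hypothesis exactly where it is needed. Lemma \ref{commoncorner} first supplies a common corner orbit $\eta$ of $L$ and $\gamma(L)$. I would then split into cases according to whether $L$ and $\gamma(L)$ enter the same pair of quadrants at $\eta$.

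In the ``same quadrants'' case, Lemma \ref{sameband}, applied in each of the two common quadrants, shows that the subbands of $L$ and $\gamma(L)$ abutting $\eta$ coincide. Their far endpoints are new shared corners, at which $L$ and $\gamma(L)$ already agree on the side we came from. Iterating at each such corner: either the quadrants entered on the far side again coincide and Lemma \ref{sameband} propagates the agreement one step further, or they differ and we drop into the second case. If the first alternative persists in both directions, the canonical bands of $L$ and $\gamma(L)$ match subband by subband, giving $\gamma(L) = L$.

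In the ``different quadrants'' case, at some shared corner $\eta'$ (either $\eta$ itself or one produced by the iteration above), Lemma \ref{eitherperiodic} forces every corner of $L$ and of $\gamma(L)$ to be periodic, and Lemma \ref{cornerperio} then forces every element of the bi-infinite block $\mathcal{C}$ of $L$ to be a lozenge. Let $\zeta$ generate the orientation-preserving stabilizer of $\eta'$; since $\zeta$ preserves transverse orientations, it preserves each lozenge of $\mathcal{C}$ adjacent to $\eta'$, and inductively every corner of $L$. Setting $\beta := \gamma^{-1}(\eta')$, also a periodic corner of $L$ with orientation-preserving stabilizer $\langle \zeta \rangle$, the conjugates $\gamma \zeta \gamma^{-1}$ and $\gamma^{-1}\zeta\gamma$ must both lie in $\langle \zeta \rangle$, say equal to $\zeta^j$ and $\zeta^i$; combining yields $ij = 1$. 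If $\gamma(\beta) \neq \beta$, then either $\gamma$ (when $i = 1$) or $\gamma^2$ (when $i = -1$) commutes with $\zeta$ without being a power of it, producing a $\mathbb{Z}^2$ subgroup of $\pi_1(M)$ and contradicting $M$ atoroidal. Hence $\gamma(\beta) = \beta$, so $\gamma \in \langle \zeta \rangle$, and therefore $\gamma$ preserves every corner of $L$. Since canonical bands are determined by their corner orbits, $\gamma(L) = L$.

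The final assertion follows immediately: if $L$ has a periodic corner, Lemma \ref{cornerperio} puts us in the setting just analyzed, and the conclusion $\gamma \in \langle \zeta \rangle$ already says that $\gamma$ fixes every corner of $L$. The principal obstacle I anticipate is organizing the iteration in the ``same quadrants'' case so that it either propagates the agreement indefinitely along $L$ or funnels into a corner where Lemma \ref{eitherperiodic} triggers the atoroidal dichotomy; this is the step that makes essential use of the bi-infinite block structure from Section \ref{limitsoflozenges}.
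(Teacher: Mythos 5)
Your proposal is correct and follows essentially the same route as the paper: a common corner from Lemma \ref{commoncorner}, propagation of identical subbands via Lemma \ref{sameband} when the quadrants agree, and Lemmas \ref{eitherperiodic} and \ref{cornerperio} plus the conjugation argument $\gamma^{-1}\zeta\gamma=\zeta^i$, $\gamma\zeta\gamma^{-1}=\zeta^j$, $ij=1$ (the Case 1.1 argument of Lemma \ref{sameband}, which the paper simply cites) to produce a $\mathbb{Z}^2$ subgroup contradicting atoroidality unless $\gamma$ fixes all corners. The only cosmetic difference is that the paper argues by contradiction assuming $L\neq\gamma(L)$, whereas you run the same dichotomy directly; the final assertion is handled identically.
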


\begin{proof}
The proof is by contradiction. Let  $W, g(W)$ which
intersect, but not transversely. 
We assume that $W$ is not equal to $g(W)$
and we obtain a contradiction.

By Lemma \ref{commoncorner} $W, g(W)$ share a corner $\eta_0$.
By Lemma \ref{sameband} 
%$W, g(W)$ have identical subbands
if $W, g(W)$ enter the same quadrant $Q$ of 
$\eta_0$, 
%then either $W, g(W)$ have periodic corners,
then $W, g(W)$ have identical subbands
with a boundary in $\eta_0$ and entering $Q$.
Let $\eta_1$ be the other boundary component of this
subband, which is a corner of both $W, g(W)$.
This can be iterated, and in the other direction too.
Since $W, g(W)$ are distinct, one eventually
reaches a common corner $\eta_2$ so that $W, g(W)$
do not enter exactly the same pair of quadrants of $\eta_2$.
By the previous lemma, this shows that all corners of 
$W$ are periodic and all elements of $\C$ are lozenges.
%Up to now we have not used the atoroidal hypothesis.
So in any case all corners of $W$ and $g(W)$ are
periodic.

By hypothesis $W, g(W)$ share a corner.
Now use the argument contained in the analysis 
of Case 0 of 
Lemma \ref{sameband}: that analysis produces
a $\mathbb{Z}^2$ subgroup of $\pi_1(M)$,
contradicting that $M$ is atoroidal.
%using that $M$ is atoroidal 
%to obtain a it
%follows that $W = g(W)$.

We already proved that $g(W) = W$.
Finally if a corner of $\C$ is periodic, then again
the analysis in Case 0 of 
Lemma \ref{sameband} implies that $g$ preserves
all corners of $\C$.
This finishes the proof of the proposition.
\end{proof}

\section{The case with no transverse intersections}
\label{notransverse}

We now start the actual proof of the Main Theorem.
In this section we consider the case that there is
a wall $W$ so that no deck translate of $W$ intersects
$W$ transversely.
We are only considering a wall $W$ that is obtained as limits
of partial walls associated with strings of lozenges. 
In Section \ref{construction} we explained that there are
such constructions.
With this hypothesis, we know from
Proposition \ref{identical} that since $M$
is atoroidal, then if $g(W)$ is
a deck translate intersecting $W$ then $g(W) = W$.
As a consequence we have the following result.
Suppose that a $2$-dimensional foliation $\cG$ of $M$ with
hyperbolic leaves is fixed.
A {\em leafwise geodesic lamination} is a $2$-dimensional
lamination in $M$, which is 
transverse to $\cG$,  and intersects the leaves of $\cG$
in geodesics. In this article we only consider the foliation
$\cG = \ls$ for leafwise geodesic laminations.
Then we have the following result:

\begin{proposition} \label{prop-gettinglamination}
Suppose that $M$ is atoroidal and $\ls, \lu$ are
transversely orientable.
Suppose that $\ls$ has hyperbolic leaves.
Let $W$ be a wall 
obtained as a limit of partial
walls associated to strings of lozenges. Suppose that
that $\pi(W)$ has
no transverse self intersections, then the closure of $\pi(W)$
is a leafwise geodesic lamination in $M$.
All leaves of the lifted lamination are obtained as limits
of partial walls associated with strings of lozenges.
\end{proposition}

\begin{proof}
Following the notation of Section \ref{construction} 
there are partial walls 
$$O_i \ \ = \ \ \cup \ \{ B_i, \  -n+1 \leq j \leq i \}$$
associated to strings of lozenges, so that $O_i$ converges to $W$.
We explained in previous sections that this is equivalent
to the hinges of $O_i$ converge to the hinges of $W$.
The closure of $\pi(W)$ lifts to the closure of the union
of deck translates of $W$ in $\mt$.

Let $p_n \in g_n(W)$ with $p_n$ converging to $p$,
here $g_n \in \pi_1(M)$. We will show that $g_n(W)$ converges
to a wall $H$, which is also obtained as the limit of the 
process of Section \ref{construction}.

Let $q_n = g_n^{-1}(p_n)$ which are in $W$. 
Since $O_i$ converges to $W$ as $i \to \infty$,
then for each $n > 0$, and
for each $2n$ blocks of $W$ around $q_n$ we can 
find partial walls in $O_i$  (for $i >> n$) 
which are less than $1/n$ from 
these blocks in bigger and bigger sets.
The images of these by $g_n$ have points
closer and closer to $p$. By the diagonal process explained
in Section \ref{construction} one can extract a subsequence
of partial walls that converges, and to a wall, which we
denote by $H$.
In particular $H$ is also obtained as a limit of partial
walls associated with string of lozenges.

We claim that in fact $H$ is the limit of $g_n(W)$ $-$
no need of subsequences.
Since $p_n$ converges to $p$, then up to a diagonal 
process we get that a subsequence of $g_n(W)$ converges, let $H'$
be the limit.
As explained above, the limit is a wall, also obtained by the
limiting process as in Section 4, so by the same argument 
as above we get that $H' = H$. Since any subsequence of $g_n(W)$
has a subsequence which converges to $H$, it now follows that
$g_n(W)$ converges to $H$.

Any limits will not have pairwise transverse intersection.
This proves that the projection to $M$ is a lamination.
It is obviously a leafwise geodesic lamination.
\end{proof}

For future reference we explicitly state the following result
which also works when $\pi(W)$ has transverse self intersections:

\begin{corollary} Let $W$ be a wall that is obtained as a
limit of partial walls associated with strings of lozenges.
Then any limit of $g_n(W)$ where $g_n$ are in $\pi_1(M)$ is 
also a wall which is a limit of partial walls associated
with strings of lozenges.
\end{corollary}

We fix such a wall $W$ and let $\cW$ be the leafwise
geodesic lamination in $M$ obtained by completing $\pi(L)$.
We have the following extremely important property.

\begin{proposition} \label{asymptotic1}
There is $\eps > 0$ so that if $E$ is a leaf of
$\wls$, $r_1, r_2$ are boundary leaves of a complementary
region of the lamination $\wl \cap E$, and there are 
$x_i \in r_i$ with $d_E(x_1,x_2) < \eps$, then 
the geodesics $r_1, r_2$ in $E$ are asymptotic in $E$.
\end{proposition}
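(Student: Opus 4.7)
I would argue by contradiction through a compactness-and-limits scheme that ultimately appeals to Proposition \ref{identical}. Assume no such $\eps$ exists. Then for each $n \geq 1$ one can find a stable leaf $E_n \in \wls$, boundary geodesics $r_1^n, r_2^n$ of some complementary region $C^n$ of the geodesic lamination $\wl \cap E_n$, and points $x_i^n \in r_i^n$ with $d_{E_n}(x_1^n, x_2^n) < 1/n$, such that $r_1^n, r_2^n$ are non-asymptotic in $E_n$ (and therefore ultraparallel, with strictly positive minimum distance).

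By cocompactness of the $\pi_1(M)$-action on $\mt$, choose deck transformations $g_n$ with $g_n(x_1^n) \to x_\infty$ for some $x_\infty \in \mt$; the condition $d(g_n(x_1^n),g_n(x_2^n)) < 1/n$ forces $g_n(x_2^n) \to x_\infty$ as well. Passing to subsequences, the leaves $g_n(E_n)$ converge to a stable leaf $E_\infty$ through $x_\infty$, and the geodesics $g_n(r_i^n) \subset g_n(E_n)$ converge (uniformly on compact subsets of $E_\infty$) to complete geodesics $r_i^\infty \subset E_\infty$ both passing through $x_\infty$. Since $\wl$ is closed in $\mt$, both $r_i^\infty$ lie in the geodesic lamination $\wl \cap E_\infty$; and since geodesic laminations of hyperbolic surfaces have pairwise disjoint or coincident leaves, I deduce $r^\infty := r_1^\infty = r_2^\infty$.

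The next step is to promote this to convergence at the level of ambient walls. For each $n$, let $L_i^n \subset \wl$ be the wall with $L_i^n \cap E_n = r_i^n$; by the definition of a wall this intersection is connected, so $L_i^n$ is well defined. Extracting a further subsequence, $g_n(L_i^n)$ converges geometrically to a wall $L_i^\infty \subset \wl$, and each $L_i^\infty \cap E_\infty$ is a connected geodesic containing $r^\infty$, hence equal to $r^\infty$. Since $\wl$ is an embedded $2$-dimensional lamination in $\mt$, distinct walls in $\wl$ are disjoint, so $L_1^\infty$ and $L_2^\infty$ coincide; call the common limit $L^\infty$.

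The remaining and hardest step is to derive a contradiction from the configuration of distinct walls $g_n(L_1^n) \neq g_n(L_2^n)$ converging geometrically to the single wall $L^\infty$ while sandwiching the nonempty complementary region $g_n(C^n)$, which degenerates onto $r^\infty$. My plan is to exploit Corollary \ref{boundthickness} together with cocompactness: the uniform thickness bound on canonical bands lets me select corner orbits in $g_n(L_1^n)$ and $g_n(L_2^n)$ that remain in a fixed compact region, and a second application of cocompactness yields deck transformations $\gamma_n$ identifying such a corner of $g_n(L_1^n)$ with one of $g_n(L_2^n)$ up to subconvergence. Passing to the limit, $\gamma_\infty(L^\infty)$ intersects $L^\infty$ non-transversely along a shared corner orbit. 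Proposition \ref{identical} (applied after passing to a double cover so that $\gamma_\infty$ preserves transverse orientations of $\wls,\wlu$, legitimate because $M$ is atoroidal) then forces $\gamma_\infty(L^\infty) = L^\infty$. Unwinding this identification for large $n$ puts $g_n(L_1^n)$ and $\gamma_n(g_n(L_2^n))$ within an arbitrarily small neighborhood of the same wall on the \emph{same} side, which is incompatible with the nonempty region $g_n(C^n)$ separating them transversally in $g_n(E_n)$. The main obstacle is choosing the $\gamma_n$ so that Proposition \ref{identical} genuinely applies and so that the resulting identification is inconsistent with the separation provided by $g_n(C^n)$; this is where the finer structure of walls as unions of canonical bands (and the corner/pivot analysis from Section \ref{translates}) will have to do the technical work.
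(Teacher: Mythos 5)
Your opening is fine and close in spirit to the paper's: argue by contradiction, bring the degenerating configuration back to a compact set by deck transformations, and pass to limit walls. But there are two genuine problems. First, your normalization throws away exactly the leverage the argument needs. The paper does not let the two close points collapse to one point; it picks $p_i\in\ell_i$, $q_i\in\alpha_i$ at distance $1$ from each other and at definite distance from the other boundary leaf, so that in the limit one gets \emph{two distinct} walls $L,G$ whose intersections with the limit leaf $E$ are distinct but asymptotic geodesics. That distinctness is what drives the rest of the proof: if the common ideal point is a negative ideal point the asymptotics of nearby leaves of $\wls\cap L$ finish the argument, and if it is the forward ideal point then $\ell,\alpha$ are corner orbits and one is forced into the perfect--fit configurations (via Lemma \ref{boundary}), whose periodicity comes from Proposition \ref{periodicdouble}. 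In your setup everything collapses onto a single wall $L^\infty$, and the limiting picture -- one wall with two sequences of leaves of $\wl$ converging to it while a complementary region pinches -- is not by itself contradictory; ruling it out is essentially the statement you are trying to prove, so you would have to recover the paper's dichotomy from scratch in a weaker position.

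Second, the step you yourself flag as the hardest one does not work as sketched. Cocompactness lets you translate a divergent sequence back into a compact set; it does not produce deck transformations $\gamma_n$ ``identifying'' a corner of $g_n(L_1^n)$ with a corner of $g_n(L_2^n)$. Even granting (via the bounded thickness of bands) that corner orbits of both walls subconverge, the two limit corners may simply be the \emph{same} corner of the single wall $L^\infty$, in which case your $\gamma_\infty$ is the identity and Proposition \ref{identical} yields nothing; and if they are different corners there is no reason any deck transformation carries one to the other. In the paper the nontrivial deck transformation is not obtained by compactness at all: it is the periodic transformation $\zeta$ forced by a double perfect fit (Proposition \ref{periodicdouble}) in Situations 1 and 2, and the contradiction comes from iterating $\zeta^n$ on the two walls to produce two \emph{distinct} walls containing the same orbit, which violates Proposition \ref{identical}. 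That construction, together with the case analysis distinguishing whether the shared ideal point is the forward flow ideal point, is the mathematical heart of the proposition, and your proposal defers it to ``technical work'' without a mechanism that would actually supply it.
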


\begin{proof}
This will use properties of the lamination $\lam$ and of
Anosov flows.

Suppose then that there are leaves
$E_i$ of $\wls$ and boundary leaves $\ell_i, \alpha_i$ of complementary
regions of $\wl \cap E_i$ with points that are less than
$\eps_i$ apart in $E_i$, with $\eps_i \rightarrow 0$,
as $i \to \infty$.
We want to show that for $i$ big enough, 
$\ell_i, \alpha_i$ are asymptotic in $E_i$.
There is a constant $a_1 > 0$ satisfying the following:
Choose $p_i, q_i$ in $\ell_i, \alpha_i$ respectively which are at
a distance $1$ from each other in $E_i$ and so
that 

$$d_{E_i}(p_i,\alpha_i) \  > \  a_1, \ \ \ 
d_{E_i}(q_i,\ell_i) \  > \  a_1.$$

\noindent
The distance of $1$ is in the leaf $E_i$, but the two
distances above are from a point to the other geodesic.
The reason is as follows: any two disjoint geodesics in the 
hyperbolic plane diverge from each other unboundedly in at
least one direction. Since the geodesics in question have points
which are $<< 1$ from each other, start at these points
and move along both geodesics in a direction where they 
diverge from each other, until hitting distance $1$ between
the geodesics.

 Choose rays $r_i$
in $\ell_i$, $s_i$ in $\alpha_i$ starting in $p_i, q_i$ respectively
and which get $\eps_i$ close to each other in $E_i$ at some
point. Using that $M$ is compact, then 
 up to a subsequence and deck transformations assume that 
$p_i, q_i$ converge respectively to points $p, q$ both
in a  leaf of $\wls$ denoted by $E$.
Let $W_i, H_i$ be the leaves of $\wl$ containing $p_i, q_i$
respectively.
These are walls.

Since $\lam$ is a lamination, then $p, q$ are in leaves
of $\wl$, so let $W, H$ be the leaves of $\wl$ containing
$p, q$ respectively. In particular $W, H$ are walls.
Let $\ell, \alpha$ be the intersections $W \cap E, H \cap E$
respectively.
First of all notice that $\ell, \alpha$ are not the same 
since $d_{E_i}(p_i,\alpha_i) > a_4$ for some fixed
$a_4 > 0$, and vice versa for $q_i$.
In addition $\ell_i, \alpha_i$ have points far away
from $p_i, q_i$ respectively, which are $\eps_i$ close to each
other with $\eps_i \to 0$.

%\vskip .05in
%\noindent
\begin{claim}
%{\bf {Claim 1 - 
For big enough $i$, we have that $\ell_i \subset W,
\alpha_i \subset H$.
%}}
\end{claim}

\begin{proof}
Choose laminated boxes (that is lamination charts) $B_0, B_1$ of $\wl$
near $p, q$ respectively. The leaf of $\wl$ through $p_i$
intersects $B_0$ for $i$ big enough and the intersection
is a disk, and similarly for $q_i$ in $B_1$. Given any such
$i$, the intersections of these disks with $E_i$ are in
the boundary of a complementary region of $E_i \cap \wl$.
Up to subsequence it follows
that for all leaves $F$ of $\wls$ intersecting $B_0, B_1$, 
the intersections $F \cap W_i, F \cap H_i$ are boundary
leaves of a fixed  complementary region of $\wl \cap F$.
%, which is
%in the same complementary region of $\wl$. Apply this to 
%$E = E_j$ for $j$ big enough, and 
Hence one obtains that after removing finitely many 
elements, then
$p_i, p_j$ are in the same local leaf of $\wl$.
Similarly for $q_i, q_j$. 
%Since this is true for all $i,j$ sufficiently big, 
The
claim follows.
\end{proof}

%\vskip .1in
%We prove another claim.

%\vskip .05in
%\noindent
%{\bf {Claim 2 - 
\begin{claim}
The leaves $\ell, \alpha$ of $\wl \cap E$
are asymptotic.
\end{claim}
%}}

\begin{proof}
This is fairly simple. If $\ell, \alpha$ are not asymptotic, then 
they have a common perpendicular in $E$: the distance between points
in $\ell, \alpha$ decreases until both points are in the common 
perpendicular then it increases on the other side of these
perpendicular points. So from $p, q$ and in a given
direction, the distance between points in $\ell, \alpha$ either
always increases, or decreases for a fixed finite length
then increases monotonically.
Let $a_2$ be the minimum of the distance between points
in $\ell, \alpha$. Then for big enough $i$ one sees the same
behavior between $\ell_i, \alpha_i$: the distance between decreases
to close to $a_2$ then increases. This means that the
distance cannot get close to $\eps_i$ if $\eps_i$ is 
smaller than $a_2/2$ and $i$ is big enough. 
This contradiction proves the claim.
\end{proof}

%\vskip .1in
\noindent
{\bf {Continuation of the proof of Proposition \ref{asymptotic1}.}}

Recall that $W, H$ are walls and $E$ is a leaf of $\wls$
and $\ell = W \cap E, \ \alpha = H \cap E$.
We will now prove that $\ell_i, \alpha_i$ are asymptotic in $E_i$ 
for $i$ big enough.

Consider first the case that the common ideal point of $\ell, \alpha$
in $S^1(E)$ is not the common forward ideal point of flow lines in $E$.
Then it is a negative ideal point in $S^1(E)$ and for
$i$ big enough, $E_i$ is asymptotic to $E$ in that direction
of $E$.
This follows because transversely to $\wls$ there is the 
unstable foliation $\wlu$.
Then $E_i$ is asymptotic to $E$ in this direction, and
there is a curve in $E_i$ asymptotic to $E$ in this direction.
But $E_i \cap W = \ell_i, \ E \cap W = \ell$.
%for every flow line $\eta$
%contained in a wall $W_0$, the nearby leaves of the foliation
%$\wls \cap W_0$ are asymptotic in $\mt$ to $\eta$ in the negative
%flow direction of $\eta$.
In particular $\ell_i$
is asymptotic in $\mt$ to $\ell$, and likewise
$\alpha_i$ is asymptotic to $\alpha$. Since $\ell, \alpha$ are asymptotic
in this direction,
then so are $\ell_i, \alpha_i$. Using the local product structure
of foliations (applied to $\wls$) it follows that $\ell_i, \alpha_i$
are asymptotic in $E_i$ as well, as we wanted to prove.

\vskip .1in
The other case is that the common ideal point of $\ell, \alpha$ in 
$S^1(E)$ is the common forward ideal point of all flow lines
of $\wwp$ in $E$. Let $x$  be this point.
This case is much more complex.
In this case the rays of $\ell_i$ are not asymptotic to the ray
of $\ell$ in the $x$ direction, 
since in the positive flow direction of $\ell$ the nearby
leaves of $\wls$ expand away from $E$.
Since $\ell, \alpha$ have ideal point $x$, it follows that $\ell, \alpha$ 
are flow lines of $\wwp$ contained in $E$. In other words
$\ell, \alpha$ are corner orbits of the walls $W, H$.
Up to a subsequence assume every $E_i$ is in the same component of
$\mt - E$.

Let $\C, \E$ be the bi-infinite blocks associated with the
walls $W, H$.
Let $C, C^*$ be the corresponding elements of $\C, \E$
which have a corner in $\ell, \alpha$ respectively
and which intersect $E_i$.
We now prove a simple result that will be very useful for us.

\begin{lemma} \label{boundary}
Let $H$ be a wall and let $\delta$ be a leaf of the
foliation $\fol$ induced by $H \cap \wls$ in $H$, and
so that $\delta$ is not a flow line. 
Let $\HH$ be the bi-infinite block associated with $H$.
Let $E$ in $\wls$
so that $\delta = H \cap E$.  Let $U, V$ in $\wlu$ so that
the ideal points of $\delta$ in $S^1(E)$ are the negative
ideal points of $U \cap E$ and $V \cap E$. 
Let $L$ in $\wls$ also intersecting $H$. Let $I$ be the interval
of leaves of $\wls$ intersecting $H$ between 
$L$ and $E$, including $L, E$. Suppose that for any
leaf $F$ in $I$ then the ideal points  of $F \cap H$ are
both negative ideal points in $S^1(F)$. Then for any
$F$ in $I$ the ideal points of $F \cap H$ are the negative
ideal points of $F \cap U$ and $F \cap V$.
In other words the leaves $U, V$ are constant until
$H$ hits a corner orbit, or equivalently  until the
first time $H$ does not intersect one of $U$ or $V$.
Finally the leaves $U, V$ are constant for any $F$ in an
element of $\HH$.
\end{lemma}

\begin{proof}
The leaf $\delta$ is in a basic block of $\HH$, which we denote by
$\cB$. If the basic block is a lozenge, there are unique
unstable leaves $U, V$ containing half leaves in the boundary
of $\cB$. The corners of the lozenge are leaves of $H \cap \wls$
which are tangent to the flow. Hence any leaf $L$ as above
has to intersect the lozenge. It follows that 
for any $F$ between $E$ and $L$, the ideal points
of $Z \cap H$ are the negative ideal points of $U \cap Z$ and
$V \cap Z$ as desired. 
For any $T \in \wls$ intersecting $H$ outside the lozenge, it does not
intersect at least one of $U, V$, and if it is a leaf
containing a side of the lozenge, it contains a corner
of the lozenge, which is say in $U$ and then 
$T$ does not intersect $V$.

If the basic block $\cB$ is a stable adjancent block, then again there are
only two unstable leaves $U, V$ with half leaves contained in
$\partial \cB$. The corners of $\cB$ are contained in $H$, 
a similar analysis as in the lozenge case yields the result.

Finally suppose the block $\cB$ is an unstable adjacent block. 
Let $P_1$ be the element of $P$ containing $\delta$.
The element $P_1$ could be either a lozenge or one of a pair
of stable adjacent $(1,3)$ ideal quadrilaterals. If it is
a lozenge, the corners of the lozenge are contained in $H$
and the result follows as above. Otherwise let $P_2$ be the adjacent
$(1,3)$ ideal quadrilateral forming a pair with $P_1$,
and so that there 
is a unique unstable leaf $U$ entirely contained in
$\partial (P_1 \cup P_2)$. In addition there is a unique
unstable leaf $V$ so that $V \cap 
\partial (P_1 \cup P_2)$ is a non empty flow band 
(over a bounded interval) in $V$.
The two corners of $P_1 \cup P_2$ are orbits contained in $H$.
For any $F \in \wls$ intersecting $P_1 \cup P_2$, including
the stable leaf with a flow band which is a side of both
$P_1$ and $P_2$ (or equivalently the stable leaf
separating $P_1$ from $P_2$),
then $F$ intersects $H$ and
the ideal points of $Z \cap H$ are 
the negative ideal points of $H \cap U$ and $H \cap V$.
At the corners of $P_1 \cup P_2$, the wall $H$ intersects $V$
but not $U$ and any other stable leaf does not intersect
at least one of $U$ or $V$,
proving the result in this case as well.
\end{proof}

\noindent
{\bf {Continuation of the proof of Proposition \ref{asymptotic1}.}}

By the lemma, there are 
unstable leaves $U_0, U_1$ so that 
the leaves of $W \cap L$ for any stable leaf $L$ intersecting the
element $C$ have ideal points which are the negative 
ideal points of $U_0 \cap L$ and $U_1 \cap L$.
Since $W \cap E = \ell$ is a flow line of $\wwp$ it follows
that up to switching $U_0$ with $U_1$ we have that  $U_0 = \wlu(\ell)$.
Then it follows 
that $U_1$ does not intersect $E$,
but for any $L$ intersecting
$C$, then  $L$ intersets $U_1$. It follows that $U_1$ makes a perfect
fit with a stable leaf non separated from $E$.
There is then a unique unstable leaf $U_2$ which makes a perfect
fit with $E$ and is either equal to $U_1$ or separates $E$
from $U_1$. Recall that $E = \wls(\ell) = \wls(\alpha)$.

In the same way there are unstable leaves $V_0, V_1$
satisfying the following:
the ideal points of $H \cap L$ with $L$ leaf of $\wls$ intersecting
the element
$C^*$, are given by the negative ideal points of 
$V_0 \cap L$ and $V_1 \cap L$. 
Without loss of generality
assume that $V_0 = \wlu(\alpha)$. Then as in the case of $W$,
it follows that $V_1$ makes a perfect fit with a stable
leaf non separated from $E$.
Let $V_2$ making a perfect fit with $E$ and either equal to
$V_1$ or separating it from $E$.

By assumption $\ell, \alpha$ are distinct flow lines
in $E$, hence $U_0, V_0$ are distinct leaves.
If $U_1 = V_1$ then the intersections of $W, H$ with
stable $L$ intersecting $C$ (or $D$)  near $E$ are 
asymptotic, and so $\ell_i, \alpha_i$ 
are asymptotic for $i$ big enough, as we wanted to prove.
So we assume that $U_1, V_1$ are distinct unstable leaves.
There are two possibilities here:

\vskip .05in
\noindent
{\bf {Situation 1 $-$ 
None of $U_1, V_1$ separates the other
from $E$.}}

 This is equivalent to $U_1, U_0, V_0, V_1$ all 
intersecting a stable leaf $L$ ($L$ intersects $C$) and
so that in $L$ the flow lines intersection of the 
above leaves with $L$  are linearly ordered, 
in the order
above $U_1, U_0, V_0, V_1$ $-$ this is because
the bands $W, H$ do not intersect.
In this case the leaves $U_2, V_2$ are distinct leaves,
contained in the same component of $\mt - E$, and both 
making a perfect fit with $E$. Then 
Theorem \ref{periodicdouble} implies
that $E, U_2, V_2$ are all periodic, and left invariant by
the same non trivial deck transformation $g$.
Let $\delta$ be the periodic orbit in $E$.
We assume that $g$ acts on $\delta$ moving points
backwards. 
Then $U_2, V_2$ are connected by a pair of adjacent lozenges,
denoted by $D_1, D_2$. 
The leaf $E$ is entirely contained in $\partial(D_1 \cup D_2)$.
Let $S,S'$ be the other stable leaves which have half
leaves in the boundary of $D_1, D_2$.

Since $\ell, \alpha$ are distinct
orbits in $E$, at least one of them is not the periodic orbit
in $E$. Without loss of generality assume that $\ell$ is such
an 
orbit. We depict this in Figure \ref{figure15} (a).
Up to switching $S, S'$ assume that $\wlu(\ell)$ ($= U_0$)
 intersects $S$.
%In the same way assume $\wlu(\ell)$ intersects $D_2$.
We consider the two quadrants at $\ell$ which contain
$U_2, V_2$: let $Q$ be the one that contains 
$\delta$ and let $Q'$ be the other quadrant that contains
one of $U_2, V_2$. Suppose that $W$ enters the quadrant
$Q$, in which case $U_2$ intersects $S'$ $-$ notice this
is not depicted in Figure \ref{figure15} (a).
Then we are in the situation of Configuration 1,
which showed that this is impossible.
Since $W$ enters $Q \cup Q'$ as it intersects $E_i$,
 which in turn
intersect $D_1 \cup D_2$, it follows that $W$ enters
$Q'$. 
This is what is depicted in Figure \ref{figure15} (a). 
Here $U_2$ is contained in $Q'$.

\begin{figure}[ht]
\begin{center}
\includegraphics[scale=1.00]{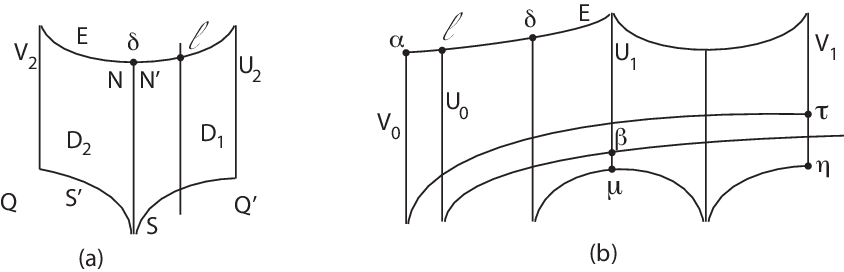}
\begin{picture}(0,0)
\end{picture}
\end{center}
%\vspace{-0.5cm}
\vspace{0.0cm}
\caption{a. Figure for Situation 1, b. Figure for 
Situation 2.}
\label{figure15}
\end{figure}

Let $A$ be the component of $\mt - E$ which contains $U_2, V_2$.
Let $N, N'$ be the quadrants of $\delta$ contained in $A$
and so that $Q$ contains the quadrant $N$ at $\delta$.
Notice that $N, N'$ are quadrants at $\delta$ and
$Q, Q'$ are quadrants at $\ell$. In particular
$Q' \subset N'$ and not equal, and similarly $N \subset Q$ and
not equal.
Now iterate $W$ by $g^n, \ n \to \infty$.
Then $g^n(\ell)$ converges to $\delta$, so 
$g^n(W)$ converges to a wall $W_0$ which contains $\delta$
and whose intersection with the component $A$ of $\mt - E$
is contained in $N'$.

Now do the same with the orbit $\alpha$ which is the 
intersection of $H$ with $E$. 
Using the setup of Configuration 1, we obtain that
$\wlu(\alpha)$ cannot intersect $S$. So either
$\alpha = \delta$ or $\wlu(\alpha)$ intersects
$S'$. In any case $H \cap A$ is contained
in the quadrant $N$ at $\delta$. Again apply $g^n$:
$g^n(\alpha)$ converges to $\delta$ as $n \to \infty$, and 
$g^n(H)$ converges to a wall $H_0$ containing
$\delta$ and so that $H_0 \cap A \subset N$.

We conclude that $W_0, H_0$ both contain $\delta$ but are
distinct. This is impossible, by Lemma \ref{identical}.
This shows that Situation 1 cannot happen.

\vskip .05in
\noindent
{\bf {Situation 2 $-$
One of $U_1, V_1$ separates the other from $E$.}}

In particular $U_2 = V_2$.
Without loss of generality we assume that $U_1$ separates
$E$ from $V_1$. We refer to Figure \ref{figure15}  (b).
This shows that the basic block in $\E$ containing $C^*$ is a
stable adjacent block. 
The orbit $\alpha$ is a corner orbit of this block,
let $\tau$ be the other corner orbit.
%(since
%it is stable adjacent block, it has only two corner
%orbits). 
Notice that $\tau$ is contained in $V_1$. 

In the same way $\ell$ is a corner orbit of the block
in $\C$ containing $C$. 
%This basic block does not
%intersect $U_1$.
For simplicity in the figure
we denote the case that $U_1$ makes a perfect fit with $E$.
%but this is not necessarily the case. 
Let $\delta$ be the periodic orbit in $E$, 
$\eta$ the periodic orbit in $V_1$,
and $\mu$ the periodic orbit in $U_1$.
As in the previous case let $g$ be a generator
of the isotropy group of $E,U_1,V_1$, and assume
as in Situation 1 that $g$
acts pushing points backwards along $\delta$.

We first consider the iterates $g^n(H)$ of the wall 
$H$, and let $n \to \infty$.
Then as in the previous case $g^n(\alpha)$ converges to
$\delta$ as $n \to \infty$. Hence $g^n(H)$
converges to a wall $H_0$ which contains $\delta$.
Notice that $g$ acts in $\eta$ pushing points
forward: this is because $\pi(\delta)$ is freely
homotopic to the inverse of $\pi(\eta)$. Since
$V_1$ is an unstable leaf, then $g$ acts
on the orbit space of $V_1$ as a contraction.
Hence $g^n(\tau)$ converges to $\eta$.
The images under $g^n$ of the band in $H$ from
$\alpha$ to $\tau$ are all isometric, hence they converge
to the band with corners $\delta, \eta$.
It follows that $H_0$ is a wall containing
both $\delta$ and $\eta$.

On the other hand applying $g^n$ to $W$ one obtains:
$g^n(\ell)$ also converges to $\delta$ as $n \to \infty$,
and $g^n(W)$ converges to a wall $W_0$ containing $\delta$.
In addition either $W$ does not intersect $U_1$ or 
intersects it in an orbit $\beta$. 
Hence the limit $W_0$ either does not intersect $U_1$
or intersects it in the orbit $\mu$.
Regardless $W_0$ is distinct from $H_0$, but both are walls
containing $\delta$. Again this is a contradiction
to Proposition \ref{identical}.
This shows that Situation 2 cannot happen either.

\vskip .05in
This contradiction shows that the assumption $U_1, V_1$
are distinct is impossible. Hence $U_1 = V_1$ and
it follows that $\ell_i, g_i$ have
to be asymptotic in $E_i$ for $i$ big enough.
This finishes the proof of Proposition \ref{asymptotic1}.
\end{proof}

\subsection{Ruling out the case that $\lam$ is a foliation}

Conceivably, $\lam$ could be a foliation, we deal with this
possibility first.

\begin{proposition} $\lam$ cannot be a foliation.
\end{proposition}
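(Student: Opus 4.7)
The plan is to suppose toward contradiction that $\lam$ is a foliation of $M$, lift to a foliation $\wl$ of $\mt$ by walls, and extract a contradiction by examining the induced geodesic foliation on a single leaf of $\wls$. Since each wall is a properly embedded plane and $\mt \cong \rrrr^3$, distinct leaves of $\wl$ are disjoint, and for any $E \in \wls$ the intersection $\wl \cap E$ is a foliation of $E \cong \HH^2$ by geodesics. Let $y_E \in S^1(E)$ denote the common forward ideal point of all flow lines of $\wwp$ in $E$.

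I would fix a wall $L$ with a corner $\gamma \subset E$, so that $L \cap E = \gamma$ is a flow line with ideal endpoints $y_E$ and some $x \in S^1(E)\setminus\{y_E\}$. Taking a short transversal $\tau$ to $\wl$ at a point of $\gamma$ and letting $L_s$ be the leaf of $\wl$ through $\tau(s)$ with $L_0 = L$, the geodesic $\gamma_s := L_s \cap E$ converges to $\gamma$ in $E$ as $s \to 0$, so one ideal endpoint of $\gamma_s$ tends to $y_E$. The argument then splits on whether $\gamma_s$ is itself a flow line for small $s \neq 0$.

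Case (i): $\gamma_s$ is not a flow line for arbitrarily small $s \neq 0$. Then Lemma \ref{boundary} identifies the ideal endpoints of $\gamma_s$ as the negative ideal points of $U_s \cap E$ and $V_s \cap E$ for unstable leaves $U_s, V_s$ associated to the element of the bi-infinite block of $L_s$ containing $\tau(s)$. Using Proposition \ref{limitloz} and the geometric convergence of canonical bands in Proposition \ref{limitofbands}, the bi-infinite block of $L_s$ converges to that of $L$ at the element having $\gamma$ as a corner; this forces $U_s$ (say) to converge in $\wlu$ to $\wlu(\gamma)$, whose intersection with $E$ is the flow line $\gamma$ with backward ideal endpoint $x \neq y_E$. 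Hence the backward ideal endpoint of $U_s \cap E$ converges to $x$, not to $y_E$, contradicting the fact that the corresponding endpoint of $\gamma_s$ must tend to $y_E$.

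Case (ii): $\gamma_s$ is itself a flow line for arbitrarily small $s \neq 0$. Then an open interval of leaves of $\wl \cap E$ consists of flow lines in $E$ through $y_E$, each being a corner of a distinct wall $L_s$. Applying Lemma \ref{cornerperio} and Lemma \ref{morethanone} (after noting that the block of $L_s$ at $\gamma_s$ shares structural constraints with neighboring walls via the Configuration 0/1 analysis), each such corner would be a periodic orbit; but then we produce an interval of pairwise distinct periodic orbits of $\Phi$ in $\pi(E)$, contradicting the countability of closed orbits of $\Phi$. The main obstacle is Case (i): rigorously deducing from the geometric convergence of walls that the associated unstable leaves cannot escape in the leaf space of $\wlu$, which requires a careful application of Proposition \ref{limitofbands} together with the detailed description of limits of lozenges.
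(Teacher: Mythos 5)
Your overall strategy (a local analysis at a corner of a wall, comparing nearby leaves of $\wl$ inside a fixed stable leaf $E$) is genuinely different from the paper's proof, but as written it has a real gap, and it is exactly at the point you flag yourself. In Case (i) the contradiction you state does not exist: by Lemma \ref{boundary} the two ideal endpoints of $\gamma_s$ are the negative ideal points of $U_s\cap E$ and $V_s\cap E$, and as $s\to 0$ one endpoint tends to $x$ while the other tends to $y_E$. The half of your claim that is forced — the unstable leaf whose endpoint tends to $x$ converges to $\wlu(\gamma)$ — is perfectly consistent; the problematic behavior is the \emph{other} leaf $V_s$, whose intersection $V_s\cap E$ must escape every compact set of $E$ (its negative ideal point approaches the forward point $y_E$). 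Nothing you cite rules this out: Propositions \ref{limitloz} and \ref{limitofbands} describe limits of lozenges/bands whose corner orbits converge to actual orbits (this is what the uniform Hausdorff bound of Theorem \ref{lozengebound} buys for a \emph{single} sequence of lozenges with one converging corner), whereas here the corner orbits of the elements of the blocks of $L_s$ containing $\gamma_s$ are precisely what may escape to infinity, so those propositions cannot be invoked to ``force the blocks to converge.'' Ruling out this escaping behavior is a global statement, and it is telling that your argument never uses that $M$ is atoroidal or that leaves of $\ls$ are dense; the closely related escaping/non-separated phenomena are exactly what the paper spends Proposition \ref{asymptotic1} (via Proposition \ref{identical}, which needs atoroidality) to control, and Proposition \ref{asymptotic1} itself is unavailable here since it concerns boundary leaves of complementary regions, which do not exist if $\lam$ is a foliation. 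Case (ii) is also not closed: corners of walls need not be periodic, Lemma \ref{cornerperio} only propagates periodicity once you have it, Lemma \ref{morethanone} needs a multi-lozenge basic block you have not produced, and the appeal to ``Configuration 0/1 analysis'' is not an argument; so the countability-of-closed-orbits contradiction is not reached.

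For comparison, the paper's proof takes a completely different and essentially global route: if $\lam$ were a foliation transverse to $\ls$ and leafwise geodesic, Calegari's analysis in \cite{Cal1} provides a \emph{funnel leaf} $E$ of $\wls$, i.e.\ all leaves of $\wl\cap E$ share one ideal point of $S^1(E)$. Comparing this funnel foliation with the funnel foliation by flow lines of $\wwp$ in $E$, one finds larger and larger disks on which the two line fields nearly agree; taking limits (compactness of $M$) yields a leaf $F$ on which $\wl\cap F$ coincides with the flow foliation, and density of leaves of $\ls$ (here atoroidality enters) propagates this to every leaf, contradicting the fact that a wall meets most stable leaves in geodesics transverse to the flow. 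If you want to keep your local approach, the missing ingredient is precisely a proof that for walls $L_s$ converging to $L$, the defining unstable leaves of the slices $L_s\cap E$ cannot escape in the leaf space — and proving that will almost certainly require the same kind of global input (atoroidality, uniqueness of walls through a corner as in Proposition \ref{identical}) that the paper uses elsewhere, rather than the limit-of-lozenges propositions alone.
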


\begin{proof}
Suppose by way of contradiction that $\lam$ is a foliation.
Then $\lam$ is a foliation transverse to $\ls$ and intersecting
its leaves
in geodesics. This was analyzed in detail by Calegari
in \cite{Cal1}, section 4. He proved that there is what is called
a {\em funnel leaf} $E$ in $\wls$. This means that
all the leaves of $\wl \cap E$ (which are all geodesics
in $E$) have a common ideal point in $S^1(E)$, called
the funnel point of $\wl \cap E$.

We now have two funnel foliations in $E$: one which is
given by leaves of $\wl \cap E$ and
is denoted by $\fol_{\lam}$  and one which is given
by the flow lines of $\wwp$ in $E$ denoted by $\fol_{\wwp}$.
Both of these foliation are by geodesics.

Let $x_1$ be the funnel point of $\fol_{\lam}$ and $x_2$ 
be the funnel point of $\fol_{\wwp}$ in $E$.
They may or may not be the
same point (we want to show they are the same point).
Let now $x$ be a point in $S^1(E)$ distinct from both
$x_1, x_2$ and $I$ an open interval in $S^1(E)$ containing
$x$ and whose closure is disjoint from $x_1, x_2$. 
The geodesics with one ideal point in $x_1$ and another
in $y \in I$ are asymptotic in the $y$ direction
to the geodesics with one ideal point in $x_2$ and
the other in $y$. The first is a leaf of $\fol_{\lam}$ and
the second is a leaf of $\fol_{\wwp}$. This happens
not at a single point $y$ but at every point $y$ in $I$.
It follows that there are disks $D_i$
in $E$
of radii converging to infinity where the directions
of $\fol_{\lam}$ and $\fol_{\wwp}$ in $D_i$ are less than $1/i$ from
each other throughout $D_i$.
Up to a subsequence and deck
translations we obtain a leaf $F$ of $\wls$ so that
the foliations $\wl \cap F$ and $\wwp|_{F}$ are the
same. 

Since $M$ is atoroidal, $\Phi$ is transitive.
It now follows that for every leaf $L$ of $\wls$, the
foliations $\wl \cap L$ and $\wwp|_{L}$ are the same.
In particular this shows that if $W$ is a leaf of $\wl$,
then the foliation induced by $\wls \cap W$ has all leaves
which are flow lines.

This is a contradiction to the properties of walls.
This contradiction shows that $\lam$ cannot be a foliation
and finishes the proof of the proposition.
\end{proof}

\subsection{The complementary regions of $\lam$}

Here we prove that the complementary regions of $\lam$ are particularly
simple.

\begin{proposition} \label{complementary}
Suppose that $M$ is atoroidal.
Let $E$ be a leaf of $\wls$. The 
complementary regions of $\wl \cap E$ are finite sided ideal
polygons in $E$. 
The complementary regions of $\lam$ are open solid tori
or open solid Klein bottles,
which are obtained as suspensions of homeomorphisms of
finite sided ideal polygons.
\end{proposition}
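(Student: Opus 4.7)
The plan is to prove the proposition in two stages: first describe each planar complementary region $R_E$ of $\wl\cap E$ in a stable leaf $E$, and then promote this picture to the 3-dimensional complementary region $R\subset M$.

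First I would use Proposition \ref{asymptotic1} to pin down the shape of $R_E$. Since $\wl\cap E$ is a geodesic lamination in the hyperbolic surface $E$, each component $R_E$ of $E\setminus(\wl\cap E)$ is an open convex subset whose boundary is a disjoint union of complete geodesics. The uniform constant $\eps$ produced by Proposition \ref{asymptotic1} rules out (i) crown-like ends, because a pair of boundary geodesics spiralling toward a common leaf from inside $R_E$ would be non-asymptotic yet come within $\eps$ of each other, and (ii) an infinite number of sides, since any accumulation point on $S^1(E)$ of the ideal vertices of $R_E$ would force two boundary geodesics with distinct pairs of ideal endpoints to meet within $\eps$ without being asymptotic, again contradicting Proposition \ref{asymptotic1}. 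The only remaining possibility is that $R_E$ is a finite-sided ideal polygon.

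Next I would lift $R$ to a component $\tilde R$ of $\mt\setminus\wl$. For every leaf $E\in\wls$ meeting $\tilde R$, each component of $E\cap\tilde R$ is a finite-sided ideal polygon by the first stage, and its sides lie in finitely many walls $L_1,\dots,L_k\in\LL$. The combinatorial type of this polygon is locally constant in the transverse parameter of $\wls$, because walls are properly embedded planes varying continuously in $\mt$ and their flow-line corners form a discrete subset of each wall. I would argue global constancy by connectedness together with Proposition \ref{identical}, which prevents one of the $L_i$ from coalescing with or trading corners with another $L_j$: no nontrivial deck translate of a wall intersects it transversely in our setting, and tangential coincidences force equality. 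This identifies $\tilde R$ with a bundle of ideal polygons over an interval in the $\wls$-leaf space, so that $\tilde R\cong P\times\rrrr$ with $P$ a finite-sided ideal polygon (hence an open topological disk).

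Finally I would identify $\pi_1(R)$ with the stabilizer $\Gamma_R=\{g\in\pi_1(M):g(\tilde R)=\tilde R\}$, so $R\cong\tilde R/\Gamma_R$. The group $\Gamma_R$ permutes the finite set $\{L_1,\dots,L_k\}$ of bounding walls, producing a homomorphism $\Gamma_R\to\operatorname{Sym}(k)$ with finite image. By Proposition \ref{identical}, the stabilizer in $\pi_1(M)$ of any wall $L_i$ is infinite cyclic (generated by the element shifting the discrete family of periodic corners along $L_i$), so the kernel of the permutation representation is cyclic and hence $\Gamma_R$ is virtually $\mathbb{Z}$. Atoroidality of $M$ makes $\pi_1(M)$ torsion-free and word-hyperbolic, so $\Gamma_R$ is in fact infinite cyclic. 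A generator of $\Gamma_R$ preserves the polygon-bundle structure on $\tilde R$ and so descends to a self-homeomorphism $\varphi$ of $P$; the quotient $R=\tilde R/\Gamma_R$ is then the mapping torus of $\varphi$, that is, an open solid torus when $\varphi$ is orientation-preserving and an open solid Klein bottle otherwise. The hard part will be the global constancy of the polygon type across the $\wls$-leaves meeting $\tilde R$: corners of the walls $L_i$ and non-Hausdorff directions in the $\wls$-leaf space could a priori permit a side to disappear or two sides to merge, and controlling this requires combining Proposition \ref{identical} with Theorem \ref{nonsep} and the atoroidal hypothesis.
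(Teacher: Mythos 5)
There is a genuine gap at both ends of your argument. First, Stage 1 does not follow from Proposition \ref{asymptotic1} alone. That proposition only says that two boundary leaves of a complementary region of $\wl \cap E$ which come within $\eps$ of each other are asymptotic; it does not force finitely many sides, nor does it exclude crown-type regions. For the infinite-sided case, take boundary geodesics whose ideal endpoints accumulate at a point of $S^1(E)$ with rapidly shrinking gaps: any two non-consecutive sides then stay at distance bounded below by $\eps$, so the hypothesis of Proposition \ref{asymptotic1} is never triggered and no contradiction arises. For the crown case, in the universal cover all the sides spiralling onto a lifted closed leaf share an ideal endpoint with it, so they are pairwise asymptotic and again the $\eps$-criterion is vacuously satisfied. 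In the paper the finite-sidedness is \emph{not} a purely two-dimensional consequence of the $\eps$-estimate: one builds, inside each complementary region, the surface $B$ of points at leafwise distance exactly $\eps$ from the lamination, proves $B$ is a compact torus with finitely many cusp curves (compactness of $M$ plus the non-accumulation argument), uses atoroidality and Novikov's theorem to see $B$ bounds a solid torus and its cusp curves are essential, and then uses the wall structure (corner leaves of a wall are not all asymptotic to a given leaf of $\wls\cap L$) together with Novikov again to show the induced foliation $\U$ on $B$ is by closed curves bounding disks in leaves of $\ls$. Finite-sidedness of the leafwise polygons is read off from this compact picture.

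Second, Stage 3 assumes exactly what has to be proved about the group. Proposition \ref{identical} says that a non-transverse intersection of $L$ with $\gamma(L)$ forces $\gamma(L)=L$; it says nothing about the stabilizer of a wall being infinite cyclic, and walls need not have periodic corners, so your "element shifting the discrete family of periodic corners" is not available. More seriously, even granting that $\Gamma_R$ is at most cyclic, you never show it is nontrivial, i.e.\ that the region closes up; without that you only get $\tilde R\cong P\times\rrrr$, not an open solid torus obtained as a suspension. In the paper this cocompactness is precisely what the compactness of the surface $B$ delivers (it cannot accumulate in $M$, hence is a closed surface, hence a torus bounding a solid torus once atoroidality and Novikov are invoked). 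Your "local constancy of the polygon type plus connectedness" step, which you flag as the hard part, is in effect the same issue: the paper resolves it by ruling out non-product behavior of $\U$ on the annuli of $B$ using the structure of walls, not by a general continuity argument.
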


\begin{proof}
Taking a double cover if necessary we assume that $M$ is
orientable. This does not affect the conclusion of the
Proposition.
First of all $\lam$ is not a foliation, hence there
are complementary regions, which obviously are open.
Any leaf $L$ of $\ls$ is dense in $M$, because $M$ is
atoroidal, hence $L$ intersects a complementary
region. 
In addition every leaf of $\wl$ is a plane, and therefore
the leaves of $\lam$ are incompressible in $M$.
%Once this proposition is proved it will imply that
%any leaf of $\ls$ intersects a leaf of $\lam$.

Let $\eps_0$ be the constant given by Proposition \ref{asymptotic1}.
Choose $\eps < \eps_0/2$. 
Any leaf $F$ of $\wls$ is isometric to $\mathbb H^2$.
Any complementary region of $\wl \cap F$ contains
at least an ideal triangle.
The $\eps$ is also chosen much smaller than the inradius
of an ideal hyperbolic triangle.

A lot of the arguments are done in $M$.
Consider the set $B_0$ of point $x$ in $M - \lam$ so that $x$ is
in a complementary region of $F \cap \lam$ ($F$ leaf of $\ls$)
and the distance along $F$ from $x$ to a boundary leaf of $F \cap \lam$
is exactly $\eps$. 
This set is non empty.

We claim that each connected component of $B$ of $B_0$ is a 
topological closed surface
which is transverse to $\ls$ and intersects it in a one
dimensional foliation in $B$.
It will have corners as shown in Figure \ref{figure16}.
Parts of this analysis are similar 
to what  was done by Thurston in \cite{Th3}.
Let $x \in B$. If $x$ is $\eps$ distant from exactly one
boundary leaf of $F \cap \lam$, then $B \cap F$ is locally 
one dimensional near $x$. 
Since $\lam$ is transverse to $\ls$, the same happens for
leaves $F'$ of $\ls$ near $F$ (along $x$)  and with the corresponding
complementary regions. Therefore $B$ is two dimensional
near $x$ and $\lam \cap \ls$ induces a one dimensional
foliation in $B$ near $x$. 

\begin{figure}[ht]
\begin{center}
\includegraphics[scale=0.80]{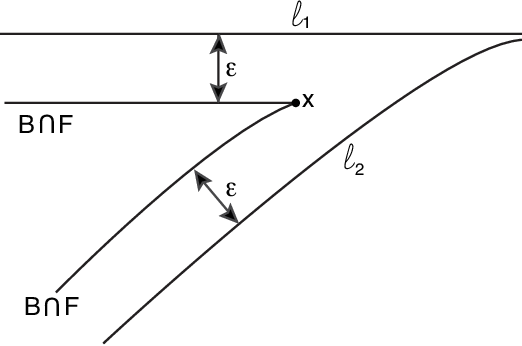}
\begin{picture}(0,0)
\end{picture}
\end{center}
%\vspace{-0.5cm}
\vspace{0.0cm}
\caption{$\ell_1, \ell_2$ are boundary leaves
in $F \cap \lam$. $x$ is a corner point of $B \cap F$.}
\label{figure16}
\end{figure}

Suppose now that $x$ is a point in $B \cap F$ which is 
$\eps$ away from at least two boundary leaves $l_1, l_2$
of $F \cap \lam$. Then there are exactly two such
leaves in the boundary of the same complementary
region of $F \cap \lam$. By Proposition \ref{asymptotic1} the leaves
$l_1, l_2$ are asymptotic in one direction and hence
they diverge from each other in the other
direction, see Figure \ref{figure16}. This implies that locally
near $x$ the set $B \cap F$ is also one dimensional and
forms a ``corner" at $x$, see Figure \ref{figure16}.
In particular moving in different directions in $l_1, l_2$
from points $\eps$ close to $x$, the leaves
$l_1, l_2$ get closer than $\eps$ to each other in one
direction and farther than $\eps$ 
 from each other in the opposite
direction. Therefore
one can also see this behavior in 
leaves $F'$ of $\ls$ near $F$ along $x$. In other words
$B \cap F'$ is one dimensional locally and has a corner 
$x'$ which is near $x$ in $M$. 
This analysis shows that $B$ is locally two dimensional,
that is, it is  a surface. In addition
$\ls \cap B$ induces
a one dimensional foliation in $B$. 

\vskip .1in
An important property is that the surface $B$ cannot
accumulate anywhere in $M$: 
first of all notice that $B_0$ is a closed subset of $M$,
because it is defined by a closed condition.
The way $\eps$ is chosen, a point can be $\eps$ from at most
two boundary components.
If a point $x$ is $\eps$ from a boundary component, then
either \ 1) it is not $\eps$ from another boundary component
and we constructed a small arc of $B_0$ in that leaf, and
this is the only intersection of $B_0$ with a small
neigbhborhood of $x$ in its $\ls$ leaf, or \ 2) $x$ is
$\eps$ away from exactly two boundary components, $x$ is a
corner, and a curve neighborhood containing the corner
is the only intersection of $B_0$ with a small neighborhood
of $x$ in its $\ls$ leaf.
%This shows that $B_0$ is compact. 
%In addition
%$B$ does not have boundary, so it is a closed surface.
%this is because otherwise one would get two local
%sheets of $B$ very near each other. In that case one of the
%sheets would have to be either $< \eps$ or $> \eps$ from
%the boundary, which is impossible.

We conclude that $B$ is compact. Also it has no
boundary, so it is a closed surface.
In addition $B$ has 
an induced one dimensional foliation given by $\ls \cap B$.
Since $B$ is obviously two
sided and $M$ is orientable, it follows that $B$ is a
two dimensional torus. 
Notice also that the set of ``corner" points in $B$ is locally
one dimensional by the above arguments. The arguments above imply
that any point in $B$, be it a regular point or a corner
point, has a neigbhorhood either with no corner points, or
a neighborhood with a single corner curve. It follows that 
corner curves cannot accumulate anywhere in $B$, 
hence the set of corner curves is a finite set of closed
curves in $B$.
In addition any such corner curve is transverse to $\ls$, 
so it is $\pi_1$ injective in $M$, and hence in $B$
as well.

A point in $B$ is $\eps$ near one or two leaves of $\ls$: 
these leaves are tracked by the point. One can only change
the tracking leaf if one crosses a corner curve.
Hence there are only finitely many tracking leaves in
the boundary. But also every leaf in the boundary is tracked,
so the boundary of the complementary region is made up of
finitely many leaves of $\ls$.

If $B$ has no corner points 
then $B$ will $\eps$ track a single leaf $L$ of $\ls$. 
It follows that $L$ is also a torus, and it is isotopic
to $B$. But as $M$ is atoroidal, this would
imply that $L$ is compressible, 
contradicting the fact that leaves of $\lam$ are
incompressible. 
%The last fact is because leaves
%of $\wl$ are planes in $\mt$.

Since there are corner curves, we pick one and denote it by $\mu$,
which we stress is a closed curve.

Again because $M$ is atoroidal, and $B$ is an embedded torus in $M$,
then either $B$ bounds a solid torus, or $B$ is contained in
a three ball. 
The second option is impossible because of the following.
The curve $\mu$ (the corner curve above) is closed
and it is transverse to $\ls$. By Novikov's theorem
$\mu$ cannot be null homotopic, hence cannot be contained
in a closed ball.
Hence $B$ bounds a solid torus $V$ in $M$.

%Suppose first that $B$ is contained in  a 
%$3$-ball $V$. In this case we can adjust the $3$ ball such that there
%is a properly embedded arc $\delta$ in $V$, and there
%is a product neighborhood $N(\delta)$ of $\delta$ in $V$ such that
%
%$$ B \ \ = \ \ \partial (V - N(\delta))$$
%
%\noindent
%If a leaf of $\lam$ is contained in $V$ it lifts to a leaf of
%$\wl$ which is not properly embedded in $\mt$, contradiction.
%Suppose now that a leaf of $\lam$ intersects $V$. 
%Then since it does not intersect $B$ it cannot 
%intersect $V - N(\delta)$ or else it would be contained
%in $V - N(\delta)$, which we just showed is a contradiction.
%Finally consider the intersections of leaves of
%of $\lam$ with $N(\delta)$.
%Since $N(\delta)$ is a closed ball, these intersections can
%be pushed away, using classical $3$-manifold techniques.
%%Lifting 
%%$V$ to $\widetilde V$ in $\mt$ and $N(\delta)$ to $N$
%%%contained in $\widetilde V$, then the lifted leaf $L$ of $\wl$
%%intersects $N$ but does not intersect $\widetilde V - N$.
%%By looking at the intersection with $N(\delta)$ one sees
%that $L$ cannot be a properly embedded plane in $\mt$.
%After these modifications, it follows that
%$V$ is contained in a complementary
%region of $\lam$. Again lift $V$ to $\widetilde V$. Considering
%the boundary leaves of the component of $\mt - \wl$
%which track $B$, one sees that these leaves also cannot
%be properly embedded planes either. This is a contradiction.
%We conclude that $B$ contained in a $3$-ball is impossible.

\vskip .05in
The non empty set of corner curves in $B$ forms a finite set of closed
curves which are not null homotopic in $B$, and are 
pairwise isotopic in $B$ because they are disjoint, not null homotopic,
and $B$ is a torus..
Let $\U$ be the one dimensional foliation in $B$ obtained
by intersecting $B$ with leaves of $\ls$. We want to show
that this is a foliation by closed curves in $B$, each of
which bounds a disk in $V$ and bounds a disk in the
respective leaf of $\ls$ containing it.

The corner curves in $B$ cut up $B$ into finitely many
annuli, and the boundaries are transverse to $\U$.
%If $\U$ restricted to each annulus is a product foliation
%then the foliation $\U$ is by closed curves in $B$.
Suppose that there 
is one such
annulus,
denoted by $A$ so that the foliation $\U$ in $A$ is not a product.
There is a leaf of $\U$ intersecting one boundary component
of $A$ but not the other. This leaf limits to a closed
leaf of $\U$ in the interior of $A$. 
The annulus $A$ is $\eps$ close to a single leaf $Z$ of
$\lam$. The foliation $\U$ in $A$ pushes along leaves
of $\ls$ to a foliation of an annulus in $Z$.
This foliation is part of the foliation $\ls \cap Z$.
Summarizing: in $Z$ the foliation $\ls \cap Z$ has 
a closed leaf $\gamma$ and also has a leaf $\eta$
which has a point $\eps$ near a corner, hence $\eta$ is asymptotic
to another leaf of $\lam$ $-$ by Proposition \ref{asymptotic1}.
Lift $Z$ to a leaf $L$ of $\wl$, and lift $\gamma, \eta$
to $\widetilde \gamma, \widetilde \eta$.
%Let $\zeta$ be a generator of the stabilizer of $\widetilde \gamma$
%($\gamma$ is a closed curve).
Notice that $Z$ is an annulus. In addition the set of curves
$\widetilde \eta$ as above fills one complementary component
of $\widetilde \gamma$ in $L$.
But in a wall this is impossible: there are many corner leaves
(of the foliation $\wl \cap L$)
in any component of $L - \widetilde \gamma$,
and they are not asymptotic to $\widetilde \gamma$.

We have shown that the foliation $\U$ restricted to any annulus is a product.
Suppose that a leaf of $\U$ has a return to $\mu$ which is
not the initial point. Lift to $\mt$: any lift of $B$ is
an infinite cylinder and there is a unique lift $\widetilde
\mu$ of $\mu$ to this cylinder. The above implies that
the lift of the leaf of $\U$ to this cylinder intersects
$\widetilde \mu$ in more than one point. But $\widetilde
\mu$ is a transversal to $\wls$, so again Novikov's theorem
implies that this is impossible.

This now proves that all leaves of $\U$ are closed and in fact
they lift to closed curves in $\mt$. It follows that the
leaves in $\U$ 
bound disks in the respective leaves of $\ls$. The
boundary does not intersect $\lam$, so the disks do
not intersect $\lam$ either and are contained in 
complementary regions of $\lam$.
The boundary of these disks are the curves in $\U$:
they are compact, polygonal curves (convex
when lifted to the universal cover). To get the
complementary region in the leaf of $\ls$ we just at
the points $< \eps$ from the boundary: these
are finitely many relatively compact strips, plus
finitely many cusps. It follows that the complementary
region is the interior of a finite sided ideal polygon.

This finishes the proof of the Proposition \ref{complementary}.
\end{proof}

We are now ready to prove one case of our Main theorem.

\begin{theorem} \label{nontransverse}
Let $M$ be atoroidal.
Suppose that for some wall $W$ obtained as the limit
of partial walls associated to strings of lozenges,
every deck translate $g(W)$
does not interesect $L$ transversely. Then $\Phi$ is
$\rrrr$-covered.
\end{theorem}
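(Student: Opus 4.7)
The plan is to show that the leaf space $\cal T$ of $\wls$ in $\mt$ is homeomorphic to $\rrrr$ by exploiting the lamination structure of $\lam$ and the solid torus/Klein bottle complementary regions established in Proposition \ref{complementary}. Since the leaf space of $\wls$ is always a (possibly non-Hausdorff) simply connected one-manifold, it suffices to show it is Hausdorff.

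First I would analyze how $\wls$ interacts with the lifted lamination $\wl$. Each leaf $E$ of $\wls$ is a hyperbolic plane intersecting $\wl$ in a geodesic lamination whose complementary regions are finite-sided ideal polygons. For each complementary region $V$ of $\lam$ in $M$, each lift $\widetilde V$ to $\mt$ is a simply connected open region bounded by finitely many walls in $\wl$. Since $V$ is a suspension of a homeomorphism of a finite-sided ideal polygon, the restriction $\wls|_{\widetilde V}$ has leaf space homeomorphic to a compact interval whose endpoints correspond to the two combinatorial ``sides'' of the suspension.

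Next I would decompose the global leaf space $\cal T$ into contributions from walls of $\wl$ (each contributing a copy of $\rrrr$) and from lifted complementary regions (each contributing a compact interval). The hypothesis combined with Proposition \ref{identical} ensures the gluing is consistent: any two translates of $L$ either coincide as subsets of $\cal T$ or are disjoint in $\cal T$. Thus $\cal T$ is obtained by gluing intervals and copies of $\rrrr$ along common endpoint leaves, making it an honest (possibly branched) 1-manifold.

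The key step is to rule out branching. Suppose for contradiction that $F \neq F'$ are non-separated in $\cal T$. By Theorem \ref{nonsep}, $F$ and $F'$ are periodic, fixed by a common non-trivial $g \in \pi_1(M)$, and connected by a chain of adjacent lozenges $\{A_i\}$ (even in number) all intersecting a common stable leaf, with $g$ preserving each lozenge. The corresponding freely homotopic periodic orbits of $\Phi$ generate, via the limiting construction of Section \ref{construction}, walls containing these orbits as corners; on the other hand, at a non-separated pair in $\wls$ the induced geodesic lamination $\wl$ on leaves near $F$ would have to exhibit a bifurcation into both $F$ and $F'$ that is incompatible with every leaf of $\wls \cap \widetilde V$ being a finite-sided ideal polygon inside a single lifted complementary region. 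Formalizing this last step is the main obstacle: one must use Proposition \ref{asymptotic1} (to control how boundary leaves of complementary regions approach each other) and Proposition \ref{identical} (to identify any wall containing the periodic orbit of $F$ with one containing that of $F'$, which would force $F = F'$), together with Lemma \ref{noone} and the atoroidal hypothesis to exclude the scalloped $\mathbb Z$-chain alternative in Theorem \ref{nonsep}. The circle case for $\cal T$ is ruled out because leaf spaces of foliations in universal covers are always simply connected. Hence $\cal T \cong \rrrr$, and $\Phi$ is $\rrrr$-covered.
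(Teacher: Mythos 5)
Your overall target (show the leaf space of $\wls$ is Hausdorff, hence $\rrrr$) matches the paper, and starting a contradiction from a non-separated pair via Theorem \ref{nonsep} is also how the paper begins. But there are two genuine problems. First, a structural error: the leaf space of $\wls$ restricted to a lifted complementary region $\widetilde V$ is not a compact interval. The complementary region is a suspension, so its core is a closed curve transverse to $\ls$; the corresponding deck transformation acts freely on the interval of stable leaves meeting $\widetilde V$, which is therefore a line, not a compact interval. More importantly, the proposed ``gluing'' of lines and intervals never addresses why the glued object has no branching --- that is the entire content of the theorem, and you acknowledge it only as ``the main obstacle.''

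Second, the mechanism you propose for excluding branching does not work. Proposition \ref{identical} says that a wall and a deck translate of it that meet non-transversely coincide; it does not identify a wall containing the periodic orbit of $F$ with a wall containing the periodic orbit of $F'$, and even if a single wall contained both periodic orbits as corners this would not force $F=F'$ --- walls are designed to contain corner orbits lying in many distinct stable leaves, so this gives no contradiction. Likewise, non-separation of stable leaves is not in itself ``incompatible'' with every leaf of $\wl\cap E$ having finite-sided ideal polygon complementary regions; one must produce a specific object that cannot coexist with the walls. In the paper this is done by taking the adjacent lozenges $C_1,C_2$ attached to the non-separated pair $Y_1,Y_2$, with shared corner the periodic orbit $\alpha$ in the intermediate leaf $Y$, building the set $S\subset\wlu(\alpha)$ of negative rays cut off by the geodesics $g_E$, and proving the key Lemma \ref{nocross} that no wall crosses $S$ transversely (a case analysis in which the pair $C_1,C_2$ traps the basic block of any crossing wall, using the Configuration~0 and~1 analyses). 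From this one concludes $\alpha$ meets no wall transversely, and then both remaining possibilities in $Y$ --- $\alpha$ a diagonal or boundary leaf of an ideal polygon of $\wl\cap Y$ (impossible since $\gamma$ fixes the polygon but has only two fixed points on $S^1(Y)$), or $\alpha$ a non-isolated leaf of $\wl\cap Y$ (impossible via the pivot/Configuration~0 argument applied to nearby walls $L_n$) --- are excluded. None of these steps is supplied, or replaceable, by the citations in your sketch, so the proposal as written has a genuine gap at its central step.
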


\begin{proof}
Up to a finite cover we may assume that $M$ is orientable
and $\ls, \lu$ are transversely orientable.
Recall that we constructed a leafwise geodesic lamination $\lam$
with leaves being projections
of walls to $M$. 
This was proved in Proposition \ref{prop-gettinglamination}.
 The previous propositions show that
$\lam$ is not a foliation,
and the complementary regions of $\lam$ intersect
leaves of $\ls$ with components which are finite sided
ideal polygons.

The proof will be by contradiction. Assume that $\Phi$ is
not $\rrrr$-covered, and choose distinct leaves $Y_1, Y_2$ of
$\wls$ which are non separated from each other and with half
leaves part of the boundary of adjacent lozenges 
$D_1, D_2$ as in
Figure \ref{figure17} (a).
Let $\alpha_i$ be the periodic orbits in $Y_i$.
Let $U_i$ be the unstable leaf of $\alpha_i$.
Let $Y$ be the leaf of $\wls$ entirely contained in
$\partial(D_1 \cup D_2)$. Let $\alpha$ be the
periodic orbit in $Y$ and let $U = \wlu(\alpha)$.
Let $g$ be a generator of the isotropy group  of
$\alpha$, which is also a generator of the isotropy group 
of $\alpha_1, \alpha_2$.

\begin{figure}[ht]
\begin{center}
\includegraphics[scale=1.10]{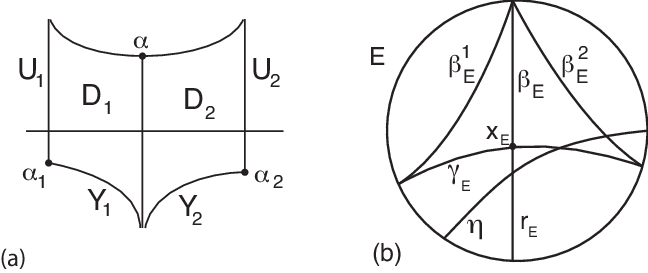}
\begin{picture}(0,0)
\end{picture}
\end{center}
%\vspace{-0.5cm}
\vspace{0.0cm}
\caption{a. Non separated stable leaves, b. The segment
$r_E$ and the case that $\eta$ intersects it 
transversely.}
\label{figure17}
\end{figure}

We will consider the set $\II$ of stable leaves $E$ which
intersects $D_1$. 
%This is the same set of stable leaves
%which intersects $C_2$, as $C_1, C_2$ are adjacent.
Each such leaf $E$ intersects $U_1, U_2, U$
in flow lines which are geodesics in $E$.
Let these intersections be $\beta^1_E, \beta^2_E, \beta_E$
respectively.
Consider the geodesic $\gamma_E$ in $E$ with ideal points
the negative limit points of $\beta^1_E, \beta^2_E$.

Let $x_E$ be the intersection point of $g_E$ and $\beta_E$,
see Figure \ref{figure17} (b).
Let $r_E$ be the ray of $\beta_E$ starting
at $x_E$ and with ideal point the negative ideal point of
$\beta_E$.
We define 

$$A \ \ = \ \ \bigcup_{E \in \II} \ r_E$$

\noindent
Notice that $A$ is a subset of $U = \wlu(\alpha)$.
The points $x_E$ vary continuously with $E$ in $\II$ because
the geodesics $\gamma_E$ vary continuously as they are asymptotic to
each other (for different $E$) in both directions.
Similarly $\beta_E$ also varies continuously with $E$ as they
are all flow lines in $U$. Hence the boundary component
of $\partial A$ made up by the points $r_E$
is a continuous curve in $U$. The set $A$ is also $g$ invariant,
since all objects are $g$ invariant: $U_1, U_2, U, \II$ and
so on.
Hence the projection $\pi(A)$ is a subannulus of the annulus
$\pi(U)$ in $M$.
% with boundary a closed curve.

The main property we need is the technical result
below. By a transverse intersection of $A$ with a leaf
$H$ of $\wl$ we mean a point $p$ in $H \cap A$ so 
that if $E = \wls(p)$, then $\eta = E \cap H$ 
intersects $A$ transversely.

\begin{lemma} \label{nocross}
The set $A$ has no transverse intersection with the leaves
of $\wl$.
\end{lemma}

\begin{proof}
We do the proof of the lemma by contradiction.
Suppose the lemma is false.
Then there is $E \in \II$ so that $r_E$ 
intersects $\eta := H \cap E$ transversely.

We refer to Figure \ref{figure17} (b).
Let $J_i, i = 1, 2$ be the open interval in $S^1(E)$ between the ideal
point of $r_E$ and the negative ideal point of $\beta^i_E$
which does not contain the positive ideal point of all flow lines
in $E$.
Since $\eta$ intersects $r_E$ 
transversely, it follows that $\eta$ has at least one
ideal point in $J_1 \cup J_2$. We assume without loss of
generality that $\eta$ has an ideal point in $J_1$.
This is depicted in Figure \ref{figure17} (b).
%(could be both ideal points in 
%the union). 

The main idea to obtain a contradiction is that the pair
of adjacent lozenges $D_1, D_2$ prevents the lozenges
or ideal quadrilaterals in the bi-infinite block associated
with $H$ from closing up. There are various possibilities
depending on whether the elements of the bi-infinite block
are lozenges or $(1,3)$ ideal quadrilaterals.

Let $\C$ be the bi-infinite block associated with $H$.
Let $\cB$ be the basic block of $\C$ containing $\eta$.
%The geodesic $\eta$ is in a band $B_1$. 
%contained in the wall $L$. 
The geodesic $\eta$ is transverse to the flow in $E$.
The ray of $\eta$ which has ideal point in $J_1$ is
asymptotic to a flow line $\nu$ in $E$
which has negative ideal point in $J_1$.
Let $V = \wlu(\nu)$. 
%The geodesic $\eta$ 
%is contained in a basic  block of $\C$, which we denote by $\C_0$. 
Part of the boundary of $\cB$ is contained in $V$: it 
could be the full leaf $V$ (for example when there are adjacent
$(1,3)$ ideal quadrilaterals in $\cB$), or a half leaf of $V$ 
(for example when $\cB$ is a 
single lozenge with an unstable side contained in
$V$), or a bounded flow band (two or more adjacent
$(1,3)$ ideal quadrilaterals).

There are three possibilities for $\cB$: a single lozenge,
a stable adjacent block, an unstable adjacent block.

\vskip .05in
\noindent
{\bf {Case 1 - $\cB$ is a single lozenge.}}

Denote the lozenge by $C$.
The lozenge $C$ has one unstable side contained in $V$. 
It has a stable side, contained in a 
leaf $F$ which makes a perfect fit 
$V$. 
Notice that $C$ has to intersect $D_1$ because
$\eta$ intersects $D_1$.
There are two possibilities for $F$
depending on which component of $\mt - Y$ it 
is contained in. In both cases it follows that
the other unstable side of $C$ is contained in a leaf
$V'$ which has to intersect $D_1$ as well.
See example in Figure \ref{figure18} (a),
where $F$ is contained
in the component of $\mt - Y$ which does contain
$D_1$.
%$-$ It could be that $F$ is contained in the
%component of $\mt - Y_1$ which does not contain
%$C_1$, see fig.A10 page 37W. Then we have 
%Configuration 1, which was proved impossible
%in Lemma \ref{sameband}.
%$-$ Or it could be that $F$ is in the component
%of $\mt - P$ not containing $C_1$, see fig.A11 page 37W.
%The other sides of $C$ are contained in 
%stable leaf $G$ intersecting $Z$, and unstable
%leaf $T$ intersecting $F$ and so that $G, T$ 
%make a perfect fit. This can only happen if $T$
%intersects $C_1$, which 
This implies that $\eta$ is contained
in $D_1$. But then $\eta$ does not intersect $U = \wlu(\alpha)$,
contradiction.
We conclude that Case 1 cannot happen.

%Therefore because of the structure of the 2 adjacent
%lozenges $C_1, C_2$, and hte fact that 
%$Z$ intersects $C_1$,  it follows that $P$ cannot intersect
%$U_2$ or any leaf separated by $U_2$ from $Z$. 
%On the other hand $\eta$ intersects $C_2$ hence 
%$P$ crosses $U$. It follows that the other
%unstable side of $C$ is in a leaf $U$ which
%has to intersect $C_2$.
%In the same way there is a stable leaf $G$ which is a side
%of $C$ and makes a perfect fit with $U$. 
%Either $P$ is contained in the component of $\mt - Y_1$
%not containing $U$ or $G$ is contained in the component
%of $\mt - Y_2$ not containing $U$. 
%Without loss of generality assume the first option.
%Then $P$ cannot intersect $V$, contradiction.
%We conclude that Case 1 cannot happen.

The analysis of Case 1 highlights how the adjacent pair of
lozenges $D_1, D_2$ traps the basic block $\cB$ in the 
case of $\cB$ being a lozenge.
A similar situation happens more generaly.

%The geodesic $\eta$ has another ideal point $x$ in $S^1(E)$  which is the
%negative ideal point of a flow line $\xi$. Let $V = \wlu(\xi)$.
%In Case 1, $V$ was the other unstable leaf also a side of the lozenge.
%As indicated by Case 1, there are 3 possibilities:
%jI) $V$ does not intersect the closure of $C_2$, or equivalently
%$x$ is not in the closure of $I_2$,
%II) $V$ is in the boundary of $C_2$, or equivalently $x$ is a
%boundary point of $I_2$,
%III) $V$ intersects $C_2$, or equivalently $x$ is a point in $I_2$.

\begin{figure}[ht]
\begin{center}
\includegraphics[scale=1.10]{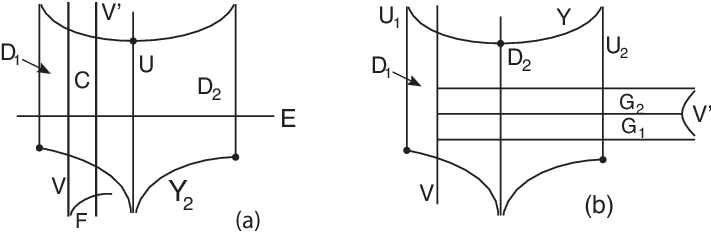}
\begin{picture}(0,0)
\end{picture}
\end{center}
%\vspace{-0.5cm}
\vspace{0.0cm}
\caption{a. Case when $\cB$ is a single
lozenge, b. Case when $\cB$ is an unstable adjacent block.}
\label{figure18}
\end{figure}

\vskip .05in
\noindent
{\bf {Case 2 - $\cB$ is an unstable adjacent block.}}

If the element of $\cB$ containing $\eta$ is a lozenge,
the proof of Case 1 shows that $V$ has to intersect
$D_1$, hence as in Case 1, $\eta$ does not
intersect $U$, contrary to assumption.
If the element of $\cB$ containing $\eta$ is a $(1,3)$ ideal
quadrilateral, then there is a pair 
of adjacent $(1,3)$ quadrilaterals $G_1, G_2$ in $\cB$ intersecting
a common unstable leaf.
In this case either the full leaf $V$ is contained
in $\partial(G_1 \cup G_2)$ or there is an unstable
leaf, denoted by $V'$, which is distinct from $V$ and which
is fully 
contained in $\partial(G_1 \cup G_2)$.
Suppose first that the full leaf $V$ is contained
in $\partial(G_1 \cup G_2)$.
%Let $A_1, A_2$ be stable leaves which contain  sides of
%$D_1, D_2$ respectively and make a perfect fit with $Z$,
%see fig.A12, page 37W.
Then $V'$ intersects $D_1$, and so $\eta$ does 
not intersect $U$, again a contradiction.

Suppose now that  another leaf
$V'$ is contained in $\partial(G_1 \cup G_2)$.
The only possibility
is that $V'$ is contained in the component of $\mt - U_2$
that does not contain $U$, see Figure \ref{figure18} (b).
In this case we do not yet obtain a contradiction, this is
because it allows $\eta$ to intersect $D_2$.
%But recall that the basic block has an odd number of elements
%and one element is a lozenge.
Now we move in the direction of $Y$.
As long as there are adjacent $(1,3)$ ideal quadrilaterals,
one unstable side is contained in $V$ and the other unstable
side is a leaf non separated from $V'$ and contained in the
component of $\mt - U_2$ not containing $U$. Eventually
one arrives either at a lozenge or at a hinge.
Supose first it is a lozenge, denote it by $C^*$. 
It has one unstable side contained in 
the leaf $V$
and the other unstable side contained in a leaf non
separated from $V^*$. 
%As in Case 1, this is impossible.
%Let $S$ be the other stable leaf with a side in $C$.
%Then $S$ makes a perfect fit with $V$.
%If $S$ is contained in the component of $\mt - Y$ not
%containing $U_1$, and cannot intersect a leaf making
%a perfect fit with a leaf non separated from $V'$.
% then this is disallowed as in Configuration 1
%analysis. If $S$ is contained in the component of $\mt - Y_1$
%not containing $U_2$ this is disallowed as in the analysis
%of Configuration 0.
This is disallowed by the analysis of Configuration 0.
If the wall hits a hinge, the hinge has to be in $V$.
But this is also disallowed by the analysis in Configuration 0.

This finishes the proof of Case 2.

\vskip .05in
\noindent
{\bf {Case 3 - $\cB$ is a stable adjacent block.}}

If the ``first" element of $\cB$ (the one
which has an unstable side 
contained in $V$) is a lozenge $C$,
then this was dealt with in Case 1.
%it has one unstable side contained in $Z$. The proof of Case 1
%shows that the other unstable side is contained in a leaf $V_1$
%which intersects $C_1$. A pair of adjacent $(1,3)$ quadrilaterals
%which have one side in $V_1$ will have the other unstable
%half leaf in the boundary (call it $V_2$) also intersecting
%$C_1$. By induction the final unstable leaf in the boundary
%of elements in $\cB$ also intersects $C_1$. This contradicts
%the fact that $\eta$ intersects $C_2$.

Suppose now that the ``first" element of $\cB$ (the one which 
has an unstable side contained in $V$) is a $(1,3)$ ideal quadrilateral.
It is in a pair of $(1,3)$ ideal quadrilaterals, where all unstable
sides intersect $D_1$. This keeps on happening as long as there
are $(1,3)$ quadrilaterals, without producting intersection
of $\eta$ with $D_2$.
the final lozenge. Again we conclude that the final unstable
side intersects $D_1$, contradicting that $\eta$ intersects
$D_2$.
The case of lozenge is disallowed by the analysis of Configuration 0.
This shows that Case 3 cannot happen either.

This finishes the proof of Lemma \ref{nocross}.
\end{proof}

\noindent
{\bf {Completion of the proof of Theorem \ref{nontransverse}.}}

The previous lemma shows that $A$ does not intersect 
leaves of $\wl$ transversely. The projection $\pi(A)$ of $A$ 
is a subannulus of $\pi(U)$, see Figure \ref{figure19} (a).
We need to understand this subannulus better.
The surface $A$ is the union of $r_E$ where $E$ 
runs through all stable leaves intersecting 
both $U_1, U_2$.
Each $r_E$ is a negative ray in a flow line
in $U$. All negative rays in $U$ are asymptotic.
Recall that $\alpha = U \cap Y$ is periodic, invariant
under $g$. It follows that $\pi(\alpha)$ is a 
periodic orbit, contained in the annulus $\pi(U)$.
The projections $\pi(r_E)$ are all asymptotic to
$\pi(\alpha)$. 
On the other end of $r_E$ we have the point $x_E$
and the collection of these is a continuous curve, 
also invariant by $g$. Hence it projects to a
simple closed  curve $\delta$ in $\pi(U)$.

As a conclusion we obtain that $\pi(A)$ is a half
open annulus with compact closure in $\pi(U)$ with a boundary component
$\delta$ contained in $\pi(A)$ and the other 
boundary component $\pi(\alpha)$ disjoint from
$\pi(A)$. 

We proved in Lemma \ref{nocross} that $A$ does not intersect
any wall transversely, and so $\pi(A)$ does not intersect
any projection of a wall transversely. 
Since $\pi(\alpha)$ is in the boundary of $\pi(A)$ it now follows
that $\pi(\alpha)$ does not intersect any projection of a wall transversely.

This is the important property we want. Lifting to $\mt$
it follows that $\alpha$ does not intersect any wall
in $\wl$ transversely.

Consider the situation in $Y = \wls(\alpha)$. 
Then $Y$ intersects $\wl$ and
the complementary regions of $\wl \cap Y$ are finite sided
ideal polygons. 
Since $\alpha$ does not intersect $\wl \cap Y$ transversely
then either $\alpha$ is a leaf of $\wl \cap Y$ or $\alpha$
stays forever in a complementary region of $\wl \cap Y$.
We call the second type a diagonal curve.
We analyze each case separately, showing that each case is
impossible.
Notice that $\alpha$, $Y$, $\wl \cap Y$
are all invariant under the non trivial deck transformation 
$g$.

\begin{figure}[ht]
\begin{center}
\includegraphics[scale=1.00]{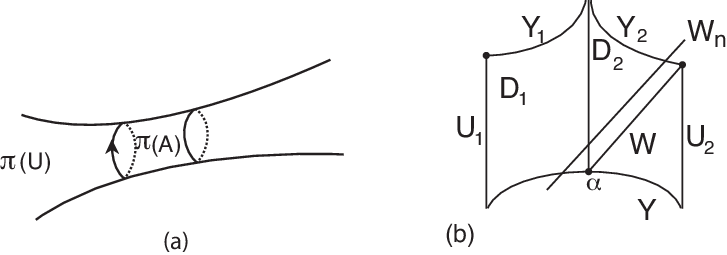}
\begin{picture}(0,0)
\end{picture}
\end{center}
%\vspace{-0.5cm}
\vspace{0.0cm}
\caption{a. The unstable leaf $\pi(U)$ and the 
annulus $\pi(A)$  with compact closure in $\pi(U)$, 
b. Walls $L_n$ converging to wall $L$.}
\label{figure19}
\end{figure}

\vskip .05in
\noindent
{\bf {Situation 1 $-$ There is a complementary region $X$ of
$\wl \cap Y$ so that $\alpha$ is either a boundary component
of $X$ or $\alpha$ is a diagonal of $X$.}}

In either case since $g(\alpha) = \alpha$ and 
$g(\wl \cap Y) = \wl \cap Y$, it follows that 
$g(X) = X$. But this is impossible since $\gamma$ has
only two fixed points in $S^1(Y)$, but $X$ has at least
$3$ ideal points. All ideal points of $X$ need to be
fixed by $g$. We conclude that this situation cannot happen.

\vskip .1in
\noindent
{\bf {Situation 2 $-$ $\alpha$ is a leaf of $\wl \cap Y$ 
which is not isolated on both sides.}}

Let $H \in \wl$ be the wall containing $\alpha$,
and let $\HH$ be the bi-infinite block associated with $H$.
Since $\alpha$ and hence $H$ is not isolated on both sides, 
there are sequences of walls $H_n, W_n$
in $\wl$ so that $H_n, W_n$ converge to $H$, 
and $H$ separates $H_n$ from $W_m$
for any $n, m$.

Recall that $\alpha$ is periodic, and since $\alpha \subset H$,
$\alpha$ is a corner orbit of $H$. Lemma \ref{cornerperio} implies
that all the corner orbits of $\HH$ are periodic 
and all elements of $\HH$ are lozenges. 
We consider the two elements of $\HH$ with a corner $\alpha$.
If these  two lozenges are (the previously defined
lozenges) $D_1, D_2$ (of which $\alpha$ is a corner of both),
then there is a basic block
of $\HH$ containing both of them, and this basic block
is a stable adjacent block. But since it is a stable
adjacent block, then the only corner orbits
of the basic block of $\HH$ which are corners
of $H$ are the hinges of the basic block,
and $\alpha$ could not be a corner orbit of $H$, contradiction.
The same proof disallows the case that none of the elements of
$\HH$ with a corner in $\alpha$ are $D_1$ or $D_2$. It follows
that one and only of $D_1, D_2$ is an element of $\HH$.
%In particular $\alpha$ is a hinge of $H$.

Without loss of generality we assume that this
element of $\HH$ is $D_2$. We refer to
Figure \ref{figure19} (b). Up to switching $H_n$ with $W_n$ assume that $W_n$
intersects $Y = \wls(\alpha)$ in the boundary
of $D_1$ for $n$ big enough, see Figure
\ref{figure19} (b).
As $W_n$ converges to $H$, then for $n$ big enough $W_n$
intersects both $D_1$ and $D_2$ $-$ this is because
$\HH$ intersects $D_2$.

We study the wall $W_n$.  For $n$ big, $W_n$ intersects
$A$, the subset of $U$ described above. 
Fix one such $n$ and let $p$ in $W_n \cap A$. Then $p$ is in
a leaf $E$ of $\wls$. 
Let $\eta = E \cap W_n$.
Lemma \ref{nocross} shows that $\eta$ does not intersect
$A$ transversely, hence the direction of $\eta$ at $p$
is the direction of $r_E$: in other words $\eta$
is a flow line of $\wwp$, rather than being transverse
to the foliation by flow lines in $E$.
It follows that $\eta$ is a corner orbit of $W_n$.
Notice that $p$ is in $\eta$ and $p$ is in $U$, hence
$\eta$ is an orbit in $U = \wlu(\alpha)$.

Recall that we assumed 
that $n$ is big enough so that $W_n$
intersects $Y$ in the half leaf defined by $\alpha$
and contained in the boundary of $D_1$.
Let $\E$ be the bi-infinite block associated with 
$L_n$. 
%Let $D$ be the element of $\E$ with a corner
%in $\eta$ and intersecting $C_1$.

First notice that $\eta$ is a hinge of $\E$.
The reason is that $W_n$ intersects both $D_1, D_2$
so if there were are adjacent elements of $\E$
at $\eta$, the basic block of $\E$ would be 
a stable adjacent block, and
$\eta$ would not be a corner.

This leads to a contradiction as follows: since $\eta$
is a hinge, then at least one of the two 
elements of $\E$ with a corner at $\eta$ is
contained in a quadrant at $\eta$ that intersects
the leaf $Y$ 
(these quadrants do not intersect $Y_1$ or $Y_2$).
In one of these quadrants $Y$ makes a perfect fit with
$U_1$, and in the other quadrant $Y$ makes a perfect fit with $U_2$.
In addition $\wls(\eta)$ intersects both $U_1, U_2$, and
$\wlu(\eta)$ intersects $Y$. The analysis of Configuration
0 now shows that this situation is impossible. This uses
that $\eta$ is a hinge of $L_n$.

%Notice that $\wlu(\eta) = \wlu(\alpha)$ intersects
%$Y$ transversely, $\wls(\eta)$ intersects $U_1$ transversely,
%and $U_1, U_2$ make a perfect fit with $Y$.
%This shows that $D$ cannot be a lozenge.
%If $D$ is one of a pair of adjacent $(1,3)$ ideal
%quadrilaterals intersecting a common stable leaf, then
%the basic block of $\E$ containing $D$ is a stable
%adjacent stable block.
%The other unstable leaf with a half leaf in the boundary
%of the union of these quadrilaterals still intersects
%$Y$. At the end of the block we would have a lozenge,
%which is impossible.
%Finally if $D$ is one of a pair of adjancent ideal
%quadrilaterals both intersecting an unstable leaf,
%then the basic block in $\E$ is an unstable adjacent
%block. Then again the other stable leaf with a half leaf
%in the boundary of the union of these two $(1,3)$ ideal
%quadrilaterals has to intersect $U_1$. We have to have
%a lozenge at the end.
%But again a lozenge is impossible.

The contradiction is derived from assuming that $\ls$ is
not $\rrrr$-covered.
We conclude that in this case $\ls$ is $\rrrr$-covered
and hence $\Phi$ is $\rrrr$-covered.

This finishes the analysis of the Main theorem in the 
case that there is a wall $L$ and for any $\gamma$ in $\pi_1(M)$
$\gamma(L)$ does not intersect $L$ transversely,
and finishes the proof of Theorem \ref{nontransverse}.
\end{proof}

\section{The case of transverse intersections}
\label{transverse1}

\noindent
{\bf {New assumption $-$}}
From now on in the proof of the Main Theorem we assume that for
any wall $W$ obtained by our limiting process there is a deck
transformation $g$ so that $g(W)$ intersects
$W$ transversely.
%\begin{notation} Given $L$ a wall let $\C(L)$ be the bi-infinite
%block associated with $L$.
%\end{notation}
%With $L$ and $\gamma$ as above, let $\C = \C(L)$ and $\E =
%\gamma(\C(L)) = \C(\gamma(L))$.

\vskip .05in
We will prove that the bi-infinite blocks $\C$ of $W$ and 
$\E$ of $g(W)$
are linked: the walls intersect exactly the same set of
leaves of $\wls$, and for each such leaf $E$ of $\wls$,
then $W \cap E$ and $g(W) \cap E$ are geodesics which
intersect transversely. 
An obvious  possibility is that both $\C$ and $\E$
are bi-infinite strings of lozenges which are interlocked
as in Figure \ref{figure20}.
But a priori there are more complicated possibilities 
with adjacent blocks with $(1,3)$ ideal quadrilaterals $-$
in particular we do not know a priori that a corner of one
of the blocks is in the interior of the other block,
this is one main property to be proved.

Given two pairs of points in a circle, we say that the
pairs are linked if each separates the other in the circle.

\begin{figure}[ht]
\begin{center}
\includegraphics[scale=0.75]{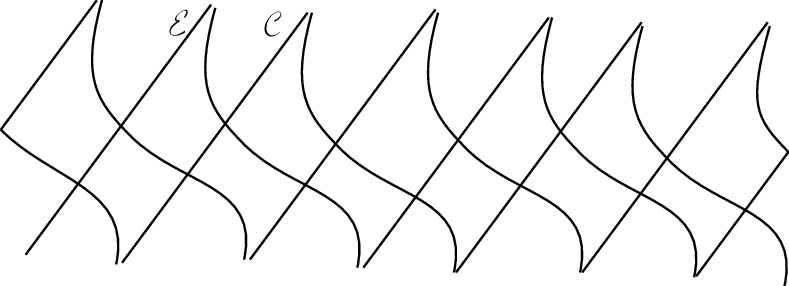}
\begin{picture}(0,0)
\end{picture}
\end{center}
%\vspace{-0.5cm}
\vspace{0.0cm}
\caption{Interlocked strings of lozenges $\C, \E$.}
\label{figure20}
\end{figure}

\begin{proposition} (linking) \label{linking}
Assume that $M$ is atoroidal and $\ls, \lu$ are transversely
orientable.
Let $W$ be a wall with associated bi-infinite block $\cC$
and $g$ in $\pi_1(M)$ so that $g(W)$
intersects $W$ transversely at some point. 
Let $\cE$ be the bi-infinite block of $g(W)$.
Then $\cC, \cE$ intersect the same set of stable
leaves and the same set of unstable leaves.
Any corner orbit of $L$ is contained
in the interior of a basic block of $\cE$ and vice
versa.
Finally for every leaf $F$ intersected by $L$, then 
$L \cap F, \ g(L) \cap F$ are geodesics in $F$
which intersect each other transversely.
Finally $\cC$ (hence also $\cE$) is a bi-infinite string
of lozenges.
\end{proposition}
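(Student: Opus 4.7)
The plan is to propagate the single transverse intersection across the full $\mathbb{R}$-parameter of stable leaves that $L$ crosses. After passing to a finite cover if necessary we may assume $\ls,\lu$ are transversely orientable, so every deck transformation preserves transverse orientations; then by Proposition \ref{identical} any tangential intersection between $L$ and $\gamma(L)$ would force $\gamma(L)=L$. Hence under the standing hypothesis every point of $L\cap\gamma(L)$ is a transverse intersection, so $L\cap\gamma(L)$ is a properly embedded $1$-submanifold of $\mt$, and since two distinct geodesics in a hyperbolic plane meet in at most one point, each connected component projects injectively into the leaf space of $\wls$, sweeping out a subinterval of the $\mathbb{R}$-interval $A$ of stable leaves met by $L$.

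Let $A,B$ be the $\mathbb{R}$-intervals of stable leaves met by $\C$ and by $\E$, and let $J\subset A\cap B$ be the set of leaves $F$ for which $L\cap F$ and $\gamma(L)\cap F$ are two geodesics in $F$ crossing transversely. First I would check that $J$ is open in $A$: by Lemma \ref{boundary} the pair of unstable leaves whose negative ideal points determine the ideal points of $L\cap F$ is constant as $F$ varies inside one basic block of $\C$, and similarly for $\E$. Continuity of the resulting ideal points in local trivializations of the circle bundle over stable leaves, together with openness of the linking condition on pairs of points in $S^1(F)$, handles the case when $F$ is a non-corner leaf of both $L$ and $\gamma(L)$. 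At corner orbits the relevant intersection degenerates to a flow line, but the same ideal-point analysis on either side of the corner confirms that transverse crossing persists.

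The main obstacle is closedness of $J$ in $A$. Suppose $F_n\in J$ converge to $F_0\in A$ with $F_0\notin J$, and let $p_n$ be the unique transverse intersection of $L$ and $\gamma(L)$ inside $F_n$. Since the intersection curve through the $p_n$ is closed in $\mt$, either $p_n$ converges to $p_0\in L\cap\gamma(L)\cap F_0$, placing $F_0$ in $J$ and contradicting the hypothesis, or $p_n$ escapes every compact set of $\mt$. In the second case I would apply deck translates and pass to limits, using the uniformly bounded thickness of canonical bands of lozenges (Corollary \ref{boundthickness}) and the global Hausdorff bound between corners of a lozenge (Theorem \ref{lozengebound}) to extract a limiting configuration; at the limit one obtains a corner orbit of one of the walls sitting in a quadrant whose adjacent stable and unstable leaves make a perfect fit of exactly the form ruled out by Configuration $0$ or Configuration $1$ of Section \ref{translates}. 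This contradicts those analyses and forces $J=A$; by symmetry $J=B$, so $A=B$ and every common stable leaf is cut by both walls in transversely crossing geodesics.

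The equality of the sets of unstable leaves met by $\C$ and $\E$ follows by the analogous argument with the roles of stable and unstable swapped. For the corners-in-interiors claim, let $\eta$ be a corner orbit of $L$; then $L\cap\wls(\eta)=\eta$ is a flow line, while the just-established transverse geodesic crossing on $\wls(\eta)$ forces $\gamma(L)\cap\wls(\eta)$ to be a geodesic transverse to the flow, meeting $\eta$ in exactly one point. If $\eta$ lay on the stable or unstable boundary of a basic block of $\E$, some side or corner of a lozenge or $(1,3)$ ideal quadrilateral of $\E$ would abut $\eta$, producing once more a perfect-fit quadrant configuration forbidden by Configurations $0$ or $1$. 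Therefore $\eta$ lies in the interior of a basic block of $\E$, and the reverse containment is symmetric.
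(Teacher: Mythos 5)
Your overall strategy (propagate the transverse crossing across the interval of stable leaves met by $L$ by an open--closed argument) is the same propagation idea as the paper, but the two places where you dispose of the degenerate cases do not hold up. First, your reduction ``any tangential intersection between $L$ and $\gamma(L)$ would force $\gamma(L)=L$ by Proposition \ref{identical}'' misapplies that proposition: as its proof (via Lemma \ref{commoncorner}) makes clear, the hypothesis there is that $L$ and $\gamma(L)$ intersect and at \emph{no} point transversely, whereas in the present situation a transverse intersection exists by assumption. So you cannot invoke it to exclude a leaf $F_0$ on which the two geodesics coincide (for instance a leaf on which both walls pass through the same corner orbit). Consequently your closedness dichotomy is incomplete: if $p_n$ converges to $p_0\in F_0$, the limit intersection at $F_0$ may be tangential rather than transverse, and ruling this out is precisely part of the work, not something that follows from earlier results.

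Second, the escape case is where essentially all of the difficulty lies, and your one-sentence treatment does not address it. When the crossing point runs off to infinity, what happens (in the paper's language) is that the four nested unstable leaves bounding $L\cap E$ and $\gamma(L)\cap E$ stop meeting stable leaves simultaneously, either at a common leaf $E_0=E_1$ or at two non-separated leaves. This is not excluded by Configurations $0$ or $1$: nothing in the limiting picture produces a wall entering a quadrant containing a perfect fit of that form, and indeed the paper never uses those configurations here. Instead it needs three different mechanisms: the coherence of transverse orientations at pivots (using that deck transformations preserve both transverse orientations) to rule out one pattern of simultaneous degeneration; the atoroidal hypothesis, via a $\mathbb Z^2$ subgroup generated by $\gamma$ and the stabilizer of the non-separated leaves, when the corners are forced to be periodic; and a limiting argument under powers of the periodic deck transformation $\zeta$ producing two \emph{distinct} walls sharing a corner orbit, which is then contradicted by Proposition \ref{identical} applied to those limit walls. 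Without an argument of this kind your set $J$ could a priori be a proper open subinterval of $A$, and the conclusions $\X_0=\X_1$ and the leafwise transverse crossing do not follow. (As a side remark, once the leafwise crossing is established, the ``corner in the interior of a basic block'' statement follows directly, as in the paper, from the fact that $\gamma(L)\cap\wls(\eta)$ is a geodesic transverse to the flow crossing $\eta$; no appeal to Configurations $0$ or $1$ is needed there.)
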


\begin{proof}
Let $\X_0$ be the set of stable leaves intersecting $W$
and $\X_1$ the corresponding set for $g(W)$.
Both are properly
embedded copies of $\rrrr$ in the stable leaf space.
One main property we want to prove is that $\X_0 = \X_1$.
Let $v$ be a point of transverse intersection of $W$ 
and $g(W)$, and let $E = \wls(v)$,
hence $v$ is a point in both geodesics 

$$\gamma_1 = W \cap E, \ \ {\rm and} \ \ 
\gamma_2 = g(W) \cap E.$$

\noindent
Clearly the set of stable leaves where $W, g(W)$ intersect
transversely is open. Also the set of orbits of $\wwp$ contained
in $W, g(W)$ is discrete. Hence we can assume that
neither $\gamma_1, \gamma_2$ has an ideal point which is the positive
ideal point of flow lines in $E$.
Hence there are $4$ distinct flow lines which have
negative ideal points the ideal points of $\gamma_1 \cup \gamma_2$ in $E$.
Let $X, U, Y, V$ be the unstable leaves through
these flow lines. We assume that $X, U, Y, V$ are nested,
that is, $X \cap E, U \cap E, Y \cap E$ and $V \cap E$
are nested flow lines in $E$, this means that the first
and last do not separate the other $3$ in $E$, but the
second separates the first from the others, and the third
separates the fourth from the others.
We assume that the negative ideal points of 
$U \cap E, \ V \cap E$ are
the ideal points of $\gamma_1 = W \cap E$ and the negative
ideal points of $X \cap E, \  Y \cap E$ are the ideal points of $\gamma_2 =
g(W) \cap E$. 
We want to prove the following:

\vskip .08in
\noindent
{\bf {Claim $-$ Up to switching $W, g(W)$, there is a corner
of $\cC$ which is contained in the interior of $\cE$.}}
\vskip .05in

The proof is fairly involved and will have several cases.

Fix a component $A$ of $\mt - E$.
As long as a stable leaf $F$ in $A$ intersects all of $X, U, Y, V$
then $W, g(W)$ intersect $F$ and the ideal points of
$W \cap F, g(W) \cap F$ are the negative ideal points
of $W \cap F$ and $g(W) \cap F$ and they are linked.

Hence there is a first leaf stable leaf $E_0$ in $A$ and
in $\X_0$, which stops intersecting either $U$
or $V$. Recall that $E_0$ still intersects one of them.
The same happens for the pair $X, Y$ and $g(W)$.
There is a first leaf $E_1$ in $A$,
so that $E_1$ is in $\X_1$,
and $E_1$ intersects one of $X, Y$ but not the other.
There are two possibilities.

\vskip .05in
\noindent
{\bf {Case 1 $-$ 
There are leaves $F \in \wls$ in $A$ 
intersecting three of $X, U, Y, V$ but not all four of them.}}

This is the simpler case.
The leaf of $X, U, Y, V$ not intersecting $F$
has to be an end leaf, that is, either $X$ or $V$.
Without loss of generality we assume it is $V$. 
Hence there is a first such leaf with this behavior,
which is the leaf $E_0$ that we defined above. 
Notice that in this case $E_0$ is still in $\X_1$
and $E_0$ separates $E$ from $E_1$.
We know that $E_0$ intersects $U$, and
it follows that 
the intersection of $W$ with $E_0$ is the orbit
$U \cap E_0$. 

The intersection of $g(W)$ with $E_0$ is the geodesic
with ideal points which are  the negative ideal points of 
$X \cap E_0$ and $Y \cap E_0$. 
But the pair of ideal points of
the flow line $U \cap E_0$ (both positive and
negative) link with the negative ideal
points of $X \cap E_0, \ Y \cap E_0$ in $S^1(E_0)$.
This is because 
$X, U, Y, V$ are nested.
Hence $g(W) \cap E_0$ intersects $W \cap E_0$
transversely and $W \cap E_0$ is a corner of $L$.

This proves the claim in this case, that is, the corner 
orbit $U \cap E_0$ of $W$ is in the interior of an element
of the bi-infinite block $g(W)$.

Continuiing in this case we have the following:
The leaves of $\wls \cap W$ which are tangent to the
flow lines of $\wwp$ form a discrete set of leaves.
It follows that $\X_0, \X_1$ contain an open interval $I$ of stable leaves
with $E_0$ in the interior and there is $F$ in $I$ with
$F \cap W, \ F \cap g(W)$ intersecting transversely,
both not flow lines of $\wwp$, and $E_0$ separating $F$ from $E$.
In this case we can restart the analysis with $F$ instead
of $E$. The leaves $X, U, Y$ will remain the same, but
$V$ will be replaced by another leaf.

This case is what we want to show always happens:
if the intersection of (say) $W$ with stable leaves 
becomes a flow line in a stable leaf $F$ then $F$
still intersects $g(W)$ and 
$F \cap g(W)$ intersects $F \cap W$ transversely.
The other case to consider is:

\vskip .1in
\noindent
{\bf {Case 2 $-$ For any stable leaf $F$ in that
component of $\mt - E$, then all four $Z, U, Y, V$ intersect $F$
or at most two of them intersect $F$.}}

This happens when the stable leaves stop intersecting one of $U, V$
at the same time they stop intersecting one of $X, W$.
It follows that there are two possibilities here: either $E_0 = E_1$
or they are distinct but non separated from each other.

\vskip .1in
\noindent
{\bf {Case 2.1 $-$ $E_0, E_1$ are distinct but non separated
from each other.}}

In this case $E_0$ is in $\X_0$ but not in $\X_1$,
since $E_1$ is in $\X_1$.
%, and $E_0, E_1$ are distinct and
%non separated. 
Also $E_0$ intersects one of $U, V$ but 
not the other, and $E_1$ intersects one of $X, Y$ but not
the other.

In this case there are more possibilities for the relative
positioning of the relevant leaves.
Fix a transverse orientation to $\wls$ so that 
$E_1$ is on the positive side of $E$, and fix a transverse
orientation to $\wlu$ so that $U$ is on the positive
side of $X$.
%Let $T_0$ be the one of $U, V$ which is intersected by 
%$E_0$, and $T_1$ the one of $X, Y$ intersected by $E_1$.
We put an order in the pair $E_0, E_1$: we say that
$E_0 < E_1$ if $E_0, E_1$ intersect unstable leaves $Z_0, Z_1$
respectively so that $Z_1$ is on the positive side
of $Z_0$, and $E_1 < E_0$ otherwise.
This is well defined (independent of $Z_0, Z_1$): since
$E_0, E_1$ are non separated, then for any $Z_0, Z_1$ there
is a stable leaf $F$ intersecting both $Z_0, Z_1$ and
the transversal orientation to $\wlu$ says in $F$ which one
of the flow lines $Z_0 \cap F, Z_1 \cap F$ is in front
of the other.

\vskip .05in
Suppose first that $E_0 < E_1$.
In this case $E_0$ cannot intersect $V$, since $V$ is on the
positive side of both $Y$ and $X$, so $E_0$ must intersect
$U$, 
and $E_1$ must intersect $Y$.
Let $\alpha = E_0 \cap U$. By construction this is contained
in $W$ and it is a corner orbit of $\C$. 
Let $\R$ be the basic block of $\C$ which intersects $E$.
In addition
the leaf $E_0$ does not intersect all the unstable leaves
that the basic block $\R$ intersects $-$ this is because
$E_1$ contains part of the boundary of $\R$ and so 
there are unstable leaves intersecting $E_1$ which intersect
$\R$ and these unstable leaves do not intersect $E_0$.
See Figure \ref{figure21} (a).

\begin{figure}[ht]
\begin{center}
\includegraphics[scale=1.00]{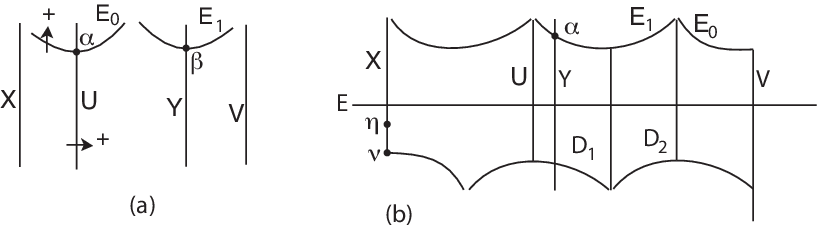}
\begin{picture}(0,0)
\end{picture}
\end{center}
%\vspace{-0.5cm}
\vspace{0.0cm}
\caption{a. The figure when $E_0 < E_1$, 
the arrows indicate the positive transversal 
orientation; \ b) The figure when $E_1 < E_0$.}
\label{figure21}
\end{figure}

It follows that
$\alpha$ is a hinge of $W$ and $\C$.
This is because $E_1$ intersects the boundary of $\R$.
Hence $\R$ is a stable adjacent block and $\alpha$ is a
hinge.
In the same way $\beta := E_1 \cap Y$ is a hinge of 
$g(W)$.
This is a contradiction to $g$ preserving orientation as follows:
in $W$ when one moves positively transverse to $\wls$, 
that is, from $E$ to $E_0$, then eventually 
one moves negatively transverse to $\wlu$. The last fact
occurs when $W$ crosses the hinge $\alpha$, in the direction
from $E$ to $E_0$. In terms of unstable leaves, $W$ moves
from intersecting $Y$ to not intersecting $Y$, but 
intersecting $E_0$, that is, in the negative direction
transverse to $\wlu$.
Since $\alpha$ is a hinge, it determines the global
direction of moving transversely to the unstable
foliation.

In $g(W)$ when one moves positively transversely
to $\wls$ one moves positively transversely to $\wlu$:
that is from $U$ to $Y$. By assumption all deck transformations
of $\pi_1(M)$ preserve transversal orientations, hence
this is a contradiction.
This contradiction shows that this case cannot happen.

%\begin{lemma}

\vskip .05in
The other possible situation is that 
$E_1 < E_0$.
There are two possibilities here. Suppose first that
$E_1$ intersects $Y$. This forces $E_0$ to intersect
$V$. We refer to Figure \ref{figure21} (b).
Notice that $X$ makes a perfect fit with a stable leaf
non separated from $E_0$, and $U$ makes a perfect fit
with a stable leaf non separated from $E_1$ 
(which is also non separated from $E_0$). The figure
displays the simplest situation. The full leaf $E_1$
is contained in $\partial(D_1 \cup D_2)$, where
$D_1, D_2$ are adjacent lozenges intersecting 
a common stable leaf (an example is the leaf $E$).

We want to prove that $\alpha := Y \cap E_1$ is the periodic
orbit in $E_1$. Suppose it is not. Let $\cB$ be the
basic block of $\E$ intersecting $E$. This block
has half leaves of $X$ and $Y$ contained in its boundary.
The properties above imply there is a stable leaf
$S$ making a perfect fit with $X$ and entirely contained
in $\partial \R$.
It follows that $\cB$ is a stable adjancent block with more
than one element. There is a half leaf of $E_1$ contained
in the boundary of the region $\cB$, and also a half 
leaf of $Y$ in this boundary. It follows that $\alpha$
is a hinge of $\cB$.
The other corner $\eta$ of $\cB$ is in $X$.
Let $\nu$ be the periodic orbit in $X$.
By assumption $\alpha$ is not periodic. Then either
\ 1) $\wls(\eta)$ intersects $\wlu(\alpha)$
producing a corner of $\cB$ in $\wlu(\alpha)$ besides
$\alpha$ $-$ a contradiction; \ 
or \ 2) $\wls(\nu)$ separates $\wls(\eta)$ from a stable
leaf in the boundary of $\cB$ which makes a perfect
fit with $\wlu(\alpha)$ $-$ also a contradiction.
%Since $\alpha$ is not the periodic orbit in $E_1$ then
%$\wlu(\alpha)$ intersects another stable leaf in the
%boundary of $D_1 \cup D_2$ (in the figure we draw the
%case that $\wlu(\alpha)$ intersects a leaf in the boundary
%of $D_1$ $-$ the intersection is $\eta$). 
%This intersection is also a hinge of $\cB.
%Then there would be two hinges in $\wlu(\alpha)$, which is
%impossible. 

We conclude that $\alpha$ is the periodic orbit in $E_1$.
It follows that all corners of $\C$ are periodic
and hence so are the corners of $\E$. 
%Also the same arguments as above show that there is a hinge
%of $\C$ in $E_0$. We just showed that the 
%corners of $\C$ are periodic.
Notice that $E_0, E_1$ are non separated from each other in
the leaf space of $\wls$.
Hence there is a non trivial deck transformation $h$ 
leaving both $E_0, E_1$ invariant. In addition the corners
of $\E$ are connected to the corners of $\cC$ by chain of
lozenges. In other words
$\alpha$ and $g(\alpha)$ are 
connected by a chain of lozenges.
As in the proof of Case 0 of Lemma \ref{sameband}
this implies 
that $g, h$ generate a ${\bf Z}^2$ subgroup of $\pi_1(M)$.
This is impossible, since $M$ is atoroidal.
This shows this case cannot happen.

In the same way one rules out the case that 
$E_0$ intersects $U$ by switching the roles of $W$ and $g(W)$.

The next possibility in this case is that $E_1$ intersects
$X$ and $E_0$ intersects $U$. This case is exactly the same as
the previous one by switching $W$ with $g(W)$.

Finally the remaining possibility in this case is
that $E_1$ intersects $X$ and $E_0$ intersects 
$V$. Then an analysis as in the situation that $E_0 < E_1$,
shows that $E_1 \cap X$ is a 
hinge of $g(W)$ and $E_0 \cap V$ is a hinge of 
$W$. As in that analysis this leads to the contradiction 
that $g$ preserves all orientations in $\mt$.
This shows that Case 2.1 cannot happen.

The remaining possibility to be analyzed is:

\vskip .05in
\noindent
{\bf {Case 2.2 $-$ $E_0 = E_1$.}}

Given that $E_0$ intersects one of $U,V$ but not both,
and one of $X, Y$ but not both there are two possibilities: 
\ i) $E_0$ intersects $U$ and $Y$, but does not intersect
either of $X$ or $V$,  or
\ ii) $E_0$ intersects both  $X, U$ but not none of $Y, V$;
or $E_0$ intersects both $Y, V$, but none of $X, U$.

\begin{figure}[ht]
\begin{center}
\includegraphics[scale=1.00]{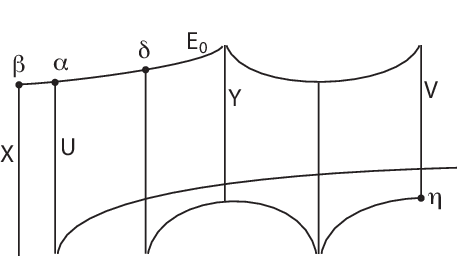}
\begin{picture}(0,0)
\end{picture}
\end{center}
%\vspace{-0.5cm}
\vspace{0.0cm}
\caption{The depiction of the leaves in Case 2.2.}
\label{figure22}
\end{figure}

We deal with possibility ii) First. Up to switching $W$ 
with $g(W)$ assume that $E_0$ intersects both 
$X, U$, but does not intersect either of $Y, V$. 
We depict this is in Figure \ref{figure22}.
The argument will be very similar to the one in Situation 2
of Lemma \ref{boundary}
producing two different walls with a common corner orbit,
which will be a contradiction.
Hence we will only review the main points.
The intersections $\alpha := W \cap E_0$, 
$\beta := g(W) \cap E_0$ are both tangent to the
flow and hence orbits of $\wwp$.
The leaf $V$ makes a perfect fit with a stable leaf 
which is distinct from $E_0$, but non separated from $E_0$.
It follows that $E_0, Y, V$ are all periodic, and left invariant
by a non trivial deck transformation $h$.
Let $\delta$ be the periodic orbit of $\wwp$ in $E_0$ and
$\eta$ the periodic orbit in $V$. We assume $h$ acts
pushing points backwards in $\delta$.
The basic block of $\C$ intersecting $E$ is necessarily
a stable adjacent block with $\alpha$ one corner orbit.
Either $\alpha = \delta$ and the other corner of this
basic block is $\eta$ or the other corner orbit is 
an orbit in $V$ different from $\eta$. In any case $h^n(W)$ converges 
(as $n \to \infty$) to 
a wall $W_0$ which has both $\delta$ and $\eta$ as corner
orbits.
These are the only corner orbits in this stable adjacent block.
Now apply $h^n$ to $g(W)$. Then $h^n(\beta)$
also converges to $\delta$, so $h^n(g(W))$ converges
to a wall $W_1$ with $\delta$ as a corner orbit as well.
Either $g(W)$ does not intersect $Y$ and hence $W_1$ does
not intersect $V$, or $g(W)$ intersects $Y$ in
an orbit, and then $W_1$ intersects $Y$ in the periodic orbit
of $Y$. In either case $W_0, W_1$ are distinct walls.
This contradicts the result of Proposition \ref{identical}.

\vskip .05in
Finally suppose that $E_0$ intersects both $U, Y$, but none
of $X, V$. Hence $X$ makes a perfect fit with a stable
leaf non separated from $E_0$ (could be $E_0$ itself).
The same happens for $V$.
This is very similar to Situation 1 in Lemma \ref{boundary}.
It follows that $X, V, E_0$ are all periodic, left invariant
by a non trivial deck transformation $h$, which we assume
is a generator of the isotropy group of this set of leaves. In addition
let $\delta$ be the periodic orbit in $E_0$, and assume that
$h$ acts pushing points backwards in $\delta$.
Applying $h^n$ one obtains that $h^n(W), \ h^n(g(W))$
converge to distinct walls $W_0, W_1$ both containing $\delta$
$-$ as in Situation 1 of Lemma \ref{boundary}.
This is a contradiction and shows that this cannot happen.

\vskip .05in
From this analysis we conclude that Case 2 cannot happen,
and therefore Case 1 happens always.
In particular this proves the Claim.
Once we hit a corner of one of $W$ or $g(W)$ $-$ without
loss of generality say it is a corner of $W$ which is
then in $E_0$. Then when we move slightly past $E_0$
in $\X_0$ to a leaf $F$ of $\wls$, we obtain that $F$ intersects
both $W, g(W)$, the intersections are transverse
and no intersection is a flow line. We can keep proceeding
in the elements of $\cC$ and $\cE$ and the process never
ends, and $\X_0$ never separates from $\X_1$.
This implies that 
that $\X_0 = \X_1$.  Hence $W, g(W)$ intersect
the same set of stable leaves. 
In addition if $F$ leaf of $\wls$ intersects
$W$, then the geodesics $F \cap W$ and $F \cap g(W)$
in $F$ intersect each other and transversely.
So the intersections are always linked.

Let $\Z_0$ (resp. $\Z_1$) be the set of unstable leaves intersected
by $W$ (resp. $\gamma(W)$).
The arguments show that
for any stable leaf $E$ of $\wls$
 intersected by $W$ then $E \cap W$, \ $E \cap g(W)$
are transverse geodesics in $E$.
Hence any corner orbit $\nu$ of $g(W)$ intersects
a basic block of $\C$. Therefore $\wlu(\nu)$ is in $\Z_0$.
This implies that $\Z_1 \subset \Z_0$. Switching the roles
of $W, \gamma(W)$ it follows that $\Z_0 \subset \Z_1$.

This proves that every corner orbit of $\cC$ is in the interior
of an element in $\cE$ and vice versa. It now follows
immediately that all the elements of $\cC$ are lozenges
and $\cC$ is a string of lozenges.

This finishes the proof of Proposition \ref{linking}.
\end{proof}

\section{The convex hull}

Next we introduce the set $\V$ which will be a form of a convex
hull associated with a collection of walls which intersect
transversely. Fix a wall $W$. By our standing assumption
there is $g$ in $\pi_1(M)$ so that $W, g(W)$ intersect
transversely at some point. Let

$$\PP \\ = \ \ \{ g \in \pi_1(M), \  {\rm so \ there \ are} \
g_0 = id, g_1,...,g_n = g,$$

$$ \ {\rm and} \ g_i(L), \ g_{i-1}(L) \ \  {\rm 
are \ either \ equal \ or \ intersect
\ transversely \ for \ all} \ 1 \leq i \leq n \}$$

\noindent
Let also

$$\G \ \ = \ \ \{ \ g(W), \ {\rm so \ that} \ g \in \PP \ \}$$

First we establish a basic lemma:

\begin{lemma}
For any $g$ in $\PP$, the walls $W, \ g(W)$ intersect
exactly the same set of leaves of $\wls$ and $\wlu$.
\end{lemma}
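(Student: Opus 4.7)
The plan is to proceed by a straightforward induction on the length $n$ of a chain $\gamma_0 = id, \gamma_1, \dots, \gamma_n = \gamma$ witnessing membership of $\gamma$ in $\PP$. The key observation is that the relation ``$W_1$ and $W_2$ intersect exactly the same set of leaves of $\wls$ and the same set of leaves of $\wlu$'' is an equivalence relation on walls (reflexivity is trivial, symmetry is obvious, and transitivity is immediate from set equality).

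For the base case $n = 0$ there is nothing to prove since $\gamma_0(L) = L$. For the inductive step, suppose the statement holds for the chain $\gamma_0, \dots, \gamma_{n-1}$, so $\gamma_{n-1}(L)$ intersects the same set of stable leaves and the same set of unstable leaves as $L$. By definition of $\PP$, consecutive walls $\gamma_{n-1}(L)$ and $\gamma_n(L)$ are either equal $-$ in which case they intersect exactly the same set of stable and unstable leaves trivially $-$ or they intersect transversely. In the second case I would apply Proposition \ref{linking} directly to the pair $\gamma_{n-1}(L)$ and $\gamma_n(L) = (\gamma_n \gamma_{n-1}^{-1}) \gamma_{n-1}(L)$: this proposition is a statement about pairs of walls that are deck translates of each other and intersect transversely (noting that $\gamma_n(L)$ is a deck translate of $\gamma_{n-1}(L)$ via the element $\gamma_n \gamma_{n-1}^{-1}$), and its conclusion is exactly that they intersect the same set of stable leaves and the same set of unstable leaves. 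Combining with the inductive hypothesis by transitivity yields the claim for $\gamma_n = \gamma$.

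There is essentially no obstacle here beyond correctly invoking Proposition \ref{linking}; the only mild subtlety is noting that Proposition \ref{linking} is phrased for a wall and its translate under a single deck transformation, so one must observe that any two walls in $\G$ are automatically deck translates of the fixed wall $L$, and hence of each other, so the proposition applies to any pair of consecutive walls in the chain. Also, the hypotheses of Proposition \ref{linking} ($M$ atoroidal, foliations transversely orientable) are already standing assumptions at this point in the paper, so no additional verification is needed.
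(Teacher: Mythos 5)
Your proof is correct and is essentially the same as the paper's: the paper also argues by induction along the chain $\gamma_0=id,\dots,\gamma_n=\gamma$, applying Proposition \ref{linking} to each consecutive pair of translates (which are deck translates of one another) and concluding by transitivity. Your explicit handling of the case where consecutive walls are equal and your remark about viewing $\gamma_i(L)$ as a translate of $\gamma_{i-1}(L)$ are fine, minor elaborations of the same argument.
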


\begin{proof}
Let $g$ in $\PP$ and 
$g_0 = id, g_1,..., g_n = g$
so that $g_{i-1}(W), \ g_i(W)$ intersect transversely.
By Proposition \ref{linking} for any $i$, the walls 
$g_i(W), \ g_{i-1}(W)$ are linked and intersect exactly
the same set of stable and unstable leaves. 
The lemma follows by induction.
\end{proof}

\vskip .1in
\noindent
{\bf {Strategy for the completion of the proof of the Main theorem.}}

By the previous lemma any wall in $\G$ intersects exactly the
same set of stable leaves, and this set is homeomorphic 
to a properly embedded copy of the reals in the stable leaf space.
The goal is to show that this is all of the leaf space of $\wls$ $-$
which immediately implies that $\ls$ is $\rrrr$-covered and finishes
the proof of the Main theorem.

To prove this property we introduce the convex hull $\V$ of $\G$
as follows.

Let $E$ be a leaf of $\wls$ intersecting some wall in $\G$. Hence
there is $\beta$ in $\PP$ so that $E$ intersects $\beta(W)$.
By the previous lemma,
 for any $\gamma$ in $\PP$ then $E$ intersects $\gamma(W)$.
We first define the convex hull in $E$. Let

$$\V_E \ \ = \ \ \{ {\rm convex \ hull \ in} \ E \ {\rm of \
the \ union \ of} \ 
E \cap g(W), \ g \in \PP \}.$$

Recall that the convex hull of a set $A$ in 
$\mathbb{H}^2$ is the smallest convex set
which contains all points in $A$, or equivalently the
union of all segments with endpoints in $A$.
Finally let 

$$\V \ \ = \ \ \bigcup \ \{ \ \V_E, \ E \in \wls, \ E \cap \gamma(L) \not
= \emptyset, \ {\rm for} \ \gamma \in \PP \}$$

\noindent
We stress the following: \ \ 

\vskip .1in
{\bf {The wall $W$ is fixed throughout the discussion 
in this section.}}

\begin{define} (convex hull of $W$)
The set $\V$ is called the {\em convex hull of $W$}. 
\end{define} 

Our goal
is to show that under the
New assumption of Section \ref{transverse1}
the set $\V$ is all of $\mt$ and hence $\ls$ is
$\rrrr$-covered.
In fact we will show that if $\V$ is not all of $\mt$, then
the New assumption fails and we can use the previous
analysis.

\begin{remark} Notice that $\V_E$ is open for any $E$.
Otherwise for some $E$, $\V_E$ is not open, but since it
is convex, it follows that some boundary component of $\V_E$
is contained in $\V_E$. This boundary component
is a geodesic $\ell$ in $E$.
This can
only occur if $\ell$ is $g(W) \cap E$ for some $g$
in $\pi_1(M)$. But by the condition on $W$, $\ell$ transversely
intersects some $h(W) \cap E$. It follows that $\ell$ would not
be in the boundary of $\V_E$. 
Hence $\V_E$ is open.
\end{remark}

\begin{define} A set $P$ in $\mt$ is precisely invariant if for
any $h$ in $\pi_1(M)$ such that $h(P)  \cap P$ is not
empty then $h(P) = P$.
\end{define}

\begin{lemma} \label{precise}
The set $\V$ is precisely invariant.
\end{lemma}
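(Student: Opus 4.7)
The plan is to prove that $\beta(\V)\cap\V\neq\emptyset$ forces $\beta\in\PP$; this immediately yields $\beta(\G)=\G$ and therefore $\beta(\V)=\V$. Since each $\V_E$ lies entirely in the single leaf $E$, an intersection point $p\in\beta(\V)\cap\V$ lies in $\V_E\cap\beta(\V_{\beta^{-1}(E)})$ for $E=\wls(p)$. Because $\beta$ acts by isometry on the leafwise hyperbolic metric, $\beta(\V_{\beta^{-1}(E)})$ is precisely the convex hull in $E$ of the geodesics $\{\beta\gamma(L)\cap E:\gamma\in\PP\}$. So we have two open convex subsets of the hyperbolic plane $E$ overlapping, each realized as the convex hull of a countable family of complete geodesics.

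The first main step is a geometric extraction: from this overlap I want to produce a pair $\gamma,\delta\in\PP$ such that the geodesics $\gamma(L)\cap E$ and $\beta\delta(L)\cap E$ either cross transversely in $E$ or coincide. If neither convex hull contains the other, the boundary of one enters the interior of the other; since every boundary arc of such a hull is either contained in a defining geodesic or is a limit of defining geodesics, an approximation argument yields two defining geodesics, one from each family, that cross transversely. If one hull is contained in the other, any defining geodesic of the smaller lies in the interior of the larger, and again crosses (or equals) a defining geodesic of the larger, unless the smaller hull degenerates to a geodesic shared with the larger family, in which case one pair coincides.

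With such a pair $(\gamma,\delta)$ in hand, the transverse case gives a transverse intersection of the walls $\gamma(L)$ and $\beta\delta(L)$ in $\mt$ (using Proposition \ref{linking} to propagate transverse leafwise intersection to transverse intersection of walls), so appending this pair to the chain witnessing $\gamma\in\PP$ gives $\beta\delta\in\PP$. In the coincidence case, the walls $\gamma(L)$ and $\beta\delta(L)$ intersect along a full geodesic in $E$, hence non-transversely; Proposition \ref{identical}, invoking the atoroidal hypothesis and our standing orientability conventions, forces $\gamma(L)=\beta\delta(L)$, so $\beta\delta\in\PP$ again, using an equality step in the chain.

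Finally I would bootstrap from $\beta\delta\in\PP$ to $\beta\in\PP$. A chain $id=\delta_0,\delta_1,\dots,\delta_n=\delta$ witnessing $\delta\in\PP$ translates by $\beta$ to a sequence $\beta(L)=\beta\delta_0(L),\dots,\beta\delta_n(L)=\beta\delta(L)$ whose consecutive walls still pairwise either coincide or cross transversely, because $\beta$ is a homeomorphism preserving those relations. Traversing this sequence in reverse from $\beta\delta(L)$ back to $\beta(L)$ and concatenating with the chain witnessing $\beta\delta\in\PP$ yields a valid chain from $L$ to $\beta(L)$, so $\beta\in\PP$. The same translation trick then shows $\beta\gamma\in\PP$ for every $\gamma\in\PP$, hence $\beta(\G)=\G$ and $\beta(\V)=\V$. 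The main obstacle I expect is the geometric extraction step, particularly the case in which the relevant boundary arcs of the two convex hulls are limits of defining geodesics rather than defining geodesics themselves; handling this cleanly will require a careful approximation argument exploiting that only countably many geodesics appear and that each comes from a wall in the globally-controlled family $\G$.
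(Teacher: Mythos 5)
Your proposal follows essentially the same route as the paper's proof: an overlap of the leafwise convex hulls in a single leaf $E$ is used to produce $\gamma,\alpha \in \PP$ with $\beta\gamma(L)$ either equal to or transversely meeting $\alpha(L)$, and then translating a chain by $\beta$ and concatenating gives $\beta(\G)\subset\G$, hence by symmetry $\beta(\G)=\G$ and $\beta(\V)=\V$. The extraction step you flag as the main obstacle is asserted with no more justification in the paper itself (though note that your auxiliary claim that every boundary geodesic of such a hull is a defining geodesic or a limit of defining geodesics is not literally true, since hull boundaries can contain bridging geodesics spanning gaps of the family), and your explicit handling of the coincidence case via Proposition \ref{identical} supplies a detail the paper leaves implicit.
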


\begin{proof}
Let $g$ in $\pi_1(M)$ so that $g(\V) \cap \V \not = \emptyset$,
so there is $x$ in the intersection. Let $E$ be the stable leaf
through $g^{-1}(x)$ $-$ a point in $\V$ as well.
Hence $g^{-1}(x)$ is in $E \cap \V$. 
It follows that $g(E \cap \V)$ intersects $g(E) \cap \V$.

Recall that $\V \cap E$ is the the convex hull 
in $E$ of the
set of geodesics $h(W) \cap E$, where $h \in \PP$.
Similarly $g(E) \cap \V$ is the interior of the convex
hull of $h(W) \cap g(E)$ where $h \in \PP$.
Since $g(E \cap \V)$ intersects $g(E) \cap \V$,
it follows that there are $h \in \PP$, $f \in \PP$
so that $g h(W)$ intersects $f(W)$
transversely.

$\G$ is the set of deck translates of $W$ so that
there is a finite sequence of deck translates of $W$ starting
with $W$, and consecutive ones are either 
equal or intersect transversely.
Hence $g$ sends some element $h(W)$ of $\G$ to
an element of $\G$.

Let now $f(W)$ ($f \in \PP$) be a generic element of $\G$. Either
$f(W) = W$ or there is
a sequence of deck translates of $W$ from $h(W)$ to
$f(W)$, so consecutive ones intersect transversely.
Just take the inverse of a sequence from $W$ to $h(W)$,
and then a sequence from $W$ to $f(W)$. Applying
$g$ one gets a sequence of consecutively transverse
translates from $g h(W)$ to $g f(W)$.
Since $g h(W)$ is an element of $\G$,
there is a sequence of consecutively
transverse translates from $W$ to $g h(W)$. Again
concatenating with the sequence from $g h(W)$ to $g f (W)$ one
produces a sequence from $W$ to $g f (W)$. 
Hence $g f (W)$ is in $\G$.

In particular $g(\G)$ is a subset of $\G$. In the same way
$g^{-1}(\G)$ is also a subset of $\G$, and consequently
$g(\G) = \G$. 
Since $\V$ is obtained by taking 
the convex hull of $f(W) \cap E$
with $f(W)$ in $\G$, it follows that $g(\V) = \V$.

This proves Lemma \ref{sameband}.
\end{proof}

%\begin{lemma}
%The subset $\PP$ of $\pi_1(M)$ is a subgroup of $\pi_1(M)$.
%\end{lemma}
%
%\begin{proof} let $g \in \PP$ verified by a sequence 
%$g_0 = id, g_1, \cdots, g_n = g$ from $W$ to $g(W)$.
%Multiplying this sequence by $g_n^{-1} = g^{-1}$ we obtain
%a sequence from $g^{-1}(W)$ to $W$, hence there is a
%sequence from $W$ to $g(W)$,  showing that $g^{-1}$
%is also in $\PP$.
%
%Let now $g, h$ in $\PP$. We already proved that $h^{-1}$ is in
%$\PP$. Consider a sequence $g_0 = id, g_1, 
%\cdots, g_n = g$ for $g$ and a sequence
%$h_0 = id, h_1, \cdots, h_m$ for $h^{-1}$ (not for $h$!).
%Now concatenate these two starting in $h^{-1}(W)$ and 
%along the inverse of the $h^{-1}$ sequence to $W$, then
%along the $g$ sequence to $g(W)$ produces a sequence
%from $h^{-1}(W)$ to $g(W)$. Multiplying by $h$ produces
%a sequence from $W$ to $h g(W)$. It follows 
%that $h g$ is in $\PP$ and this proves that $\PP$ is
%a subgroup of $\pi_1(M)$.
%\end{proof}
%

This result will be used in the next section.
An immediate consequence is:

\begin{corollary} The subset $\PP$ of $\pi_1(M)$ is a subgroup
of $\pi_1(M)$.
\end{corollary}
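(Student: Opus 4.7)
The plan is to verify the subgroup axioms directly from the definition of $\PP$, exploiting the fact that each $\alpha \in \pi_1(M)$ acts as a homeomorphism of $\mt$ carrying the stable and unstable foliations to themselves. The key observation is that the binary relation on $\pi_1(M)$ defined by $\gamma \sim \gamma'$ iff $\gamma(L)$ and $\gamma'(L)$ are equal or intersect transversely is symmetric (transverse intersection and equality are symmetric relations on unordered pairs of walls) and left-invariant: if $\gamma \sim \gamma'$ then $\alpha\gamma \sim \alpha\gamma'$ for every $\alpha \in \pi_1(M)$, because $\alpha$ sends the pair $(\gamma(L),\gamma'(L))$ to $(\alpha\gamma(L),\alpha\gamma'(L))$ and preserves both equality and the notion of transverse intersection used in Section \ref{transverse1} (points of transverse intersection lie in a stable leaf $E$ where the two walls cut out transverse geodesics, and $\alpha$ sends this configuration to the corresponding one in $\alpha(E)$ isometrically). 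By definition $\PP$ is the equivalence class of the identity under the equivalence relation generated by $\sim$, and in any group a left-invariant equivalence class containing $id$ is automatically a subgroup.

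Unpacking this, first $id \in \PP$ with the trivial chain $n=0$. Next, for closure under composition, suppose $\alpha,\beta \in \PP$ with certifying chains $\gamma_0 = id,\gamma_1,\ldots,\gamma_m=\alpha$ and $\delta_0 = id,\delta_1,\ldots,\delta_n=\beta$. Translating the second chain by $\alpha$ gives $\alpha\delta_0 = \alpha, \alpha\delta_1,\ldots,\alpha\delta_n = \alpha\beta$, and by left-invariance each consecutive pair $\alpha\delta_{j-1}(L),\alpha\delta_j(L)$ is either equal or transverse. Concatenating this with the chain for $\alpha$ produces a single chain from $id$ to $\alpha\beta$, so $\alpha\beta \in \PP$. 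Finally, for closure under inverses, given a chain $\gamma_0 = id,\ldots,\gamma_m=\alpha$, translate by $\alpha^{-1}$ and reverse to obtain $\alpha^{-1}\gamma_m = id,\alpha^{-1}\gamma_{m-1},\ldots,\alpha^{-1}\gamma_0=\alpha^{-1}$, whose consecutive pairs again satisfy the required relation by left-invariance, so $\alpha^{-1}\in\PP$.

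There is no real obstacle here: the statement is essentially a formal consequence of how $\PP$ was defined, once one notices the left-invariance of the generating relation. The content of this corollary is that it lets us henceforth treat $\PP$ as a genuine subgroup $-$ so that, together with Lemma \ref{precise}, the set $\V$ is acted on by the group $\PP$ with $\V/\PP$ a natural object embedding into $M$, which will be exploited in the subsequent analysis.
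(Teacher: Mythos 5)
Your proof is correct, and it takes a genuinely different route from the paper. The paper deduces the corollary from Lemma \ref{precise}: for $\beta \in \PP$ one has $\beta(\V) \cap \V \neq \emptyset$, precise invariance gives $\beta(\V) = \V$ (and, from the proof of that lemma, $\beta(\G) = \G$), and closure under multiplication and inverses follows from $\beta(\PP) = \PP$. You instead verify the subgroup axioms directly from the definition of $\PP$, using only that the generating relation ``$\gamma(L)$ and $\gamma'(L)$ are equal or intersect transversely'' is symmetric and left-invariant under $\pi_1(M)$; the left-invariance is legitimate because deck transformations preserve $\wls$, map walls to walls, and act leafwise isometrically for the lifted leafwise hyperbolic metric, so they carry transverse geodesic intersections in a leaf $E$ to transverse geodesic intersections in $\gamma(E)$. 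Your chain-translation and chain-reversal arguments for closure under composition and inverses are complete. What each approach buys: yours is more elementary and self-contained $-$ it needs none of the convex hull machinery, so the corollary could in fact be stated immediately after the definition of $\PP$ $-$ while the paper's derivation is shorter once Lemma \ref{precise} is in hand and simultaneously records the identities $\beta(\G) = \G$ and $\beta(\V) = \V$ for $\beta \in \PP$, which are the facts actually exploited in the subsequent analysis of $\V$ and its boundary.
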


\begin{proof}{}
If $g \in \PP$, then by definition $g(\V) \cap \V 
\not = \emptyset$. Conversely if $g(\V) \cap \V \not =
\emptyset$, then $g(W)$ is connected to $W$ by a sequence
of translates of $W$ so that consecutives intersect
transversely. If follows that $g \in \PP$ if and only
if $g(\V) \cap \V \not = \emptyset$. The result
is now immediate from the previous lemma.
\end{proof}
%Let $g$ in $\PP$.
%then $g(L) \in \G$, hence $g(\V) \cap \V \not = \emptyset$.
%By the lemma $g(\V) = \V$, which then implies 
%that $g(\PP) = \PP$. 
%Hence $\PP$ is closed under multiplication. The above also
%implies that $g^{-1}(\PP) = \PP$ so $\PP$ is also closed
%under taking inverses, and $\PP$ is a subgroup of $\pi_1(M)$.

We now prove a further property:

\begin{lemma} The set $\V$ is open.
\end{lemma}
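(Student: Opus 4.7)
The plan is to show that every point of $\V$ has an open neighborhood in $\mt$ contained in $\V$, by combining the already-established openness of each $\V_E$ in $E$ with the local product structure of $\wls$ and continuous dependence of the intersection geodesics on the stable leaf.

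Fix $x \in \V$ and let $E = \wls(x)$, so $x \in \V_E$. By the Remark just above the lemma, $\V_E$ is open in $E$, so pick a small round disk $D \subset \V_E$ about $x$ and a geodesic triangle $T \subset D$ with $x$ in its interior. Each of the three vertices of $T$ lies in $\V_E$, which by definition is the interior of the convex hull in $E$ of $\bigcup_{\gamma \in \PP} \gamma(L) \cap E$. Since any convex combination in a convex hull uses only finitely many base points, each vertex of $T$ lies in the closed convex hull of geodesics taken from finitely many of the $\gamma(L) \cap E$. Collecting contributions from all three vertices, I obtain a finite collection $\gamma_1,\dots,\gamma_n \in \PP$ such that $T$, and in particular $x$, lies in the interior (computed in $E$) of the closed convex hull $K$ of $g_1 \cup \cdots \cup g_n$, where $g_i := \gamma_i(L) \cap E$.

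Next I pass to a small foliation chart $W$ for $\wls$ around $x$, in which $\wls$ is a genuine product. For any stable leaf $F$ meeting $W$, the plaque $F \cap W$ is $C^0$-close to $E \cap W$, and by the good form of the flow (Subsection \ref{ss.goodform}) the leafwise hyperbolic metrics vary continuously. Each wall $\gamma_i(L)$ is a properly embedded surface whose intersection with each stable leaf it meets is a geodesic in that leaf; by Lemmas \ref{band1} and \ref{band2} and the continuity statement of Proposition \ref{limitofbands}, these geodesics vary continuously with the stable plaque. Hence, shrinking $W$ if necessary, for every $F$ meeting $W$ the geodesic $g_i' := \gamma_i(L) \cap F$ restricted to $W$ is arbitrarily close to $g_i$ restricted to $W$, uniformly in $i = 1,\dots,n$. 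Continuity of the closed convex hull then places the corresponding triangle $T' \subset F$ (obtained from $T$ via the local product structure) in the interior of the convex hull in $F$ of $g_1' \cup \cdots \cup g_n'$, so the point $x' \in F$ corresponding to $x$ lies in $\V_F \subset \V$. As $F$ ranges over the transversal direction in $W$, these points fill an open neighborhood of $x$ in $\mt$.

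The only subtlety, and the main place one must be careful, is at corner orbits of the $\gamma_i(L)$ meeting $E$ near $x$, where a wall switches between being transverse to the flow in $E$ and tangent to a flow line: on one side of $E$ the intersection with nearby leaves continues as a canonical subband, while on the other side the wall may continue through a neighboring band whose geodesic traces approach the corner orbit asymptotically. This is handled by taking $W$ small enough to avoid all but possibly the corner orbit through $x$ itself, and, if needed, by enlarging the finite set $\{\gamma_1,\dots,\gamma_n\}$ so that on both sides of $E$ in the local leaf-space direction the selected geodesics already trap $x$ in the interior of their convex hull. With this precaution the continuity argument in the preceding paragraph applies on both sides of $E$, and $x$ has an open neighborhood in $\mt$ inside $\V$.
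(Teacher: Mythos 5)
Your argument is correct and follows essentially the same route as the paper's proof: trap the point in the interior of the convex hull of finitely many translates $\gamma_i(L)\cap E$ (with a definite margin), then use the continuity of wall--leaf intersections as the stable leaf varies along a transversal to conclude that nearby leaves' convex hulls still contain a disk about the corresponding nearby points. Your final precaution about corner orbits is harmless but not needed, since the continuity established in Lemmas \ref{band1} and \ref{band2} already holds across the tangent (corner) leaves of the induced foliation in a wall.
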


\begin{proof}
Let $p$ in $\V$. Then $p$ is in a leaf $E$ of $\wls$. There are finitely
many $g_1, ..., g_k$ in $\PP$ so that $p$ is in the
interior of the convex hull $c_E$ in $E$ of 

$$\bigcup_{i = 1}^k \ (g_i(W) \cap E)$$

\noindent
There is $\eps > 0$ so that a disk of radius $2\eps$ centered
at $p$ is contained in the interior (in $E$) of $c_E$.
Let 

$$\ell_i \ = \ g_i(W) \cap E, \ 1 \leq i \leq k$$

\noindent
Let $\tau = \tau(t), -1 < t < 1$ be an open transversal to $\wls$
with $c(0) = p$.
Let $E_t$ be the stable leaf through $\tau(t)$. By reducing
$\tau$ if necessary, assume it only intersects leaves of $\wls$
which intersect $W$ (hence intersect $g(W)$ for any
$g$ in $\PP$).

By the properties of walls, for any $1 \leq i \leq k$ and
if $t \rightarrow 0$, then $g_i(W) \cap E_t$ converges
to $\ell_i$ in $\mt$ as $t \rightarrow 0$.
Therefore for $t$ sufficiently small the interior of the convex hull
of

$$\bigcup_{i = 1}^k \ (g_i(W) \cap E_t)$$

\noindent
contains an $\eps$ disk in $E_t$ centered at $\tau(t)$. This shows
that $\V$ contains an open set in $\mt$ containing $p$.
Hence $\V$ is open as desired.
\end{proof}

\section{Boundary components of $\V$}

The wall $W$ is the same fixed in the last section.
The set $\V$ is obtained from $W$.
As mentioned before we eventually want to show that $\V = \mt$. The proof 
is by contradiction. We assume that $\V \not = \mt$
and eventually prove that the New assumption fails. 
We first analyze properties of 
boundary components of $\V$.

Let $\A$ be the the lef space of $\wls$ and let 
let $\A_0$ be the subset of $\A$ 
which is the set of stable leaves intersecting the wall $W$.
Recall that $\A_0$ is a properly embedded copy of 
$\rrrr$ in $\A$. 
First we prove the following very useful result:

\begin{lemma} \label{total}
Let $H$ be an arbitrary wall in $\mt$. Let $\eta$ 
be an intersection of $H$ with a leaf $E$ of $\wls$.
Suppose that one ideal point of $\eta$ is the negative
ideal point of $U \cap E$, where $U$ is a leaf in 
$\wlu$. Then every leaf $G$ of $\wls$ that
intersects $U$, also intersects $H$.
In other words if $U$ is an unstable leaf which is
part of the boundary of some element in
the bi-infinite block associated with $H$, then 
every stable leaf intersecting $U$ intersects $H$.
\end{lemma}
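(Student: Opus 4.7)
The plan is to prove $\HH_U \subseteq \HH_L$, where $\HH_U$ and $\HH_L$ denote the sets of stable leaves intersecting $U$ and $L$ respectively. Both are connected subsets of the leaf space of $\wls$. I introduce the auxiliary set
$$I_U \ = \ \{F \in \HH_L \mid \text{one ideal point of } L \cap F \text{ equals the negative ideal point of } U \cap F\}.$$
By Lemma \ref{boundary}, $I_U$ is precisely the union of the stable leaves intersecting those elements of the bi-infinite block $\cC$ associated with $L$ for which $U$ is one of the two ``support'' unstable leaves, together with the stable leaves through their shared corner orbits. The hypothesis on $\eta$, reinterpreted via Lemma \ref{boundary}, guarantees $I_U \neq \emptyset$, and the inclusion $I_U \subseteq \HH_U$ is immediate; the content of the lemma is the reverse inclusion $\HH_U \subseteq I_U$.

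To establish this, I would analyze the boundary of $I_U$ in the leaf space $\HH$ on each side. By Lemma \ref{boundary}, the two support unstables of $L$ change only at corner orbits. If a corner orbit $\alpha$ of $L$ satisfies $\wlu(\alpha) = U$, then both elements of $\cC$ meeting at $\alpha$ (whether at a pivot in opposite quadrants or at an intermediate corner of a basic block) have $\alpha$ as a corner, and therefore $U$ as a support, so $I_U$ extends across $\alpha$. Hence the boundary of $I_U$ in $\HH$ must occur at a corner orbit $\gamma$ of $L$ with $\wlu(\gamma) \neq U$. At such a transition $\gamma$, the last element of $\cC$ in $I_U$ on that side has $\gamma$ as one corner and $U$ as its other support unstable leaf. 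By the defining perfect-fit relations of lozenges (and, for $(1,3)$ ideal quadrilaterals, by Proposition \ref{periodicdouble} and the structural results of Section \ref{limitsoflozenges}), the stable leaf $\wls(\gamma)$ then makes a perfect fit with $U$. Consequently $\wls(\gamma) \cap U = \emptyset$, and since $U$ remains on one side of $\wls(\gamma)$, connectedness of $\HH_U$ forces $\HH_U$ to terminate at exactly the same stable leaf as $I_U$ on that side. This proves $\HH_U = I_U \subseteq \HH_L$.

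The main obstacle will be the case analysis at transition corners when the adjacent elements of $\cC$ are stable or unstable adjacent basic blocks containing $(1,3)$ ideal quadrilaterals, rather than simple lozenges; in those settings the perfect fit between $\wls(\gamma)$ and $U$ may be mediated by a chain of intermediate non-separated stable leaves, and the rigidity supplied by Theorem \ref{nonsep} and Proposition \ref{periodicdouble} is needed to conclude that $U$ stays confined to one side of each of these leaves through $\gamma$. A separate subcase is when $I_U$ extends indefinitely in some direction without ever reaching a transition corner; there one verifies that the corresponding chain of consecutive elements of $\cC$ sharing $U$ as a support is infinite, so that both $I_U$ and $\HH_U$ are unbounded in $\HH$ on that side and the inclusion holds automatically.
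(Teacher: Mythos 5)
Your plan is correct in outline and, despite the dual packaging, runs on the same ingredients as the paper's proof: constancy of the supporting unstable leaves over each piece of the block (Lemma \ref{boundary}), propagation of the support $U$ across corners of $L$ lying on $U$, and termination governed by perfect fits at corners off $U$. The paper organizes this by walking along $U$ rather than along $\cC$: parametrizing the stable leaves $E_t$ crossing $U$, it observes that $E_t$ can only stop meeting the current element of $\cC$ at a corner of that element lying on $U$, where the next element (sharing that corner) takes over, and since $U$ meets at most one pivot and only boundedly many corners of $L$ the continuation exhausts all stable leaves crossing $U$; this never requires identifying exactly where $\HH_U$ ends. Your version, fixing the maximal run of bands supported by $U$ and arguing nothing lies beyond it, must carry out that termination analysis, and there your key assertion is imprecise in exactly the way you flag: for a stable adjacent basic block with end corners $\gamma$ (off $U$) and $\delta$ (on $U$), it is not $\wls(\gamma)$ but a leaf non-separated from it (the leaf $X_j$ in Proposition \ref{limitloz}) that makes the perfect fit with $U = \wlu(\delta)$; what rescues the step is the fact proved inside Proposition \ref{limitloz} that every stable leaf in the interval $I^s$ of leaves crossing the block meets $\wlu(\delta)$, which is precisely the ``no stable leaf crosses $U$ beyond the block'' statement you need. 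Two smaller adjustments: for stable adjacent blocks the supporting unstable leaves are constant on the whole basic block (the band only sees the two end corners), so $I_U$ should be described in terms of bands/basic blocks rather than individual elements of $\cC$; and in the infinite-run case the covering of a full ray of $U$ is not automatic from unboundedness of $I_U$ in $\HH$, but follows either from properness of $L$ (corners of $L$ on $U$ cannot accumulate) or, as in the paper, from the finiteness of the set of corners of $L$ on $U$, which makes that case vacuous.
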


Notice that we do not require that $H$ is obtained by 
limits of partial walls associated with strings of lozenges.

\begin{proof}
Let $\cC$ be the bi-infinite block associated with $H$.
Clearly the set of stable leaves which intersects both $H$ and
$U$ is an open set. 
So we may assume that both ideal points of $\eta$ are
negative ideal points in the stable leaf
containing it. Let $C$ be the element of
$\cC$ containing $\eta$. 
Parametrize the stable leaves intersecting $U$
as $E_t,  t \in \rrrr$, where $E_0 = E$.
We will consider $t > 0$.
As long as $E_t$ intersects $C$ then $E_t$ intersects
$U$ and $E_t$ also 
intersects $H$.
The first possibility
is that this happens for all $t > 0$ and we get that
any stable leaf intersecting a half leaf of $U$ in that component of
$\mt - E$ also intersects $H$.

Otherwise there is a smallest $t_0 > 0$ so that 
$E_{t_0}$ does not intersect $C$. 
Notice that $E_{t_0}$ intersects $U$. This can only
happen if $E_{t_0} \cap U = \beta$ is a corner of $C$.
Consider the next element $D$ of $\cC$ which also
has a corner $\beta$. It could be that $\beta$ is
not a hinge, hence $C, D$ are adjacent along
a stable leaf (which is $\wls(\beta)$). 
In that case for any  $t > t_0$ sufficiently
close to $t_0$ then $E_t$ intersects $D$, and hence $H$,
and $H \cap E_t = \eta'$ has an ideal point which
is the negative ideal point of $E_t \cap U$. 
In this case we restart the analysis with the element
$D$ of $\cC$ instead of $C$.
Otherwise $\beta$ is a hinge. 
But the same conclusion happens.

The leaf $U$ can intersect at most one hinge, and the
number of corners in $U$ until hitting  a hinge
is finite.
This implies the result of the lemma.
\end{proof}

Notice that the result only holds for those $U$ which 
are part of boundary of the elements of $\cC$. For
an unstable leaf $X$ intersecting the interior of one
of those elements, the result will follow
from the Main theorem, but it needs the atoroidal
hypothesis.

\begin{proposition} \label{boundaryplane}
Let $H$ be a boundary component of $\V$, then $H$ is a properly
embedded plane transverse to $\wls$.
In addition the leaf space of $\wls \cap H$ is the reals.
\end{proposition}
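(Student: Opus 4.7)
The plan is to analyze the convex structure of $\V$ leaf by leaf and assemble the boundary component $W$ from the supporting geodesics. For each $E \in \HH_0$, the slice $\V_E$ is an open convex subset of the hyperbolic plane $E$: it is the interior of the convex hull in $E$ of the family of complete geodesics $\{\gamma(L) \cap E : \gamma \in \PP\}$. Standard convex geometry in ${\bf H}^2$ then gives that $\partial \V_E$ (computed in $E$) is a disjoint union of complete geodesics, each a supporting geodesic of $\V_E$. A connected boundary component $W$ of $\V$ in $\mt$ therefore meets $E$ in a union of such geodesics. The first step is to show that $W \cap E$ is a single geodesic $g_E$ when nonempty: if $W \cap E$ had two disjoint components, the local product structure of $\wls$ combined with the continuous variation of $\V_{E'}$ for $E'$ near $E$ (established in the next step) would separate them into distinct connected components of $\partial \V$, contradicting the connectedness of $W$.

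Next I would establish continuous variation of $g_E$ with $E$. The pair of ideal points of $g_E$ in $S^1(E)$ is obtained as a limit of pairs of ideal points of geodesics $\gamma(L) \cap E$, which themselves are negative endpoints of flow lines in unstable leaves bounding elements of the bi-infinite block associated with the corresponding wall. Because each individual wall slices stable leaves continuously (Lemmas \ref{band1} and \ref{band2}) and the hyperbolic metrics on stable leaves vary continuously (Subsection \ref{ss.goodform}), the pair of ideal points of $g_E$ moves continuously in $S^1(E)$ as $E$ varies. This exhibits $W$ as a continuously embedded surface. Transversality to $\wls$ is automatic since each $g_E$ is a nondegenerate geodesic with tangent directions lying in $E$, and properness of $W$ in $\mt$ follows from properness of the nearby walls $\gamma(L)$ together with the global bound on thicknesses of canonical bands associated with lozenges (Corollary \ref{boundthickness}).

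The main obstacle, and the heart of the proof, is showing that the parameter set for $W$ is all of $\HH_0 \cong \rrrr$, so that the leaf space of $\wls \cap W$ is indeed $\rrrr$ and $W$ is a plane rather than a strip with a missing end. Suppose toward a contradiction that the set of $E \in \HH_0$ with $g_E \neq \emptyset$ has a boundary point $E_0 \in \HH_0$. As $E \to E_0$ inside this set, the ideal points of $g_E$ either converge to two distinct negative endpoints in $S^1(E_0)$ of unstable leaves $U, V$ that bound elements of the bi-infinite block of some wall in $\G$, or they collapse to a common endpoint, forcing $g_E$ to limit to a flow line of $\wwp$ in $E_0$ (a corner orbit). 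In the first case, Lemma \ref{total} ensures that every stable leaf meeting $U$ or $V$ still intersects the walls involved, so $E_0$ already supports a nondegenerate supporting geodesic $g_{E_0}$ by passing to the limit, contradicting that $E_0$ lies on the boundary of the parameter interval. In the second case, one uses the structure of bi-infinite blocks at the corner (pivot or not) together with the linking property of Proposition \ref{linking} to identify an adjacent element of some wall's block across the corner that produces a nondegenerate $g_{E_0}$ on the relevant side of $E_0$. Either outcome contradicts $E_0$ being a boundary point of the parameter range, so the set is all of $\HH_0$, $W$ is a properly embedded plane transverse to $\wls$, and the leaf space of $\wls \cap W$ is $\rrrr$.
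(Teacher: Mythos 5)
Your first two steps (convexity of each slice, one supporting geodesic per leaf per component) and the general strategy of tracking the supporting geodesic $g_E$ as $E$ varies are in the spirit of the paper's proof. But the proposal has a genuine gap, and it also misidentifies where the difficulty lies. The hard point is \emph{not} showing that $W$ meets every leaf of $\HH_0$ -- the proposition only asserts that the leaf space of $\wls\cap W$ is $\rrrr$, and the stronger statement that $W$ meets the same stable leaves as $\V$ is obtained later in the paper by a separate (torus) argument. What actually has to be proved is: (a) the set of leaves met by $W$ is open and the supporting geodesics $\ell_t$ vary continuously, \emph{including} in the delicate case where one ideal point of $\ell = W\cap E$ is the common forward ideal point of the flow lines in $E$; and (b) $\ell_t$ converges \emph{only} to $\ell$, i.e.\ the $\ell_t$ do not also accumulate on a geodesic in a leaf $F\neq E$ non-separated from $E$. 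Point (b) is exactly what makes the leaf space of $\wls\cap W$ Hausdorff, and a connected, simply connected, open Hausdorff $1$-manifold is $\rrrr$; without (b) the conclusion fails (one would get branching in $W$ and $W$ would not be a plane).

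Your continuity step asserts that the ideal points of $g_E$ move continuously "because each individual wall slices stable leaves continuously and the metrics vary continuously," but the supporting geodesic of the convex hull of \emph{infinitely many} walls can jump even if each generator varies continuously: one must rule out that ideal points of some $\gamma_i(L)\cap E_{t_i}$ invade the forbidden interval $I\subset S^1(E)$ in the limit (this is the Claim in the paper's Case 2, which uses that points of $I$ are negative ideal points with neighborhoods of asymptotic directions). More seriously, you never address (b) at all. This is where Lemma \ref{total} does its real work in the paper: if $\ell_{t_i}$ accumulated on a geodesic in a non-separated leaf $F$, some wall $\gamma(L)$ realizing the supporting condition would have an ideal point which is the negative ideal point of an unstable leaf $U$ crossing $F$, whence by Lemma \ref{total} $\gamma(L)$ would intersect both $E$ and $F$ -- impossible for non-separated leaves. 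You instead spend Lemma \ref{total} on the (unneeded) claim that the parameter set is all of $\HH_0$, and your contradiction there is itself circular: to contradict "$E_0$ is a boundary point of the parameter set" you need openness of that set, which is the continuity statement you have not yet secured. The properness of $W$ also cannot be deduced from thickness bounds for canonical bands ($W$ is not a wall); it too rests on (b).
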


\begin{proof}
This result is reminiscent of Lemma \ref{band2}.
But here we do not know a priori what are the
ideal points of intersections of $H$ with leaves of $\wls$,
in particular that they follow the negative ideal points
of intersections of some fixed unstable leaves with
varying stable leaves. This makes this proof more involved.

Let $E$ be a leaf intersecting $H$, in particular $\V \cap E$ is not
all of $E$. Let $\ell$ be a boundary component of $\V \cap E$.
Fix a transversal $\tau = \{ c(t), -\eps < t < \eps \}$
to $\wls$ with $c(0)$ in $\ell$. Let $E_t$ be the leaf of $\wls$
containing $c(t)$. If necessary, shorten $\tau$ so that
all leaves $E_t$ intersect $H$.
The geodesic $\ell$ in $E$ ($= E_0$)  has ideal points $p, q$ in $S^1(E)$.

\vskip .1in
\noindent
{\bf {Case 1 $-$ Suppose first that neither $p$ nor $q$ is the
positive ideal point of the flow lines in $E$.}}

Then for any $|t|$ sufficiently small $E_t$ is asymptotic to 
$\ell$ in both directions: this means that for each
ray $r$ of $\ell$, then $r$ is asymptotic to $E_t$.
This defines unique points $p_t, q_t$ in $S^1(E_t)$
corresponding to $p, q$ in $S^1(E)$: they are the
ideal point of asymptotic rays in $E_t$.
Let $\ell_t$ be the geodesic in $E_t$ with ideal points
$p_t, q_t$. We claim that $\ell_t$ is contained in the
boundary of $\V$ as well for $|t|$ sufficiently small.
In particular we show that $\ell_t$ is contained in the same
component of the boundary that contains $\ell$.

There is a single ideal point in $E$ which is 
the forward ideal point of all flow lines.
Let $I$ be the complementary component of $\{ p, q\}$ in
$S^1(E)$ which does not contain any ideal point
of $g(W) \cap E$ where $g$ is in $\PP$.
Let $I_t$ be the open interval in $S^1(E_t)$ with 
ideal points $p_t, q_t$ and $I_t$ close to $I$ in
the local trivialization of circles at infinity.
Suppose first that the positive ideal point of all flow lines
in $E$ is not in $I$.

Now for any $g \in \PP$ it follows that $g(W) \cap E_t$
is contained in the component of $E_t - \ell_t$ on the same
side that $g(W) \cap E$ is relative to $\ell$.
%This is because the ideal points in $J$ correspond
%to asymptotic directions between $E$ and $E_t$. 
The reason is as follows:
if $g(W) \cap E_t$ had an ideal point in $I_t$
then $E_t$ is asymptotic to $E$ in that direction.
Since $g(W)$ is a wall, this can only happen
if $g(W) \cap E$ is asymptotic to $g(W) \cap E_t$.
In other words $g(W) \cap E$ has an ideal point in $I$,
contradiction.
%$z$ between
Hence $\V \cap E_t$ is contained in the corresponding component
of $E_t - \ell_t$.

Finally we show that $\ell_t$ is in the closure of $\V \cap E_t$.
Suppose first that there is $g$ in $\PP$ with say $p$ 
being an ideal point of $g(W) \cap E$ $-$ this is forcibly
a negative ideal point in $E$.
Then $g(W) \cap E_t$ also has ideal point $p_t$. 
Similarly for $q_t$. Otherwise there is a sequence 
$\alpha_i \in \PP$ with $\alpha_i(W) \cap E$ having ideal
points converging to $p$. Then $\alpha_i(W) \cap E_t$
also has ideal points converging to $p_t$.

In any case there is $\eps > 0$ so that
for $|t| < \eps_0$, there are $v_i$ converging
to $p_t$ in $E_t \cup S^1(E_t)$
with $v_i$ in $\V \cap E_t$ and $u_i$ converging
to $q_t$ with $u_i$ in $\V \cap E_t$.
The geodesic arc from $v_i$ to $u_i$ in $E_t$ is contained
in $\V \cap E_t$ and converges to $\ell_t$ with $i$. 
This shows that $\ell_t$ is contained in the closure 
of $\V$ so it is contained in the boundary of $\V$.

\vskip .05in
These arguments deal with the possibility that the forward ideal point of
$E$ is not in $I$. If the forward ideal point
is in $I$, then we consider 
the other component $J$ of $S^1(E) - \{ p, q \}$ and
let $N$ be a small open neighborhood of $J \cup \partial J$.
Then all points in $J$ are negative ideal points, a
similar proof as above shows the same result that $\ell_t$
is in the boundary of $\V \cap E_t$.
In addition
in Case 1 notice that $\ell_t$ and $\ell$ are a bounded
distance apart in $\mt$.
Notice also that 
for any $|t_0| < \eps_0$ and $t \to t_0$, then $\ell_t$
converges to $\ell_{t_0}$ and only to $\ell_{t_0}$.

\vskip .05in
\noindent
{\bf {Final property in Case 1 $-$}}

We note the following:
let $\gamma_t$ be the flow line in $E_t$ whose negative
ideal point is $p_t$.
The $p_t$ correspond to directions in
$E_t$ which are asymptotic to $\ell$ in the direction
of $p$. Hence all $\gamma_t$ are in
the same unstable leaf $U$.
Likewise for $q_t$.

This finishes the analysis of Case 1.

\vskip .05in
\noindent
{\bf {Case 2 $-$ Suppose that (say) $p$ is the forward
ideal point of flow lines in $E$.}}

This case is more complex than Case 1. 
We recall (see eg \cite{Cal4}) the topology in
$\Z = \cup \{ S^1(E_t), -\eps < t < \eps \}$
(these $E_t$ are exactly those intersecting $\tau$).
Fix $t$. For each $x$ in $S^1(E_t)$ there is a unique
geodesic ray starting at $\tau \cap E_t$ with ideal point
$x$. This produces a bijection between $\Z$ and
$\cup \{ T^1 \wls|_{\tau} \}$. This last set has a natural topology
$-$ that is, independent of the transversal used $-$
as an annulus and this is the topology we put in $\Z$.

The ideal points in $S^1(E)$ of $\ell$ bound a unique interval
$I$ which is disjoint from all ideal points of $\gamma(L) \cap E$
for $\gamma \in \PP$.
We first prove a simple claim:

\begin{claim}
For any $x$ in $I$ there is no sequence $x_i$ of ideal
points of $g_i(W) \cap E_{t_i}$ with $t_i \rightarrow 0$,
$g_i \in \PP$ and $x_i$ converging to $x$.
\end{claim}

\begin{proof}
Notice that $g_i(W)$ intersects $E$ for all $i$.
Suppose by way of contradiction that there is such a sequence $x_i$.
Since $x$ is in $I$, $x$ is a negative ideal point in $S^1(E)$
$-$ because $p$ is the positive ideal point of the
flow lines in $E$. 
Then $x$ has an open neighborhood in $S^1(E)$ where all 
points correspond to
directions asymptotic with $E_t$ for all $|t| < \eps_1$, and
 for a {\underline {fixed}}
$\eps_1 > 0$. Using the trivialization above this implies
that $g_i(W) \cap E$ has ideal points in $I$ for $i$
big enough. This is a contradiction.
This proves the claim.
\end{proof}

Suppose that $E_t$ intersects $\V$ and let $Z$ be
a complementary region in $E_t$ of $\V \cap E_t$.
Then $Z$ is a hemisphere in a hyperbolic plane, including
the boundary geodesic. The ideal boundary of $Z$
is the set of accumulation points of $Z$ in $S^1(E_t)$.
By the claim there is a unique complementary region
of $\V \cap E_t$ with ideal boundary an interval in $S^1(E_t)$
which contains an interval close to the interval $I$
in the trivialization above. Let $\ell_t$ be the boundary
of this complementary region of $\V \cap E_t$ with
ideal points $p_t, q_t$. 

We first show that $p_t \to p$ and $q_t \to q$ as $t \to 0$.
Suppose by way of contradiction that there are
$p_{t_i} \to z \not = p$ and $t_i \to 0$. For simplicity
assume $t_i > 0$ for all $i$.
There is $w \in [p,z)$ which is an ideal point
of $g(W) \cap E$ for some $g \in \PP$, since
$\ell$ is the boundary of the complementary region
associated with $I$.
In Lemma \ref{band2} it was proved that ideal
points $g(W) \cap F$ in $S^1(F)$
vary continuously with $F \in \wls$ along an interval of
leaves.
So the corresponding ideal points of $g(W) \cap E_{t_i}$
as $i \to \infty$ 
show that $p_{t_i}$ cannot converge to $z$.

\vskip .05in
Now we show that $\ell_t$ converges to $\ell$ and
only to $\ell$.
Since the ideal points of $\ell_t$ converge to the ideal
points of $\ell$, then the local trivialization of
$\Z$ and continuous change of hyperbolic metrics show
that $\ell_t$ converges to $\ell_t$.

Finally we prove that $\ell_t$ converges only to $\ell$.
This is much trickier.
Let $x_i$ in $\ell_{t_i}$ converging to $x$,
and so that $t_i \to 0$.
If $x$ is in $E$, then the argument of Lemma \ref{band2}
implies that $x$ is in $\ell$.
Suppose that $x$ is in $F$ different from $E$.
In particular $F$ is non separated from $E$.
Since $x_i$ converges to $x$ in $F$ then up to subsequence
we may assume that $\ell_{t_i}$ converges to a geodesic
$\eta$ in $F$. One of the ideal points of $\eta$, call
it $y$ is a negative ideal point in $F$.

We now consider a transversal $\tau'$ to $\wls$ through
$x$ in $F$, and parametrize the leaves of $\wls$ intersecting
$\tau'$ as $F_t, -\eps \leq t \leq \eps$, with $F_0 = F$.
Reducing $\tau, \tau'$ and reparametrizing if necessary,
we may assume that $F_t = E_t$ if $0 < t <  \eps$.
 Notice again that $F$ is non
separated from $E$. Then put in 
$\Y  = \cup \{ S^1(F_t), -\eps < t < \eps\}$ the topology
coming from 
\  $\cup \ \{ \ T^1 \wls|_{\tau'} \ \}$.
Since $y$ is a negative ideal point of $F$ the following
happens: $y$ has a 
neighborhood in $S^1(F)$ consisting only of negative
ideal points.
In addition $y$ it has a neighborhood $N$ in $\Y$
consisting only of negative ideal points and
satisfying: there is $\eps_0 > 0$ so that 
for $F_t$ with $|t| < \eps_0$ if $z$ is an ideal point
in $S^1(F_t)$ with $z$ in $N$  and $U$ is the unstable
leaf so that $U \cap F_t$ has negative ideal point
$z$, then $U$ intersects $F_0$ also.

In the local parametrization given by $\Y$, then one
of the ideal points of $\ell_{t_i}$ $-$ call it $p_{t_i}$ $-$
satisfies that $p_{t_i}$ converges
to $y$
 (notice that this is in $\Y$ and not in $\Z$, this
is very different from the 
topology in $\Z$,
 since $E, F$ are non separated from each
other).
 Then the following happens:
 fix one $i$ big enough so that 
$p_{t_i}$ is in $N$ (again we are considering
$\tau'$ and $\Y$ here).
Then since $\ell_{t_i}$ is a boundary
component of the convex hull $\V$ in $E_{t_i}$ (which is
equal to $F_{t_i}$ for $i$ big)  it follows that
there is $g \in \PP$ so that $g(W) \cap E_{t_i}$ 
has an ideal point in $N$.
The important fact here is that $g$ is in $\PP$.
In particular there is $U$ unstable leaf so
that this ideal point of $g(W) \cap E_{t_i}$ is
the negative ideal  point of $U \cap E_{t_i}$.

By the defining property of $N$, it follows that
$U$ intersects $F$. By Lemma \ref{total} it follows
that $g(W)$ intersects $F$ also.

This is a contradiction: $g(W)$ intersects $F$ and
also intersects $E$ $-$ the second fact is because
$g$ is in $\PP$. But $E, F$ are non separated and
distinct from each other. 
This is impossible because $g(W)$ is a wall and
intersects an $\rrrr$ worth of leaves of $\wls$.
%and hence $g(W) \cap E_{t_i}$ has a ray $r$ contained
%in $N_{t_i}$. Since this ray 

This shows that the only limits of $\ell_t$ are in $\ell$.
This finishes Case 2.

\vskip .05in
In addition since $\ell_t$ only converges to $\ell$ in
both cases, it follows that the leaf space of $\wls \cap W$
is Hausdorff, and hence it is the reals.

This completes the proof of Proposition \ref{boundaryplane}.
\end{proof}

We also need the following result.

\begin{lemma} \label{atmostcyclic}
Suppose that $M$ is atoroidal.
Let $H$ be an arbitrary component of $\partial \V$.
Then the isotropy group of $H$ is at most infinite
cyclic.
\end{lemma}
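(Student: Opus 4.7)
The plan is to show that the isotropy group $G$ of $W$ in $\pi_1(M)$ is cyclic; since $\pi_1(M)$ is torsion free this will give at most infinite cyclic. Because $M$ is atoroidal and hyperbolic, $\pi_1(M)$ is Gromov hyperbolic, torsion free, and contains no $\mathbb{Z}^2$ subgroup. By the Tits alternative for hyperbolic groups, every subgroup of $\pi_1(M)$ is either virtually cyclic (hence cyclic, in the torsion-free case) or contains a non-abelian free subgroup of rank two. The main goal is thus to rule out the second possibility for $G$.

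First I would produce a faithful action of $G$ on a line. By Proposition \ref{boundaryplane}, $W$ carries a non-singular, $G$-invariant, one-dimensional foliation $\mathcal{F}_W := \wls \cap W$ whose leaf space $\mathcal{H}_W$ is homeomorphic to $\rrrr$. The induced action $\rho: G \to \mathrm{Homeo}(\mathcal{H}_W)$ has trivial kernel: any $\gamma \in \ker \rho$ would fix every stable leaf intersecting $W$, an uncountable set, whereas a non-trivial deck transformation fixes only the countably many stable leaves containing lifts of closed orbits of $\Phi$ in its conjugacy class (these are the corner orbits of chains of lozenges invariant under $\gamma$, by Theorem \ref{freelyhomotopic}). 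Moreover, for each $E \in \mathcal{H}_W$, the stabilizer $G_E \subseteq G$ acts on $E \cong \mathbb{H}^2$ preserving the geodesic $W \cap E$ setwise; as $G_E$ is torsion free, each non-trivial element must be a translation along this geodesic, so $G_E$ is cyclic.

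I would then analyze the quotient surface $\Sigma := W/G$. Since $G$ acts freely and properly on $W \cong \rrrr^2$, the quotient $\Sigma$ is an aspherical surface without boundary with $\pi_1(\Sigma) = G$, and $\mathcal{F}_W$ descends to a non-singular one-dimensional foliation on $\Sigma$. If $\Sigma$ is compact, the existence of such a foliation forces $\chi(\Sigma) = 0$, so $\Sigma$ is a torus or a Klein bottle, yielding $\mathbb{Z}^2 \subseteq G$ and contradicting atoroidality of $M$.

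The main difficulty is the remaining case in which $\Sigma$ is non-compact yet $G$ contains a rank-two free subgroup $F = \langle \alpha, \beta \rangle$. To rule this out I would exploit the geometric origin of $W$: each leaf of $\mathcal{F}_W$ is a boundary geodesic of the convex set $\V \cap E$, realized either as a leaf $\gamma(L) \cap E$ for some $\gamma \in \PP$ or as a limit of such geodesics. Using the bi-infinite block structure from Section \ref{limitsoflozenges} together with Theorems \ref{freelyhomotopic} and \ref{nonsep}, I would argue that a rich $F$-action on the corner orbits of $W$ must produce either (a) commuting powers $\alpha^p, \beta^q$ that preserve a common periodic corner of $W$, hence generate a $\mathbb{Z}^2$ in $\pi_1(M)$; or (b) an infinite $F$-invariant family of pairwise non-separated stable leaves along $\mathcal{H}_W$, giving a scalloped region and, by the last bullet of Theorem \ref{nonsep}, an immersed $\pi_1$-injective torus transverse to $\wwp$. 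Either outcome contradicts atoroidality. The hardest step is the careful control of how the $F$-orbits distribute along $\mathcal{H}_W$ and interact with the boundary structure of $\V$, which is where the detailed block-and-wall machinery of the earlier sections is essential.
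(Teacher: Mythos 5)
Your overall frame (pass to the quotient surface $\Sigma = W/G$, use the nonsingular foliation to force $\chi=0$ in the compact case, and derive a $\mathbb{Z}^2\subset\pi_1(M)$ contradiction) agrees with the paper, and the abelian/compact cases are fine. The genuine gap is the case you yourself identify as hardest: $\Sigma$ non-compact with $G$ non-cyclic (note that here $G$ is automatically free of rank $\geq 2$, since $\pi_1$ of an open surface is free, so the Tits alternative buys you nothing). For this case you only gesture at a dichotomy -- (a) commuting powers fixing a common periodic corner of $W$, or (b) an infinite family of pairwise non-separated stable leaves giving a scalloped region -- without any argument that a rank-two free action on the leaf space $\rrrr$ of $\wls\cap W$ must produce either outcome. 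Worse, the tools you invoke are not available: at this point $W$ is only known to be a boundary component of the convex hull $\V$, i.e.\ a properly embedded plane transverse to $\wls$ whose induced foliation has leaf space $\rrrr$ (Proposition \ref{boundaryplane}). It is not known that $W$ is a wall, that it contains any flow lines (``corner orbits''), or that it carries a bi-infinite block structure; the paper explicitly declines to prove that boundary components of $\V$ are walls. A boundary leaf of $\V\cap E$ need not equal $\gamma(L)\cap E$ for some $\gamma\in\PP$ -- it can be a limit of such geodesics -- so no element of $G$ need stabilize any ``corner'' at all, and the block-and-lozenge machinery cannot be applied to $W$ as you propose.

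For comparison, the paper closes exactly this case by a purely two-dimensional argument requiring no wall structure on $W$: if the isotropy group $A$ is non-abelian, then (Hölder) its action on the leaf space $\{\ell_t,\ t\in\rrrr\}$ of $\wls\cap W$ is not free, so some non-trivial $\rho\in A$ fixes a leaf; one shows $\sup\{t:\rho(\ell_t)=\ell_t\}=\infty$ (otherwise leaves beyond the last fixed parameter spiral in $W/\langle\rho\rangle$ and cannot project to closed curves in $W/A$, contradicting that translates of $\rho$-invariant leaves do project to closed curves), and then, choosing $\mu\in A\setminus\langle\rho\rangle$, the projections of $\ell_0$ and $\mu(\ell_0)$ lie in a compact subannulus of $W/\langle\rho\rangle$ and map to the same closed curve in $W/A$, forcing $W/A$ to be a closed foliated surface, hence a torus or Klein bottle, hence a $\mathbb{Z}^2$ in $\pi_1(M)$, contradicting atoroidality. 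Some argument of this kind (forcing compactness of the quotient when $A$ is non-abelian, or otherwise excluding a free non-abelian isotropy group using only the structure guaranteed by Proposition \ref{boundaryplane}) is what your proposal is missing.
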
 

\begin{proof}
The proof is entirely two dimensional.
Let $\K$ be the isotropy group of $H$. 
Then $H/\K$ is a surface.
Suppose by way of contradiction that $\K$ is not cyclic.
What we will prove is that this implies that $H/\K$ is
a torus or a Klein bottle. Since $H$ is a properly embedded plane,
this implies that $H/\K$ is $\pi_1$-injective,
so there is a $\mathbb Z^2$ subgroup of $\pi_1(M)$
which is a contradiction.

The group $\K$   acts on the leaf
space of the foliation $\fol := \wls \cap H$, which is homeomorphic
to the reals $\rrrr$ as proved in Proposition
\ref{boundaryplane}. By assumption, $\K$ is not cyclic.
If it is abelian, then 
since $H/ \K$ is a surface it follows that
$\K \cong \mathbb Z^2$ and we are done.

Parametrize the leaf space of $\fol$ as 
$\{ \ell_t, \ t \in \rrrr \}$.
If $\K$ is not abelian, then it does not
act freely 
%it has a fixed point
on the leaf space of $\fol$, 
and let $g$ in $\K$ non trivial with a fixed leaf of $\fol$.
We assume $g$ indivisible, and up to changing the parametrization,
assume that $g(\ell_0) = \ell_0$.
Using that $\K$ is not cyclic, 
let $h$ in $\K$ which is not in the subgroup generated by $g$.

Let $t_0 = \sup \ \{ t \in \rrrr, \ g(\ell_t) = \ell_t \}$,
where $t_0$ can attain the value $\infty$.
We claim that $t_0$ is $\infty$. Suppose not. 
Since the leaf space of $\fol$ is $\rrrr$ it follows
that $g(\ell_{t_0}) = \ell_{t_0}$.
Let $s > t_0$. Then up to taking an inverse of $g$,
it follows that $g^n(\ell_s) = \ell_{s_n}$ with $s_n$ converging
to $t_0$ as $n \to \infty$. This implies that in $H/<g>$
the projection of the curve $\ell_s$ spirals towards
a closed curve $\ell_{t_0}/<g>$.
In particular it follows also that in $H/\K$ the projection
of $g_s$ is a non closed curve.

Up to taking an inverse of $h$, assume that $h(\ell_{t_0})
= \ell_s$ with $s > t_0$.
Then since $h(\ell_{t_0})$ is a deck translate of a curve
that projects to a closed curve in $H/\K$ it follows that
it also projects to a closed curve in $H/\K$. But since
$s > t_0$, we just proved that $h(\ell_{t_0})$ does not
project to a closed curve in $H/\K$. This contradiction
shows that $t_0 = \infty$.

Again up to taking an inverse of $h$ assume that
$h(\ell_0) = \ell_v$ with $v > 0$. 
The quotient $H/<g>$ is an annulus. 
Let $t_1 > v$ so that $g(\ell_{t_1}) = \ell_{t_1}$.
The projection of $\{ \ell_t, \ 0 \leq t \leq t_1 \}$
to $H/<g>$ is a compact subannulus $B$ of $H/<g>$.
The projection of $h(\ell_0) = \ell_v$ to this compact
subannulus $B$ of $H/<g>$ is a closed curve $-$ we already used this 
argument above. 
Now $h(\ell_0)$ and $\ell_0$ project in $H/\K$  to the 
same closed curve. It follows that 
the projection of the region in $H$ between 
$\ell_0$ and $h(\ell_0)$ to $H/\K$ is a closed surface,
and hence $H/\K$ is a closed surface. 
Since $H/\K$ has a one dimensional foliation
$\fol/\K$ it follows that $H/\K$ is either a torus or a Klein bottle.
This finishes the proof of Lemma \ref{atmostcyclic}.
\end{proof}

\begin{proposition} \label{asymptotic2}
Let $H$ be a component of $\partial \V$.
For any stable leaf $E$ intersecting $H$ and any 
ray $r$ in $H \cap E$ there is a
boundary component $G$ of $\partial \V$,  intersecting 
$E$, with $G$ distinct from $H$ and $G \cap E$ asymptotic
to the ray $r$.
In addition the isotropy group of $H$ is infinite cyclic.
\end{proposition}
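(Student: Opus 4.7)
The plan is to prove both statements together via a compactness argument on translates of $L$. Let $p\in S^1(E)$ denote the ideal endpoint of $r$. Since $\V\cap E$ is the open convex hull of $\{\gamma(L)\cap E:\gamma\in\PP\}$ and $\ell:=W\cap E$ is a boundary geodesic ending at $p$, I produce distinct $\gamma_i\in\PP$ with $g_i:=\gamma_i(L)\cap E\ne\ell$ lying on the $\V$-side of $\ell$ and having one ideal point $p_i\to p$. After passing to a subsequence, the other ideal points $q_i$ tend to some $q'\in S^1(E)$, necessarily with $q'\ne p$ since the $g_i$ stay on one side of $\ell$.

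In the main case $q'\ne q$ (where $q$ is the second endpoint of $\ell$), the geodesics $g_i$ converge to a geodesic $\ell'$ in $E$ with ideal points $p$ and $q'$, asymptotic to $r$ at $p$ and distinct from $\ell$. I then extract a subsequential geometric limit $Z$ of the walls $\gamma_i(L)$ in $\mt$; $Z$ is a wall by the closure of $\LL$ under limits recorded after the construction of walls in Section \ref{construction}. Precise invariance of $\V$ (Lemma \ref{precise}) forces $Z\subset\overline{\V}$, and a transversality argument places $Z$ in $\partial\V$: were $\ell'$ interior to $\V\cap E$, some $\alpha(L)\cap E$ with $\alpha\in\PP$ would cross $\ell'$ transversely and thus cross $g_i$ transversely for all large $i$, contradicting the one-sided placement of $g_i$ between $\ell$ and $\ell'$. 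Then $Z\cap E=\ell'$ is the desired asymptotic trace.

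In the degenerate case $q'=q$ the $g_i$ collapse onto $\ell$ in $E$, so the required information must come from $\mt$. When $p$ is a negative ideal point in $S^1(E)$, it is the negative ideal point of the flow line in a unique unstable leaf $U$. Lemmas \ref{total} and \ref{boundary} then force $U$ to be a boundary unstable leaf of the basic block of $\gamma_i(L)$ meeting $E$ for large $i$, which selects pairwise distinct corner orbits $\eta_i\subset\gamma_i(L)\cap U$. Using compactness of $M$ I choose deck translates $\rho_i$ with $\rho_i(\eta_i)$ in a fixed compact set and extract a subsequential limit wall $W^{\#}=\lim\rho_i\gamma_i(L)$; comparing translates, the compositions $\rho_j^{-1}\rho_i$ for large $i\ne j$ stabilize $W$, producing a nontrivial element $\zeta$ in the stabilizer $H$ of $W$. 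Combined with Lemma \ref{atmostcyclic}, $H=\langle\zeta\rangle$ is infinite cyclic, which is the second statement of the proposition. Iterating $\zeta$ on a wall $\alpha(L)$ with $\alpha\in\PP\setminus H$ gives a family of distinct walls whose $E$-traces accumulate on $\ell$, and the main-case argument applied to this family produces a boundary component $Z\ne W$ of $\V$ with $Z\cap E$ asymptotic to $r$ at $p$. The parallel situation in which $p$ is the forward ideal point of flow lines in $E$ is handled by the same structural analysis with the roles of unstable leaves and forward flow dynamics adapted as in Section \ref{transverse1}.

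The main obstacle is precisely the degenerate case $q'=q$, where the $g_i$ collapse onto $\ell$ in $E$ and a distinct boundary component cannot be read off from the hyperbolic geometry in $E$ alone. Producing simultaneously the new boundary component $Z$ and the generator $\zeta$ of the stabilizer requires combining the closure of $\LL$ under wall limits, the precise invariance of $\V$, the discreteness of corner orbits and pivots of walls, and the atoroidal hypothesis (through Lemma \ref{atmostcyclic}) to rule out a $\mathbb{Z}^2$ stabilizer; the delicate point is that the cyclic stabilizer and the asymptotic boundary component are two facets of the same ``cusp'' structure of $\V\cap E$ at $p$.
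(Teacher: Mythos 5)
Your main-case argument has a genuine gap at its central step: you take translates $g_i=\gamma_i(L)\cap E$ with ideal points $p_i\to p$ and $q_i\to q'\ne q$, pass to the limit geodesic $\ell'$, and assert that $\ell'$ lies in $\partial \V$. The justification offered $-$ that otherwise some $\alpha(L)\cap E$ would cross $\ell'$ and hence cross $g_i$ transversely, ``contradicting the one-sided placement of $g_i$'' $-$ is not a contradiction: under the standing (New) assumption of this part of the paper, translates of $L$ are precisely expected to cross each other transversely, and nothing in your setup prevents $\alpha(L)\cap E$ from crossing both $\ell'$ and the $g_i$; nor is it established that the $g_i$ lie in the wedge between $\ell$ and $\ell'$. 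In fact, for the convex hull of a general family of geodesics nothing forces a second boundary geodesic asymptotic to $\ell$ at $p$: the hull of all geodesics with both endpoints in a fixed closed arc of the circle at infinity has a single boundary geodesic, and limits of hull geodesics sharing the endpoint $p$ lie in the interior of the hull. Ruling out exactly this behaviour is the whole content of the proposition, and it is where the paper's proof has to work hard: it argues by contradiction, chooses points on $r$ at a definite distance (an ideal-triangle inradius) from the rest of $\partial\V\cap E$, uses compactness of $M$ to produce deck transformations with $\gamma_i(\V)=\gamma_j(\V)$ and hence a nontrivial stabilizer, passes to the intermediate cover $\mt/\langle\alpha\rangle$, and then uses the New assumption (applied to a limit wall $G$ of translates $K_i$ of $L$ and to the convex hull of its translates) together with Lemma \ref{atmostcyclic} to reach a contradiction. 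In your main case none of these ingredients, in particular the New assumption, does any real work, which is a sign the argument cannot be complete.

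The degenerate case and the cyclic-stabilizer claim also contain unproved steps. It is not shown that the single unstable leaf $U$ is a boundary leaf of the basic blocks of the $\gamma_i(L)$: Lemma \ref{boundary} attaches to each $g_i$ its own unstable leaves $U_i$, which need not equal $U$, so the ``pairwise distinct corner orbits $\eta_i\subset\gamma_i(L)\cap U$'' are not produced. It is likewise not shown that the compositions $\rho_j^{-1}\rho_i$ stabilize $W$; this requires an argument, as in the paper, that nearby translates of $\V$ (and of $W$) must coincide. The final step, iterating $\zeta$ on some $\alpha(L)$ and ``applying the main-case argument,'' is circular: the resulting traces accumulate on $\ell$ itself, and there is no reason their second ideal points avoid $q$, i.e.\ no reason you are not back in the degenerate case. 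Finally, your proof of the second assertion lives entirely inside the degenerate case, so if only your main case occurs you never exhibit a nontrivial element of the stabilizer of $W$; the paper proves this part separately, by inscribing in $\V\cap E$ ideal triangles sharing two ideal points with $W\cap E$, varying continuously with $E$, and rerunning the compactness argument of the first part.
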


\begin{proof}
For example it could be that $E \cap \V$ is a finite sided
ideal polygon. However, a priori the statement allows for
infinitely many components of $\partial (E \cap \V)$ $-$
in that case each such component would be part of an
infinite chain of boundary components so that consecutive
ones have asymptotic rays.

We start by proving the first assertion.
The proof is by contradiction. We assume that $H, E, r$ are
as in the statement of the proposition, and fail the
conclusion of the proposition.
Hence $r$ is not asymptotic to any other boundary
component of $\V \cap E$.

Any ideal triangle in the hyperbolic plane
has inradius $> a_1 > 0$. Since $r$ is not asymptotic
to any other ray $r' \subset \partial \V \cap E$, there
are points $p_i$ in $r$ escaping the ray and points $q_i$
in $\V \cap E$ so that the orthogonal
geodesic to $r$ in $E$ at $p_i$ intersects
$q_i$ at distance exactly $a_1$ and the segment
between $q_i$ and $p_i$ (except for $p_i$) is contained in $\V$.

Recall from Lemma \ref{precise} that $\V$ is precisely
invariant.

Up to subsequence and deck translations $g_i$ assume 
that $g_i(p_i)$
converges to $p_0$, and $g_i(q_i)$ converges to $q_0$.
We claim that there is $j > 0$ fixed so that
$g_i(\V) = g_j(\V)$ for all $i > j$
and $g_i(H) = g_j(H)$.
This is because we can assume
that $g_i(q_i)$ is very close to $g_j(q_j)$ for
all $i > j$ ($j$ is fixed, 
and hence $g_j(q_j), g_j(\V)$ are fixed).
This implies that $g_i(\V) = g_j(\V)$
for all $i > j$
because  $\pi(q_0)$ is in $\pi(\V)$: otherwise since
it is accumulated by elements $\pi(q_i)$ which are in $\pi(\V)$,
it follows that $q_0$ is in $\pi(\partial V)$. But the
projection of this boundary component
of $\V$ would intersect the disks in $\pi(g_i(\V))$ with
center at $\pi(q_i)$ with radius $a_1$ for $i$ big,
contradiction.
We are using the precise invariance of $\V$ in this argument.

We change our starting point with $g_j(\V)$ 
instead of $\V$ as follows. Let

$$\U = g_j(\V), \ \ T = g_j(H), \ \ b = g_j(p_j), \ \ 
h_i = g_i \circ g_j^{-1}, \ \ \eta_0 = g_j(H \cap E).$$

\noindent
Since $d(p_i,p_j)$ converges to infinity with $i$, then up
to subsequence it follows that the $\{ h_i \}$ are pairwise
distinct.
The set $T$ is a boundary component of $\partial \U$. 
This set $\U$ is precisely invariant, since $\V$ is.
Every $h_i$ preserves $\U$ and preserves $T$. 

%Since $M$ is hyperbolic, elements of $\pi_1(M)$ are
%not infinitely indivisible and have unique maximal
%roots. 
%So a subgroup is either cyclic or contains a 
%free group of rank 2.
%We first prove the following claim:
%\vskip .1in
%\noindent
%{\bf {Claim $-$ 
%Under the assumption of the proof it follows that
%the isotropy group of $T$ is
%not cyclic.}}
All $h_i$ preserve $T$.
By the previous Lemma the isotropy group subgroup of $T$
is at most cyclic.
%subgroup generated by
%$\delta_i$ is cyclic.
%\vskip .1in
%\noindent
%2) The subgroup generated by $\{ h_i \}$ is cyclic.
%
Let $h$ be a generator of this subgroup. 
We consider the foliation
$\fol := \wls \cap T$.
By Proposition \ref{boundaryplane}, 
this foliation has leaf space the reals $\rrrr$.
We consider the action of $h$ on this leaf space.

The points $g_j(p_i)$ are in $\eta_0$ and escape
in a ray of $\eta_0$. 
%Let $x_i = g_j(p_i)$.
Their images under $h_i$ are
$g_i(p_i)$ which converge to $p_0$ (which is also in $T$).
Since $h_i = h^{n_i}$ with $|n_i| \rightarrow \infty$
as $i \rightarrow \infty$, it follows that 
$h^{n_i}(\eta_0)$ converges to the leaf $\eta$ of $\fol$ through
$p_0$. It follows that this leaf $\eta$ of
$\fol$ has to be fixed by $h$.

%Let $F$ be the stable leaf containing $\eta$. Then $\alpha$
%fixes $F$ and acts as a hyperbolic isometry in $F$ leaving
%$\eta$ invariant. It follows that $\eta$ is the axis
%of $\alpha$ acting on $F$ and hence one of the ideal points
%is the positive ideal point in $F$, that is, $\eta$ is a flow
%line of $\wwp$ in $F$.
%
%The existence of a non trivial $\beta$ works in generality
%and shows that the isotropy group of $T$ (and hence of $W$)
%is never trivial. Since $\pi_1(M)$ is torsion free this implies
%that the isotropy group of $W$ contains an infinite
%cyclic subgroup of $\pi_1(M)$.

\vskip .05in
%We now produce another transformation $\beta$ leaving $T$ invariant
%and not in the subgroup generated by $h$.
We now use the following intermediate cover of $M$:
which is $M_1 := \mt/<h>$.  
%Let this be $M_1$.
In $M_1$ the projection $T_1 = T/<h>$ is either
an annulus or a M\"{o}bius strip and $\eta_1 = \eta/<h>$
is a closed curve in $T_1$. The components of $T_1$ minus
a neighborhood of $\eta_1$ are annuli $-$ there is one such
component if $T_1$ is a M\"{o}bius band and two components if $T_1$
is an annulus. Let $Z$ be one such component.
Let $\U_1 = \U/<h>$. 
Suppose that there are points $z_i$ in $Z$ escaping
to infinity in $T_1$ so that the distance across
$\U_1$ to any other boundary component of $\partial \U_1$
does not go to zero. Then an argument exactly as above
produces a deck transformation $h'$ of $\mt$
preserving $\U$ and $T$. Since the points are farther 
and farther from $\eta$ (lifting to $\mt$) it follows
that $h'$ is not a power of $h$.
Again this is impossible by the previous lemma.

It follows that
for any sequence escaping
in $Z$ then distance to the union of the other components
of $\partial \U$ goes to zero.
Consider one such sequence and lift to points
$y_i$ in $T$. 
Then the distance from $y_i$ to the union of
other boundary components of $\U$ goes to zero.
Since $\U$ is the convex hull associated with the
wall $g_j(W)$, there must be a wall
separating $T$ from this other component of 
$\partial \U$. Hence there are $v_i$ in 
deck translates $G_i$ of $W$ which are in $\U$ and 
$d(y_i,v_i)$ converges to zero.

\vskip .05in
Up to a subsequence assume that $\pi(v_i)$ converges
in $M$. Let $f_i$ in $\pi_1(M)$ so that $f_i(v_i)$
converges to $v_0$.
Notice that $G_i$ are deck translates of $W$,
and so are $f_i(G_i)$. They are also walls.
It follows that $f_i(G_i)$ converges to a wall
$G$. By hypothesis we are
under the New assumption as prescribed
in the beginning of Section \ref{transverse1}, hence there
is a deck translate $g(G)$ which intersects $G$
transversely. 
Since $f_i(G_i)$ converges to $G$ there is $i$ so that
for any $n, m \geq i$ the following happens:
$f_n(G_n)$ intersects $g(f_m(G_m))$ transversely. Since all
of these are translates of $W$, then if  $\Y$ is the
convex hull associated with $f_i(G_i)$, it follows
that all $f_n(G_n)$ and $g(f_n(G_n))$ are
contained in $\Y$ if $n \geq i$.
In particular $\Y$ is the convex hull of all these
deck translates of $W$.
%It also follows that the sets of stable leaves
%intersected by all these walls is the same
%as proved in Proposition \ref{linking}.

\vskip .05in
The $v_n$ are in $G_n$ for $n \geq i$, the
$f_n(v_n)$ are in $\Y$ for $n \geq i$.
It follows that $v_0$ is either in $\Y$ or in $\partial \Y$.
If $v_0$ is in $\Y$, then so are $f_n(y_n)$ for $n$ 
big enough as $\Y$ is open. This is impossible since
$f_n(y_n)$ is in $f_n(T)$, $T \subset \partial \U$,
and the sets $\U$ and $\Y$ are precisely invariant.

Suppose on the other hand that $v_0$ is in $\partial \Y$,
and let $Z$ be the boundary component of $\Y$ containing
$v_0$.  Then there is $m$ so that for $n \geq m$,
$f_n(y_n) \in Z$.
Since $y_n \in T$ for any $n$ it follows that $Z = f_m(T)$.
Recall that $h(T) = T$, so $f_m h f^{-1}_m(Z)
= Z$. 
In addition $f_n f^{-1}_m(Z) = Z$ for any $n \geq m$.
Since the projection of $y_n$ to $M_1 = M / <h>$ escapes
compact sets in $M_1$ it follows that $f_n f^{-1}_m$
are not all in the group generated by $f_m h f^{-1}_m$.
This implies that the isotropy group of $Z$ is not cyclic.
This again is disallowed by Lemma \ref{atmostcyclic}.

This contradiction shows that the assumption in the
beginning of the proof of this proposition was impossible.
This proves the first assertion of the proposition.

\vskip .05in
For the second statement we do the following:
the non empty intersection of $\V$ with any
$E$ in $\wls$ contains an ideal triangle with 
two ideal points the ideal points of $H \cap E$. 
These triangles can be chosen to vary continuously with 
$E$. Then an argument as in the first part of the 
proof produces a non trivial deck transformation $f$
preserving $\V$ and $T$. 

This finishes the proof of Proposition \ref{asymptotic2}.
\end{proof}

\section{Completion of the proof of the Main Theorem}
\label{completion}

We now finish the proof of the main theorem.

\begin{theorem}
Let $W$ be a wall which is the limit of partial
walls associated with strings of lozenges. 
Suppose that there is $g_0$ 
in $\pi_1(M)$ so that 
$g_0(W)$ intersects $W$ transversely. Then $\Phi$
is $\rrrr$-covered.
\end{theorem}

\begin{proof}
Let $\V$ be the convex hull  of $L$.
There are two possibilities:

\vskip .05in
\noindent
{\bf {Case 1 $-$ $\V = \mt$.}}

But $\wls$ restricted to $\V$ is a foliation which
has leaf space the reals. Since $\V$ is $\mt$,
the 
the leaf space of $\wls$ is $\rrrr$, and
$\Phi$ is
an $\rrrr$-covered Anosov flow.

\vskip .05in
\noindent
{\bf {Case 2 $-$ $\V$ is not all of $\mt$.}}

Up to a finite cover assume that $M$ is orientable
and all foliations are transversely orientable.
For a sufficiently small $\eps$ consider 
the following set in $\pi(\V)$: for any 
$E$ stable leaf intersecting $\pi(\V)$ consider the
set $Z$ of 
points in $E \cap \pi(\V)$ which are $\eps$ away along $E \cap 
\pi(\V)$ from 
a point in $\pi(\partial \V)$.
We have proved in Proposition \ref{asymptotic2} that 
boundary components of $\V \cap E$ are asymptotic to other
such boundary components in both directions.
Then a proof exactly as in Proposition \ref{complementary}
shows that every component of 
$Z$ is a two dimensional torus. This implies as 
in the proof of Proposition \ref{complementary} that 
$\pi(\V)$ is an open  solid torus,
$\pi(\partial V)$ has finitely many boundary components,
and so does $\partial \V$.
It also implies that any component $H$ of $\partial \V$ 
intersects exactly the same stable leaves that $\V$ does.
One can prove that any such $H$ is in fact a wall associated
to limits of strings of lozenges, but
we will not do that as we can prove the $\rrrr$-covered
property for $\ls$ more directly using what was done
before in this article.

The above implies that for any $E$ leaf of $\wls$ intersecting
$\V$, then $E \cap \V$ is a finite sided ideal 
polygon. Let $g(W)$ be a wall contained in $\V$. Then $g(W) \cap E$ is
a geodesic where each ray is asymptotic to two geodesics
in $\partial \V \cap E$. 
In particular there are only finitely many
such $g(W)$ in $\V$: the options for $g(W) \cap F$ are
finitely many geodesics in $F$ for any $F$ intersecting $W$. 
In addition $g(W) \cap F$ varies continuously with $F$,
and for each boundary component $H$ of $\V$ the intersections
$H \cap F$ also vary continuously with $F$. Hence the
rays in intersections $g(W) \cap F$ always are asymptotic
to the same pair of boundary components $H$ and $G$ of $\partial \V$. 

Any other $g(W)$ cannot intersect $\V$ and hence
does not intersect $W$.
Let $h$ be a generator of the isotropy group of $\V$.
There is a smallest positive power $h^i$ which fixes
all $g(W)$ contained in $\V$. 
Then 

$$W \ \ \cap \ \ \cup \{ g(W) \subset \V, g(W)  \not = W \}$$

\noindent 
is a finite collection of curves, all invariant under $h^i$.
This implies that there is a half plane $P$ in $W$ bounded 
by a curve $\gamma$ invariant under $h^i$ so that
$P$ does not intersect any other translate of $W$. 
Take points $p_i$  in $W$ farther and farther from 
$\gamma$, so that $\pi(p_i)$ converges in $M$. 
Using appropriate deck translates $f_i$ of $W$ it follows
%Then a limiting proof exactly as in the end of the
%proof of  Proposition 
%%\ref{asymptotic2} 
that the limit of $f_i(W)$ is a wall $G$ whose 
$\pi_1(M)$ deck translates do not intersect each 
other transversely. 
In addition $W$ is obtained as the limit of partial walls
associated with strings of lozenges.
But we are under the New assumption as in the
beginning of Section \ref{transverse1} so this
cannot happen
(in other words, the existence of such $G$ would
imply that $\ls$ is $\rrrr$-covered by the proof of the first
case of the Main Theorem).
%By the first case of the proof of the Main theorem,
%this implies that $\ls$ is $\rrrr$-covered.

This finishes the proof of the Main theorem.
\end{proof}

\section{Examples of walls}
\label{examp}

In this section we describe some examples of walls.
We assume throughout this section
that the Anosov flow $\Phi$ has the standard form used in this
article:
the leaves of $\ls$ have the hyperbolic metric, and flow lines
in any such leaf are geodesics in their respective stable leaves.

\vskip .1in
\noindent
{\bf {Example 1 $-$ $\rrrr$-covered Anosov flows.}} 

We first describe walls for general $\rrrr$-covered Anosov
flows and then specialize to atoroidal, Seifert and toroidal
manifolds.
We will assume that $\Phi$ is an $\rrrr$-covered Anosov flow
not orbitally equivalent to a suspension Anosov flow.
These are called {\em skewed} Anosov flows \cite{Fe1}.

We provide more information. For proofs and much more detailed
explanations and properties we refer to 
\cite{Cal1,Cal4,Fe1,Th3}.
For $\Phi$ a skewed Anosov flow
the following happens:
The foliation $\ls$ is what is called 
{\em uniform} \cite{Th3}: this means
that any two leaves $E, F$ are a finite
Hausdorff distance from each other in $\mt$, where
the Hausdorff distance depends on the particular 
pair of leaves $E, F$. Let $a_0$ be
the Hausdorff distance between $E, F$. There is a coarsely
defined map:

$$\tau: \ \tau_{E,F}: E \rightarrow F, \ \ 
\tau_{E,F}(x) = y, \ \ {\rm for \ some} \ \ y \in F,
\ {\rm with} \ d_{\mt}(y,x) \leq  a_0 + 1.$$

\noindent
This map is clearly not uniquely defined, but it is coarsely
defined \cite{Fe3} in the following sense: there is a continuous 
monotone function $\rho: [0,\infty) \rightarrow [0,\infty)$,
so that if $y, z \in F$ satisfy $d_{\mt}(y,z) \leq  b$,
then $d_F(y,z) < \rho(b)$.
The map $\tau_{E,F}$ is a quasi-isometry
with quasi-isometry constants which depend only on
the Hausdorff distance $a_0$ between $E, F$ and the function
$\rho$ evaluated at $2a_0$.
Since $\tau_{E,F}$  is a quasi-isometry then it extends to 
a homeomorphism \cite{Gr,Th1,Th2} still denoted by $\tau_{E,F}:
E \cup S^1(E) \to F \cup S^1(F)$ \cite{Th1,Th2,Gr}.

We now describe walls for skewed Anosov flows.
Recall that a wall $G$ has the property that any two leaves
of the foliation $\fol := G \cap \wls$ are a finite
Hausdorff distance in $\mt$ from each other.
Start with a leaf $E \in \wls$ and an arbitrary geodesic
$\gamma$ in $E$. For any $F$ stable leaf the image 
$\tau_{E,F}(\gamma)$ of $\gamma$ is a quasigeodesic in $F$ and it
is a bounded distance in $F$ from a unique geodesic
which is denoted by $\gamma_F$. Let 

$$W \ \  = \ \ \bigcup_{F \in \wls}  \ \gamma_F.$$.

In terms of the universal circle of the foliation $\ls$
\cite{Th3,Cal1}: given
two distinct $p, q$ points in the universal circle,
they define two points $p_F, q_F$ in $S^1(F)$ for any
$F$ in $\wls$, and $g_F$ is defined by the above formula.

It follows from \cite{Fe3} that $W$ is a properly embedded plane in
$\mt$. Given $F$ in $\wls$ and $\beta$ a geodesic in $F$
suppose it is a bounded Hausdorff distance in $\mt$ from 
$\gamma_F$. Then $d_{\mt}(\beta,\gamma_F)$ is finite and by the property
of $\rrrr$-covered foliations (the function $\rho$), then 
$d_F(\beta,g_F)$ is finite. Since $F$ has the hyperbolic
metric, one has that $\beta = \gamma_F$. It follows that the only candidates
for walls are the sets $W$ described by the above formula. 

Notice that given an $\rrrr$-covered Anosov flow $\Phi$, then
every orbit of $\wwp$ is a corner of a bi-infinite chain 
of lozenges, which is obtained as the limit of periodic
strings of lozenges \cite{Ba1,Fe1}. So all the walls 
obtained here are limits of partial walls that are strings
of lozenges.

We explain additional properties that establish
that any such $W$ is a wall.
First we claim that there is a leaf $F$ of $\wls$ so that $W \cap F$
is a flow line of $\wwp$. 
%This is because of the finite Hausdorff distance property between
%$L \cap F$, $L \cap F'$ for leaves $F, F'$ of $\wls$.
Fix $F$ in $\wls$. If one of the ideal points of $F \cap W$
in $S^1(F)$ 
is the positive ideal point of $F$ we are done. Otherwise
let $x, y$ be the ideal points in $S^1(F)$ of
$F \cap W$ and $\beta, \alpha$ the flow lines in $F$ with
negative ideal points in $S^1(F)$ which
are  $x, y$ respectively. Let $U = \wlu(\beta),
V = \wlu(\alpha)$.
As long as $U$ (or $V$) keeps intersecting stable leaves $F'$ nearby
then $U \cap F'$ (or $V \cap F'$) is asymptotic with $U \cap F$ 
(or $V \cap F$) in the
negative direction, and this determines $F' \cap W$. 
At some point $U$ and  $V$ stop intersecting stable leaves:
this is because any unstable leaf in $\mt$ intersects
only a bounded interval in the leaf space of $\wls$ \cite{Fe1}.
Suppose that the unstable leaf $U$ stops intersecting
stable leaves before $V$.
This means that the intersection of stable leaves
with $U$ escapes to infinity and the
limit stable leaf then intersects $W$ in a flow line,
which is the intersection with $V$.
This can be iterated on either side of the flow line
intersection, producing  infinitely many flow line
intersections on either side.

Near a flow line intersection $W \cap F$ for some $F$ in $\wls$,
one sees that intersections $W \cap F'$ have one ideal point
corresponding to a direction asymptotic with $W \cap F$ and
the other direction is expanding away from $W \cap F$.
The set $W$ intersects all leaves of $\wls, \wlu$, which
are both homeomorphic to $\rrrr$. 
Finally using the structure of skewed Anosov flows \cite{Fe1} it is
easy to see that each $W$ is associated with a bi-infinite
chain of lozenges, which is in fact a string of lozenges.
This implies that $W$ is a wall.

In summary if one fixes a stable leaf $E$ in $\wls$, then
the collection of all walls in $\mt$ is in a one to one correspondence
with geodesics in $E$.

\vskip .1in
\noindent
{\bf {Example 2 $-$ $\Phi$ is $\rrrr$-covered and $M$ atoroidal.}}

We now specialize to the case that 
$M$ atoroidal or equivalently  $M$ hyperbolic.
These are the walls $W$ that are used to prove the Main
Theorem. Here we will understand better what happens when 
$\pi(W)$ is embedded, or self intersects transversely.
For simplicity assume that $\ls$ is transversely
oriented. This can be always achieved by taking a double
cover. 

Thurston proved in \cite{Th3} (see also \cite{Cal1,Fe3}) that,
under the transvere orientability
hypothesis and $M$ atoroidal, there is a pseudo-Anosov flow
$\varphi$ which is transverse to $\ls$ and regulating for it.
{\em Regulating} means that every orbit of $\widetilde \varphi$ intersects
every leaf of $\wls$ and vice versa.
The pseudo-Anosov flow is obtained by a collapsing operation applied
to a pair of 
leafwise geodesic laminations $\G^s, \G^u$, the stable and
unstable laminations transverse to
$\ls$: each leaf $Z$ of $\wgs$ (or $\wgu$) 
intersects each stable leaf $E$ of $\wls$ of the flow $\wwp$ in
a geodesic in $E$. 
Furthermore for any $F, F'$ in $\wls$ then $Z \cap F$, $Z \cap F'$
are a finite Hausdorff distance from each other in $\mt$.
In other words every leaf of of $\wgs$ or $\wgu$ is a wall as
described in Example 1.

In particular this describes a collection of walls $W$ in $\mt$
so that
$\pi(W)$ are embedded in $M$, so obviously no transverse
self intersections in $M$. 
In addition for any
leaf $E$ of $\wls$,
the complementary regions of $E \cap \wgs$
and $E \cap \wgu$, $E \in \wls$,  are finite sided ideal polygons
in $E$.
The laminations $\G^s, \G^u$ are transverse to each other
and both are transvese to $\ls$. The flow $\varphi$ is obtained
by collapsing the complementary regions of $\G^s \cup \G^u$.
%For each wall $Z$ as described above, then $\pi(Z)$ is a
%leaf of $\G^s$ or $\G^u$, which are minimal laminations.
%Hence $\pi(Z)$ is dense in 
The complemetary regions of $\G^s$ or $\G^u$ 
are 
(open) solid tori in $M$ (in this case the transverse orientability
condition on $\ls$ forces $M$ to be orientable).
This possibility 
for the walls was
described and analyzed in the first case of the proof of the 
Main Theorem.

We stress that there are 2 very different flows in these examples:
$\Phi$ is an Anosov flow, $\varphi$ is strictly a pseudo-Anosov flow $-$
there are $p$ prong singular orbits of $\varphi$ with $p \geq 3$.

\vskip .1in
We have described the walls $W$ in $\mt$ so that $W$ is
a leaf of $\wgs$ or $\wgu$. There are two other types 
of walls in $\mt$.
The first type is as follows: let $E$ be a leaf of $\wls$ and
let $\gamma$ be a geodesic which does not intersect
$\wgs \cap E$ transversely, but also $\gamma$ is a not a leaf of 
$\wgs \cap E$. Let $W$ be the wall associated with $\gamma$.
Then $\gamma$ is in a complementary region of
$E \cap \wgs$, which we explained is a finite sided ideal
polygon $P$. Hence $\gamma$ is a diagonal in $P$.
There are finitely many complementary regions
to $\gs$ in $M$  and therefore finitely many possibilities
for $\pi(W)$ as above. It could be that $\pi(W)$ has
no transverse self intersections, hence its completion is
a lamination. It contains a sublamination of $\gs$, but
$\gs$ is minimal, so $\pi(W)$ completes to $\pi(L) \cup \gs$.
The other option is that $\pi(W)$ self intersects transversely.
Still the deck translates $g(W)$ intersecting $W$ transversely
cannot intersect $\gs$ transversely, so $g(W) \cap E$ is still
contained in $P$. It follows that the convex hull $\V$
of $W$ intersected with $E$
is contained in $P$: it could be $P$ itself, in which
case a boundary component of $\V$ would be a leaf of $\wgs$,
yielding the lamination $\gs$ in $M$.
Or it could be that $\V \cap E$ is a strict subset of 
 $P$.
The boundary components of $\V \cap E$ are either
leaves of $\wgs \cap E$ $-$ leading to the production of $\gs$,
or they are other diagonals in $P$, but now produce walls
without self intersections when projected to $M$ and this
is analyzed above as well.
The same happens with the unstable lamination, $\wgu$ intead
of $\wgs$.

\vskip .1in
The final possibility is that the geodesic $\gamma$ in $E$ intersects
both $\wgs \cap E$ and $\wgu \cap E$ transversely.
This case is more involved and we analyze this now.

We will use the following:
Given leaves $E, F$ of $\wls$ with $F$ being $\widetilde \varphi$
flow forward from $E$, then 
the map $\tau_{E,F}$ is a bounded distance
from the map which is flow along $\widetilde \varphi$ 
from $E$ to $F$ as follows: given $x$ in $E$ take the flow line
of $\widetilde \varphi$ through $x$ and let $\Delta_{E,F}(x)$
be the intersection with $F$. Then there is a
constant $a_1$ (independent of $x$ but dependent on
the Hausdorff distance between $E, F$)
so that 
$d_F(\tau_{E,F}(x),\Delta_{E,F}(x)) < a_1$.

%Start with $\alpha$ a periodic orbit of $\Phi$ and $\widetilde \alpha$
%a lift to $\mt$, with $\widetilde \alpha \subset$ a leaf $E$ of
%$\wls$. The geodesic $\widetilde \alpha$ is transverse to both
%$\wgs \cap E$ and $\wgu \cap E$. 

Let $\F^s, \F^u$ be the stable and unstable foliations of $\varphi$.
We stress again that $\F^s$ is not to be confused with $\ls$,
in fact $\F^s$ is transverse to $\ls$.
The the pseudo-Anosov behavior of the flow $\varphi$ implies 
that any curve transverse to the singular
stable foliation $\F^s$ flows forward (under $\varphi$) to curves
closer and closer to the unstable foliation $\F^u$.
%We flesh this out more carefully: the pseudo-Anosov flow 
%is obtained by collapsing the leafwise geodesic laminations
%$\wgs, \wgu$. Leaves of the laminations $\wgs$ are a uniform
%bounded distance from the corresponding leaves of $\widetilde \F^s$.
%Any leaf $L$ of $\wgs$ has the property that for any 
%two $E, E'$ leaves of $\wls$, then $L \cap E, L \cap E'$
%are a bounded distance apart from each other, the bound
%depends only on the Hausdorff distance between $E,E'$.
%In other words the intersections $L \cap E$ have ideal
%points in $S^1(E)$ which are constant when projected
%to the universal circle of $\wls$ $-$ we refer to 
%Thurston, Fenley, Calegari for details.
%Any curve transverse to $\F^s$ flows by $\varphi$ to curves
%closer and closer to leaves of $\F^u$. This gets translated
%to: 
In particular any geodesic $\gamma$ in a leaf $E$ of $\wls$ so that it is 
transverse to $\wgs \cap E$ then flowing by $\varphi$
moves them closer and closer to
the unstable foliation ($\G^u$).

But we explained that the map
$\tau_{E,F}$ is also a bounded distance 
from the flow along map along map $\Delta_{E,F}$.
%Let $\alpha_0 = \alpha$ be the original periodic orbit. 
%In \cite{Fe1} it is proved that $\alpha_0$ is freely
%homotopic to infinitely many other periodic
%orbits
%$\{ \alpha_i, \ i \in {\mathbb Z} \}$.
%Let $\widetilde \alpha_i$
%be coherent lifts. Then \cite{Fe1} 
%$\widetilde \alpha_i, \widetilde \alpha_{i+1}$
%are corners of lozenges for all $i$ in $\mathbb Z$.
%And the chain of lozenges is a string of lozenges.
%It follows that $\widetilde \delta_{i+1}$ is a bounded distance
%from $\widetilde delta_i$ for all $i$.
%
%Let $E_0 = E$, $E_n = \wls(\widetilde \alpha_n)$.
%The orbits $\widetilde \alpha_i$ are a bounded distance
%from $\widetilde \alpha_0$, so a bounded distance from
%$\tau_{E_0,E_n}(\widetilde \alpha_0)$.
%The flow forward (under $\widetilde \varphi$) 
%of $g$ 
%is limiting
%to leaves of the unstable foliation $\widetilde \F^u \cap E_n$.
%The flow forward $\Delta_{E,F}(\widetilde)$ is a bounded
%distance from $\tau_{E,F}$ and this is a bounded
%distance from $L \cap F$.
It follows that as $F$ escapes in the leaf space of $\wls$ in
the positive direction then $\pi(F \cap W)$ is getting
closer and closer to $\gu \cap \pi(F)$.
This shows that $\pi(W)$ limits to $\gu$ in the
positive direction and to $\gs$ in the negative
direction.
Since $\gu \cup \gs$ completely fill $M$, it 
now follows that the convex hull $\V$ associated
with $W$ is all of $\mt$.
This corresponds to the second case of the Main theorem
when $\V = \mt$.
In the second case we assume that for any wall 
$W$ obtained in the limit process
then $\pi(W)$ self intersects transversely.
If instead one assumes that this is true for some,
but not necessarily all walls $W$, then
one can obtain the case $\V = \mt$, as well as the
case $\V \not = \mt$, which is in fact what happens.

We specialize this further. Suppose that
for some $E \in \wls$ then $\gamma = \pi(E \cap W)$ is a closed
geodesic. This can only happen when $E \cap W$ is a
flow line of $\wwp$, which projects to a periodic
orbit of $\Phi$. It is proved in \cite{Fe1} that
$\gamma$ is transverse to both $\wgs$ and $\wgs$ so it
falls in this case. Let $h$ be the deck transformation
associated with $\pi(g)$. Then $h(L) = L$ and $h$
preserves all corner orbits of $W$, which then project to
periodic orbits of $\Phi$ which are pairwise freely
homotopic. 
Notice the similarlity with a suspension pseudo-Anosov flow:
if one starts with a closed curve in the surface then
positive iterates will limit to the unstable lamination
of the monodromy of the fibration, and negative iterates
will limit to the stable lamination.

\vskip .1in
\noindent
{\bf {Example 3 $-$ $\rrrr$-covered Anosov flows in $M$ Seifert.}}

In this case Barbot \cite{Ba2} proved $\Phi$ is orbitally
equivalent to a finite lift of a geodesic flow.
Hence to simplify we assume that
$\Phi$ is the geodesic flow on $M = T^1 S$, where $S$
is a closed hyperbolic surface. We also assume that 
$S$ is orientable $-$ this is equivalent to $\ls$ being
transversely orientable.
First let $\gamma_*$ be a simple closed geodesic in $S$ which lifts
to 2 periodic orbits $\gamma$ and $\gamma'$ of $\Phi$ corresponding
to different directions of $\gamma_*$ in $S$. Then $\gamma, \gamma'$ 
respectively
lift to infinitely many pairs of periodic orbits $\gamma_{2i}, \gamma_{2i+1}$
of $\wwp$ in $\mt$, 
$i \in \mathbb Z$. The $\gamma_j$ are corners of
a bi-infinite string of lozenges and generate a wall $W$ just
as in example 1, the case of no transverse intersection.
In this case $W$ is left invariant by a $\mathbb Z^2$ subgroup
of $\pi_1(M)$ generated by (say) a deck translation
associated with $\gamma$ and a deck transformation 
corresponding to a Seifert fiber of $M$.

The projection $\pi(W)$ has no transverse self intersections
and it is a torus. This is a lamination in $M$. Notice that the
boundary of a tubular neighborhood of the torus $\pi(W)$ 
is a torus in $M$, which in this case is incompressible.
Recall the analysis of the proof of the Main theorem:
in the case of $M$ atoroidal this boundary of a neighborhood 
of the lamination was a torus, which had to be compressible.

Now instead of starting with a simple closed geodesic,
let $\gamma'$ be a closed geodesic in $S$ with self intersections,
which
are necessarily transverse.
As in the case of $\gamma'$ simple we construct a wall $L$
associated with $\gamma'$.
The isotropy group of $L$ is a $\mathbb Z^2$ subgroup of $\pi_1(M)$.
Let $S_0$ be the subsurface of $S$ which is the fill of $\gamma'$.
Suppose first that $S_0 = S$, that is, $\gamma'$ fills $S$.
Then the deck translates of $W$ fill $\mt$, that is,
the convex hull $\V$ of $W$ is $\mt$.
If on the other hand
$S_0$ is not $M$, let $\beta$ be a boundary
component of $S_0$. Let $W$ be a wall constructed from $\gamma'$.
The convex hull $\V$ of $W$ has a boundary component
which is a wall $H$ associated with curves which
project to $\beta$ in $S$. The projection
$\pi(H)$ is a $\pi_1$-injective torus in $M$.

These are the walls invariant under a $\mathbb Z^2$ group
of deck translations. Every other wall is invariant
under the cyclic group generated by the element associated
with a Seifert fiber.

%\vskip .2in
%\noindent
%{\bf {Example 3 $-$}} Generalization of 1) and 2).
%
%The above examples can be greatly generalized. Suppose that
%$\C$ is bi-infinite chain of lozenges which is invariant by 
%a $\mathbb Z^2$ subgroup of $\pi_1(M)$.
%Then the $\mathbb Z^2$ subgroup represents a free
%homotopy from a closed orbit of the flow $\Phi$ to itself.
%The projection of this free homotopy is a $\pi_1$-injective 
%immersed torus in $M$. It could have self intersections or
%not as seen in the geodesic flow example.
%However in order for it to generate a wall for the
%stable foliation we need the following condition: the
%stable foliation restricted to $\C$ has leaf space 
%homeomorphic to the reals $\rrrr$. It is not necessary that
%$\C$ is a string of lozenges, even though $\C$ being a string
%of lozenges guarantees this condition.

\vskip .2in
\noindent
{\bf {Example 4 $-$ $\Phi$ which is $\rrrr$-covered 
in $M$ toroidal, but $M$ not Seifert.}}

There are many examples in this class. We describe one class.
This was analyzed in detail in \cite{Fo-Ha}.
Start with $M_0 = T^1 S$, $S$ hyperbolic surface, for
simplicity assume $S$ orientable. Let $\gamma_*$ be a closed
geodesic in $S$. To make it more interesting we assume
that $\eta_*$ is not simple, and fills a subsurface $S_1$ of $S$.
One can make several examples here, again for definiteness assume
that $S_1$ is a strict subsurface of $S$, that is 
$S_1$ is not just $S$ cut along a collection of non
separating closed geodesics.
Let $S_2$ be the closure of $S - S_1$.

Let $\gamma$ be an orbit of the geodesic flow associated
with $\gamma_*$.
Do an appropriate Fried-Dehn surgery on $\gamma$
to result in an Anosov flow $\Phi$ in 
a manifold $M$. One can do this so that the resulting
flow is still Anosos and
$\rrrr$-covered \cite{Go,Fr,Fe1,Sha}. The submanifold $T^1 S_1$ of 
$M_0$ becomes an atoroidal submanifold $M_1$ of $M$ after
surgery \cite{Fo-Ha}. The original orbit $\gamma$ of the geodesic flow
becomes an orbit $\beta$ of $\Phi$ which is freely homotopic
to the infinitely many distinct periodic orbits of $\Phi$. 
Since $\Phi$ is $\rrrr$-covered these free homotopies produce
a wall $W$ in $\mt$. This wall has translates intersecting
it transversely, as in Example 3. The projection $\pi(W)$
fills the submanifold $M_1$ of $M$. As oppposed to Example 2,
the projection is a not a closed torus. 
Each boundary component of $M_1$ can be obtained as $\pi(H)$,
where $H$ is a wall which is a boundary component of the
convex hull $\V$ of $W$. As in Example 3, each such $\pi(H)$
is an embedded, incompressible torus in $M$.

One can do this for various orbits of the geodesic flow, filling
pairwise disjoint subsurfaces of $S$.

\vskip .2in
\noindent
{\bf {Example 5 $-$ $\Phi$ non $\rrrr$-covered.}}

We describe one of many possible constructions.
Start with the example described in Example 4, with $M_1$
obtained from $M_0 = T^1 S$ by Fried Dehn surgery on
the flow line $\gamma$ corresponding to a geodesic
$\gamma_*$ of $S$ filling the subsurface $S_1$ of $S$.
Let $S_2$ be the closure of $S - S_1$, which
we assume is a two dimensional surface (no single geodesic
components).
Let now $\beta_0$ be a flow line of the geodesic flow 
corresponding to a geodesic in $S_2$ which is not
peripheral in $S_2$. This flow line is unaffected by
the Fried Dehn surgery becoming an orbit $\beta_1$ of 
the flow in $M$. Now do a derived from Anosov blow up
of $\beta_1$ producing a flow with a repelling orbit
$\beta_2$ in $M_1$. Remove a solid torus neighborhood
of $\beta_2$ to produce a manifold $M_2$ with boundary
a torus and a semiflow transverse to $\partial M_2$ which
is incoming for the flow. Glue a copy $M'_2$ with 
a reversed flow. Work of Beguin-Bonatti-Yu \cite{BBY}
shows how to do the
gluing so that the resulting flow $\varphi$ on the resulting
manifold $N$ is Anosov. The walls $W, H$ constructed
in Example 4 produce walls $W', H'$ for $\widetilde \varphi$.
They are not exactly the same because the blow up of $\beta_1$
changes the stable and unstable foliations of the flows.
In the same way as in Example 4), the projection $\pi(W')$
intersects itself transversely. It fills a submanifold
of $N$ with a boundary isotopic to $\pi(H)$ which is 
embedded.

There are many variations of this construction.

\vskip .1in
More generally one can relax the requirement that a wall
intersects exactly an $\rrrr$ worth of stable and unstable
leaves in $\mt$. Then any non trivial homotopy from 
a periodic orbit to itself generates a bi-infinite chain
of lozenges invariant under a $\mathbb Z^2$ sugroup of $\pi_1(M)$.
These ``generalized walls" may be useful in other contexts.
One example of these generalized walls occurs in the
Bonatti-Langevin \cite{Bo-La} example. This has many free 
homotopies, generating infinitely many such generalized walls. One can
have 
pairs of these walls sharing a corner without being the same wall.
This shows that the requirement that $M$ is atoroidal is in
general necessary to obtain equality of walls when they
share a corner. In the Bonatti-Langevin example one can
make counterexamples that are in fact walls as
defined in this article and not generalized walls.

\section{Applications}
\label{applications}

\subsection{More quasigeodesic pseudo-Anosov flows}
\label{moreqg}

In this subsection we will prove the following:

\begin{theorem} Let $M$ be a hyperbolic $3$-manifold admitting
an Anosov flow $\Phi$. Then up to perhaps a double cover, $M$
admits a quasigeodesic pseudo-Anosov flow. In any case $M$
admits a one dimensional foliation by quasigeodesics, with a
dense leaf.
\end{theorem}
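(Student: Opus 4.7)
My approach is to split into the two cases of the Main Theorem dichotomy. If $\Phi$ is not $\rrrr$-covered, then the Main Theorem asserts that $\Phi$ itself is quasigeodesic. Since every topological Anosov flow is in particular a pseudo-Anosov flow (with no $p$-prong singularities), this already gives a quasigeodesic pseudo-Anosov flow on $M$ with no cover needed. For the final statement, the orbit foliation of $\Phi$ is the desired one-dimensional foliation of $M$: its leaves are quasigeodesics by the Main Theorem, and a dense leaf is provided by any dense orbit of $\Phi$ (topological Anosov flows on closed atoroidal $3$-manifolds are transitive).

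Now suppose $\Phi$ is $\rrrr$-covered. By \cite{Ba1}, one of $\ls$ or $\lu$ being $\rrrr$-covered is equivalent to both being $\rrrr$-covered, and up to passing to at most a double cover $\widehat M \to M$ we may assume both foliations are transversely orientable. Then $\ls$ is an $\rrrr$-covered, transversely orientable, taut foliation in a closed hyperbolic $3$-manifold, and the construction of Thurston, Calegari, and the author (\cite{Th3, Cal1, Fe6}) produces a pseudo-Anosov flow $\Psi$ on $\widehat M$ which is transverse and regulating for $\ls$. The $\rrrr$-covered quasigeodesic theory in \cite{Fe6} then shows that $\Psi$ is a quasigeodesic pseudo-Anosov flow, yielding the first assertion.

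For the second assertion in the $\rrrr$-covered case we need to produce the quasigeodesic foliation on $M$ itself, not merely on the double cover $\widehat M$. Here we use that $\Psi$ was constructed canonically from $\ls$ (via its universal circle and the pair of regulating laminations collapsed to $\Psi$), so the \emph{unoriented} line field tangent to the orbits of $\Psi$ is invariant under the deck group of $\widehat M \to M$ even when $\Psi$ itself does not descend as an oriented flow. Its integral curves therefore descend to a one-dimensional foliation $\HH$ on $M$; the leaves of $\HH$ are quasigeodesics since this is a property of the underlying curves, not of their parametrization, and a dense leaf is obtained by projecting any dense orbit of $\Psi$ on $\widehat M$. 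The main obstacle in this step is the descent: one must verify that the line field really is deck-invariant (as opposed to merely the oriented flow up to reversal), and in Section \ref{moreqg} I will illustrate this by exhibiting explicit Fried surgeries on geodesic flows of non-orientable hyperbolic surfaces where the double cover is genuinely required for a pseudo-Anosov flow to exist, yet the foliation $\HH$ still makes sense directly on $M$.
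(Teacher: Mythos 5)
Your case split and the first two steps match the paper's own proof: in the non $\rrrr$-covered case the Main Theorem makes $\Phi$ itself the quasigeodesic pseudo-Anosov flow, and in the $\rrrr$-covered, transversely orientable case the transverse regulating pseudo-Anosov flow of Thurston/Calegari/Fenley does the job. The problem is the final descent step, which you yourself flag as ``the main obstacle'' and then do not prove: you assert that because $\Psi$ is ``constructed canonically from $\ls$,'' the unoriented orbit line field is invariant under the full deck group of $\widehat M \to M$, and you propose to support this only by exhibiting examples. That is not an argument. A deck transformation $\gamma$ reversing the transverse orientation of $\ls$ carries the pair of leafwise geodesic laminations $(\G^s,\G^u)$ used to build $\Psi$ to the analogous pair built from $\ls$ with the opposite co-orientation; without a uniqueness statement for regulating pseudo-Anosov flows (which is not available here and is not needed in the paper), there is no reason a priori that $\gamma(\wgs \cup \wgu) = \wgs \cup \wgu$ as subsets of the universal cover — $\gamma$ could conceivably send leaves of $\wgs$ to other walls, e.g.\ ones corresponding to diagonals of complementary ideal polygons.

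The paper closes exactly this gap with a concrete claim and proof: for $\gamma \notin \pi_1(M_2)$ one has $\gamma(\wgs) = \wgu$ and $\gamma(\wgu) = \wgs$. The proof uses the wall technology: a leaf $L$ of $\wgs$ is a wall, so $\gamma(L)$ is a wall; if $\gamma(L)$ is not tangent to $\wgs$, then flowing forward along the regulating flow its slices converge to $\wgu$, so by minimality $\gamma(\G^s) = \G^u$; and the alternative $\gamma(\wgs) = \wgs$ is ruled out because $\gamma$ reverses the transverse orientation of $\wls$, which would force leaves of $\wgs$ to expand away from each other in both transverse directions, a contradiction. Only after this swap is established does the pair (hence the intersection and the complementary regions) become $\pi_1(M)$-invariant, so that collapsing yields a one-dimensional foliation on $M$ transverse to $\ls$ with a dense leaf. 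To complete your proposal you would need to supply an argument of this kind (or an independent uniqueness/canonicity theorem strong enough to imply the invariance of the unordered pair $\{\G^s,\G^u\}$); as written, the second assertion of the theorem in the non transversely orientable case is not proved.
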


\begin{proof}
Let $\Phi$ be an Anosov flow in $M$. If $\Phi$ is not 
$\rrrr$-covered, then the Main Theorem shows that $\Phi$ 
is quasigeodesic and this proves the result.

We assume from now on that $\Phi$ is $\rrrr$-covered. 
Then $\Phi$ is not quasigeodesic \cite{Fe1}. Suppose first that
the stable foliation $\ls$ of $\Phi$ is transversely
orientable. Then, as explained in the previous section,
there is a pseudo-Anosov
flow $\varphi$ in $M$, transverse to $\ls$ and regulating for
$\ls$ \cite{Th3,Fe3,Cal1}. This flow
is quasigeodesic \cite{Th3}.
This finishes the proof in
this case.

Finally suppose that $\Phi$ is $\rrrr$-covered, but $\ls$
is not transversely orientable.
This is a non empty class, see eg. \cite{Fo-Ha}, for
examples which are Dehn surgeries on geodesic flows.
%: for example start with the geodesic flow on
%a non orientable, closed hyperbolic surface $S$.
%Let $\eta_*$ be an orientation preserving  geodesic that fills
%$S$ and $\eta$ an orbit of the geodesic flow that projects
%to $\eta_*$. Then high enough approprite Fried Dehn surgery 
%on $\eta$ \cite{Fr} yields
%a manifold $M$ which is atoroidal \cite{Fo-Ha}
%hence hyperbolic. If the surgery coefficient satisfy a
%positivity condition then the resulting Anosov flow
%is an $\rrrr$-covered
%Anosov flow \cite{Fe1}. Finally the resulting
%Anosov flow has $\ls$ non transversely orientable.
%So again assume $\Phi$ is $\rrrr$-covered, but $\ls$
%not transversely orientable.
Let $M_2$ be the double cover of $M$ corresponding to
the deck transformations which preserve transverse
orientation to $\wls$ in the universal cover. 
Let $\pi_2: \mt \to M_2$ the corresponding
universal cover map. The flow $\Phi$ lifts to an
Anosov flow $\Phi_2$ in $M_2$ with a stable foliation
$\lss$ which is $\rrrr$-covered and transversely
orientable.
Notice that since $M$ admits an $\rrrr$-covered
Anosov flow, which is not orbitally equivalent to 
a suspension, then $M$ is orientable.

In $M_2$ there is a pseudo-Anosov flow $\varphi$ which is 
transverse and regulating for $\lss$. This is a quasigeodesic flow.
It does not project to a flow in $M$ transverse to $\ls$
since $\ls$ is not transversely orientable. However
the one dimensional foliation by flow lines of $\Phi_2$
can be made to project to a foliation in $M$ transverse to
$\ls$. It is easier to  do this using the leafwise
geodesic laminations $\G^s, \G^u$ associated with $\varphi$
and transverse to the foliation $\lss$.
The stable and unstable singular foliations of $\varphi$ are obtained
by collapsing the complementary regions of these 
leafwise geodesic laminations,
see eg. \cite{Th3}. These are the laminations discussed
in Example 2 of Section \ref{examp}.

Let $g$ be an element of $\pi_1(M)$.  We show that $g$
preserves the pair $\wgs, \ \wgu$ $-$ even though not necessarily
each of them individually.
If $g$ is in $\pi_1(M_2)$ then by definition $g$ preserves
$\wgs$ and $\wgu$.

Suppose then that $g$ is not in $\pi_1(M_2)$. Hence it 
reverses the transversal orientation to $\wlss$.
Since $\pi_1(M_2)$ is normal in $\pi_1(M)$, then $g$
induces an isometry of $M_2$ (of order $2$) and sends
$\G^s$ to another leafwise geodesic lamination.

%\vskip .1in
%\noindent
%{\bf {Claim $-$ 
\begin{claim}
$g(\wgs) = \wgu$, 
and consequently $g(\wgu) = \wgs$.
\end{claim}
%}}

\begin{proof}
Parametrize $\wls$ as $\{ E_t, t \in \rrrr \}$,
so that the positive 
$\widetilde \varphi$ flow direction is with increasing
$t$.
Going up means increasing $t$.

Let $W$ be a leaf of $\wgs$, which is a wall. Then $g(W)$
is also a wall. Therefore $g(W) \cap E_t$ is a bounded
distance from $g(W) \cap E_0$ (the bound depends on $t$).
We analysed this situation in Example 2 of the previous 
section: if $g(W) \cap E_0$ is not in the intersection
of $\wgs$ with a stable leaf, then $g(W) \cap E_t$
is getting closer and closer to the unstable lamination 
$\wgu \cap E_t$ as $t$ increases. Notice that $g(W) \cap E_t$ cannot be 
in a complementary region of $\wgs \cap E_t$, since then
it would be an isolated leaf of $g(\wgs) \cap E_t$.
Then $\pi_2(g(W))$ limits in $\G^u$ and so its closure
contains $\G^u$. Since $\G^s$ is minimal in this case
then $g(\wgs) = \wgu$. 

Can $g(W) \cap E_0$ be tangent to a leaf in $\wgs$? 
In that case, the same arguments imply that $g(\wgs) =
\wgs$. But this is impossible: going up ($t$ increasing) 
then leaves of $\wgs$ expand away from each other.
Applying $g$ reverses the orientation to $\wls$, so 
this means going down ($t$ decreasing). So going
down one gets transverse expansion. This would 
imply that both going up and down
one gets expansion of leaves of $\wgs$ away from each
other, which is impossible.

We conclude that $g(\wgs) = \wgu$ and $g(\wgu) = \wgs$.
\end{proof}

In any case $\pi_1(M)$ preserves the pair of laminations 
$\wgu, \wgs$, hence preserves their intersection. It also 
preserves the set of complementary regions of $\wgs \cup \wgu$
in $\mt$.
The projection of $\wgs$ to $M$ is an immersed lamination
(it has self intersections), it is equal to the projection
of $\wgu$. The complementary regions are locally 
a relatively compact polygon times an interval.
Then collapse the complementary regions in $M$ 
as is done in the transversely orientable case 
by Thurston \cite{Th4,FLP,Bl-Ca}. We obtain
a one dimensional foliation in $M$ transverse to $\ls$ and regulating
for it. 
%It projects to a one dimensional foliation
%in $M$ which is transverse to $\ls$.

Finally all flows are pseudo-Anosov in $M$ or $M_2$ which
are atoroidal. Hence there is dense orbit. This projects
to a dense leaf in the last case.

This finishes the proof of the Theorem.
\end{proof}

\subsection{Continuous extension of Anosov foliations}
\label{continuousextension}

\begin{theorem}
Let $\Phi$ be an Anosov flow in $M^3$ closed, hyperbolic.
Then the stable and unstable foliations of $\Phi$ have
the continuous extension property.
\end{theorem}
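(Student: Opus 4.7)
The plan is to split along the $\rrrr$-covered dichotomy for $\Phi$. If $\Phi$ is $\rrrr$-covered, then up to a double cover (which does not affect the continuous extension property) $\ls$ is transversely orientable, and by \cite{Th3,Cal1,Fe6} there exists a pseudo-Anosov flow $\varphi$ in $M$ transverse to $\ls$ and regulating for $\wls$ which is quasigeodesic in the hyperbolic manifold $M$. Continuous extension of $\ls$ under the existence of such a transverse regulating quasigeodesic pseudo-Anosov flow is precisely what is established in \cite{Fe6} (and in \cite{Th3} in the special case handled there): the boundary map on each leaf $F$ of $\wls$ is read off from the ideal endpoints of the $\widetilde\varphi$-orbits that cross $F$, and uniform quasigeodesity of $\varphi$ provides continuity. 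The same machinery applied to $\lu$ (which is also $\rrrr$-covered and admits a transverse regulating quasigeodesic pseudo-Anosov flow) handles the unstable foliation.

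Assume now that $\Phi$ is not $\rrrr$-covered. The Main Theorem then gives that $\Phi$ is quasigeodesic and, by compactness of $M$, uniformly quasigeodesic. I will work in the good form of Subsection \ref{ss.goodform}: each leaf $F$ of $\wls$ is isometric to $\mathbb H^2$ and the flow lines of $\wwp$ in $F$ are geodesics of $F$, all sharing a common forward ideal point $y^+_F \in S^1(F)$. Because these flow lines are uniform quasigeodesics in $\mt$ and are forward asymptotic in $F$, they also share a common forward endpoint $y_F \in \si$. Define $\Psi_F \colon F \cup S^1(F) \to \mt \cup \si$ to be the inclusion on $F$, to send $y^+_F$ to $y_F$, and to send each other point $x \in S^1(F)$ to the backward endpoint in $\si$ of the unique flow line $\alpha_x$ in $F$ whose negative ideal point in $S^1(F)$ is $x$.

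Continuity in the interior of $F$ is immediate. Continuity at a point $x \in S^1(F)\setminus\{y^+_F\}$ is the easier half: by the geometry of the funnel foliation of $F$ by flow lines through $y^+_F$, a sequence $p_n \to x$ in $F \cup S^1(F)$ lies for large $n$ on flow lines $\beta_n$ whose negative ideal points in $S^1(F)$ converge to $x$, with $p_n$ going to the backward end of $\beta_n$ in $F$; uniform quasigeodesity combined with continuous dependence of ideal endpoints on $(K,C)$-quasigeodesics forces $p_n$ to converge to the backward endpoint of $\alpha_x$ in $\mt \cup \si$. The hard part will be continuity at the forward ideal point $y^+_F$, because arbitrarily close to $y^+_F$ in $F$ there are points on wildly different flow lines of $F$ and all of them must end up near the single point $y_F \in \si$. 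To close this I will combine three ingredients: all forward rays of flow lines in $F$ are pairwise a uniformly bounded Hausdorff distance apart in $\mt$ (a leafwise consequence of the good form of the flow together with Theorem \ref{lozengebound}); all of them end at $y_F$; and $\wwp$ is uniformly quasigeodesic. A Morse-type argument then shows that any sequence $p_n \to y^+_F$ in $F \cup S^1(F)$ must converge to $y_F$ in $\mt \cup \si$.

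The argument for $\lu$ is symmetric. In an unstable leaf $G$ the flow lines are backward asymptotic in $G$ and share a common backward endpoint $z_G \in \si$; the boundary map is defined analogously with the roles of forward and backward interchanged, and its continuity is proved by the same reasoning, where again the only subtle step is continuity at the distinguished ideal point of $S^1(G)$ at which all flow lines of $G$ converge.
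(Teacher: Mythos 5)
Your top-level structure is exactly the paper's: split on whether $\Phi$ is $\rrrr$-covered, use the transverse regulating quasigeodesic pseudo-Anosov flow of \cite{Th3,Cal1,Fe6} in the $\rrrr$-covered case, and use the Main Theorem in the non $\rrrr$-covered case to reduce to the statement that a quasigeodesic Anosov flow has the continuous extension property. The difference is that at this last step the paper simply cites the known result (Theorem 5.8 of \cite{Fe1}, see also Theorem 3.4 of \cite{Fe4}), whereas you try to reprove it in sketch form, and that sketch has a genuine gap at precisely the point you identify as the hard one.

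Two concrete problems with your argument at the forward ideal point $y^+_F$. First, the ingredient ``all forward rays of flow lines in $F$ are pairwise a uniformly bounded Hausdorff distance apart in $\mt$'' is misattributed and misstated: Theorem \ref{lozengebound} is about the two corner orbits of a lozenge, not about two flow lines in a common stable leaf, and the Hausdorff distance between full forward rays is controlled by the distance between their initial points, so it is finite (by forward asymptoticity in the leaf) but not uniform over all pairs of rays. Second, and more seriously, the listed ingredients do not touch the critical sequences: take flow lines $\alpha_n$ in $F$ whose negative ideal points $x_n$ converge to $y^+_F$ in $S^1(F)$, and points $p_n\in\alpha_n$ near the \emph{backward} ends of the $\alpha_n$. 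These $p_n$ converge to $y^+_F$ in the cone topology of $F\cup S^1(F)$, yet they lie nowhere near any forward ray, so no Morse-type comparison with forward rays ending at $y_F$ applies to them; what one must show is that the backward endpoints $\eta(x_n)\in\si$ converge to $y_F$. The correct mechanism is different from what you list: since $x_n\to y^+_F$, the geodesics $\alpha_n$ leave every compact set of $F$, hence of $\mt$ ($F$ is properly embedded), and a family of uniform quasigeodesics escaping every compact set with one endpoint fixed at $y_F$ must have its other endpoints converging to $y_F$; only then does one get $\alpha_n\to y_F$ in $\mt\cup\si$ and hence $p_n\to y_F$. This escaping-compact-sets step is the essential content of the result the paper invokes; as written your proof needs either that argument supplied or simply the citation of Theorem 5.8 of \cite{Fe1} (Theorem 3.4 of \cite{Fe4}), after which the rest of your outline matches the paper.
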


\begin{proof}
Suppose first that $\Phi$ is $\rrrr$-covered. Then both
$\ls$ and $\lu$ are $\rrrr$-covered.
Then the result was proved in \cite{Th3}, see also
Theorem G of \cite{Fe6}.

Suppose on the other hand that $\Phi$ is not $\rrrr$-covered.
It follows that $\Phi$ is a quasigeodesic Anosov flow by the Main
Theorem. Then
the continuous extension property
was proved in Theorem 5.8 of \cite{Fe1}, see also 
Theorem 3.4 of \cite{Fe4}.
\end{proof}

\subsection{Group invariant Peano curves and Cannon-Thurston maps}
\label{groupinvar}

We prove the abundance of group invariant Peano curves for
Anosov flows in hyperbolic $3$-manifolds.

\begin{theorem} Let $M^3$ be a hyperbolic $3$-manifold admitting
an Anosov flow $\Phi$. Then there is a group invariant Peano
curve $\eta: \partial \oo \rightarrow \si$ where $\partial
\oo$ is the flow ideal boundary of a quasigeodesic flow 
in either $M$ or a double cover $M_2$ of $M$.
In addition the map $\eta: \partial \oo \to \si$ is 
boundedly finite to one. Finally the identifications of
points of $\partial \oo$
mapped to same points of $\si$ are generated by points
which are ideal points of stable or unstable leaves
of an appropriate pseudo-Anosov flow.
\end{theorem}

\begin{remark} The flow ideal boundary $\partial \oo$ can also
be thought of as the universal circle associated with 
the corresponding pseudo-Anosov flow.
\end{remark}

\begin{proof}
The map $\eta$ is associated with a quasigeodesic pseudo-Anosov flow.
Given a pseudo-Anosov flow $\varphi$, the orbit space $\oo$ of 
$\widetilde \varphi$ is homemorphic to the plane \cite{Fe-Mo}. The
stable and unstable foliations of $\varphi$ descend to one
dimensional foliations in $\oo$. In \cite{Fe6} we define a flow ideal
boundary $\partial \oo$ of $\oo$. It is homeomorphic to a circle.
We refer to the reader to \cite{Fe6} for definitions, constructions,
and details.

This can be done for any pseudo-Anosov flow in any manifold.
When $M$ is hyperbolic and $\varphi$ has an upper bound on
the cardinality of sets of pairwise freely homotopic periodic orbits,
then in \cite{Fe7} we produced a group invariant Peano curve roughly
as follows: Identify $\mt$ with $\oo \times \rrrr$ where 
$\rrrr$ corresponds to the flow direction. 
Reparametrize the flow direction to be $(-1,1)$ so
$\mt$ is parametrized as $\oo \times (-1,1)$.
Then extend this 
model to 

$$\D \ \ = \ \ (\oo \cup \partial \oo)  \times [-1,1],$$

\noindent
using the flow ideal compactification. 
The fundamental group
$\pi_1(M)$ naturally acts on $\D$. In \cite{Fe6,Fe7} we consider a quotient
space of $\partial \D$ using the stable and
unstable foliations of $\varphi$ (thought of as one dimensional
foliations in $\oo$). 
Let this quotient be 

$$\R \ \ = \ \ \partial \D/ \sim,$$

\noindent
where $\sim$ is the equivalence relation in $\partial \D$.
The equivalence  relation is generated by the following:
two points
in $(\oo \cup \partial \oo) \times \{ 1 \}$ are identified if the pair is
contained in the union of a 
stable leaf in $\oo \times \{ 1 \}$ and its ideal points in 
$\partial \oo \times \{ 1 \}$.
In the lower boundary  $(\oo \cup \partial \oo)  \times \{ -1 \}$
we use the unstable boundary instead. Finally each vertical
segment $p \times [-1,1]$ is collapsed to a point. 
In particular every point in $\D$ is identified with a 
point in $\partial \oo \times \{ 1 \}$.
In other words, there is an induced collapsing map

$$\eta_0: \partial \oo \ \rightarrow \ \R,$$

\noindent
where we identify $\partial \oo$ with $\partial \oo \times \{ 1 \}$.

\vskip .1in
There is an induced action of $\pi_1(M)$ on $\R$.
The important fact is that under the boundedness
condition on freely homotopic periodic orbits it is proved
in \cite{Fe7} that the action of $\pi_1(M)$ on $\R$ is
a uniform convergence group action.
Super briefly: if a stable leaf $A$ makes a perfect fit with
an unstable leaf $B$, then the corresponding ideal points
of the rays of $A, B$ (seen in $\oo$) are the same point
in $\partial \oo$. So if there are too many perfect fits, then
there are too many identifications, which can lead to
bad topological behavior in $\R$.

Using results of Bowditch [Bo] (under the boundedness
condition) this implies that $\R$ is
$\pi_1(M)$ equivariantly homeomorphic to the sphere
at infinity $\si$. In addition there is a quotient space
$\mt \cup \R$ of $\D$ which is $\pi_1(M)$
equivariantly homeomorphic to the canonical compactification
of $\mt$ into $\mt \cup \si$. The induced map $\eta: \partial \oo
\rightarrow \R \cong \si$ is the 
group invariant Peano curve or Cannon-Thurston map associated
with this situation.

\vskip .1in
Now we apply this to the situation we have at hand starting
with an Anosov flow $\Phi$ in $M^3$ hyperbolic.

$-$ 1) If $\Phi$ is not $\rrrr$-covered, let $\varphi = \Phi$.
Then $\varphi$ is quasigeodesic and we apply the above construction
to $\varphi$.
This proves this case.

$-$ 2) If $\Phi$ is $\rrrr$-covered and its stable foliation
is transversely orientable, let $\varphi$ be a pseudo-Anosov
flow transverse to and regulating for $\ls$.
Then $\varphi$ is quasigeodesic 
\cite{Th3,Fe3,Cal1}. Apply the construction
above to $\varphi$. Notice that the orbit space of $\wwp$
is definitely completely different from the orbit space
of $\widetilde \varphi$. We apply the construction to $\varphi$
and not to $\Phi$ in this case.

$-$ 3) Finally suppose that $\Phi$ is $\rrrr$-covered, but 
$\ls$ is not transversely orientable. As in the previous section
let $M_2$ be the double cover which orients the lift $\lss$
of $\ls$ to $M_2$. Let $\varphi$ be a pseudo-Anosov flow transverse
to $\lss$ 
and regulating for $\lss$. Let $\oo$ be the orbit space
of $\widetilde \varphi$. Then the construction above
yields a Cannon-Thurston map 

$$\eta: \  \partial \oo \ \rightarrow \ \si.$$

\noindent
What we know is that this map is $\pi_1(M_2)$ equivariant.
Now consider the one dimensional foliation $\F$ in $M$ induced
by the flow lines of $\varphi$ in $M_2$. This foliation
has the same leaf space in $\mt = \mt_2$ as the orbit
space of $\widetilde \varphi$, since $\varphi$ is a double
cover of $\F$. Let $g$ be a deck transformation of $\mt$
(an element of $\pi_1(M)$). Suppose that $g$ is  not
in $\pi_1(M_2)$. We proved in the last section that $g(\wgu)
= \wgs$ and $g(\wgs) = \wgu$. 
 Hence $g$ also induces
a homeomorphism of $\partial \oo$. 
We explained that $\R$ is obtained
as a quotient of $\partial \oo$ where one identifies different
points which are ideal points of either a stable or
an unstable leaf in $\oo$. 
Since $g$ switches the
stable and unstable foliations, the same identifications
occur, so $g$ also induces a 
homeomorphism of $\R \cong \si$.

%Since it switches the
%stable and unstable foliations, this also induces a 
%homeomorphism of $\R \cong \si$ as well.

It is easy to see that this induces
an action of $\pi_1(M)$ on $\R$ and that $\eta$ is
$\pi_1(M)$ group equivariant, that is, a Cannon-Thurston map.

This finishes the proof of the first part of the theorem.

The identifications of $\eta$ are in bijection with those
of $\eta_0$. By definition the identifications of $\eta_0$
are generated by points which are ideal points of the
same stable or unstable leaf, or ideal points of rays
which make a perfect fit. Since there is an upper bound
on the number of leaves in
chains of leaves with consective ones
making perfect fits, this shows
that the map $\eta: \partial \oo \to \si$ is boundedly
finite to one. This also specifies what are the
identifications of $\eta$ as described in the statement.

This finishes the proof of the theorem.
\end{proof}

\begin{remark}
The existence of group invariant Peano curves can be 
obtained by the main result of Frankel \cite{Fra2}:
we have proved that either $M$  or a double cover $M_2$
has a pseudo-Anosov flow $\varphi$ which is quasigeodesic.
Let $N$ be the manifold in question.
Any quasigeodesic flow in a hyperbolic $3$-manifold
has orbit space homeomorphic to $\rrrr^2$ \cite{Cal3}.
Let $\Theta: \mt \to \oo$ be the projection map.
Frankel \cite{Fra2} following Calegari \cite{Cal3} 
proved that any section of $\Theta$ extends to a continuous
map of a closed disk $D^2 = \oo \cup \partial \oo$.
The map restricted to $\partial \oo$ 
is a $\pi_1(N)$ equivariant Peano curve. This proves the
result.

We note that 
the compactications of \cite{Fra2} and the
one described above are naturally equivariantly
homeomorphic: this follows from Theorem 2 of \cite{Bo}.

The approach here also produces a group invariant Peano curve,
where we know exactly what are the identifications, and
it also proves that the map constructed is boundedly
finite to one.
\end{remark}

\section{Future questions}
\label{future}

A) 
Obviously one 
immediate question raised by the results of
this article is to decide what
happens for general pseudo-Anosov flows in hyperbolic $3$-manifolds.
The best result would be to show that if there is a $p$-prong
singular orbit (with $p \geq 3$, or equivalently 
it is the case of a ``true" pseudo-Anosov
flow, as opposed to a topological Anosov flow), then the flow is quasigeodesic.
One obvious problem that needs to be solved is to find a good
geometry for the leaves of (say) $\ls$, which in this case
is a singular foliation. In this article we used Candel's 
results to give a metric in $M$ so that every leaf of $\ls$
is a hyperbolic surface. Clearly this does not work in the general
situation. An appropriate metric with appropriate properties
needs to be found to analyze this problem.

\vskip .05in
B) Another very important problem is to decide when a given Anosov
flow in $M^3$ hyperbolic is $\rrrr$-covered or not.
In the Introduction we described several examples which 
are $\rrrr$-covered, and several which are not $\rrrr$-covered,
using the results of Bonatti and Iakovoglou \cite{Bo-Ia}.
It would be good to know more about when exactly the
$\rrrr$-covered condition holds.

\vskip .05in
C) One can study the quasigeodesic question for Anosov 
flows in toroidal manifolds. The methods of 
this article could be particularly useful to study quasigeodesicty
in an atoroidal piece $P$. This corresponds to a
finite volume hyperbolic manifold. Putting the
boundary $\partial P$ in 
good form with respect to the flow (union of Birkhoff tori)
one can make sense of the stable or unstable foliation
being $\rrrr$-covered when restricted to $P$.
We expect that the $\rrrr$-covered case will not be 
quasigeodesic, this should not be too complicated.
The non $\rrrr$-covered case could be analyzed using
the tools of this article, and hopefully show that the flow
restricted to $P$ is quasigeodesic. This has a fairly good
chance to succeed. It is quite possible the same holds for
pseudo-Anosov flows as well.

\vskip .05in
D) Bands and walls can easily be defined for more general Anosov
flows. They could be useful to attack other problems involving
the topological and geometric structure of Anosov flows.

\vskip .05in
E) Let $\fol$ be a Reebless foliation, By Novikov's theorem
\cite{No} any leaf of $\fn$ is a properly embedded plane
in $\mt$. If $M$ is also hyperbolic, then for $L \in \fn$,
its limit set is the set of accumulation points in
the sphere at infinity $\si$.
If $\fol$ is $\rrrr$-covered (Anosov foliation or not),
then the limit set of any $L \in \fn$ is the whole
$\si$ \cite{Fe1}.
Suppose now that $\Phi$ is a non $\rrrr$-covered Anosov flow
in $M^3$ hyperbolic. Given $L \in \wls$ (or $\wlu$) what is
the limit set of $L$? Could it be $\si$? This is a good
test case for the following question: suppose that
$\fol$ is Reebless in $M^3$ hyperbolic. Is it true that
there is $L \in \fn$
with limit set of $L$ the whole $\si$ if and only if $\fol$ is
$\rrrr$-covered? The results of this article should be 
able to give light to this question in the case of
Anosov foliations.

%\small{

%}

\end{document}